\theoremstyle{plain} 
\newtheorem{thm}{Theorem}[section] 
\newtheorem{prop}[thm]{Proposition}
\newtheorem{cor}[thm]{Corollary}
\newtheorem{lem}[thm]{Lemma}
\theoremstyle{definition}
\theoremstyle{remark} 
\newtheorem{oss}{Remark}
\renewcommand{\P}{\mathbb{P}}
\newcommand{\CC}{\mathcal{C}}
\newcommand{\Q}{\mathbb{Q}}
\newcommand{\R}{\mathbb{R}}
\newcommand{\E}{\mathbb{E}}
\newcommand{\F}{\mathcal{F}}
\newcommand{\ind}{\mathbbm{1}}
\newcommand{\eps}{\varepsilon}
\newcommand{\N}{\mathbb{N}}
\renewcommand{\d}{\,\mathrm{d}}
\newcommand{\G}{\mathbb{G}}
\newcommand{\calV}{\mathcal{V}}
\newcommand{\calA}{\mathcal{A}}
\newcommand{\sigmaUR}{\sigma_{\text{UR}}}
\newcommand{\sigmaUNR}{\sigma_{\text{UNR}}}
\newcommand{\sigmaNF}{\sigma_{\text{NF}}}
\newcommand{\sigmaA}{\sigma_{\text{A}}}
\newcommand{\bS}{\mathbb{S}}
\DeclareMathOperator{\Bin}{Bin}
\title{The probability of unusually large components for critical percolation on random $d$-regular graphs}
\author{Umberto De Ambroggio\thanks{University of Bath, Department of Mathematical Sciences, Bath BA2 7AY, UK. \texttt{umbidea@gmail.com}}\hspace{1.5mm} and Matthew I.~Roberts\thanks{University of Bath, Department of Mathematical Sciences, Bath BA2 7AY, UK. \texttt{mattiroberts@gmail.com}}}
\begin{document}

\maketitle

\begin{abstract}
Let $d\ge 3$ be a fixed integer, $p\in (0,1)$, and let $n\geq 1$ be a positive integer such that $dn$ is even. Let $\mathbb{G}(n, d, p)$ be a (random) graph on $n$ vertices obtained by drawing uniformly at random a $d$-regular (simple) graph on $[n]$ and then performing independent $p$-bond percolation on it, i.e.~we independently retain
each edge with probability $p$ and delete it with probability $1-p$. Let $|\mathcal{C}_{\text{max}}|$ be the size of the largest component in $\mathbb{G}(n, d, p)$. We show that, when $p$ is of the form $p=(d-1)^{-1}(1+\lambda n^{-1/3})$ for $\lambda\in\R$, and $A$ is large,
\begin{align*}
\mathbb{P}(|\mathcal{C}_{\text{max}}|>An^{2/3})\asymp A^{-3/2}e^{-\frac{A^3(d-1)(d-2)}{8d^2}+\frac{\lambda A^2(d-2)^2}{2d(d-1)}-\frac{\lambda^2 A(d-1)}{2(d-2)}}.
\end{align*}
This improves on a result of Nachmias and Peres. We also give an analogous asymptotic for the probability that a particular vertex is in a component of size larger than $An^{2/3}$.
	\vskip0.3cm
	\textbf{Keywords}: Random regular graph, percolation, component size, exploration process
\end{abstract}

\section{Introduction}
Let $d\ge 3$ be a fixed integer, and let $n\in \mathbb{N}$ be such that $dn$ is even. Let $p\in (0,1)$. We let $\mathbb{G}(n,d)$ be a $d$-regular graph sampled uniformly at random from the set of all $d$-regular graphs on $[n]$, and then denote by $\mathbb{G}(n,d,p)$ the random graph obtained by performing $p$-bond percolation on a realisation of $\mathbb{G}(n,d)$. That is, for each edge $e$ of $\mathbb{G}(n,d)$, we independently keep it with probability $p$ and delete it with probability $1-p$.

Alon, Benjamini and Stacey \cite{alon_benj_stacey} showed that $\mathbb{G}(n,d,\mu/(d-1))$ undergoes a phase transition as $\mu$ passes $1$: specifically, the size of the largest component $|\mathcal C_{\max}|$ is of order $\log(n)$ when $\mu < 1$, and of order $n$ when $\mu > 1$. 

A similar behaviour is shared by the Erd\H{o}s-R\'enyi random graph $\mathcal G(n,p)$. Indeed, it is well known (see e.g. the monographs \cite{bollobas_book}, \cite{remco:random_graphs} or \cite{janson_et_al:random_graphs} for more details) that, if $p=p(n)=\mu/n$, then $\mathcal G(n,p)$ undergoes a phase transition as $\mu$ passes 1. Specifically, if $\mu < 1$ then $|\mathcal C_{\max}|$ is of order $\log n$; if $\mu=1$ (the \textit{critical} case), then $|\mathcal C_{\max}|$ is of order $n^{2/3}$; and if $\mu > 1$, then $|\mathcal C_{\max}|$ is of order $n$.

Nachmias and Peres in \cite{nachmias:critical_perco_rand_regular} analysed the $\mathbb{G}(n,d,p)$ model near criticality. Amongst other results they proved that, if $p=(d-1)^{-1}(1+\lambda n^{-1/3})$ with $\lambda \in \mathbb{R}$ and $d\geq 3$ fixed, then there are positive constants $c(\lambda ,d)$ and $C(\lambda,d)$ such that, for any $A>0$ and all $n$, 
\begin{align}\label{uppernachmiasperes}
\mathbb{P}\left(|\mathcal C_{\max}|>An^{2/3}\right)\leq \frac{C(\lambda,d)}{A}e^{-c(\lambda,d)A^3}.
\end{align}
Furthermore, they also proved that there exists a positive constant $D(\lambda,d)$ such that for $\delta>0$ small enough and all $n$, 
\begin{align}\label{upperboundlowertail}
\mathbb{P}\left(|\mathcal C_{\max}|<\lceil \delta n^{2/3}\rceil\right)\leq  D(\lambda, d) \delta^{1/2},
\end{align}
thus showing that the largest component in this model, within the critical window, has size of order $n^{2/3}$, as for the Erd\H{o}s-R\'enyi random graph.

Partially motivated by studying a dynamical version of $\mathbb{G}(n,d,p)$ along the lines of the dynamical Erd\H{o}s-R\'enyi graph introduced by Roberts and {\c{S}}eng{\"{u}}l \cite{seng_rob:dynamical_ER}, our goal with this paper consists of determining the correct asymptotic order for the probability of observing maximal components containing significantly more than $n^{2/3}$ vertices. That is, we will prove a sharper version of \eqref{uppernachmiasperes} and a matching (up to a constant factor) lower bound.

We do this by adapting the methodology introduced in \cite{de_ambroggio_roberts:near_critical_ER} to study component sizes in the near-critical Erd\H{o}s-R\'enyi random graph, thus showing that the argument used there is robust and adaptable to other models of random graphs at criticality.

The main result of this paper is the following theorem. The reader may wish to begin by thinking of $\lambda$ as a constant in $\R$, or even taking $\lambda=0$. 

\begin{thm}\label{mainthm}
	Let $d\geq 3$ be fixed. There exists $A_0\in\N$ such that if $A=A(n)$ satisfies $A_0\le A = o(n^{1/30})$, and $p=p(n)=(1+\lambda n^{-1/3})(d-1)^{-1}$ where $\lambda=\lambda(n)$ satisfies $|\lambda|\leq A(1-2/d)[3(d-1)]^{-1}$, then for all sufficiently large $n$,
	\[\frac{c_1}{A^{1/2}n^{1/3}}e^{-G_{\lambda}(A,d)} \leq \mathbb{P}(|\mathcal{C}(V_n)|>An^{2/3})\leq \frac{c_2}{A^{1/2}n^{1/3}}e^{-G_{\lambda}(A,d)}\]
	and
	\[\frac{c_1}{A^{3/2}}e^{-G_{\lambda}(A,d)} \leq \mathbb{P}(|\mathcal{C}_{\max}|>An^{2/3})\leq \frac{c_2}{A^{3/2}}e^{-G_{\lambda}(A,d)},\]
	where
	\[G_{\lambda}(A,d)\coloneqq \frac{A^3(d-1)(d-2)}{8d^2}-\frac{\lambda A^2(d-1)}{2d}+\frac{\lambda^2 A(d-1)}{2(d-2)}\]
	and $c_1=c_1(d)>0$ and $c_2=c_2(d)>0$ are two finite constants that depend only on $d$. 
\end{thm}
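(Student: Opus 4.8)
\section*{Proof proposal}

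The plan is to explore the cluster $\mathcal{C}(V_n)$ of the distinguished vertex by a half-edge exploration, reduce the two one-vertex bounds to a sharp survival estimate for the associated random walk, and then deduce the bounds on $|\mathcal{C}_{\max}|$ by first and second moment arguments. I will work throughout in the configuration model, realising $\mathbb{G}(n,d,p)$ as a uniform pairing of the $dn$ half-edges in which each edge is independently declared open with probability $p$; since a uniform configuration-model $d$-regular (multi)graph is simple with probability bounded away from $0$, the upper bounds transfer immediately, and the lower bounds transfer after a short argument controlling the effect of conditioning on simplicity. Explore $\mathcal{C}(V_n)$ one half-edge at a time: maintain a set of active half-edges, repeatedly pair a chosen active half-edge with a uniformly chosen still-unpaired half-edge and reveal whether the resulting edge is open, updating the active set. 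Writing $S_k$ for the number of active half-edges after $k$ steps, one has $S_0$ of constant order, the event $\{|\mathcal{C}(V_n)|>An^{2/3}\}$ coincides up to negligible corrections with $\{S_k>0\text{ for all }k\le(d-1)An^{2/3}\}$, and, conditionally on the past,
\[\E[S_{k+1}-S_k\mid\mathcal{F}_k]=\lambda n^{-1/3}-\frac{(d-2)k}{d(d-1)n}-\frac{2S_k}{dn}+\mathrm{error},\qquad \operatorname{Var}(S_{k+1}-S_k\mid\mathcal{F}_k)=d-2+o(1).\]
Here the term $-\tfrac{(d-2)k}{d(d-1)n}$ comes from the half-edges destroyed by closed edges (which reduces the number of fresh half-edges contributed by each newly discovered vertex), the term $-\tfrac{2S_k}{dn}$ from self-intersections, and the error terms are governed once one knows $S_k\lesssim A^2n^{1/3}$ throughout the relevant window (a mild bootstrap).

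The crux is the random-walk estimate. After rescaling, $\tilde S_s:=n^{-1/3}S_{\lfloor sn^{2/3}\rfloor}$ on $s\in[0,(d-1)A]$ behaves like the diffusion $\d\tilde S_s=\bigl(\lambda-\tfrac{d-2}{d(d-1)}s\bigr)\d s+\sqrt{d-2}\,\d B_s$ started just above $0$, and I must prove that
\[\mathbb{P}\bigl(S_k>0\text{ for all }k\le(d-1)An^{2/3}\bigr)\asymp\frac{c}{A^{1/2}n^{1/3}}\,e^{-G_\lambda(A,d)}\]
with matching constants. For the upper bound I will use an optimally tilted exponential supermartingale: staying positive for that long forces the walk to rise to height of order $A^2n^{1/3}$ against the eventually-negative drift, the associated relative entropy cost equals $G_\lambda(A,d)$, and the polynomial factor $A^{-1/2}n^{-1/3}\sim S_0/\sqrt{(d-1)An^{2/3}}$ is the usual ballot-type contribution of starting from constant height. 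For the lower bound I will force the walk to track the optimal trajectory $S_k^{\mathrm{opt}}\approx\tfrac{d-2}{2d(d-1)}\,n^{1/3}\tfrac{k}{n^{2/3}}\bigl((d-1)A-\tfrac{k}{n^{2/3}}\bigr)$, a parabola vanishing at both ends: minimising $\tfrac{1}{2(d-2)}\int_0^{(d-1)A}(\dot g-\text{drift})^2$ over nonnegative paths $g$ from $0$ shows the Brownian part should travel along the chord of the parabolic barrier $\psi(s)=\tfrac{1}{\sqrt{d-2}}\bigl(\tfrac{(d-2)s^2}{2d(d-1)}-\lambda s\bigr)$, of cost $\tfrac{\psi((d-1)A)^2}{2(d-1)A}$, and a direct computation identifies this with $G_\lambda(A,d)$; the hypothesis $|\lambda|\le A(1-2/d)[3(d-1)]^{-1}$ guarantees in particular $\psi((d-1)A)>0$, so that this barrier genuinely rises and $G_\lambda(A,d)>0$. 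The walk then follows this trajectory with probability $\asymp A^{-1/2}n^{-1/3}$ by a Brownian-meander-type estimate. This is the scheme of \cite{de_ambroggio_roberts:near_critical_ER}, now carried out for the half-edge exploration with its killed-half-edge and self-intersection corrections.

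To pass from $\mathcal{C}(V_n)$ to $\mathcal{C}_{\max}$, the upper bound is immediate: every vertex of a cluster of size $>An^{2/3}$ lies in one, so
\[\mathbb{P}\bigl(|\mathcal{C}_{\max}|>An^{2/3}\bigr)\le\frac{n\,\mathbb{P}(|\mathcal{C}(V_n)|>An^{2/3})}{An^{2/3}}=\frac{n^{1/3}}{A}\,\mathbb{P}\bigl(|\mathcal{C}(V_n)|>An^{2/3}\bigr)\le\frac{c_2}{A^{3/2}}\,e^{-G_\lambda(A,d)}.\]
For the lower bound I will apply the second moment method to $X:=\sum_v\ind\{|\mathcal{C}(v)|>An^{2/3}\}$, with $\E[X]=n\,\mathbb{P}(|\mathcal{C}(V_n)|>An^{2/3})$. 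The diagonal part of $\E[X^2]$ equals $n\,\E[\,|\mathcal{C}(V_n)|\ind\{|\mathcal{C}(V_n)|>An^{2/3}\}\,]$, which the super-exponential tail from the previous step (applied at all scales $a\ge A$) bounds by $O(An^{5/3}\,\mathbb{P}(|\mathcal{C}(V_n)|>An^{2/3}))$; the off-diagonal part is $(1+o(1))(\E[X])^2$, since after revealing $\mathcal{C}(u)$ the residual graph is again close to a random $d$-regular graph on $n-|\mathcal{C}(u)|$ vertices at a percolation parameter differing from $p$ only negligibly once the hypotheses on $A$ and $\lambda$ are used. Since $G_\lambda(A,d)$ is large the diagonal part dominates, so $\E[X^2]\lesssim An^{5/3}\,\mathbb{P}(|\mathcal{C}(V_n)|>An^{2/3})$, and Paley--Zygmund gives $\mathbb{P}(|\mathcal{C}_{\max}|>An^{2/3})\ge\mathbb{P}(X\ge1)\ge(\E[X])^2/\E[X^2]\gtrsim\tfrac{n^{1/3}}{A}\,\mathbb{P}(|\mathcal{C}(V_n)|>An^{2/3})\ge c_1A^{-3/2}e^{-G_\lambda(A,d)}$.

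The main obstacle is the second step: obtaining matching constants in a survival estimate for a walk whose increments are neither independent nor stationary, over a window of length $(d-1)An^{2/3}$ with $A$ allowed to grow, and in particular showing that the self-intersection drift $-2S_k/(dn)$, the killed-half-edge corrections, collisions between revealed half-edges, the discrepancy between $dn-2k$ and $dn$, and the conditioning on simplicity each perturb the exponent only by $o(1)$ and the prefactor only by bounded factors; it is the accumulation of the resulting constraints (with $A=o(n^{1/30})$ the binding one) that dictates the precise hypotheses. A secondary difficulty is making the one-vertex \emph{lower} bound survive the passage to the simple random $d$-regular graph, and establishing the approximate independence needed for the off-diagonal second-moment term.
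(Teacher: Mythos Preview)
Your proposal is correct and follows essentially the same architecture as the paper: half-edge exploration in the configuration model, reduction of $\{|\mathcal{C}(V_n)|>An^{2/3}\}$ to a survival probability for the associated walk with the drift you identify, and then Markov/Paley--Zygmund for $|\mathcal{C}_{\max}|$. The paper's implementation of the core survival estimate differs from what you sketch in two places worth knowing. For the upper bound, rather than an exponentially tilted supermartingale, the paper couples the increments to i.i.d.\ ones and applies a ballot-type identity (from \cite{de_ambroggio_roberts:near_critical_ER}) together with a binomial local limit theorem; this gives the $A^{-1/2}n^{-1/3}$ prefactor and the exponent $G_\lambda(A,d)$ by an exact calculation rather than an optimisation. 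For the lower bound, the paper does not track the whole parabola in one shot but splits $[0,T]$ into a short initial piece $[0,T']$ with $T'=\lfloor n^{2/3}/A^2\rfloor$, on which a crude i.i.d.\ minorant and Ritter's theorem for walks conditioned positive produce the factor $(T')^{-1/2}\asymp A n^{-1/3}$, and a long piece $[T',T]$, on which a KMT coupling with Brownian motion and the reflection principle against two chords of the parabolic barrier produce the factor $A^{-3/2}e^{-G_\lambda(A,d)}$; this decomposition is what makes the polynomial prefactor come out right when the walk starts from height $O(1)$, and is the main thing your sketch leaves implicit. Your second-moment argument with $X=\sum_v\ind\{|\mathcal{C}(v)|>An^{2/3}\}$ also works, though the paper uses $\ind\{|\mathcal{C}(v)|\in[T,2T]\}$ to make the diagonal bound immediate.
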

We remark that our proof of the upper bounds in Theorem \ref{mainthm} is relatively straightforward. A key part of the argument will be a simple ballot-type result, established in \cite{de_ambroggio_roberts:near_critical_ER} and also used in \cite{de_ambroggio:component_sizes_crit_RGs} to provide simple (polynomial) upper bounds for the probability of observing unusually large maximal components in other critical models of random graphs.
 
Our proof of the lower bounds in Theorem \ref{mainthm} will be more complicated than that for the upper bound, although it still relies only on robust tools such as Brownian approximations to random walks, again along the lines of \cite{de_ambroggio_roberts:near_critical_ER}.

\subsection{Related work}

Nachmias and Peres \cite{nachmias:critical_perco_rand_regular}, as well as showing that the largest component within the critical window is of order $n^{2/3}$ as mentioned above, also considered the behaviour of the $\mathbb{G}(n,d,p)$ random graph outside the scaling window; see Theorems 3 and 4 in \cite{nachmias:critical_perco_rand_regular}. Moreover they established general upper bounds on the size of the largest component which are valid for all $d$-regular graphs; see Proposition 1 in in \cite{nachmias:critical_perco_rand_regular}. They also studied diameters and mixing times for this model within the critical window---see Corollary 6 in \cite{nachmias:critical_perco_rand_regular}---and established a distributional convergence for the sizes of all components, Theorem 5 in \cite{nachmias:critical_perco_rand_regular}.


Pittel \cite{pittel:largest_cpt_rg} is perhaps the earliest paper dealing with the problem of determining the probability of observing unusually large maximal components in critical random graphs. Pittel showed---among other results---that in the near-critical Erd\H{o}s-R\'enyi random graph $\mathcal{G}(n,p)$ with $p=1/n+\lambda/n^{4/3}$ and $\lambda \in \mathbb{R}$ fixed, 
\[\mathbb{P}(|\mathcal C_{\max}|> An^{2/3} ) \sim \frac{c}{A^{3/2}} e^{-\frac{A^3}{8}+\frac{\lambda A^2}{2}-\frac{\lambda^2A}{2}}\]
where $c$ is stated to equal $(2\pi)^{-1/2}$ but should be $(8/9\pi)^{1/2}$ due to a small oversight in the proof. More details, and a stronger result that allows $A$ and $\lambda$ to depend on $n$, are available in \cite{roberts:component_ER}. 

More recently, with the purpose of obtaining a simple probabilistic proof of the behaviour of the Erd\H{o}s-R\'enyi random graph near criticality, Nachmias and Peres \cite{nachmias_peres:CRG_mgs} introduced an argument using an exploration process and an associated martingale. With their method they did not obtain the correct asymptotic order of $\mathbb{P}(|\mathcal C_{\max}|> An^{2/3} )$ identified by Pittel, but their argument had the advantage of being very robust and adaptable, and has subsequently been used to analyse other models of random graphs at criticality; see e.g. \cite{nachmias:critical_perco_rand_regular}, \cite{hatami_molloy_conf}, and more recently \cite{de_ambroggio_pachon:upper_bounds_inhom_RGs}.

The current authors attempted to combine the advantages of the precise Pittel asymptotic, with a robust and adaptable probabilistic proof \`a la Nachmias and Peres, in \cite{de_ambroggio_roberts:near_critical_ER}. That paper constitutes the main source of inspiration for the proofs in the present paper.

See also Van der Hofstad, Kliem and Van Leeuwaarden \cite{hofstad_et_al:cluster_tails_power_law_RGs}, where similar results to those established by Pittel \cite{pittel:largest_cpt_rg} are proved in the context of inhomogeneous random graphs whose degrees obey a power law.

\subsection{Open problems}
	Our main result, Theorem \ref{mainthm}, does not identify the \textit{exact} asymptotic expansion for the probability of observing unusually large components, but gives bounds which are optimal up to multiplicative constants. The precise constant factor appearing in the asymptotic expansion is known for the Erd\H{o}s-R\'enyi graph (as mentioned above). One open problem is therefore to derive an exact asymptotic for the model studied in this paper, i.e.~to identify specific constants $\beta$ and $\gamma$ such that in the regime described in Theorem \ref{mainthm},
	\[\mathbb{P}(|\mathcal{C}(V_n)|>An^{2/3}) \sim \frac{\beta}{A^{1/2}n^{1/3}}e^{-G_{\lambda}(A,d)}\]
	and
	\[\mathbb{P}(|\mathcal{C}_{\max}|>An^{2/3}) \sim \frac{\gamma}{A^{3/2}}e^{-G_{\lambda}(A,d)}.\]

	We also remark that, in this paper, the parameter $d$ is considered fixed, and our proofs rely on this fact; but Theorem \ref{mainthm} is consistent with the Erd\H{o}s-R\'enyi case, in that if we formally substitute $d=n-1$ into Theorem \ref{mainthm} then we recover the analogous result for Erd\H{o}s-R\'enyi graphs \cite[Theorem 1.1]{de_ambroggio_roberts:near_critical_ER}. Another open problem is therefore to determine whether Theorem \ref{mainthm} holds when $1\ll d < n-1$.

\subsection{Graph-theoretic terminology and general notation}
Given an arbitrary set $S$, we denote by $|S|$ the number of elements contained in it. Let $G=(V,E)$ be any (undirected) graph. Given two vertices $u,v\in V$, we write $u\sim v$ if $\{u,v\}\in E$ and say that vertices $u$ and $v$ are neighbours. We often write $uv$ as shorthand for the edge $\{u,v\}$. We write $u\leftrightarrow v$ if there exists a path of occupied edges connecting vertices $u$ and $v$, where we adopt the convention that $v\leftrightarrow v$ for every $v\in V$. We denote by $\CC(v)\coloneqq \{u\in V:u\leftrightarrow v\}$ the component containing vertex $v\in V$. We define the largest component $\CC_{\max}$ to be some cluster $\CC(v)$ for which $|\CC(v)|$ is maximal, so that $|\CC_{\max}|=\max_{v\in V}|\CC(v)|$.

Given any $k\in \mathbb{N}=\{1,2,\dots\}$ we write $[k]\coloneqq \{1,\dots,k\}$. 
We denote by $\mathbb{N}_0$ the set of all non-negative integers. 
If $(x_n)_n$ and $(y_n)_n$ are two sequences of real numbers, we write $x_n=O(y_n)$ if there exists a finite constant $C>0$ (independent of $n$) such that $x_n\leq Cy_n$ for all large enough $n$. We write $x_n=\Theta(y_n)$ or $x_n\asymp y_n$ if $x_n=O(y_n)$ and $y_n=O(x_n)$. Sometimes we write $O_d(\cdot)$ and $\Theta_d(\cdot)$ to highlight the fact that the constants involved depend on the parameter $d$. Moreover, we write $x_n=o(y_n)$ or $x_n\ll y_n$ if $x_n/y_n\rightarrow 0$ as $n\rightarrow \infty$, and $x_n\sim y_n$ if $x_n/y_n\rightarrow 1$ as $n\rightarrow \infty$. We write $\Bin_{m,p}$ for a binomial random variable with parameters $m$ and $p$, and $U\sim U([0,1])$ for a random variable having a uniform distribution on $[0,1]$. When talking about random variables, the notation i.i.d.~stands for \textit{independent and identically distributed}. We will often write $c$, and sometimes $C$ or $c'$, to denote a finite, strictly positive constant which depends on the parameter $d$, and use $c$ many times in a single proof even though the constant may change from line to line.

\subsection{The configuration model}\label{config_model_sec}
The configuration model, which we describe below and which is due to Bollob\'as \cite{bollobas_config}, gives us a way of choosing a graph $\G(n,d)$ uniformly at random from the set of all $d$-regular graphs on $n$ vertices, provided that $dn$ is even.

Start with $dn$ stubs, labelled $(v,i)$ for $v\in[n]$ and $i\in[d]$. Choose a stub $(V_0,I_0)$ in some way (the manner of choosing may be deterministic or random) and pair it uniformly at random with another stub $(W_0,J_0)$. Say that these two stubs are \emph{matched} and put $\{V_0,W_0\}\in E$. Then at each subsequent step $k\in\{1,\ldots, nd/2-1\}$, choose a stub $(V_k,I_k)$ in some way from the set of unmatched stubs, and pair it uniformly at random with another unmatched stub $(W_k,J_k)$. Say that these two stubs are matched and put $\{V_k,W_k\}\in E$.

At the end of this process, the resulting object $G=([n],E)$ is uniformly chosen amongst all $d$-regular \emph{multigraphs} on $[n]$, i.e.~it may have multiple edges or self-loops. However, with probability converging to $\exp((1-d^2)/4)$ it is a simple graph, and conditioning on this event, it is uniformly chosen amongst all $d$-regular (simple) graphs on $[n]$.

\subsection{Structure of the paper}

The rest of the paper is organized as follows. In Section \ref{exploration_sec} we provide a constructive description of our model through an exploration process, which is a useful algorithmic procedure for revealing the component structure of the graph. We will then show how to relate the analysis of this exploration process to the size of $\CC(V_n)$, where $V_n$ represents a vertex selected uniformly at random from $[n]$. Then, in Section \ref{UBsec}, we prove the upper bounds in Theorem \ref{mainthm}, while the lower bounds are proved in Section \ref{LB_sec}.

\section{The exploration process}\label{exploration_sec}
We now specify a method for exploring the components of the graph $\mathbb{G}(n,d,p)$, which we recall is the random graph obtained by performing bond percolation with parameter $p$ on a realisation $\mathbb G(n,d)$ of a $d$-regular graph sampled uniformly at random from the set of all $d$-regular (simple) graphs on $[n]$. In fact, our exploration process will use the configuration model (see Section \ref{config_model_sec}) to generate components of $\G'(n,d,p)$, the $p$-percolated version of a uniformly random $d$-regular multigraph $\G'(n,d)$. When we talk about whether an edge of $\G'(n,d)$ is \emph{retained}, we mean whether it is present in $\G'(n,d,p)$. 

During our exploration process, each stub (or half-edge) of $\G'(n,d)$ is either \textit{active}, \textit{unseen} or \textit{explored}, and its status changes during the course of the process. We write $\mathcal{A}_{t}$, $\mathcal{U}_{t}$ and $\mathcal{E}_{t}$ for the sets of active, unseen and explored stubs at the end of the $t$-th step of the exploration process, respectively.

Given a stub $h$ of $\mathbb G'(n,d)$, we denote by $v(h)$ the vertex incident to $h$ (in other words, if $h=(u,i)$ for some $i$ then $v(h) = u$) and we write $\mathcal{S}(h)$ for the set of \textit{all} stubs incident to $v(h)$ in $\G'(n,d)$ (that is, $\mathcal{S}(h) = \{(v(h),i) : i\in[d]\}$; note in particular that $h\in \mathcal{S}(h)$).

The exploration process works as follows. Let $V_n$ be a vertex selected uniformly at random from $[n]$. At step $t=0$ we declare \textit{active} all stubs incident to $V_n$, while all the other $d(n-1)$ stubs are declared \textit{unseen}. Therefore we have that $|\mathcal{A}_0|=d$, $|\mathcal{U}_0|=d(n-1)$ and $|\mathcal{E}_0|=0$. For every $t\geq 1$, we proceed as follows.
\begin{itemize}
	\item [(a)] If $|\mathcal{A}_{t-1}|\geq 1$, we choose (in an arbitrary way) one of the active stubs, say $e_t$, and we pair it with a stub $h_t$ picked uniformly at random from $[dn]\setminus \left(\mathcal{E}_{t-1}\cup \{e_t\}\right)$, i.e. from the set of all unexplored stubs after having removed $e_t$. 
	\begin{itemize}
		\item [(a.1)] If $h_t\in \mathcal{U}_{t-1}$ \textbf{and} the edge $e_t h_t$ is retained in the percolation (the latter event occurs with probability $p$, independently of everything else), then all the unseen stubs in $\mathcal{S}(h_t)\setminus \{h_t\}$ are declared active, while $e_t$ and $h_t$ are declared explored. Formally we update
		\begin{itemize}
			\item $\mathcal{A}_t\coloneqq \left(\mathcal{A}_{t-1}\setminus \{e_t\}\right)\cup\left(\mathcal{U}_{t-1} \cap \mathcal{S}(h_t)\setminus \{h_t\}\right)$;
			\item $\mathcal{U}_t\coloneqq \mathcal{U}_{t-1}\setminus \mathcal{S}(h_t)$;
			\item $\mathcal{E}_t\coloneqq \mathcal{E}_{t-1}\cup\{e_t, h_t\}$.
		\end{itemize}
		\item [(a.2)] If $h_t\in \mathcal{U}_{t-1}$ \textbf{but} the edge $e_th_t$ is not retained in the percolation, then we simply declare $e_t$ and $h_t$ explored while the status of all other stubs remain unchanged. Formally we update 
		\begin{itemize}
			\item $\mathcal{A}_t\coloneqq \mathcal{A}_{t-1}\setminus \{e_t\}$;
			\item $\mathcal{U}_t\coloneqq \mathcal{U}_{t-1}\setminus \{h_t\}$;
			\item $\mathcal{E}_t\coloneqq \mathcal{E}_{t-1}\cup\{e_t,h_t\}$.
		\end{itemize}
	\item [(a.3)] If $h_t\in \mathcal{A}_{t-1}$, then we simply declare $e_t$ and $h_t$ explored while the status of all other stubs remain unchanged. Formally we update 
	\begin{itemize}
		\item $\mathcal{A}_t\coloneqq \mathcal{A}_{t-1}\setminus \{e_t,h_t\}$;
		\item $\mathcal{U}_t\coloneqq \mathcal{U}_{t-1}$;
		\item $\mathcal{E}_t\coloneqq \mathcal{E}_{t-1}\cup\{e_t,h_t\}$.
	\end{itemize}
	\end{itemize}
\item [(b)] If $|\mathcal{A}_{t-1}|=0$ \textbf{and} $|\mathcal{U}_{t-1}|\geq 1$, we pick (in an arbitrary way) an unseen stub $e_t$, we declare active \textit{all} the unseen stubs in $\mathcal{S}(e_t)$ (thus $e_t$ at least is declared active), so that the number of active stubs is non-zero, and then we proceed as in step (a).
\item [(c)] Finally, if $|\mathcal{A}_{t-1}|=0$ \textbf{and} $|\mathcal{U}_{t-1}|=0$, then all the stubs have been paired and we terminate the procedure.
\end{itemize}
For $t\geq 1$ we define the event $R_t\coloneqq \{e_t h_t\in \mathbb{G}'(n,d,p)\}$ that the edge $e_t h_t$ revealed during the $t$-th step of the exploration process is retained in the percolation.

Observe that, if $|\mathcal{A}_{t-1}|\geq 1$, then we can write
\begin{equation}\label{mainquantity}
\eta_t\coloneqq |\mathcal{A}_t|-|\mathcal{A}_{t-1}|=\ind_{\{h_t\in \mathcal{U}_{t-1}\}} \ind_{R_t}\left|\mathcal{S}(h_t)\cap \mathcal{U}_{t-1}\setminus \{h_t\}\right|-\ind_{\{h_t\in \mathcal{A}_{t-1}\}}-1.
\end{equation}
In words, assuming $|\mathcal{A}_{t-1}|\geq 1$, the number of active stubs at the end of step $t$ decreases by two if $h_t$ is an active stub; it decreases by one if $h_t$ is unseen and the edge $e_t h_t$ is not retained in the percolation, or if $h_t$ is the unique unseen stub incident to $v(h_t)$ and the edge $e_t h_t$ is retained in the percolation; and it increases by $m-2\in \{0,1,\dots,d-2\}$ if $v(h_t)$ has $m\in\{2,\ldots,d\}$ unseen stubs at the end of step $t-1$ and the edge $e_t h_t$ is retained in the percolation.

\subsection{Relating the exploration process to component sizes}

In order to describe the relationship between $|\CC(V_n)|$ and the exploration process that we have just illustrated, we need to introduce a few quantities. 

For $t\geq 0$, let us denote by $\mathcal{V}^{(m)}_{t}$ the set of vertices that have $m\in \{0,1,\ldots,d\}$ unseen stubs after the completion of step $t$ in the exploration process. Since vertices with $d$ unseen stubs play an important role in our analysis, we give them a name: we say that a vertex is \textit{fresh} if it possesses $d$ unseen stubs, i.e.~is an element of $\mathcal{V}^{(d)}_{t}$. We then define:
\begin{itemize}
	\item [(i)] $\tau \coloneqq \inf \{t\geq 1: |\mathcal{A}_t|=0\}$, the first time at which the set of active stubs is empty;
	\item [(ii)] $\sigmaUR \coloneqq \left| \left\{t\in [\tau]:h_t\in \mathcal{U}_{t-1},\,e_th_t\in \mathbb{G}'(n,d,p)\right\}\right|$, the number of steps $t\leq \tau$ in which the stub $h_t$ picked uniformly at random (from the set of unexplored stubs) is unseen and the edge $e_th_t$ is retained in the percolation;
	\item [(iii)] $\sigmaUNR \coloneqq \left| \left\{t\in [\tau]:h_t\in \mathcal{U}_{t-1},\,e_th_t\notin \mathbb{G}'(n,d,p)\right\}\right|$, the number of steps $t\leq \tau$ in which the stub $h_t$ is unseen and the edge $e_th_t$ is \emph{not} retained in the percolation;
	\item [(iv)] $\sigmaA\coloneqq \left| \left\{t\in [\tau]:h_t\in \mathcal{A}_{t-1} \right\}\right|$, the number of steps $t\leq \tau$ in which the stub $h_t$ is active;
	\item [(v)] $\sigmaNF\coloneqq \left| \left\{t\in [\tau]:h_t\in \bigcup_{m=0}^{d-1}\mathcal{V}^{(m)}_{t-1}\right\}\right|$, the number of steps $t\leq \tau$ in which the stub $h_t$ is incident to a vertex with $m\le d-1$ unseen stubs, i.e.~is not fresh.
\end{itemize}
The relationship between the size of $\CC(V_n)$ and the random variables we have just defined is illustrated in the following result, which corresponds to Lemma 10 in \cite{nachmias:critical_perco_rand_regular}. Since our terminology is different from that in \cite{nachmias:critical_perco_rand_regular} and because we only use part of their argument, we include a proof here for the reader's convenience. 

\begin{lem}[Nachmias and Peres \cite{nachmias:critical_perco_rand_regular}]\label{relationship}
	We have that $|\CC(V_n)|=\sigmaUR+1$ and, moreover,
	\begin{align*}
	\frac{\tau-d}{d-1}\leq \frac{\tau+\sigmaA-d}{d-1}\leq \sigmaUR\leq \frac{\tau+\sigmaA}{d-1}+\sigmaNF.
	\end{align*}
\end{lem}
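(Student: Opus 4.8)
The plan is to prove the identity $|\CC(V_n)|=\sigmaUR+1$ and the chain of inequalities separately; both amount to careful bookkeeping of the exploration process run up to time $\tau$.

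\emph{The identity.} I would show that the set of vertices ``discovered'' by the exploration up to time $\tau$, namely $V_n$ together with $\{v(h_t):t\in[\tau]\text{ a step of type (a.1)}\}$, is exactly $\CC(V_n)$, and that these vertices are pairwise distinct. For the inclusion into $\CC(V_n)$: in a type-(a.1) step the retained edge $e_th_t$ joins $v(h_t)$ to $v(e_t)$, and $v(e_t)$ is already discovered since $e_t$ is an active stub, so induction does it. For the reverse inclusion: once a vertex $u$ is discovered, each of its stubs is either already explored or becomes active, and all active stubs get paired by time $\tau$, so every edge incident to $u$ is revealed by time $\tau$; since revealing a retained edge incident to an already-discovered vertex forces its other endpoint to be discovered too (it cannot be a type-(a.2) step, which only reveals non-retained edges), walking along a retained path from $V_n$ shows every vertex of $\CC(V_n)$ is discovered. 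Distinctness holds because in a type-(a.1) step $h_t$ is unseen, which forces $v(h_t)$ to be previously untouched: touching a vertex turns all of its unseen stubs active. Hence $|\CC(V_n)|=1+\sigmaUR$.

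\emph{The inequalities.} The basic observation is that $|\mathcal{A}_{t-1}|\ge1$ for every $t\in[\tau]$, since $|\mathcal{A}_0|=d\ge1$ and $\tau$ is the first time the active set vanishes. Thus every step in $[\tau]$ is of type (a), hence of exactly one of the types (a.1), (a.2), (a.3), which gives $\tau=\sigmaUR+\sigmaUNR+\sigmaA$ and validates \eqref{mainquantity} throughout. Writing $m_t\coloneqq|\mathcal{S}(h_t)\cap\mathcal{U}_{t-1}|\in\{1,\dots,d\}$ for the number of unseen stubs incident to $v(h_t)$ just before a type-(a.1) step, summing \eqref{mainquantity} over $t\in[\tau]$ and using $|\mathcal{A}_\tau|=0$, $|\mathcal{A}_0|=d$ gives
\begin{equation*}
-d=\sum_{t:\,\text{(a.1)}}(m_t-2)-\sigmaUNR-2\sigmaA ,
\end{equation*}
and substituting $\sigmaUNR=\tau-\sigmaUR-\sigmaA$ turns this into $\sum_{t:\,\text{(a.1)}}(m_t-2)=\tau+\sigmaA-\sigmaUR-d$, a sum of $\sigmaUR$ terms. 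The lower bound on $\sigmaUR$ then follows from $m_t\le d$, which gives $\tau+\sigmaA-\sigmaUR-d\le(d-2)\sigmaUR$, hence $\sigmaUR\ge(\tau+\sigmaA-d)/(d-1)\ge(\tau-d)/(d-1)$. For the upper bound I would split the type-(a.1) steps according to whether $v(h_t)$ is fresh ($m_t=d$) or not ($1\le m_t\le d-1$, so $m_t-2\ge-1$); writing $\sigmaUR^{\mathrm{nf}}$ for the number of non-fresh ones, we get $\sum_{t:\,\text{(a.1)}}(m_t-2)\ge(d-2)\sigmaUR-(d-1)\sigmaUR^{\mathrm{nf}}$, and since $\sigmaUR^{\mathrm{nf}}\le\sigmaNF$ by definition of $\sigmaNF$, rearranging yields $\sigmaUR\le(\tau+\sigmaA-d)/(d-1)+\sigmaNF\le(\tau+\sigmaA)/(d-1)+\sigmaNF$.

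I expect the one real subtlety, and the reason the correction term $\sigmaNF$ is forced into the upper bound, to be that ``$h_t$ unseen'' does \emph{not} imply $v(h_t)$ is fresh: a vertex may already have had one of its stubs explored by an earlier type-(a.2) step (which reveals a non-retained edge at a fresh vertex) while still carrying unseen stubs, so a later type-(a.1) step at that vertex can have $m_t\le d-1$ and lose up to $d-1$ relative to the fresh value $d-2$. The remaining points to check are routine: that no type-(b) step ever intervenes in $[\tau]$ (immediate from $|\mathcal{A}_{t-1}|\ge1$), and that $|\mathcal{A}_t|$ reaches $0$ exactly at $\tau$ rather than overshooting, which holds because a type-(a.3) step --- the only one that decreases $|\mathcal{A}|$ by $2$ --- cannot occur when $|\mathcal{A}_{t-1}|=1$.
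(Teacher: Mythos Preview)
Your proof is correct and follows essentially the same route as the paper: both derive the key identity by summing the increments $\eta_t$ (equivalently, the paper's $\sum_m(m-2)N_m$ is your $\sum_{t:\,\text{(a.1)}}(m_t-2)$) and telescoping to $|\mathcal{A}_\tau|-|\mathcal{A}_0|=-d$, then bound this sum above by $m_t\le d$ and below by splitting off the fresh case and using $m_t-2\ge-1$ on the non-fresh remainder, exactly as the paper does. Your treatment of $|\CC(V_n)|=\sigmaUR+1$ is more careful than the paper's one-line sketch; one small wording issue is that ``touching a vertex turns all of its unseen stubs active'' should read ``discovering'' rather than ``touching'' (a type-(a.2) step touches $v(h_t)$ without activating its other stubs), but your distinctness argument only needs the statement for discovered vertices, where it is correct.
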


\begin{proof}
	First we observe that at each step $t$ in which $h_t$ is unseen and the edge $e_th_t$ is retained in the percolation, we add one vertex to our currently explored component, and this is the only way in which vertices can be added to the current component. Thus $|\CC(V_n)|=\sigmaUR+1$ (the $+1$ comes from counting $V_n$).
	
	Denote by $N_m$ the number of steps $t\leq \tau$ in which $h_t$ is incident to a vertex having $m$ unseen stubs at the end of step $t-1$, and the edge $e_t h_t$ is retained in the percolation; formally,
	\[N_m\coloneqq \left| \left\{t\in [\tau]:h_t\in \mathcal{V}^{(m)}_{t-1},\, e_th_t\in \mathbb{G}'(n,d,p)\right\}\right|.\]
	Since at each step $t\in [\tau]$ in which $h_t$ is active we remove two half-edges from the set of active stubs, whereas at each step $t\in [\tau]$ in which $h_t$ is unseen and $e_th_t$ is not retained in the percolation we remove one half-edge from the set of active stubs, we see that
	\begin{equation}\label{identityforthevariousquantities}
	0=|\mathcal{A}_{\tau}|=d-2\sigmaA-\sigmaUNR+\sum_{m=1}^{d}(m-2)N_m.
	\end{equation}
	
	Next observe that
	\begin{align}\label{sumbound}
	\sum_{m=1}^{d}(m-2)N_m\leq (d-2)\sum_{m=1}^{d}N_m\leq (d-2)\sigmaUR,
	\end{align}
	where the second inequality in (\ref{sumbound}) is due to the fact that, if $h_t$ is in $\mathcal{V}^{(m)}_{t-1}$ for some $m\in [d]$, then $h_t$ must be unseen (as if $h_t$ is active then all the stubs adjacent to $v(h_t)$ must be active or explored). Combining (\ref{identityforthevariousquantities}) and (\ref{sumbound}) together with the identity $\tau=\sigmaUNR+\sigmaUR+\sigmaA$, which holds since at each step $t\in [\tau]$ we have either $h_t\in \mathcal{U}_{t-1}$ or $h_t\in \mathcal{A}_{t-1}$, yields
	\begin{align*}
	0\leq d-2\sigmaA-\sigmaUNR+(d-2)\sigmaUR=d-\sigmaA-\tau+(d-1)\sigmaUR
	\end{align*}
	whence
	\[\frac{\sigmaA+\tau-d}{d-1}\leq \sigmaUR.\]
	Since $\sigmaA\geq 0$ we arrive at $\sigmaUR\geq(\tau-d)/(d-1)$, and we have proved the first two inequalities in the statement of our lemma.
	
	To establish the upper bound for $\sigmaUR$, we start by observing that
	\begin{align}\label{lowerforsum}
	\sum_{m=1}^{d}(m-2)N_m\geq (d-2)N_d-\sum_{m=1}^{d-1}N_m\geq (d-2)\sigmaUR-(d-1)\sigmaNF.
	\end{align}	
	Combining (\ref{identityforthevariousquantities}) and (\ref{lowerforsum}) we obtain
	\begin{align*}
	0\geq d-2\sigmaA-\sigmaUNR+(d-2)\sigmaUR-(d-1)\sigmaNF,
	\end{align*}
	which together with the identity $\tau=\sigmaUNR+\sigmaUR+\sigmaA$ mentioned above gives
	\[	(d-2)\sigmaUR\leq -d+2\sigmaA+\sigmaUNR+(d-1)\sigmaNF
	=-d+\sigmaA+\tau-\sigmaUR+(d-1)\sigmaNF.\]
	Rearranging and ignoring the $-d$ term, we have
	\begin{align*}
	(d-1)\sigmaUR\leq \sigmaA+\tau+(d-1)\sigmaNF,
	\end{align*}
	and dividing both sides by $d-1$ yields the required upper bound on $\sigmaUR$.	
\end{proof}
Let $k=k_n\in \mathbb{N}$. Thanks to Lemma \ref{relationship} we can write 
\begin{equation}\label{relationshipLB}
\mathbb{P}(|\CC(V_n)|>k)=\mathbb{P}(\sigmaUR\geq k)\geq \mathbb{P}(\tau\geq (d-1)k + d) = \mathbb{P}(\tau> (d-1)(k+1)).
\end{equation}
Consequently, in order to bound from below the probability that $\CC(V_n)$ contains more than $k$ vertices, it suffices to provide a lower bound for the probability that the number of active stubs stays positive for all times $t\leq (d-1)(k+1)$. 

To establish un upper bound for the probability that $|\CC(V_n)|$ is larger than $k$ in terms of the stopping time $\tau$, the argument is slightly more involved. Recall that, by Lemma \ref{relationship},
\[|\mathcal C(V_n)| - 1 = \sigmaUR\leq \frac{\tau+\sigmaA}{d-1}+\sigmaNF.\]
The idea is that the random variables $\sigmaNF$ and $\sigmaA$, which appear in the upper bound for $\sigmaUR$, are of much smaller order than $\tau$, so that we expect $\mathbb{P}(\sigmaUR\geq k)$ to be of the same order as the probability that $|\mathcal{A}_t|$ stays positive for roughly $(d-1)k$ steps. Our next result makes this precise.

\begin{lem}\label{mylemmaupper}
	Let $k=k_n\in \mathbb{N}$ and $m=m_n\in \mathbb{N}$ be such that $\max\{k,n^{1/2}\}\ll m\ll n$. Then
	\begin{equation}\label{uppersigma}
	\mathbb{P}(\sigmaUR\geq k)\leq 2\mathbb{P}\left(\tau\geq (d-1)k-\frac{4(d-1)m^2}{n}\right)+\exp\left(-\frac{c m^2}{n}\right)
	\end{equation}
	for some constant $c>0$.
\end{lem}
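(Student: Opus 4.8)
The goal is to control the error terms $\sigmaA$ and $\sigmaNF$ in the bound $\sigmaUR \le (\tau + \sigmaA)/(d-1) + \sigmaNF$ from Lemma \ref{relationship}. The key observation is that $\{\sigmaUR \ge k\}$ forces the exploration to run for at least $(d-1)k$ steps (ignoring the error terms), but on that long a run of steps the cumulative effects of "collisions'' (steps of type (a.3), counted by $\sigmaA$) and of hitting non-fresh vertices (counted by $\sigmaNF$) are both $o(m^2/n) \cdot$-controllable provided we have not yet explored too many stubs. So I would introduce a truncation time $T := (d-1)k$ (roughly — the precise constant will be adjusted), split according to whether $\tau \ge T - 4(d-1)m^2/n$ or not, and on the complementary event show that $\sigmaA$ and $\sigmaNF$ restricted to the first $T$ steps are each at most (a constant times) $m^2/n$ with probability $1 - \exp(-cm^2/n)$.

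**Key steps.** First, write
\[
\mathbb{P}(\sigmaUR \ge k) \le \mathbb{P}\!\left(\tau \ge (d-1)k - \tfrac{4(d-1)m^2}{n}\right) + \mathbb{P}\!\left(\sigmaUR \ge k,\ \tau < (d-1)k - \tfrac{4(d-1)m^2}{n}\right),
\]
and on the second event use Lemma \ref{relationship} to deduce that $\sigmaA + (d-1)\sigmaNF > 4(d-1)m^2/n$, hence $\max\{\sigmaA, (d-1)\sigmaNF\} > 2(d-1)m^2/n$ — and note that on this event $\tau \le (d-1)k \le (d-1)m$ (using $k \ll m$), so all of this happens within the first $O(m)$ steps. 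Second, bound $\mathbb{P}(\sigmaNF \ge 2m^2/n,\ \tau \le (d-1)m)$: at each step $t \le (d-1)m$ at most $(d-1)m$ vertices have been touched, so the number of non-fresh stubs among the roughly $dn$ unexplored stubs is at most $d(d-1)m$, giving each step probability $O(m/n)$ of being a non-fresh step; the number of non-fresh steps is then stochastically dominated by $\Bin_{(d-1)m,\, c m/n}$, which has mean $O(m^2/n)$, so a Chernoff bound gives $\exp(-cm^2/n)$ (here $m \gg n^{1/2}$ makes the mean $\gg 1$, so the Chernoff bound is in the useful regime). Third, bound $\mathbb{P}(\sigmaA \ge 2(d-1)m^2/n,\ \tau \le (d-1)m)$ similarly: a step is of type (a.3) only if $h_t$ lands on one of the at most $|\mathcal{A}_{t-1}| \le d(d-1)m$ active stubs, again probability $O(m/n)$ per step, so $\sigmaA$ is dominated by the same kind of binomial and the same Chernoff bound applies. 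Combining the two via a union bound and absorbing constants gives the factor $2$ in front of the $\tau$-probability (the "$2$'' comes from $\sigmaUR \ge k$ itself also needing $\tau$ to be at least something — actually here the cleaner route is: on the good event $\sigmaUR \ge k \Rightarrow \tau \ge (d-1)k - 4(d-1)m^2/n$ directly, and the bad event contributes the exponential; the factor $2$ then just reflects a crude bookkeeping bound if one instead splits $\sigmaNF$ and $\sigmaA$ separately, or can simply be taken as $\le 1 + \exp(\cdots) \le 2$ absorbed). I would state it as: good event gives the $\tau$-probability, bad event gives $\exp(-cm^2/n)$, done.

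**Main obstacle.** The delicate point is justifying the stochastic domination of $\sigmaNF$ (and of $\sigmaA$) by a binomial with the claimed small parameter, \emph{uniformly in the history}: at step $t$, conditionally on everything so far, $h_t$ is uniform over $[dn] \setminus (\mathcal{E}_{t-1} \cup \{e_t\})$, and I need a deterministic upper bound, valid on the event $\tau \le (d-1)m$, on the number of non-fresh (resp. active) stubs in this set — this requires carefully checking that after $t-1 \le (d-1)m$ steps at most $O(m)$ vertices can have been made active or partially explored, so that the "bad'' stubs number $O(m)$ out of $\Theta(n)$. One must also handle the fact that $|\mathcal{E}_{t-1}|$ grows, so the denominator $dn - |\mathcal{E}_{t-1}| - 1 \ge dn - O(m) = \Theta(n)$ stays comparable to $n$ (using $m \ll n$). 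Once this per-step bound is in hand, a standard comparison argument (couple each indicator to a Bernoulli($c m/n$) dominating it conditionally, then the sum is dominated by a Binomial) plus a Chernoff/Bernstein tail bound finishes it; none of this is hard, but writing the domination cleanly — ideally by exhibiting an explicit supermartingale $\exp(\theta \sum \ind_{\text{bad step}} - (\text{step count}) \cdot c m/n \cdot (\ldots))$ and applying optional stopping at $\tau \wedge (d-1)m$ — is where the real care is needed.
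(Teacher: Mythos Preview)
Your proposal is correct and takes essentially the same approach as the paper: both start from the inequality $\sigmaUR \le (\tau + \sigmaA)/(d-1) + \sigmaNF$ of Lemma~\ref{relationship} and then show that $\sigmaA + (d-1)\sigmaNF \le 4(d-1)m^2/n$ except with probability $\exp(-cm^2/n)$, via the per-step $O(m/n)$ conditional probability of hitting an active or non-fresh stub together with a Chernoff bound (the paper outsources this last estimate to \cite[Lemma~16]{nachmias:critical_perco_rand_regular} rather than writing out the domination argument you sketch). The only difference is bookkeeping: the paper splits on $\{\tau \le m\}$ versus $\{\tau > m\}$ and absorbs the latter event into the $\tau$-probability, which is where the factor~$2$ comes from, whereas your direct split on the $\tau$-threshold would in fact yield the sharper constant~$1$ --- so your confusion about the~$2$ is warranted and harmless.
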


\begin{oss}
When we apply this lemma, we will choose $m^2/n\ll k$, so that the quantity $(d-1)k-4m^2/n$ which appears within the probability on the right-hand side of (\ref{uppersigma}) is asymptotically equivalent to $(d-1)k$, thus making sense of our previous claim that we expect $\mathbb{P}(\sigmaUR\geq k)\asymp \mathbb{P}(\tau\geq (d-1)k)$.
\end{oss}

\begin{proof}
	Observe that, using the upper bound for $\sigmaUR$ established in Lemma \ref{relationship}, we have
	\begin{align*}
	\mathbb{P}(\sigmaUR\geq k)\leq \mathbb{P}(\tau\geq (d-1)k -[\sigmaA+(d-1)\sigmaNF]).
	\end{align*}
	Recall that, by definition,
	\begin{equation*}
	\sigmaNF= \left| \left\{t\in [\tau]:h_t\in \bigcup_{m=1}^{d-1}\mathcal{V}^{(m)}_{t-1}\right\}\right|\text{ and }\sigmaA= \left| \left\{t\in [\tau]:h_t\in \mathcal{A}_{t-1} \right\}\right|.
	\end{equation*}
	By Lemma 16 in \cite{nachmias:critical_perco_rand_regular} (whose proof only uses elementary bounds) we know that, setting
	\begin{align*}
	X_m\coloneqq \left\{t\in [m]: h_t\in \mathcal{A}_{t-1} \text{ or } h_t\in \bigcup_{m=1}^{d-1}\mathcal{V}^{(m)}_{t-1}\right\},
	\end{align*}
	we have
	\begin{align}\label{expupperxm}
	\mathbb{P}(|X_m|>4m^2/n)\leq e^{-c m^2 / n}
	\end{align}
	for some constant $c>0$. Note that $|X_m|$ is almost equal to $\sigmaNF+\sigmaA$, the only difference being that in the definition of $X_m$ we consider the first $m$ steps of the exploration process, while in the definitions of $\sigmaNF$ and $\sigmaA$ we look at all steps until time $\tau$. With the purpose of replacing the random variables $\sigmaNF$ and $\sigmaA$ with $|X_m|$ (which we know how to control), we write
	\begin{multline*}
	\mathbb{P}(\tau\geq (d-1)k-[\sigmaA+(d-1)\sigmaNF])\\
	\leq \mathbb{P}(\tau\geq (d-1)k-[\sigmaA+(d-1)\sigmaNF],\tau\leq m)+\mathbb{P}(\tau>m).
	\end{multline*}
	Observe that, on the event where $\tau\leq m$, we have that
	\begin{equation*}
	\sigmaNF \leq \left| \left\{t\in [m]:h_t\in \bigcup_{m=1}^{d-1}\mathcal{V}^{(m)}_{t-1}\right\}\right|\eqqcolon \widehat{\sigma}_{\text{NF}} \,\,\,\text{ and }\,\,\, \sigmaA\leq  \left| \left\{t\in [m]:h_t\in \mathcal{A}_{t-1} \right\}\right|\eqqcolon \widehat{\sigma}_{\text{A}}.
	\end{equation*}
	Therefore we can bound
	\begin{align*}
	\mathbb{P}(\tau\geq (d-1)k-[\sigmaA+(d-1)&\sigmaNF],\tau\leq m)\leq \mathbb{P}(\tau\geq (d-1)k-[\widehat{\sigma}_{\text{A}}+(d-1)\widehat{\sigma}_{\text{NF}}])
	\end{align*}
	and hence we obtain
	\begin{align*}
	\mathbb{P}(\sigmaUR\geq k)\leq \mathbb{P}(\tau\geq (d-1)k-[\widehat{\sigma}_{\text{A}}+(d-1)\widehat{\sigma}_{\text{NF}}])+\mathbb{P}(\tau>m).
	\end{align*}
	Note that
	\begin{multline*}
	\mathbb{P}(\tau\geq (d-1)k-[\widehat{\sigma}_{\text{A}}+(d-1)\widehat{\sigma}_{\text{NF}}])\\
	\leq \mathbb{P}(\tau\geq (d-1)k-4(d-1)m^2/n)+\mathbb{P}(\widehat{\sigma}_{\text{A}}+(d-1)\widehat{\sigma}_{\text{NF}}> 4(d-1)m^2/n).
	\end{multline*}
	Now since $\widehat{\sigma}_{\text{A}}+\widehat{\sigma}_{\text{NF}}=|X_m|$ we can use (\ref{expupperxm}) to conclude that
	\begin{align*}
	\mathbb{P}(\widehat{\sigma}_{\text{A}}+(d-1)\widehat{\sigma}_{\text{NF}}> 4(d-1)m^2/n)\leq \mathbb{P}\left(\widehat{\sigma}_{\text{A}}+\widehat{\sigma}_{\text{NF}}> \frac{4m^2}{n}\right)\leq e^{-cm^2/n}
	\end{align*}
	and hence we obtain
	\[\mathbb{P}(\sigmaUR\geq k)\leq \mathbb{P}(\tau\geq (d-1)k-4(d-1)m^2/n) +e^{-cm^2/n}+\mathbb{P}(\tau>m).\]
	The proof of the lemma is completed after noticing that, since $m\gg k$,
	\begin{align*}
	\mathbb{P}(\tau>m)\leq \mathbb{P}(\tau\geq (d-1)k-4(d-1)m^2/n)
	\end{align*}
	for all large enough $n$.
\end{proof}

We now apply this lemma and combine with \eqref{relationshipLB} to obtain our desired relationship between the size of $\CC(V_n)$ and the time $\tau$ at which the number of active stubs in our exploration process hits zero for the first time.

\begin{cor}\label{relationship_cor}
For $n^{1/2}\ll k\ll n^{3/4}$ and sufficiently large $n$, we have
\[\mathbb{P}(\tau> (d-1)(k+1)) \leq \P(|\CC(V_n)|>k) \leq 2\mathbb{P}(\tau\ge (d-1)k - n^{1/2})+ e^{-c n^{1/2}}\]
for some constant $c>0$ depending only on $d$.
\end{cor}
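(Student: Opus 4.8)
The plan is to read off the lower bound directly from \eqref{relationshipLB} and to obtain the upper bound by feeding Lemma \ref{mylemmaupper} with a single well-chosen auxiliary parameter $m=m_n$.

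For the lower bound there is essentially nothing to do: Lemma \ref{relationship} gives $|\CC(V_n)| = \sigmaUR + 1$, so $\mathbb{P}(|\CC(V_n)|>k) = \mathbb{P}(\sigmaUR \ge k)$, and \eqref{relationshipLB} already records $\mathbb{P}(\sigmaUR \ge k) \ge \mathbb{P}(\tau > (d-1)(k+1))$, valid for every $k\in\N$ with no size restriction.

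For the upper bound I would apply Lemma \ref{mylemmaupper}, again using $\mathbb{P}(|\CC(V_n)|>k) = \mathbb{P}(\sigmaUR\ge k)$. The task is to pick $m$ so that, simultaneously: (i) the displacement $4(d-1)m^2/n$ appearing in \eqref{uppersigma} is at most $n^{1/2}$, which requires $m \le n^{3/4}/(2\sqrt{d-1})$; (ii) the error term $\exp(-cm^2/n)$ is bounded by $\exp(-c'n^{1/2})$, which requires $m \ge c'' n^{3/4}$; and (iii) the hypothesis $\max\{k,n^{1/2}\}\ll m \ll n$ of Lemma \ref{mylemmaupper} holds. The natural choice is $m = \lfloor n^{3/4}/(2\sqrt{d-1})\rfloor$: then $m = \Theta(n^{3/4})$, so $m\ll n$ and $n^{1/2}\ll m$ automatically, while $k\ll m$ follows from the standing assumption $k\ll n^{3/4}$; moreover $4(d-1)m^2/n\le n^{1/2}$ by the floor bound, and $m^2/n\ge c\, n^{1/2}$ for all large $n$. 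Plugging this $m$ into \eqref{uppersigma}, using the inclusion $\{\tau\ge (d-1)k - 4(d-1)m^2/n\}\subseteq\{\tau\ge (d-1)k - n^{1/2}\}$ to enlarge the probability, and relabelling the constant in the exponential error, produces exactly $2\mathbb{P}(\tau\ge (d-1)k - n^{1/2}) + e^{-cn^{1/2}}$, as claimed.

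There is no genuine obstacle; the corollary is a bookkeeping step combining \eqref{relationshipLB} and Lemma \ref{mylemmaupper}. The only point requiring a little care is that the two competing constraints on $m$ — small enough to keep the displacement below $n^{1/2}$, large enough to make the Gaussian error as small as $e^{-cn^{1/2}}$ — force $m$ to be of order $n^{3/4}$, and it is precisely for this reason that the hypothesis reads $k\ll n^{3/4}$ rather than something weaker: one needs $k\ll m$ when invoking Lemma \ref{mylemmaupper}. (The assumption $n^{1/2}\ll k$ is not used in this proof; it is stated here because the corollary will only be applied in that regime.)
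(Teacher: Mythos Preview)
Your proposal is correct and follows essentially the same route as the paper: the lower bound is read off from \eqref{relationshipLB}, and the upper bound comes from Lemma \ref{mylemmaupper} with the very same choice $m=\lfloor n^{3/4}/(2\sqrt{d-1})\rfloor$. Your explanation of why $m$ must be of order $n^{3/4}$ (balancing the displacement against the exponential error) and your remark that the hypothesis $n^{1/2}\ll k$ is not actually used here are accurate and add clarity beyond the paper's terse write-up.
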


\begin{proof}
As noted in \eqref{relationshipLB}, the first inequality follows from Lemma \ref{relationship}. For the second inequality, applying Lemma \ref{mylemmaupper} with $m=\lfloor n^{3/4}/2\sqrt{d-1}\rfloor$ we obtain that for $k\gg n^{1/2}$,
\begin{equation*}
\P(\sigmaUR\geq k) \le 2\mathbb{P}(\tau\geq (d-1)k-n^{1/2}) + e^{-c n^{1/2}}.
\end{equation*}
Recalling that $\mathbb{P}(|\CC(V_n)|>k)=\mathbb{P}(\sigmaUR\geq k)$, the proof is complete.
\end{proof}

Corollary \ref{relationship_cor} has translated the problem of studying the probability that $\CC(V_n)$ is larger than $k$, for $n^{1/2}\ll k \ll n^{3/4}$, to that of studying the probability that the number of active vertices $|\mathcal{A}_t|=d+\sum_{i=1}^{t}\eta_i$ stays positive for all times $1\leq t \le (d-1)k + o(k)$.

The goal is then to bound from above and below the sequence of $\eta_i$ with sequences of random variables which are sufficiently \textit{close} to the $\eta_i$ and that, at the same time, are easier to analyse.

 We introduce here three events which appear very often in the following sections. Specifically, we denote by $F_i$ the event that vertex $v(h_i)$ is fresh (i.e.~has $d$ unseen stubs) at the end of step $i-1$, while we write $F'_i$ for the event that $v(h_i)$ has $d-1$ unseen stubs at the end of step $i-1$ and $F^-_i$ for the event that $v(h_i)$ has $m\in [d-2]$ unseen stubs at the end of step $i-1$. More formally, for $i\geq 1$ we set
\[F_i = \{v(h_i)\in \calV_{i-1}^{(d)}\}, \,\,\,\, F_i' = \{v(h_i)\in\calV_{i-1}^{(d-1)}\} \,\,\text{ and }\,\, F^-_i = \bigg\{v(h_i)\in\bigcup_{m=1}^{d-2}\calV_{i-1}^{(m)}\bigg\}.\]

\subsection{Proof Ideas}

We concentrate first on establishing our result for $\CC(V_n)$. We will then deduce from this the result for $\mathcal{C}_{\max}$. We begin by applying Corollary \ref{relationship_cor}, thanks to which our problem reduces to establishing upper and lower bounds for the probability that the integer-valued random process $(d+\sum_{i=1}^{t}\eta_i)_t$ stays positive up to time $T \approx (d-1)An^{2/3}$. 

For the upper bound, one of our main tools is a ballot-type estimate introduced in \cite{de_ambroggio_roberts:near_critical_ER}, which allows us to bound from above the probability that a random walk started at $d$ stays positive up to time $T$ and finishes at some level $j\gg T^{1/2}$. Hence our main task for the upper bound consists of approximating the $\eta_i$, which are not independent or identically distributed, with i.i.d.~random variables, and then applying the ballot-type result to the new sequence.

For the lower bound, the analysis is more involved. Since the $\eta_i$ are not i.i.d., we again have to approximate---this time from below---to turn the process $(d+\sum_{i=1}^{t}\eta_i)_{t\in [T]}$ into a random walk over the whole interval $[T]$.

However, this time we split the time interval $[T]$ into two disjoint intervals $[T']$ and $[T]\setminus [T']$, where $T'\ll T$, and then use different techniques to control the process on these intervals. We use a rougher approximation over the first interval $[T']$, and then use known bounds to estimate the probability that the resulting random walk stays positive up to time $T'$ and finishes at distance of order $(T')^{1/2}$ from the origin. We then use a more accurate random walk approximation to $\eta_i$ on the second interval, and estimate the probability that this random walk stays positive by comparing it with a Brownian motion. We must also show that the two approximating random walks are constructed in such a way that the probability that $(d+\sum_{i=1}^{t}\eta_i)_t$ stays positive for all times $t\in [T]$ can be split into a product of two terms, the probability that the first random walk stays positive and finishes in a certain region, multiplied by the probability that the second random walk stays positive starting from within that region.



\vskip0.5cm
\begin{center}
	\textbf{Notation summary}
\end{center}
In order to facilitate reading the rest of the paper, we summarize here the quantities and events that have been introduced in this section.
\begin{enumerate}
	\item $\mathcal{A}_{t}$ represents the set of \textit{active} stubs at the end of the $t$-th step of the exploration process;
	\item $\mathcal{U}_{t}$ represents the set of \textit{unseen} stubs at the end of the $t$-th step of the exploration process;
	\item $\mathcal{E}_{t}=[dn]\setminus \left(\mathcal{A}_t\cup \mathcal{U}_t \right)$ represents the set of \textit{explored} stubs at the end of the $t$-th step of the exploration process;
	\item given a stub $s$, we denote by $v(s)$ the vertex incident to $s$;
	\item given a stub $s$, we denote by $\mathcal{S}(s)$ the set of \textit{all} stubs incident to $v(s)$ (note that $s\in \mathcal{S}(s)$ according to our definition);
	\item $R_t$ is the event that the edge $e_th_t$ is retained in the percolation.
	\item $\mathcal{V}^{(m)}_{t}$ denotes the set of vertices with $m\in [d]$ unseen stubs after the completion of step $t$ in the exploration process;
	\item vertices in $\mathcal{V}^{(d)}_t$ are called fresh;
	\item $F_t$ is the event that $v(h_t)$ is fresh after the completion of step $t-1$ (i.e. $v(h_t)\in \mathcal{V}^{(d)}_{t-1}$);
	\item $F'_t$ is the event that $v(h_i)$ has $d-1$ unseen stubs at the end of step $t-1$;
	\item $F^-_t$ is the event that $v(h_i)$ has $m\in [d-2]$ unseen stubs at the end of step $t-1$.
\end{enumerate}

\section{Proof of Theorem \ref{mainthm}: upper bounds}\label{UBsec}
Let $T=\lfloor(d-1) An^{2/3} \rfloor- \lceil n^{1/2}\rceil - 1$. Throughout this section, the letter $c$ denotes a (positive) numerical constant that might depend on $d$ and can change from line to line.

Recalling Corollary \ref{relationship_cor}, here we want to bound from above the probability 
\begin{align*}
\mathbb{P}(|\CC(V_n)|> An^{2/3})\leq 2\mathbb{P}(\tau > T) + e^{-cn^{1/2}} = 2\mathbb{P}\left(d+\sum_{i=1}^{t}\eta_i>0\hspace{0.15cm}\forall t\in [T]\right) + e^{-cn^{1/2}}.
\end{align*}
As a first step in this direction we introduce a new sequence of random variables, larger than the $\eta_i$ and easier to analyse.

Recalling the definition of the random variable $\eta_i$ given in (\ref{mainquantity}) we see that, if $|\mathcal{A}_{i-1}|\geq 1$, then 
\begin{align}\label{ineqdelta}
\eta_i&\leq \ind_{R_i}(d-2)+\ind_{R_i}\ind_{F_i}-1\eqqcolon \eta'_i.
\end{align}
(This bound does not hold if $|\mathcal{A}_{i-1}|=0$, but since we are evaluating the probability that the number of active stubs remains positive at all times $t\in [T]$, this does not concern us.)
To see why (\ref{ineqdelta}) is true, first of all notice that if $h_i\in \mathcal{A}_{i-1}$ (i.e. $h_i$ is active) then $\eta_i=-2<-1\leq \eta'_i$. If $h_i\in \mathcal{U}_{i-1}$ (i.e. $h_i$ is unseen) but the edge $e_ih_i$ is not retained in the percolation, then $\eta_i=-1=\eta'_i$. If $h_i\in \bigcup_{m=1}^{d-1}\mathcal{V}^{(m)}_{i-1}$ (whence $h_i\in \mathcal{U}_{i-1}$) and $e_ih_i$ is retained in the percolation, then $\eta_i=(m-1)-1\leq (d-2)-1=d-3=\eta'_i$. Finally, if $h_i\in \mathcal{V}^{(d)}_{i-1}$ (whence $h_i\in \mathcal{U}_{i-1}$) and $e_ih_i$ is retained in the percolation, then $\eta_i=d-2=\eta'_i$. 

In practice, working with $\eta_i'$ is like assuming that all non-fresh vertices are in $\mathcal{V}^{(d-1)}_{i-1}$, or equivalently that all vertices have either $d$ or $d-1$ unseen stubs.

We can therefore bound
\begin{equation}\label{secondbound}
\mathbb{P}(|\CC(V_n)|>\lfloor An^{2/3} \rfloor)\leq \mathbb{P}\left(d+\sum_{i=1}^{t}\eta_i>0\hspace{0.15cm}\forall t\in [T]\right)\leq \mathbb{P}\left(d+\sum_{i=1}^{t}\eta'_i>0\hspace{0.15cm}\forall t\in [T]\right).
\end{equation}
Although $\eta'_i$ is simpler than $\eta_i$, in order to bound from above the probability on the right-hand side of (\ref{secondbound}), it would be convenient to turn the $\eta'_i$ into (larger) independent random variables. To achieve this, the idea is to substitute the dependent indicators $\ind_{F_i}$ that appear in the definition of $\eta'_i$ with other, independent $\{0,1\}$-valued random variables. To this end notice that, conditional on everything that occurred up to the end of step $i-1$ in the exploration process, vertex $v(h_i)$ is fresh with probability 
\begin{align*}
\frac{d \big|\mathcal{V}^{(d)}_{i-1}\big|}{dn-2(i-1)-1}.
\end{align*}
This is because $h_i$ is chosen uniformly from amongst all unexplored stubs, of which there are exactly $dn-2(i-1)-1$ at the end of step $i-1$. Thus, in order to substitute the $\ind_{F_i}$ with (larger) independent indicator random variables, we need an upper bound for the number of fresh vertices that we expect to observe at each step $i\in \{0\}\cup [T-1]$ of the exploration process.

Our next result, whose proof is postponed to Section \ref{concentration_sec}, states that it is very unlikely to have more than $n-1-i+i^2/2n$ fresh vertices at the $i$-th step of the exploration process, for all $i\in [T-1]$.
\begin{lem}\label{numberfresh}
	Suppose that $A=o(n^{1/12})$ as $n\rightarrow \infty$, and let $a_n(i)\coloneqq n-1-i+i^2/2n$. Then, for every $m\geq 1$ and all large enough $n$, we have that
	\begin{equation}\label{boundonfresh}
	\mathbb{P}\left(\exists i\in [T-1]: |\mathcal{V}^{(d)}_{i}|> a_n(i)+ m\right)\leq cTe^{-\frac{mn^{1/2}}{T}}
	\end{equation}
	where $c=c(d)$ is some finite constant that depends only on $d$.
\end{lem}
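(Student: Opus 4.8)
The plan is to track the number of fresh vertices $|\mathcal{V}^{(d)}_{i}|$ through the exploration process and show it stays below $a_n(i)+m$ via a supermartingale/Azuma-type argument. First I would identify the one-step dynamics of $|\mathcal{V}^{(d)}_{i}|$. A fresh vertex can lose its ``fresh'' status only when one of its stubs is paired during step $i$: this happens either because the active stub $e_i$ chosen at step $i$ sits at a fresh vertex (so that vertex becomes non-fresh—but note that in case (b) a brand new fresh vertex $v(e_i)$ is recruited, so one has to be a little careful about bookkeeping), or because the uniformly chosen stub $h_i$ lands at a fresh vertex $v(h_i)$ and the edge is retained, turning $v(h_i)$ non-fresh and, in case (a.1), activating its remaining $d-1$ stubs. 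Since at most two vertices change status per step and $|\mathcal{V}^{(d)}_0| = n-1$, the deterministic bound $|\mathcal{V}^{(d)}_i|\le n-1$ is immediate; the point of the lemma is the refined bound $a_n(i) = n-1-i+i^2/(2n)$, which reflects that on average roughly one fresh vertex is consumed per step early on, with a small correction term.

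The key step is to set up the right supermartingale. Conditionally on $\mathcal{F}_{i-1}$ (everything revealed through step $i-1$), the probability that $h_i$ lands on a fresh vertex is $d|\mathcal{V}^{(d)}_{i-1}|/(dn-2(i-1)-1)$, and given that, the edge is retained with probability $p$. So the expected decrease of $|\mathcal{V}^{(d)}|$ at step $i$ is at least $p\cdot d|\mathcal{V}^{(d)}_{i-1}|/(dn-2i+1) - O(1)$ (the $O(1)$ accounting for the at-most-one fresh vertex possibly gained in case (b), which only occurs at the finitely many times $\tau$ when $|\mathcal{A}|$ hits zero—in fact at most $|\mathcal{C}_{\max}|$-many such times, negligible on the relevant range, or one can simply absorb it). Using $p=(1+\lambda n^{-1/3})(d-1)^{-1}\ge (d-1)^{-1}$ and $|\mathcal{V}^{(d)}_{i-1}|$ close to $n-1-i$, one checks that $\mathbb{E}[|\mathcal{V}^{(d)}_i| - a_n(i)\mid \mathcal{F}_{i-1}] \le |\mathcal{V}^{(d)}_{i-1}| - a_n(i-1)$, i.e.\ the process $M_i := |\mathcal{V}^{(d)}_i| - a_n(i)$ is a supermartingale on $[T-1]$, provided $a_n$ is chosen so that $a_n(i-1)-a_n(i) = 1 - (2i-1)/(2n)$ matches the expected one-step drift; this is exactly what the quadratic $a_n(i)=n-1-i+i^2/(2n)$ does, up to lower-order terms controlled using $i\le T = O(An^{2/3})$ and $A=o(n^{1/12})$. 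Then I would apply a maximal inequality: since $M_i$ has bounded increments (at most $d$ in absolute value, since at most two vertices change and the status changes are bounded, plus the smooth drift correction is $O(1)$), the Azuma--Hoeffding inequality for supermartingales gives $\mathbb{P}(\exists i\in[T-1]: M_i > m) \le \mathbb{P}(\max_{i\le T-1} M_i > m)$. To get the stated bound $cT e^{-mn^{1/2}/T}$ rather than $e^{-cm^2/T}$, I would instead use an exponential supermartingale / union bound tuned to the regime $m$ possibly small: union-bound over $i\in[T-1]$ and for each $i$ apply a one-sided Azuma bound with the optimal exponent calibrated so that $e^{-cm^2/(i)}$ is replaced, after taking $i$ as large as $T$ and using the union bound factor $T$, by $cTe^{-mn^{1/2}/T}$—here the $n^{1/2}$ appears because the natural fluctuation scale of $M_i$ over $[T]$ is $T^{1/2}$ and one writes $m^2/T = (m/T)\cdot m \gtrsim (m/T)\cdot n^{1/2}$ is \emph{not} quite right, so more carefully one wants the increments of $M$ to be genuinely $O(1)$ and the Azuma exponent is $m^2/(C T)$; matching this to $mn^{1/2}/T$ forces $m\gtrsim n^{1/2}$, which is fine since the bound is only non-trivial there anyway, and for smaller $m$ the right-hand side exceeds $1$. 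So the clean statement to aim for is: union bound over $i$, Azuma at each $i$ with variance proxy $O(i)=O(T)$, yielding $\sum_{i\le T-1} e^{-m^2/(CT)}\le T e^{-m^2/(CT)}\le cTe^{-mn^{1/2}/T}$ whenever $m\ge n^{1/2}$ (and trivial otherwise).

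The main obstacle will be making the supermartingale inequality $\mathbb{E}[M_i\mid\mathcal{F}_{i-1}]\le M_{i-1}$ genuinely hold for \emph{all} $i\in[T-1]$ and all configurations, not just in expectation over typical histories: this requires (i) a careful lower bound on the number of fresh vertices available to be consumed—circular unless one argues inductively or restricts to a good event—and (ii) handling the ``restart'' steps of type (b) where a fresh vertex is \emph{added}; the cleanest fix is to run the argument on the component of $V_n$ only (i.e.\ up to time $\tau$, before any type-(b) step), or to note that type-(b) steps contribute a total of at most $O(|\mathcal{C}_{\max}|)$ extra fresh vertices, which is $o(n^{1/2}m/T)$ on the relevant range. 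A secondary technical point is verifying that the deterministic drift term $a_n(i-1)-a_n(i)$ dominates the stochastic drift uniformly, which uses $p\ge(d-1)^{-1}$ together with $|\mathcal{V}^{(d)}_{i-1}|\ge n-1-i-(\text{fluctuation})$; one sidesteps the circularity by defining the stopping time at which $M_i$ first exceeds $m$ and arguing only up to that time, where the crude bound $|\mathcal{V}^{(d)}_{i-1}|\ge a_n(i-1)$... wait—that is an upper-bound event, so instead one uses the trivial lower bound $|\mathcal{V}^{(d)}_{i-1}|\ge n-1-2(i-1)$ (at most two vertices lost per step), which on $i\le T=O(An^{2/3})=o(n)$ is still $\sim n$, more than enough to make the expected consumption at least $\approx 1$ per step. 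With that lower bound in hand the supermartingale property is robust, and the rest is the routine Azuma computation indicated above.
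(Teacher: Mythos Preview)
There are two genuine problems with your approach.

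First, your one-step dynamics are incorrect. A fresh vertex $v(h_i)$ becomes non-fresh whenever $h_i$ lands on one of its stubs, \emph{regardless} of whether the edge $e_ih_i$ is retained: in both cases (a.1) and (a.2) the stub $h_i$ is declared explored, so $v(h_i)$ drops to at most $d-1$ unseen stubs. Thus the expected decrease of $|\mathcal{V}^{(d)}|$ at step $i$ is $\mathbb{P}(F_i\mid\mathcal{F}_{i-1})\approx 1$, not $p\cdot\mathbb{P}(F_i\mid\mathcal{F}_{i-1})\approx 1/(d-1)$; with your factor of $p$ the drift could never match $a_n(i-1)-a_n(i)\approx 1$. (Also, $p\geq (d-1)^{-1}$ fails when $\lambda<0$.) The clean reformulation is the identity $|\mathcal{V}^{(d)}_i|=n-1-\sum_{j\le i}\ind_{F_j}=n-1-i+\sum_{j\le i}\ind_{F_j^c}$, so that $|\mathcal{V}^{(d)}_i|>a_n(i)+m$ becomes $\sum_{j\le i}\ind_{F_j^c}>i^2/(2n)+m$. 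The relevant increments are therefore the \emph{rare} indicators $\ind_{F_j^c}$, with $\mathbb{P}(F_j^c\mid\mathcal{F}_{j-1})\le dj/(dn-2j+1)\approx j/n$ (using that at most $j$ vertices are non-fresh after step $j-1$).

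Second, and more fundamentally, even with the correct dynamics an Azuma bound cannot deliver the exponent $mn^{1/2}/T$. Azuma uses only that the increments are bounded by $1$ and gives $\exp(-cm^2/T)$. But the lemma is applied (see the remark following it) with $m=An^{4/15}$, for which $m^2/T\asymp An^{-2/15}\to 0$, so the Azuma bound is trivial; your attempt to reconcile the exponents by restricting to $m\gtrsim n^{1/2}$ fails precisely in the regime that matters. What is needed is a Chernoff/Bennett-type argument that exploits the \emph{small mean} $\mathbb{P}(F_j^c\mid\mathcal{F}_{j-1})\lesssim j/n$: writing $\mathbb{E}[e^{r\ind_{F_j^c}}\mid\mathcal{F}_{j-1}]\le 1+(e^r-1)\,dj/(dn-2j+1)$ and iterating gives $\mathbb{E}\big[e^{r\sum_{j\le i}\ind_{F_j^c}}\big]\le\exp\big((e^r-1)\tfrac{i^2}{2n}(1+o(1))\big)$; choosing $r=n^{1/2}/T$ and using $e^r-1\le r+r^2$ yields the bound $\exp(-mn^{1/2}/T+O(1))$ for each fixed $i$, after which a union bound over $i\in[T-1]$ supplies the factor of $T$. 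This is the route the paper takes.
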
\label{ossfornumberfresh}
\begin{oss}
	Since $T\sim (d-1)An^{2/3}$, the exponent in (\ref{boundonfresh}) is of order $ m/An^{1/6}$. Thus, since in the statement of our main Theorem \ref{mainthm} we have assumed that $A\ll n^{1/30}$, taking $m=m(n)=An^{4/15}$ we see  that the quantity on the right-hand side of (\ref{boundonfresh}) is much smaller than the (upper) bounds stated in Theorem \ref{mainthm}, provided $n$ is large enough. We also remark that (\ref{boundonfresh}) is not the best possible upper bound, but for our purpose it suffices. 
\end{oss}

In line with the remark above, the reader should think of $m=m(n)=An^{4/15}$ in what follows. Keeping in mind that, by Lemma \ref{numberfresh}, the number of fresh vertices satisfies $|\mathcal{V}^{(d)}_{i}|\leq a_n(i)+m$ for all $i\in [T-1]$ with high probability, we bound
\begin{align}\label{pp}
\nonumber&\mathbb{P}\left(d+\sum_{i=1}^{t}\eta'_i>0\hspace{0.15cm}\forall t\in [T]\right)\\
& \hspace{0.7cm}\leq\mathbb{P}\left(d+\sum_{i=1}^{t}\eta'_i>0\hspace{0.15cm}\forall t\in [T],\, |\mathcal{V}^{(d)}_{i}|\leq a_n(i)+m\hspace{0.15cm}\forall i\in \{0\}\cup [T-1]\right)\\
\nonumber&\hspace{2.7cm}+\mathbb{P}\left(\exists i\in \{0\}\cup[T-1]: |\mathcal{V}^{(d)}_{i}|> a_n(i)+m\right)
\end{align}
and we can focus on the probability in line (\ref{pp}).

Observe that, if $|\mathcal{V}^{(d)}_{i}|\leq a_n(i)+m$ for all $i\in  \{0\}\cup [T-1]$, then we can write
\begin{equation}\label{theetaprime}
\eta'_i=\ind_{R_i}(d-2)+\ind_{R_i}\ind_{F_i}\ind_{\left\{|\mathcal{V}^{(d)}_{i-1}|\leq a_n(i-1)+m \right\}} -1
\end{equation}
for all $i\in [T]$. Conditional on everything that has occurred up to the end of step $i-1$ in the exploration process, the random variable $\ind_{R_i}\ind_{F_i}\ind_{\left\{|\mathcal{V}^{(d)}_{i-1}|\leq a_n(i-1)+m \right\}}$ which appears in (\ref{theetaprime}) equals $1$ with probability 
\begin{align*}
p\ind_{\left\{|\mathcal{V}^{(d)}_{i-1}|\leq a_n(i-1)+m \right\}}\frac{d \left|\mathcal{V}^{(d)}_{i-1}\right|}{dn-2(i-1)-1}\leq p \frac{d (a_n(i-1)+m)}{dn-2(i-1)-1}.
\end{align*}
Thus, if $(U_i)_{i\geq 1}$ is an i.i.d. sequence of $U([0,1])$ random variables, also independent from all other random quantities involved, then
\begin{equation}\label{zi}
\mu_{i}\coloneqq \ind_{R_i}(d-2)+\ind_{R_i}\ind_{\left\{U_i\leq  \frac{d(a_n(i-1)+m)}{dn-2(i-1)-1}\right\}}-1
\end{equation}
defines a sequence of \textit{independent} random variables that, intuitively at least, should be larger than the $\eta'_i$. Thus, heuristically, the random process $(d+\sum_{i=1}^{t}\mu_i)_{t\in [T]}$ should be more likely to remain positive over the whole interval $[T]$ than the process $(d+\sum_{i=1}^{t}\eta'_i)_{t\in [T]}$. 

Our next result, whose proof is postponed to Section \ref{mainpropforupper_sec}, establishes this rigorously.
\begin{prop}\label{mainpropforupper}
	Let $(U_i)_i$ be a sequence of i.i.d random variables, also independent from all other random variables involved, with $U_1\sim U([0,1])$. For each $i\in [T]$ let $\mu_i$ be as in (\ref{zi}) above.
Then we have that
\begin{multline*}
\mathbb{P}\left(d+\sum_{i=1}^{t}\eta'_i>0\hspace{0.15cm}\forall t\in [T],\, |\mathcal{V}^{(d)}_i|<a_n(i)+m\hspace{0.15cm}\forall i\in \{0\}\cup [T-1]\right)\\
\leq \mathbb{P}\left(d+\sum_{i=1}^{t}\mu_i>0\hspace{0.15cm}\forall t\in [T]\right).
\end{multline*}
\end{prop}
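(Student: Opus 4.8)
The plan is to construct an explicit coupling between the sequences $(\eta'_i)_{i\in[T]}$ and $(\mu_i)_{i\in[T]}$ on a common probability space, in such a way that on the event appearing on the left-hand side we have $\eta'_i\le\mu_i$ for every $i\in[T]$, and hence $d+\sum_{i=1}^t\eta'_i\le d+\sum_{i=1}^t\mu_i$ for every $t\in[T]$; the desired inequality between probabilities is then immediate. The key observation is that both $\eta'_i$ and $\mu_i$ are built from two ingredients: the percolation indicator $\ind_{R_i}$, which is genuinely independent of the past (each revealed edge is retained with probability $p$ independently of everything else), and a "fresh-vertex" indicator — $\ind_{F_i}$ for $\eta'_i$, and $\ind_{\{U_i\le d(a_n(i-1)+m)/(dn-2(i-1)-1)\}}$ for $\mu_i$. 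Since the deterministic part $\ind_{R_i}(d-2)-1$ is common to both, it suffices to couple these two indicators so that the first is $\le$ the second on the relevant event.

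First I would set up the exploration process together with the i.i.d.\ sequence $(U_i)_i$ on one probability space, and generate the randomness at step $i$ as follows: having revealed everything up to the end of step $i-1$, let $\mathcal F_{i-1}$ denote the corresponding $\sigma$-algebra, and recall that conditionally on $\mathcal F_{i-1}$ the stub $h_i$ is uniform over the $dn-2(i-1)-1$ unexplored stubs, so $\P(F_i\mid\mathcal F_{i-1})=d|\mathcal V^{(d)}_{i-1}|/(dn-2(i-1)-1)=:q_i$. Use a single uniform random variable $W_i$ (say the uniform variable implicit in the choice of $h_i$, or an auxiliary one) to decide $F_i$, declaring $F_i$ to occur iff $W_i\le q_i$; simultaneously let $U_i=W_i$ — or, more carefully, couple $U_i$ and the fresh-vertex decision monotonically so that $\ind_{F_i}=\ind_{\{W_i\le q_i\}}$ and $\ind_{\{U_i\le \bar q_i\}}$ with $\bar q_i := d(a_n(i-1)+m)/(dn-2(i-1)-1)$ are both increasing functions of the same uniform variable. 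On the event $\{|\mathcal V^{(d)}_{i-1}|\le a_n(i-1)+m\}$ we have $q_i\le\bar q_i$, so $\ind_{F_i}\le\ind_{\{U_i\le\bar q_i\}}$, and therefore, using the representation \eqref{theetaprime} for $\eta'_i$ valid on the event that all the $|\mathcal V^{(d)}_i|$ are bounded by $a_n(i)+m$, we get $\eta'_i\le\mu_i$ for all $i\in[T]$ on that event. One subtlety to handle cleanly is that $\eta'_i$ as originally defined in \eqref{ineqdelta} contains $\ind_{F_i}$ without the extra indicator $\ind_{\{|\mathcal V^{(d)}_{i-1}|\le a_n(i-1)+m\}}$; but on the event in the statement, where $|\mathcal V^{(d)}_i|<a_n(i)+m$ for all $i\in\{0\}\cup[T-1]$, the two definitions coincide, which is exactly why \eqref{theetaprime} holds, so this causes no difficulty.

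I would then finish by noting that, on the event $E:=\{d+\sum_{i=1}^t\eta'_i>0\ \forall t\in[T]\}\cap\{|\mathcal V^{(d)}_i|<a_n(i)+m\ \forall i\in\{0\}\cup[T-1]\}$, the pathwise inequality $\eta'_i\le\mu_i$ gives $d+\sum_{i=1}^t\mu_i\ge d+\sum_{i=1}^t\eta'_i>0$ for all $t\in[T]$, hence $E\subseteq\{d+\sum_{i=1}^t\mu_i>0\ \forall t\in[T]\}$, and taking probabilities yields the claim. Finally one must check that under this coupling the $\mu_i$ really do have the law prescribed in \eqref{zi} — i.e.\ that they are independent with the stated marginals; this holds because $\ind_{R_i}$ is independent of the past and of the $W_i$'s, and the $W_i$'s (equivalently the $U_i$'s) are i.i.d.\ uniform and independent of the $R_i$'s, so each $\mu_i$ is a fixed function of $(\ind_{R_i},U_i)$ and these pairs are i.i.d.\ across $i$.

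The main obstacle is the bookkeeping of the coupling: one must be careful that the auxiliary uniform variables used to resolve "is $v(h_i)$ fresh?" are genuinely available and can be made to coincide (monotonically) with the externally supplied $U_i$ without disturbing either the law of the exploration process or the i.i.d.\ uniform law of $(U_i)_i$ — the cleanest route is probably to define the $U_i$ \emph{as} the uniform randomness driving the choice of $h_i$ at those steps, or to note that since $(U_i)_i$ is an independent external sequence we are free to re-use it, replacing the internal randomness, as long as the conditional probability $q_i$ it is compared against is $\mathcal F_{i-1}$-measurable, which it is. Everything else is a routine pathwise comparison.
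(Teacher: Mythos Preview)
Your coupling argument is correct and is a genuinely different route from the paper's. The paper does \emph{not} build an explicit coupling; instead it uses a Lindeberg-type interpolation. It first records (Lemma~\ref{etamuobs}) that, conditionally on $\F_{j-1}$ and on the good event $V_{j-1}=\bigcap_{i\le j-1}\{|\calV^{(d)}_i|\le a_n(i)+m\}$, the increment $\mu_j$ stochastically dominates $\eta'_j$. It then defines hybrid sums $S_t^{(j)}=d+\sum_{i\le j\wedge t}\eta'_i+\sum_{i>j\wedge t}\mu_i$ and shows (Lemma~\ref{etamumain}) that swapping a single $\eta'_j$ for $\mu_j$ can only increase the probability of staying positive, because that probability is monotone in the value of the $j$th increment. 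Iterating $T$ times yields the proposition.

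Your approach packages the same stochastic-domination observation into a single pathwise inequality: by generating $\ind_{F_i}$ as $\ind_{\{U_i\le q_i\}}$ with $q_i=\P(F_i\mid\F_{i-1})$ and reusing the very same $U_i$ in $\mu_i$, you get $\eta'_i\le\mu_i$ on the good event directly. This is shorter and gives a set inclusion rather than just a probability inequality. One small point you gloss over: in step~(b) of the exploration (when $|\calA_{i-1}|=0$) the conditional probability $\P(F_i\mid\F_{i-1})$ can be $q_i-1/(dn-2(i-1)-1)$ rather than $q_i$ if the newly chosen $e_i$ happens to sit on a fresh vertex; but this only helps, since the inequality $\P(F_i\mid\F_{i-1})\le \bar q_i$ on the good event is all you need. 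The paper's interpolation machinery, by contrast, is reused verbatim for the lower bound (Proposition~\ref{deltatoDprop} and Lemma~\ref{deltatoDlem}), which is a mild economy your one-shot coupling does not automatically provide.
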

Thanks to Lemma \ref{numberfresh} and Proposition \ref{mainpropforupper}, we can focus on the probability
\begin{equation}\label{newtobound}
\mathbb{P}\left(d+\sum_{i=1}^{t}\mu_i>0\hspace{0.15cm}\forall t\in [T]\right).
\end{equation}
In order to provide an upper bound for the above quantity we would like to turn the (independent but not identically distributed) $\mu_i$ into \textit{i.i.d.}~random variables $\xi_i$. To this end, keeping in mind the definition of $\mu_i$ given in \eqref{zi}, define
\begin{equation}\label{bi}
\mu'_i\coloneqq \ind_{R_i}\ind_{\left\{U_i> \frac{d(a_n(i-1)+m)}{dn-2(i-1)-1}\right\}}
\end{equation}
and set (for all $i\in [T]$)
\[\xi_i\coloneqq \mu_i+\mu'_i=\ind_{R_i}(d-2)+\ind_{R_i} - 1 = \ind_{R_i}(d-1) - 1.\]
(Recall that the $\ind_{R_i}$ are independent Bernoulli random variables with parameter $p$.) By adding the (random) sums $\sum_{i=1}^{t}\mu'_i$ to the $\sum_{i=1}^{t}\mu_i$ we can rewrite the probability in (\ref{newtobound}) as 
\begin{equation}\label{eqq}
\mathbb{P}\left(d+\sum_{i=1}^{t}\xi_i>\sum_{i=1}^{t}\mu'_i\hspace{0.15cm}\forall t\in [T]\right).
\end{equation}
Since each $\mu'_i$ is non-negative, we have that $\sum_{i=1}^{t}\mu'_i\geq 0$ for all $t\in [T]$. Therefore we can bound from above the probability in (\ref{eqq}) by
\begin{equation}\label{mnm}
\mathbb{P}\left(d+\sum_{i=1}^{t}\xi_i>0\hspace{0.15cm}\forall t\in [T], d+\sum_{i=1}^{T}\xi_i>\sum_{i=1}^{T}\mu'_i\right).
\end{equation}
In order to control the (random) sum $\sum_{i=1}^{T}\mu'_i$ (which depends on $m$, see its definition given in (\ref{bi})) we use the following result, whose proof is postponed to Section \ref{concentration_sec}.

\begin{lem}\label{arub}
	Let $h\geq 1$. Suppose that $A=o(n^{1/2})$ and let $m=m_n=O(n^{1/2})$. Define 
	\begin{equation}\label{qt}
	q(T)=q_{n,d}(T)\coloneqq p\left(1-\frac{2}{d}\right)\frac{T(T-1)}{2n}.
	\end{equation}
	Then, for all large enough $n$, we have that 
	\begin{equation}\label{mbm}
	\mathbb{P}\left(\sum_{i=1}^{T}\mu'_i\leq q(T)-h\right) \leq cTe^{-\frac{hn^{1/2}}{T}},
	\end{equation} 
	where $c=c(d)$ is a finite constant that depends on $d$.
\end{lem}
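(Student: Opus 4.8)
The plan is to exploit the fact that, unlike the $\eta_i$, the variables $\mu'_i$ are genuinely independent. Writing $x_i\coloneqq d(a_n(i-1)+m)/(dn-2(i-1)-1)$ --- a \emph{deterministic} quantity --- we have $\mu'_i=\ind_{R_i}\ind_{\{U_i>x_i\}}$, where the $R_i$ are independent Bernoulli$(p)$ events and the $U_i$ are i.i.d.\ uniforms independent of everything; hence $S\coloneqq\sum_{i=1}^{T}\mu'_i$ is a sum of $T$ independent $\{0,1\}$-valued random variables with $\E[\mu'_i]=p(1-x_i)^+$. So the lemma is just a one-sided concentration estimate for a sum of independent indicators, and I need two ingredients: (i) a lower bound for $\E[S]$ whose leading term is \emph{exactly} $q(T)$, and (ii) a variance-aware Chernoff/Bernstein tail bound.

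For (i), expanding gives $1-x_i=\big((d-2)(i-1)+(d-1)-\tfrac{d(i-1)^2}{2n}-dm\big)\big/\big(dn-2(i-1)-1\big)$. Since the denominator is at most $dn$, one has $(1-x_i)^+\ge\tfrac{1}{dn}\big((d-2)(i-1)+(d-1)-\tfrac{d(i-1)^2}{2n}-dm\big)$ --- trivially so when the bracket is negative, as then the left-hand side is $0$. Summing over $i\in[T]$, the term $(d-2)(i-1)$ contributes precisely $q(T)=p(1-2/d)\tfrac{T(T-1)}{2n}$; dropping the nonnegative contribution of $(d-1)$ and bounding $\sum_{i=1}^{T}(i-1)^2\le T^3/3$, the remaining terms cost at most $\delta_n\coloneqq\tfrac{pT^3}{6n^2}+\tfrac{pmT}{n}$, so $\E[S]\ge q(T)-\delta_n$. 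An entirely analogous crude estimate gives the matching order $\E[S]=O_d(A^2n^{1/3})$, and under the standing hypotheses ($A=o(n^{1/2})$, $m=O(n^{1/2})$, $T\asymp_d An^{2/3}$) one checks $\delta_n=O_d(A^3+An^{1/6})=O_d(An^{1/6})$.

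For (ii), I would invoke Bernstein's inequality for the sum of the independent, $[0,1]$-bounded, centred variables $\mu'_i-\E[\mu'_i]$: since $\sum_i\operatorname{Var}(\mu'_i)\le\E[S]$, this gives $\P(S\le\E[S]-t)\le\exp\!\big(-c_0\min\{t^2/\E[S],\,t\}\big)$ for all $t\ge0$. When $h>2\delta_n$ we have $q(T)-h\le\E[S]-(h-\delta_n)$ with $h-\delta_n>h/2>0$, so $\P(S\le q(T)-h)\le\exp\!\big(-c_0\min\{(h-\delta_n)^2/\E[S],\,h-\delta_n\}\big)$; feeding in $\E[S]=O_d(A^2n^{1/3})$ and $n^{1/2}/T\asymp_d A^{-1}n^{-1/6}$, the exponent is at least a constant multiple of $hn^{1/2}/T$ once $h\ge c_dAn^{1/6}$, which after absorbing constants (or starting from a sharp-constant Chernoff bound) yields $cTe^{-hn^{1/2}/T}$. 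In the complementary regime --- which, since $\delta_n=O_d(An^{1/6})$, certainly includes $h\le2\delta_n$ --- we have $hn^{1/2}/T=O_d(1)$ while $\log(cT)\to\infty$, so the claimed bound already exceeds $1$ for all large $n$ and there is nothing to prove; the two regimes overlap for large $n$, so together they cover all $h\ge1$.

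The genuine obstacle is step (i): one must pin down the leading term of $\E[S]$ as exactly $q(T)$ rather than merely up to a constant, and verify that the error $\delta_n$ stays below the threshold beneath which the target bound is automatically $\ge1$ --- this is precisely where the quantitative hypotheses $A=o(n^{1/2})$ and $m=O(n^{1/2})$ enter. The truncation $(1-x_i)^+$ versus $1-x_i$ for small $i$ needs a word of care but is harmless. Step (ii) is routine, the only real point being that one must use a bound sensitive to the small variance of $S$ (of order $A^2n^{1/3}$, much smaller than the length $T\asymp An^{2/3}$): a plain Hoeffding bound would be too weak to reach down to $h$ of order $An^{1/6}$.
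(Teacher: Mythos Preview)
Your approach is essentially correct and uses the same underlying idea as the paper --- both exploit the independence of the $\mu'_i$ via an exponential-moment (Chernoff) bound --- but the executions differ. The paper applies Markov directly to $e^{-r\sum\mu'_i}$, computes the product MGF, expands, and chooses the specific tilt $r=n^{1/2}/T$; this yields the bound $ce^{-hn^{1/2}/T}$ with the exact exponent immediately (in fact without the prefactor $T$, so slightly stronger than stated). You instead split into ``lower-bound the mean'' plus a black-box Bernstein inequality. This is fine, but your step~(ii) is looser than you suggest: Bernstein gives exponent $\asymp h^2 n/T^2$, which is a \emph{constant multiple} of $hn^{1/2}/T$ only near the threshold $h\asymp An^{1/6}$, not uniformly. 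What actually makes your absorption work is that the Bernstein exponent grows \emph{quadratically} in $h$ while the target grows linearly, so the quadratic $c_0 h^2 n/T^2 - hn^{1/2}/T + \log(cT)$ has negative discriminant once $\log T$ is large; you should say this rather than ``absorbing constants''. A ``sharp-constant Chernoff bound'' does not sidestep the issue, since no standard form of Chernoff directly produces the exponent $hn^{1/2}/T$ --- only the paper's tailored choice of $r$ does.

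One minor point you share with the paper: the step $\delta_n=O_d(A^3+An^{1/6})=O_d(An^{1/6})$ requires $A=o(n^{1/12})$, not merely $A=o(n^{1/2})$; the paper's proof has the identical issue in its $rO(T^3/n^2)=O(A^2/n^{1/6})$ term, and since the application only uses $A=o(n^{1/30})$ this is harmless in context.
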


\begin{oss}\label{secondoss}
	Analogously to what we said in Remark \ref{ossfornumberfresh}, the quantity which appears within the exponential term in (\ref{mbm}) is of order $h/An^{1/6}$. Thus, if the reader thinks of $h$ as $h=h_n=An^{4/15}$, then the right-hand side in (\ref{mbm}) is much smaller than our desired bound and $h\ll q(T)$. 
\end{oss}

For $t\in [T]$, we bound from above the probability in (\ref{mnm}) by
\begin{multline}\label{readytoballot}
\eqref{mnm}\le\mathbb{P}\left(d+\sum_{i=1}^{t}\xi_i>0\hspace{0.15cm}\forall t\in [T-1],\, d+\sum_{i=1}^{T}\xi_i>q(T)-h\right)\\
+\mathbb{P}\left(\sum_{i=1}^{T}\mu'_i\leq q(T)-h\right).
\end{multline}
Thanks to Lemma \ref{arub} we know that we do not have to worry about the probability that $\sum_{i=1}^{T}\mu'_i$ is smaller than $q(T)-h$, provided $h$ is sufficiently large. Hence we can focus our attention on finding an upper bound for the first term on the right-hand side above.
Observe that, since $d+\sum_{i=1}^{T}\xi_i$ is at most $d+T(d-2)$ (as $\xi_i\leq d-2$ for every $i$), we can write
\begin{multline}\label{thesum}
\mathbb{P}\left(d+\sum_{i=1}^{t}\xi_i>0\hspace{0.15cm}\forall t\in [T],\, d+\sum_{i=1}^{T}\xi_i>q(T)-h\right)\\
\leq \sum_{k=\lfloor q(T)-h\rfloor+1}^{d+T(d-2)}\mathbb{P}\left(d+\sum_{i=1}^{t}\xi_i>0\hspace{0.15cm}\forall t\in [T],\, d+\sum_{i=1}^{T}\xi_i=k\right).
\end{multline}
To estimate the probabilities within the last sum we use Lemma \ref{hereweapplymodifiedballot} below, whose proof, postponed to Section \ref{hereweapplymodifiedballot_sec}, relies on a ballot-type estimate that was introduced in \cite{de_ambroggio_roberts:near_critical_ER}.

\begin{lem}\label{hereweapplymodifiedballot}
	For any $t,k\in\N$, if $d\ge 4$, then
	\[\mathbb{P}\left(d+\sum_{i=1}^{j}\xi_i>0\hspace{0.15cm}\forall j\in [t], d+\sum_{i=1}^{t}\xi_i=k\right)\leq \frac{k+d-4}{p^2(t+2)}\mathbb{P}\left(\sum_{i=1}^{t+2}\xi_i=k+d-4\right)\]
	and if $d=3$, then
	\[\mathbb{P}\left(d+\sum_{i=1}^{j}\xi_i>0\hspace{0.15cm}\forall j\in [t], d+\sum_{i=1}^{t}\xi_i=k\right)\leq \frac{k}{p^3(t+3)}\mathbb{P}\left(\sum_{i=1}^{t+3}\xi_i=k\right).\]
\end{lem}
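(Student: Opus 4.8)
The statement to prove is Lemma~\ref{hereweapplymodifiedballot}, which bounds the probability that the random walk $d+\sum_{i=1}^j \xi_i$ stays strictly positive on $[t]$ and ends at level $k$, in terms of an unconstrained endpoint probability of the walk run for a few extra steps. Here $\xi_i = \ind_{R_i}(d-1)-1$ are i.i.d., taking the value $d-2$ with probability $p$ and $-1$ with probability $1-p$. The plan is to reduce this to the ballot-type estimate from \cite{de_ambroggio_roberts:near_critical_ER}. That result (the ``modified ballot theorem'') typically controls walks whose increments are bounded below by $-1$ and that start from a positive integer, giving a bound of the shape $\mathbb{P}(\text{walk}>0 \text{ on }[t],\ \text{ends at }\ell) \le \frac{\ell}{t}\,\mathbb{P}(\text{ends at }\ell)$ or a minor variant; the factor $1/t$ comes from a cyclic-shift / reflection argument à la the classical Otter--Dwass / cycle lemma.

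**The main steps.** First I would recall the precise statement of the ballot estimate from \cite{de_ambroggio_roberts:near_critical_ER}: for a random walk $S_j = \sum_{i=1}^j \zeta_i$ with i.i.d.\ integer increments $\zeta_i \ge -1$, one has $\mathbb{P}(b + S_j > 0\ \forall j\in[t],\ b+S_t = \ell) \le \frac{\ell}{t}\,\mathbb{P}(b+S_t = \ell)$ or, in the form actually needed, something like $\le \frac{b+\ell}{t}\mathbb{P}(S_t = \ell - b)$ — the exact combinatorial constant is what dictates the shifts by $d-4$ (resp.\ $0$) and $+2$ (resp.\ $+3$) in the two cases. The reason the cases $d\ge 4$ and $d=3$ are separated: when $d=3$ the ``up'' step has size $d-2 = 1$, so increments lie in $\{-1,+1\}$ and the lattice/span structure is different (the walk is aperiodic only after accounting for parity, or the relevant cycle-lemma normalisation changes), whereas for $d\ge 4$ the up-step is $\ge 2$. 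I would then apply the ballot estimate with the appropriate choice of starting height and number of steps, and finally absorb the constant factors $p^{-2}$ and $p^{-3}$: these arise because to relate $\mathbb{P}(d+\sum_{i=1}^t \xi_i = k)$ run for $t$ steps to $\mathbb{P}(\sum_{i=1}^{t+2}\xi_i = k+d-4)$ run for $t+2$ steps, one forces the two (resp.\ three) extra steps to be ``down'' steps (each contributing $-1$ and occurring with probability $1-p$) or ``up'' steps as needed; equivalently one conditions on a specific value of the last couple of increments, paying a factor $p$ or $1-p$ each time, and $p \asymp (d-1)^{-1}$ is bounded away from $0$ and $1$ so these are harmless constants — but the lemma keeps them explicit as powers of $p$.

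**Concretely.** Let me write $\widetilde S_j := d + \sum_{i=1}^j \xi_i$, so $\widetilde S_0 = d > 0$ and the increments $\xi_i \ge -1$. For $d\ge 4$: I would append two extra steps, engineering them to be down-steps so that $\widetilde S_{t+2} = k - 2$ on that event; apply the ballot estimate to the walk of length $t+2$ started at $d$ and ending at $k-2$, which (with the normalisation from \cite{de_ambroggio_roberts:near_critical_ER}) yields a factor $\tfrac{(k-2)+(d-2)}{t+2} = \tfrac{k+d-4}{t+2}$ times $\mathbb{P}(\widetilde S_{t+2} = k-2)$; then rewrite $\mathbb{P}(\widetilde S_{t+2}=k-2) = \mathbb{P}(\sum_{i=1}^{t+2}\xi_i = k - d - 2)$... and reconcile with the target $\mathbb{P}(\sum_{i=1}^{t+2}\xi_i = k+d-4)$ — here I'd need to track the exact bookkeeping, since the displayed bound has $k+d-4$ on the right, suggesting the two appended steps are instead taken to be \emph{up}-steps (each contributing $d-2$, total $2(d-2) = 2d-4$, giving endpoint $k + 2d - 4$... hmm). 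The precise matching of shifts is exactly the routine-but-fiddly part; the clean way is: condition the length-$(t+2)$ walk on its last two increments both equalling $-1$ (probability $(1-p)^2$), note this does not affect positivity on $[t]$, apply the ballot inequality on $[t+2]$, and then undo the conditioning, which multiplies through by $(1-p)^{-2}$; since $1-p = 1 - (1+\lambda n^{-1/3})/(d-1) \ge 1/p \cdot (\text{const})$... actually one just writes $(1-p)^{-2} \le p^{-2}$ when $1-p \ge p$, i.e.\ $p \le 1/2$, which holds here since $p\to (d-1)^{-1} \le 1/2$. For $d=3$ the analogous move appends three steps.

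**Main obstacle.** The genuine difficulty is not conceptual but combinatorial precision: getting the exact constants $d-4$, $t+2$, $p^{-2}$ (and $0$, $t+3$, $p^{-3}$) to come out, which requires stating the ballot-type lemma of \cite{de_ambroggio_roberts:near_critical_ER} in exactly the form it was proved and threading the shift-by-appended-steps argument through it without off-by-one errors. The $d=3$ versus $d\ge 4$ split must be handled because the up-increment size $d-2$ equals $1$ when $d=3$, which collapses the increment range to $\{-1,1\}$ and changes both the span of the lattice and the number of extra steps one must append to land the endpoint in the right residue class and make the ballot bound's ratio factor come out as stated. I would therefore organise the proof as: (1) quote the ballot estimate precisely; (2) do the $d\ge4$ case via appending two controlled steps and the $(1-p)^{-2}\le p^{-2}$ estimate; (3) repeat for $d=3$ with three steps; flagging that all arithmetic is elementary once the ballot lemma is in hand.
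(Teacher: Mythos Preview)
Your overall direction—reduce to the ballot-type estimate from \cite{de_ambroggio_roberts:near_critical_ER} by tacking on a few controlled steps—is right, but the specific mechanism you propose does not work, and the reason you give for the $d\ge 4$ versus $d=3$ split is not the correct one.

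The precise ballot estimate used (the paper's Lemma~\ref{ballotcor}) reads: for i.i.d.\ integer-valued $X_i$ with $\P(X_1=h)>0$,
\[
\P\Big(h+\sum_{i=1}^j X_i>0\ \forall j\in[t],\ h+\sum_{i=1}^t X_i=k\Big)\le \frac{1}{\P(X_1=h)}\,\frac{k}{t+1}\,\P\Big(\sum_{i=1}^{t+1}X_i=k\Big).
\]
The crucial hypothesis is $\P(X_1=h)>0$: the starting height must be an attainable increment value. Here $\xi_i\in\{-1,d-2\}$ while the starting height is $d$, so the lemma cannot be applied directly. This is the whole obstruction, and the factor $1/\P(X_1=h)=1/p$ is where one of the powers of $p$ actually originates. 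Your proposal of \emph{appending} steps at the end (whether up- or down-steps) does nothing to address this, since appending leaves the starting height at $d$; after appending you still cannot invoke the lemma.

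The paper instead \emph{prepends} an independent copy $\xi_0$ and conditions on $\xi_0=d-2$ (probability $p$, cost $1/p$). On that event $d+\sum_{i=1}^j\xi_i=2+\sum_{i=0}^j\xi_i$, so after relabelling the walk has length $t+1$ and starting height $2$. For $d\ge 4$ one has $2\le d-2$, so by monotonicity the event is contained in the one with starting height $d-2$ (endpoint shifted up by $d-4$); now the ballot lemma applies with $h=d-2$, contributing another $1/p$ and one more step: total $p^{-2}$, $t+2$ steps, endpoint $k+d-4$. For $d=3$ one has $2>d-2=1$, so a second prepend is needed to bring the starting height down to $1$; this, followed by the ballot lemma, yields $p^{-3}$, $t+3$ steps, endpoint $k$. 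The case split therefore has nothing to do with lattice span or parity; it is simply the comparison $2\le d-2$ versus $2>d-2$.

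As a minor additional point, your proposed inequality $(1-p)^{-2}\le p^{-2}$ requires $p\le 1/2$; when $d=3$ and $\lambda>0$ one has $p=\tfrac{1}{2}(1+\lambda n^{-1/3})>\tfrac12$, so this would fail even if the appending strategy were otherwise viable.
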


Noting that the two inequalities in Lemma (\ref{hereweapplymodifiedballot}) are almost identical (the main difference being an extra factor of $1/p\asymp 1$ in the case $d=3$), we concentrate on the case $d\ge 4$. Applying Lemma \ref{hereweapplymodifiedballot}, we bound from above the sum in (\ref{thesum}) by
\[\eqref{thesum}\le \frac{1}{p^2(T+2)}\sum_{k=\lfloor q(T)-h\rfloor+1}^{d+T(d-2)}(k+d-4)\mathbb{P}\left(\sum_{i=1}^{T+2}\xi_i=k+d-4\right).\]
Note from the definition of $\xi_i$ that
\[\sum_{i=1}^{T+2}\xi_i = (d-1)B_{T+2,p} - (T+2)\]
where $B_{T+2,p}\sim\Bin(T+2,p)$. Thus
\[\eqref{thesum}\le \frac{1}{p^2(T+2)}\sum_{k=\lfloor q(T)-h\rfloor+1}^{d+T(d-2)}(k+d-4)\mathbb{P}\left(B_{T+2,p}=\frac{T+2 + k+d-4}{d-1}\right)\]
and, rewriting the last fraction on the right-hand side to isolate the expected value of $B_{T+2,p}$, we have that \eqref{thesum} is at most
\[\frac{1}{p^2(T+2)}\sum_{k=\lfloor q(T)-h\rfloor+1}^{d+T(d-2)}(k+d-4)\mathbb{P}\left(B_{T+2,p}=(T+2)p + \frac{k+d-4-\lambda(T+2)n^{-1/3}}{d-1}\right).\]
To summarise, if we let
\begin{equation}\label{xdndef}
x_{d,n}(k,\lambda,T) = \frac{k+d-4-\lambda(T+2)n^{-1/3}}{d-1},
\end{equation}
then we have shown that
\begin{multline}\label{summaryofprogress}
\mathbb{P}\left(d+\sum_{i=1}^{t}\xi_i>0\hspace{0.15cm}\forall t\in [T],\, d+\sum_{i=1}^{T}\xi_i>q(T)-h\right)\\
\le \frac{1}{p^2(T+2)}\sum_{k=\lfloor q(T)-h\rfloor+1}^{d+T(d-2)}(k+d-4)\mathbb{P}\big(B_{T+2,p}=(T+2)p + x_{d,n}(k,\lambda,T)\big).
\end{multline}

To analyse the right-hand side of \eqref{summaryofprogress}, we will need to split the sum into two parts, one for $k\le T^{2/3}$ and the other for $k>T^{2/3}$. Recall the definition of $G_\lambda(A,d)$ from Theorem \ref{mainthm}.

\begin{lem}\label{smallksum}
Provided that $h\le An^{4/15}$ and $A=o(n^{1/30})$, there exists a finite constant $c$ depending on $d$ such that
\[\frac{1}{p^2(T+2)}\sum_{k=\lfloor q(T)-h\rfloor+1}^{\lfloor T^{2/3}\rfloor}(k+d-4)\mathbb{P}\big(B_{T+2,p}=(T+2)p + x_{d,n}(k,\lambda,T)\big) \le \frac{c}{A^{1/2}n^{1/3}}e^{-G_\lambda(A,d)}.\]
\end{lem}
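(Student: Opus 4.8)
The plan is to estimate the binomial probability $\mathbb{P}(B_{T+2,p}=(T+2)p+x_{d,n}(k,\lambda,T))$ by a local central limit theorem (or Stirling-based estimate), turning it into a Gaussian density, and then to evaluate the resulting sum over $k$. First I would record the relevant parameters: $B_{T+2,p}$ has mean $(T+2)p$ and variance $(T+2)p(1-p)$, and since $p=(1+\lambda n^{-1/3})(d-1)^{-1}$ and $T\sim(d-1)An^{2/3}$, the variance is $\sim A n^{2/3}\cdot\frac{d-2}{d-1}$. In the range $k\le T^{2/3}=o(n^{4/9})$, the displacement $x_{d,n}(k,\lambda,T)$ from the mean is dominated by the $-\lambda(T+2)n^{-1/3}/(d-1)$ term, which is of order $\lambda A n^{1/3}$; this is $o(\text{variance}^{2/3})$ under the hypothesis $|\lambda|\le A(1-2/d)/[3(d-1)]$ and $A=o(n^{1/30})$, so the local CLT applies uniformly in $k$ in this range. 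I would therefore obtain, for each such $k$,
\[\mathbb{P}\big(B_{T+2,p}=(T+2)p+x_{d,n}(k,\lambda,T)\big) = \frac{1+o(1)}{\sqrt{2\pi (T+2)p(1-p)}}\exp\left(-\frac{x_{d,n}(k,\lambda,T)^2}{2(T+2)p(1-p)}\right),\]
with the $o(1)$ uniform over $k\le T^{2/3}$.

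Next I would expand the exponent. Writing $x_{d,n}(k,\lambda,T)=\frac{k+d-4}{d-1}-\frac{\lambda(T+2)n^{-1/3}}{d-1}$ and squaring, the cross term $-2\cdot\frac{k+d-4}{d-1}\cdot\frac{\lambda(T+2)n^{-1/3}}{d-1}$ divided by $2(T+2)p(1-p)$ is of order $\frac{k\lambda}{A n^{1/3}}\cdot\frac{d-1}{d-2}$, which for $k\le T^{2/3}$ is $o(1)$ and so contributes only to the multiplicative constant; similarly $\frac{(k+d-4)^2}{(d-1)^2\cdot 2(T+2)p(1-p)}=O(k^2/(An^{2/3}))=o(1)$. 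The only term that survives is the pure-$\lambda$ piece
\[\frac{\lambda^2(T+2)^2n^{-2/3}}{(d-1)^2\cdot 2(T+2)p(1-p)} = \frac{\lambda^2(T+2)n^{-2/3}}{2(d-1)^2 p(1-p)}.\]
Substituting $T+2\sim(d-1)An^{2/3}$, $p\sim(d-1)^{-1}$ and $1-p\sim(d-2)/(d-1)$, this becomes $\frac{\lambda^2 A(d-1)}{2(d-2)}(1+o(1))$, which is precisely the third term in $G_\lambda(A,d)$; I would need to track the correction terms carefully enough to see that the difference between this and $\frac{\lambda^2A(d-1)}{2(d-2)}$ is $o(1)$, using $|\lambda|=O(A)$ and $A=o(n^{1/30})$, and also to check that the first two terms of $G_\lambda(A,d)$ — the $A^3$ and $\lambda A^2$ terms — do \emph{not} appear here (they will come from the $k>T^{2/3}$ part of the sum, treated in a subsequent lemma). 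Pulling the constant $e^{-\lambda^2 A(d-1)/(2(d-2))}$ out of the sum, and noting $e^{-G_\lambda(A,d)} = e^{-A^3(d-1)(d-2)/(8d^2)}e^{\lambda A^2(d-1)/(2d)}e^{-\lambda^2A(d-1)/(2(d-2))}$ where the first two factors are at least a positive constant times $1$ — in fact $e^{-G_\lambda}$ could be tiny, so I must instead argue that the bare sum is bounded by $\frac{c}{A^{1/2}n^{1/3}}$ and that $e^{-\lambda^2A(d-1)/(2(d-2))}\ge c\,e^{-G_\lambda(A,d)}$ fails in general — hence I will keep the $\lambda^2$ factor and separately bound the sum.

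The remaining task is to bound $\frac{1}{p^2(T+2)}\cdot\frac{1}{\sqrt{(T+2)p(1-p)}}\sum_{k=\lfloor q(T)-h\rfloor+1}^{\lfloor T^{2/3}\rfloor}(k+d-4)$ times the constant $e^{-\lambda^2A(d-1)/(2(d-2))+o(1)}$. The sum $\sum_k (k+d-4)$ over $k$ up to $T^{2/3}$ is $O(T^{4/3})$, so the whole expression is $O\big(T^{4/3}/(T^{3/2})\big)=O(T^{-1/6})=O((An^{2/3})^{-1/6})=O(A^{-1/6}n^{-1/9})$, times the constant. Comparing with the target $\frac{c}{A^{1/2}n^{1/3}}e^{-G_\lambda(A,d)}$: since $A=o(n^{1/30})$ one has $A^{-1/6}n^{-1/9}\le \frac{1}{A^{1/2}n^{1/3}}$ iff $n^{2/9}\ge A^{-1/3}$, which holds; and we still need the factor $e^{-G_\lambda(A,d)}$, equivalently we need $e^{\lambda A^2(d-1)/(2d) - A^3(d-1)(d-2)/(8d^2)}\le c$ — which holds because, under $|\lambda|\le A(1-2/d)/[3(d-1)]$, the cubic term dominates: $\frac{\lambda A^2(d-1)}{2d}\le \frac{A^3(d-2)}{6d^2}<\frac{A^3(d-1)(d-2)}{8d^2}$ for $d\ge 3$, so the exponent is negative and bounded above by a constant.

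I expect the main obstacle to be the uniform control of the local central limit theorem error and of the subleading terms in the exponent over the whole range $\lfloor q(T)-h\rfloor+1\le k\le\lfloor T^{2/3}\rfloor$: one must verify that $q(T)$, which is of order $A^2n^{1/3}$, together with $h\le An^{4/15}$, still sits in the regime where the displacement $x_{d,n}(k,\lambda,T)$ is small compared to the $3/2$-power of the variance (so the Gaussian approximation's relative error is $o(1)$), and that none of the discarded terms secretly contribute a factor growing with $A$. The bookkeeping with the constants $c_1(d), c_2(d)$ and checking the inequality $\frac{\lambda A^2(d-1)}{2d}<\frac{A^3(d-1)(d-2)}{8d^2}$ under the stated constraint on $|\lambda|$ is routine once set up. A minor point is the boundary case $d=3$, where Lemma \ref{hereweapplymodifiedballot} contributes an extra $1/p=\Theta_d(1)$ factor, harmless here.
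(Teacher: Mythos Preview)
Your proposal contains a genuine gap that breaks the argument. The core error is the claim that the quadratic and cross terms in the Gaussian exponent are $o(1)$ uniformly over the summation range. Recall that the sum runs from $k=\lfloor q(T)-h\rfloor+1$, and
\[
q(T)=p\Big(1-\frac{2}{d}\Big)\frac{T(T-1)}{2n}\sim \frac{(d-2)(d-1)}{2d}A^2 n^{1/3},
\]
so the \emph{lower} endpoint already has $k\asymp A^2 n^{1/3}$. Consequently
\[
\frac{(k+d-4)^2}{2(d-1)^2(T+2)p(1-p)}\asymp \frac{A^4 n^{2/3}}{(d-2)\,An^{2/3}}\asymp A^3,
\]
and the cross term is $\asymp \lambda A^2$, neither of which is $o(1)$. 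These are precisely the $A^3(d-1)(d-2)/(8d^2)$ and $\lambda A^2(d-1)/(2d)$ constituents of $G_\lambda(A,d)$; by discarding them you lose exactly the exponential factor you are trying to produce. Your final arithmetic comparison also goes the wrong way: $A^{-1/6}n^{-1/9}\le A^{-1/2}n^{-1/3}$ is equivalent to $A^{1/3}n^{2/9}\le 1$, which fails for all $n\ge 1$, so the bound $O(T^{-1/6})e^{-\lambda^2 A(d-1)/(2(d-2))}$ is \emph{larger} than the target, not smaller.

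The fix is to retain the full Gaussian factor $\exp\big(-x_{d,n}(k,\lambda,T)^2/(2(T+2)p(1-p))\big)$ in the sum, compare the sum with the integral
\[
\int_{\lfloor q(T)-h\rfloor}^\infty (y+d-4)\exp\Big(-\frac{(y+d-4-\lambda(T+2)n^{-1/3})^2}{2(d-1)^2 p(1-p)(T+2)}\Big)\d y,
\]
and evaluate this Gaussian integral exactly (after noting that $y+d-4\asymp y+d-4-\lambda(T+2)n^{-1/3}$ on the integration range, by the constraint on $|\lambda|$, so the integrand is essentially $u\,e^{-u^2/(2\sigma^2)}$). The value is then $\sigma^2$ times the Gaussian evaluated at the lower limit; expanding the exponent at $y=q(T)-h$ with $h\le An^{4/15}=o(q(T))$ yields all three terms of $G_\lambda(A,d)+o(1)$, and the prefactor $\sigma^2/T^{3/2}\asymp T^{-1/2}\asymp A^{-1/2}n^{-1/3}$ gives the polynomial part. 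Your local CLT step and the remark about $d=3$ are fine; the issue is purely in how you handle the $k$-dependence of the exponent.
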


\begin{lem}\label{largeksum}
Provided that $|\lambda|=o(n^{1/30})$, there exists a constant $c>0$ depending on $d$ such that
\[\frac{1}{p^2(T+2)}\sum_{k=\lfloor T^{2/3}\rfloor+1}^{d+T(d-2)}(k+d-4)\mathbb{P}\big(B_{T+2,p}=(T+2)p + x_{d,n}(k,\lambda,T)\big) \le \exp(-c A^{1/3}n^{2/9}).\]
\end{lem}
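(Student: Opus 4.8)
The plan is to collapse the sum over $k$ into a single upper-tail event for the binomial $B_{T+2,p}$ and then apply a Chernoff-type bound. First I would observe that $k\mapsto x_{d,n}(k,\lambda,T)=\tfrac{k+d-4-\lambda(T+2)n^{-1/3}}{d-1}$ is strictly increasing in $k$, so the events $\{B_{T+2,p}=(T+2)p+x_{d,n}(k,\lambda,T)\}$ appearing in the sum are pairwise disjoint and are each contained in $\{B_{T+2,p}\ge(T+2)p+s\}$, where $s:=x_{d,n}(\lfloor T^{2/3}\rfloor+1,\lambda,T)$. Since $T\sim(d-1)An^{2/3}$, and under the standing hypotheses ($A=o(n^{1/30})$ together with $|\lambda|=o(n^{1/30})$, hence $|\lambda|A^{1/3}=o(n^{1/9})$), the shift obeys $|\lambda|(T+2)n^{-1/3}=o(T^{2/3})$, so that $s\ge T^{2/3}/(2(d-1))$ for all large $n$. (If $(T+2)p+x_{d,n}(k,\lambda,T)$ fails to be an integer then the corresponding probability is simply $0$, which only helps.)

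Next I would strip off the polynomial prefactor: for every $k$ in the range of summation, $k+d-4\le d+T(d-2)+d-4\le cT$, and since $p\to(d-1)^{-1}$ we have $p^2(T+2)\asymp T$, so $\tfrac{k+d-4}{p^2(T+2)}\le c$ uniformly in $n$. Combining this with the disjointness observation gives that the left-hand side of Lemma \ref{largeksum} is at most $c\,\mathbb{P}\big(B_{T+2,p}\ge(T+2)p+s\big)$.

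Finally I would apply Bernstein's inequality for the binomial: since $B_{T+2,p}$ has variance $(T+2)p(1-p)\asymp T$,
\[
\mathbb{P}\big(B_{T+2,p}\ge(T+2)p+s\big)\le\exp\Big(-\tfrac{s^2}{2(T+2)p(1-p)+\tfrac23 s}\Big).
\]
With $s\asymp T^{2/3}$ and $(T+2)p(1-p)\asymp T\gg s$, the exponent is $\asymp s^2/T\asymp T^{1/3}$; and $T\sim(d-1)An^{2/3}$ gives $T^{1/3}\ge cA^{1/3}n^{2/9}$. Since $T^{1/3}\to\infty$, the constant prefactor is absorbed into the exponential, yielding the bound $\exp(-cA^{1/3}n^{2/9})$ for all large $n$.

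The only genuinely delicate point is the orders-of-magnitude bookkeeping in the first step: one must verify that the shift $\lambda(T+2)n^{-1/3}$ — which records how far $p$ sits from the critical value $(d-1)^{-1}$ — is of strictly smaller order than $T^{2/3}$, so that the effective deviation $s$ really is of order $T^{2/3}$ rather than something smaller. Everything else is a routine Chernoff estimate, and the precise exponent $A^{1/3}n^{2/9}$ is forced by $T^{1/3}\asymp A^{1/3}n^{2/9}$.
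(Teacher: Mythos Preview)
Your proof is correct and follows essentially the same approach as the paper: bound the prefactor $(k+d-4)/(p^2(T+2))$ by a constant, collapse the sum of point probabilities into the single tail event $\{B_{T+2,p}\ge(T+2)p+s\}$ with $s\asymp T^{2/3}$, and then apply a Chernoff/Bernstein bound to obtain an exponent of order $T^{1/3}\asymp A^{1/3}n^{2/9}$. The paper invokes the Janson--\L uczak--Ruci\'nski form of the Chernoff bound (Lemma~\ref{Bol2}) and writes the threshold as $pT^{2/3}/2$ rather than $T^{2/3}/(2(d-1))$, but these differences are purely cosmetic.
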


We will prove both Lemmas \ref{smallksum} and \ref{largeksum} in Section \ref{ubappears_sec}. With these in hand, we are now in a position to establish the upper bounds stated in Theorem \ref{mainthm}.

\begin{proof}[Proof of the upper bounds in Theorem \ref{mainthm}]
Note that of the two quantities on the right-hand sides of Lemmas \ref{smallksum} and \ref{largeksum}, the fact that $A=o(1/30)$ ensures that the one from Lemma \ref{smallksum} is the larger. Thus, substituting the bounds from these lemmas into \eqref{summaryofprogress} gives that
\begin{equation}\label{finaleqforUB}
\mathbb{P}\left(d+\sum_{i=1}^{t}\xi_i>0\hspace{0.15cm}\forall t\in [T],\, d+\sum_{i=1}^{T}\xi_i>q(T)-h\right) \le \frac{c}{A^{1/2}n^{1/3}}e^{-G_\lambda(A,d)}.
\end{equation}
To complete the proof, we now recall the main points of the argument laid out so far. By \eqref{secondbound} we have
\begin{equation*}
\mathbb{P}(|\CC(V_n)|>\lfloor An^{2/3} \rfloor) \leq \mathbb{P}\left(d+\sum_{i=1}^{t}\eta'_i>0\hspace{0.15cm}\forall t\in [T]\right),
\end{equation*}
and by Lemma \ref{numberfresh} with $m=An^{4/15}$, plus Proposition \ref{mainpropforupper}, we obtain that
\[\mathbb{P}(|\CC(V_n)|>\lfloor An^{2/3} \rfloor)\le \mathbb{P}\left(d+\sum_{i=1}^{t}\mu_i>0\hspace{0.15cm}\forall t\in [T]\right) + cTe^{-c' n^{1/10}}.\]
Equations \eqref{eqq}, \eqref{mnm} and \eqref{readytoballot} then show that
\begin{multline*}
\mathbb{P}(|\CC(V_n)|>\lfloor An^{2/3} \rfloor)\le \mathbb{P}\left(d+\sum_{i=1}^{t}\xi_i>0\hspace{0.15cm}\forall t\in [T-1],\, d+\sum_{i=1}^{T}\xi_i>q(T)-h\right)\\
+\mathbb{P}\left(\sum_{i=1}^{T}\mu'_i\leq q(T)-h\right) + cTe^{-c' n^{1/10}}.
\end{multline*}
Applying Lemma \ref{arub} with $h=An^{4/15}$ gives that
\[\mathbb{P}\left(\sum_{i=1}^{T}\mu'_i\leq q(T)-h\right) \le cTe^{-c' n^{1/10}},\]
and finally from \eqref{finaleqforUB} together with the fact that $A=o(n^{1/30})$ we see that
\begin{equation}\label{uncondUB}
\mathbb{P}(|\CC(V_n)|>  An^{2/3} )\le \frac{c}{A^{1/2}n^{1/3}}e^{-G_\lambda(A,d)}.
\end{equation}
This is precisely the first upper bound in Theorem \ref{mainthm}, except that we have been working throughout via the exploration process described in Section \ref{exploration_sec}, which (as we mentioned in that section) generates a multigraph $\mathbb{G}'(n,d,p)$, whereas Theorem \ref{mainthm} concerns the (simple) graph $\mathbb{G}(n,d,p)$. Writing $\mathbb S_n$ for the event that the multigraph $\mathbb{G}'(n,d)$ underlying $\mathbb{G}'(n,d,p)$ (that is, the multigraph chosen uniformly at random from all $d$-regular multigraphs on $n$ vertices, before we carry out $p$-bond percolation) is simple, we have
\[\mathbb{P}\big(|\CC(V_n)|>  An^{2/3} \,\big|\, \mathbb S_n \big) = \frac{\mathbb{P}(|\CC(V_n)|>  An^{2/3},\, \mathbb S_n )}{\P(\mathbb S_n)} \le \frac{\mathbb{P}(|\CC(V_n)|>  An^{2/3} )}{\P(\mathbb S_n)}.\]
Since, as mentioned in Section \ref{config_model_sec}, $\P(\mathbb S_n)\to e^{(1-d^2)/4}$, the first upper bound in Theorem \ref{mainthm} follows.

For the second upper bound, concerning the size of the largest component, we proceed in a standard way (see e.g.~\cite{nachmias_peres:CRG_mgs}). 
Given any $k\in \mathbb{N}$, we denote by $N_k\coloneqq \sum_{i=1}^{n}\ind_{\{|\CC(i)|>k\}}$ the number of vertices located in components containing more than $k$ nodes. Then, by Markov's inequality, we obtain 
\begin{align*}
\mathbb{P}(|\mathcal{C}_{\text{max}}|>  An^{2/3}  )=\mathbb{P}(N_{\lfloor An^{2/3}\rfloor} >   An^{2/3} ) &\leq \frac{\mathbb{E}\big[N_{\lfloor An^{2/3}\rfloor}\big]}{  An^{2/3}}\\
&=\frac{n}{An^{2/3}}\mathbb{P}(|\CC(V_n)|> An^{2/3})\\
&\leq \frac{c}{A^{3/2}}e^{-G_\lambda(A,d)},
\end{align*}
completing the proof of the upper bounds in Theorem \ref{mainthm}.	
\end{proof}

The remainder of Section \ref{UBsec} is devoted to the proofs of those auxiliary results that have been used in our proof of the upper bounds in Theorem \ref{mainthm}. Specifically, we start by proving Proposition \ref{mainpropforupper} in Section \ref{mainpropforupper_sec}, and we proceed by establishing Lemma \ref{hereweapplymodifiedballot} in Section \ref{hereweapplymodifiedballot_sec}. Subsequently we prove, in Section \ref{ubappears_sec}, the two lemmas which give us the upper bound stated in Theorem \ref{mainthm}, namely Lemmas \ref{smallksum} and \ref{largeksum}. We finish with Section \ref{concentration_sec} where we prove the concentration bounds stated in Lemmas \ref{numberfresh} and \ref{arub}.

\subsection{Creating independent random variables: proof of Proposition \ref{mainpropforupper}}\label{mainpropforupper_sec}
Recall that  $\eta'_i=\ind_{R_i}(d-2)+\ind_{R_i}\ind_{F_i}$, where $R_i$ is the event that the edge $e_i h_i$ revealed during the $i$-th step of the exploration process is retained in the percolation, while $F_i$ is the event that the vertex $v(h_i)$ is fresh after the completion of step $i-1$. Also recall that $a_n(i) = n-1-i-i^2/2m$. As in Proposition \ref{mainpropforupper}, let $(U_i)_{i\geq 1}$ be a sequence of i.i.d.~random variables, also independent from all other random variables involved, with $U_1\sim U([0,1])$, and let
\[\mu_i = \ind_{R_i}(d-2) + \ind_{R_i}\ind_{\{U_i\le \frac{d(a_n(i-1)+m)}{dn - 2(i-1)-1}\}} - 1.\]

We now define, for $t\ge0$,
\[V_t \coloneqq \bigcap_{i=0}^t \{\calV^{(d)}_i \le a_n(i)+m\},\]
the event that the number of fresh vertices is at most $a_n(i)+m$ for each step $i\le t$. Let $\F_t$ be the $\sigma$-algebra generated by the exploration process up to step $t$. We begin with a simple observation, comparing $\eta'_j$ to $\mu_j$ on the event $V_{j-1}$.

\begin{lem}\label{etamuobs}
For any $j\ge 1$, the random variables $\eta'_j$ and $\mu_j$ take values in $\{-1,d-3,d-2\}$. They satisfy
\[\P(\eta'_j = -1\,|\,\F_{j-1}) = \P(\mu_j = -1\,|\,\F_{j-1}),\]
\[\P(\eta'_j = d-3\,|\,\F_{j-1})\ind_{V_{j-1}} \ge \P(\mu_j = d-3\,|\,\F_{j-1})\ind_{V_{j-1}},\]
and
\[\P(\eta'_j = d-2\,|\,\F_{j-1})\ind_{V_{j-1}} \le \P(\mu_j = d-2\,|\,\F_{j-1})\ind_{V_{j-1}}.\]
In other words, $\mu_j$ stochastically dominates $\eta'_j$ on the event $V_{j-1}$, given $\F_{j-1}$.
\end{lem}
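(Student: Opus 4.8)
The plan is to read off a common two-level structure from the definitions: both $\eta'_j$ and $\mu_j$ equal $-1$ on $R_j^c$, and on $R_j$ equal $d-2$ if a further event $E_j$ occurs and $d-3$ otherwise, where $E_j=F_j$ in the case of $\eta'_j$ and $E_j=\{U_j\le\theta_{j-1}\}$ in the case of $\mu_j$, with $\theta_{j-1}\coloneqq d(a_n(j-1)+m)/(dn-2(j-1)-1)\ge0$. This makes the value set $\{-1,d-3,d-2\}$ immediate and reduces everything to comparing three atom probabilities. The event $\{\,\cdot\,=-1\}$ equals $R_j^c$ for both variables; since the percolation retains each revealed edge with probability $p$ independently of the history and of which half-edge is sampled, $\P(\eta'_j=-1\mid\F_{j-1})=\P(\mu_j=-1\mid\F_{j-1})=1-p$, which proves the first identity with no reference to $V_{j-1}$.

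For the top atom I would compute both probabilities exactly. Conditionally on $\F_{j-1}$, the half-edge $h_j$ is uniform on the $dn-2(j-1)-1$ unexplored half-edges other than $e_j$; the half-edges incident to fresh vertices number exactly $d|\calV^{(d)}_{j-1}|$, are all still unseen (hence unexplored), and none of them is $e_j$ (the half-edge $e_j$ is active, so its endpoint carries an active half-edge and is not fresh). Hence $\P(F_j\mid\F_{j-1})=d|\calV^{(d)}_{j-1}|/(dn-2(j-1)-1)$, and, the retention coin being independent of the choice of $h_j$, $\P(\eta'_j=d-2\mid\F_{j-1})=p\,d|\calV^{(d)}_{j-1}|/(dn-2(j-1)-1)$. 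Since $U_j$ is independent of $\F_{j-1}$ and of $R_j$, $\P(\mu_j=d-2\mid\F_{j-1})=p\min\{\theta_{j-1},1\}$. On $V_{j-1}$ one has $|\calV^{(d)}_{j-1}|\le a_n(j-1)+m$, so $d|\calV^{(d)}_{j-1}|/(dn-2(j-1)-1)\le\theta_{j-1}$; this quantity is also a conditional probability, hence $\le 1$, and therefore $\le\min\{\theta_{j-1},1\}$. Multiplying through by the $\F_{j-1}$-measurable indicator $\ind_{V_{j-1}}$ yields the third displayed inequality.

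The second displayed inequality then follows for free: for each of the two variables $\P(\,\cdot\,=d-3\mid\F_{j-1})=\P(R_j\mid\F_{j-1})-\P(\,\cdot\,=d-2\mid\F_{j-1})=p-\P(\,\cdot\,=d-2\mid\F_{j-1})$, so it is obtained from the third inequality by subtracting from $p$. Finally, the stochastic-domination sentence is merely a reformulation: the three possible values satisfy $-1<d-3<d-2$, the $\{-1\}$-atoms agree, and the $\{d-2\}$-atom of $\mu_j$ dominates that of $\eta'_j$ on $V_{j-1}$, so $\P(\mu_j\le x\mid\F_{j-1})\ind_{V_{j-1}}\le\P(\eta'_j\le x\mid\F_{j-1})\ind_{V_{j-1}}$ for every $x\in\R$, which is precisely conditional stochastic domination of $\eta'_j$ by $\mu_j$ on $V_{j-1}$ given $\F_{j-1}$.

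The only step needing genuine care — and the one I would flag as the main (admittedly small) obstacle — is the exact identity $\P(F_j\mid\F_{j-1})=d|\calV^{(d)}_{j-1}|/(dn-2(j-1)-1)$. One must check from the description of the exploration process that every half-edge of a fresh vertex is still unexplored and that $e_j$ is never one of them, so that exactly $d|\calV^{(d)}_{j-1}|$ of the $dn-2(j-1)-1$ admissible choices for $h_j$ realise $F_j$; this is what makes the comparison exact rather than only approximate. Everything else is routine conditioning, and the clipping in $\min\{\theta_{j-1},1\}$ (coming from the possibility that $\theta_{j-1}>1$ for small $j$) causes no difficulty.
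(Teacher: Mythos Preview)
Your proof is correct and follows essentially the same approach as the paper's: both argue that the $-1$ atom has probability $1-p$ for each variable by independence of the retention coin, then compare the $d-2$ atoms via the bound $\P(F_j\mid\F_{j-1})\le d(a_n(j-1)+m)/(dn-2(j-1)-1)$ on $V_{j-1}$, and deduce the $d-3$ inequality from the other two. You are slightly more careful than the paper in two places --- you justify the exact formula $\P(F_j\mid\F_{j-1})=d|\calV^{(d)}_{j-1}|/(dn-2(j-1)-1)$ (the paper only needs and states the upper bound), and you handle the clipping $\min\{\theta_{j-1},1\}$ explicitly --- but neither addition changes the argument.
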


\begin{proof}
The first display is trivial since $R_j$ (the event that the edge $e_jh_j$ is retained) is independent of $\F_{j-1}$; both sides equal $1-p$. Since the random variables can only take three possible values, it suffices to show one of the other two displays. Again since $R_j$ is independent of $\F_{j-1}$,
\[\P(\eta'_j = d-2\,|\,\F_{j-1}) = \P(R_j\cap F_j\,|\,\F_{j-1}) = p\P(F_j|\F_{j-1}).\]
Now, on the event $V_{j-1}$, since $h_j$ is chosen uniformly from the set of all unexplored stubs after step $j-1$, of which there are exactly $dn - 2(i-1)-1$, we have
\[\P(F_j|\F_{j-1})\ind_{V_{j-1}} \le \frac{d(a_n(i-1)+m)}{dn - 2(i-1)-1}\ind_{V_{j-1}} = \P\Big(U_j\le\frac{d(a_n(i-1)+m)}{dn - 2(i-1)-1}\, \Big|\,\F_{j-1}\Big)\ind_{V_{j-1}},\]
where the last equality uses the independence of $U_j$ from $\mathcal{F}_{j-1}$. Combining the last two displays gives the result.
\end{proof}

For $j,t\ge0$, we define
\[S_t^{(j)} = d + \sum_{i=1}^{j\wedge t} \eta'_i + \sum_{i=(j\wedge t)+1}^t \mu_i.\]
The idea is that $S_t^{(j)}$ interpolates between summing $\eta'_i$ and summing $\mu_i$; we can change $j$ by one increment at a time to move gradually from one sum to the other. The key ingredient in proving Proposition \ref{mainpropforupper} is the following lemma.

\begin{lem}\label{etamumain}
For any $j\in[T]$,
\[\P\big(S_t^{(j)} > 0 \,\,\forall t\in[T]\,\big|\,\F_{j-1}\big)\ind_{V_{j-1}} \le \P\big(S_t^{(j-1)} > 0 \,\,\forall t\in[T]\,\big|\,\F_{j-1}\big)\ind_{V_{j-1}}.\]
\end{lem}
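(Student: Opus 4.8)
The plan is to condition one further step into the future and exploit Lemma \ref{etamuobs}. Fix $j\in[T]$ and work on the event $V_{j-1}\in\mathcal{F}_{j-1}$ (outside this event both sides are zero, so there is nothing to prove). The two processes $S_\cdot^{(j)}$ and $S_\cdot^{(j-1)}$ agree exactly up to time $j-1$; they differ only in that at step $j$ the first uses the increment $\eta'_j$ while the second uses $\mu_j$, and from step $j+1$ onwards \emph{both} use the increments $\mu_{j+1},\mu_{j+2},\dots$. So the entire difference between the two events is localised at the single increment at step $j$. First I would write, for $\star\in\{j-1,j\}$,
\[
\P\big(S_t^{(\star)}>0\,\forall t\in[T]\,\big|\,\F_{j-1}\big)
=\E\Big[\ind_{\{S_t^{(\star)}>0\,\forall t\le j-1\}}\,\P\big(S_t^{(\star)}>0\,\forall\,j\le t\le T\,\big|\,\F_{j-1},\text{increment at }j\big)\,\Big|\,\F_{j-1}\Big],
\]
noting that the indicator over $t\le j-1$ is $\mathcal{F}_{j-1}$-measurable and identical for both $\star$.

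The key structural point I would establish is a \emph{monotonicity} statement: conditionally on $\mathcal{F}_{j-1}$, if we set $S_j = d+\sum_{i=1}^{j-1}\eta'_i + W$ for a value $W\in\{-1,d-3,d-2\}$ and then continue with the i.i.d.-in-law-given-$\mathcal F_{j-1}$ increments $\mu_{j+1},\dots,\mu_T$ (which, crucially, do not depend on $W$), the function
\[
w\mapsto g(w):=\P\big(S_j = d+\textstyle\sum_{i=1}^{j-1}\eta'_i + w>0\text{ and }S_t>0\ \forall\,j< t\le T\,\big|\,\F_{j-1}\big)
\]
is nondecreasing in $w$. This is because a larger starting value $w$ makes the partial sums $S_t = S_j + \sum_{i=j+1}^t\mu_i$ pointwise larger for every $t>j$ and for every realisation of $\mu_{j+1},\dots,\mu_T$, hence the event of staying positive is monotone; this is a standard coupling/pathwise-domination argument. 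Then, since on $V_{j-1}$ Lemma \ref{etamuobs} says $\mu_j$ stochastically dominates $\eta'_j$ given $\mathcal{F}_{j-1}$, and $g$ is nondecreasing and bounded, we get $\E[g(\eta'_j)\mid\mathcal F_{j-1}]\ind_{V_{j-1}}\le \E[g(\mu_j)\mid\mathcal F_{j-1}]\ind_{V_{j-1}}$. Multiplying by the common $\mathcal F_{j-1}$-measurable factor $\ind_{\{S_t^{(\cdot)}>0\,\forall t\le j-1\}}$ and taking expectations yields the claimed inequality.

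The one technical subtlety — and the main thing to be careful about rather than a genuine obstacle — is that $\mu_j$ is \emph{not} independent of $\mathcal F_{j-1}$ (its law depends on the $\mathcal F_{j-1}$-measurable indicator $\ind_{\{|\calV^{(d)}_{i-1}|\le a_n(i-1)+m\}}$ and on the deterministic $a_n(i-1)$), and $\eta'_j$ is certainly not independent of $\mathcal F_{j-1}$ either; but Lemma \ref{etamuobs} is precisely a statement about their conditional laws given $\mathcal F_{j-1}$, so the stochastic-domination step is legitimate provided we keep everything conditional on $\mathcal F_{j-1}$ throughout. I also need the increments $\mu_{j+1},\dots,\mu_T$ used after step $j$ to have a conditional law given $\mathcal F_{j-1}$ that does not depend on the value taken by the step-$j$ increment — this holds because, in the definition of $S^{(j-1)}$ and $S^{(j)}$, these later increments are the \emph{same} random variables $\mu_{j+1},\dots,\mu_T$ built from the $U_i$ and the exploration process, and the pathwise-domination argument only uses that they are common to both processes, not their precise distribution. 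With these points noted, the proof is short. I would present it essentially as the three displays above: peel off the $\mathcal F_{j-1}$-measurable prefix, invoke pathwise monotonicity of the continuation probability in the step-$j$ increment, and apply Lemma \ref{etamuobs}.
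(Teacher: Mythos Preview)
Your proposal is correct and follows essentially the same route as the paper: isolate the common $\F_{j-1}$-measurable prefix, use monotonicity of the continuation probability in the step-$j$ increment, and apply the stochastic domination from Lemma \ref{etamuobs}. One small correction worth noting: $\mu_i$ actually \emph{is} independent of $\F_{j-1}$ for every $i\ge j$, since by \eqref{zi} it is built only from $\ind_{R_i}$, $U_i$ and deterministic quantities (the indicator $\ind_{\{|\calV^{(d)}_{i-1}|\le a_n(i-1)+m\}}$ does not appear in its definition); the paper uses precisely this independence to write the continuation probability $\P(s+q+\sum_{i=j+1}^t\mu_i>0\ \forall t)$ as an unconditional quantity, which is also what cleanly justifies your stochastic-domination step --- your ``same random variables'' remark is not quite enough on its own, as you implicitly need $\eta'_j$ (respectively $\mu_j$) to be conditionally independent of $(\mu_{j+1},\dots,\mu_T)$ given $\F_{j-1}$.
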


\begin{proof}
Take $j\in[T]$. 
Since $\eta'_i$ is $\F_{j-1}$-measurable for every $i\le j-1$, and therefore $S^{(j)}_{j-1}$ is $\F_{j-1}$-measurable, we can split the probability of interest over the possible values of $S^{(j)}_{j-1}$ to give
\begin{multline*}
\P\big(S_t^{(j)} > 0 \,\,\forall t\in[T]\,\big|\,\F_{j-1}\big)\\
= \sum_{s = 1}^{\infty} \ind_{\{S^{(j)}_{j-1} = s\}}\ind_{\{S^{(j)}_i > 0 \,\,\forall i\in [j-1]\}} \P\bigg(s + \eta'_j + \sum_{i=j+1}^t \mu_i > 0 \,\, \forall t\in[T]\setminus[j-1]\,\bigg|\,\F_{j-1}\bigg).
\end{multline*}
(In fact, $S^{(j)}_{j-1}$ can take a maximum value of $d+(j-1)(d-2)$ so the sum above has only a finite number of positive summands.) Now observe that $\mu_i$ is independent of $\F_{j-1}$ for every $i\ge j+1$. Further splitting the probability on the right-hand side above over the possible values $Q=\{-1,d-2,d-3\}$ of $\eta'_j$, we therefore have
\begin{align}
&\P\big(S_t^{(j)} > 0 \,\,\forall t\in[T]\,\big|\,\F_{j-1}\big)\nonumber\\
&= \sum_{s = 1}^{\infty} \ind_{\{S^{(j)}_{j-1} = s\}}\ind_{\{S^{(j)}_i > 0 \,\,\forall i\in [j-1]\}}\nonumber\\
&\hspace{30mm}\cdot\sum_{q\in Q} \P\bigg(s+q+\sum_{i=j+1}^t \mu_i > 0 \,\,\forall t\in[T]\setminus[j-1]\bigg)\P(\eta'_j = q\,|\,\F_{j-1}).\label{nutomu1}
\end{align}
We further note that, by exactly the same argument, \eqref{nutomu1} holds also for $S_t^{(j-1)}$, provided that $\nu'_j$ is replaced by $\mu_j$. That is,
\begin{align}
&\P\big(S_t^{(j-1)} > 0 \,\,\forall t\in[T]\,\big|\,\F_{j-1}\big)\nonumber\\
&= \sum_{s = 1}^{\infty} \ind_{\{S^{(j-1)}_{j-1} = s\}}\ind_{\{S^{(j-1)}_i > 0 \,\,\forall i\in [j-1]\}}\nonumber\\
&\hspace{30mm}\cdot \sum_{q\in Q}\P\bigg(s+q+\sum_{i=j+1}^t \mu_i > 0 \,\,\forall t\in[T]\setminus[j-1]\bigg)\P(\mu_j = q\,|\,\F_{j-1}).\label{nutomu2}
\end{align}

We now apply Lemma \ref{etamuobs}, which tells us that $\mu_j$ stochastically dominates $\eta'_j$ on the event $V_{j-1}$, given $\F_{j-1}$. Since
\[\P\Big(s+q+\sum_{i=j+1}^t \mu_i > 0 \,\,\forall t\in[T]\setminus[j-1]\Big)\]
is increasing in $q$, we deduce that
\begin{multline*}
\sum_{q\in Q} \P\Big(s+q+\sum_{i=j+1}^t \mu_i > 0 \,\,\forall t\in[T]\setminus[j-1]\Big)\P(\eta'_j = q\,|\,\F_{j-1})\ind_{V_{j-1}}\\
\le \sum_{q\in Q} \P\Big(s+q+\sum_{i=j+1}^t \mu_i > 0 \,\,\forall t\in[T]\setminus[j-1]\Big)\P(\mu_j = q\,|\,\F_{j-1})\ind_{V_{j-1}}.
\end{multline*}
The result follows by combining this with \eqref{nutomu1} and \eqref{nutomu2}.
\end{proof}

The proof of Proposition \ref{mainpropforupper} is now a straightforward application of the above lemma together with the tower property.

\begin{proof}[Proof of Proposition \ref{mainpropforupper}]
For any $j\in[T]$, by Lemma \ref{etamumain},
\begin{align*}
\P\big(S_t^{(j)} > 0 \,\,\forall t\in[T],\,V_{j-1}\big) &= \E\big[\P\big(S_t^{(j)} > 0 \,\,\forall t\in[T]\,\big|\,\F_{j-1}\big)\ind_{V_{j-1}}\big]\\
&\le \E\big[\P\big(S_t^{(j-1)} > 0 \,\,\forall t\in[T]\,\big|\,\F_{j-1}\big)\ind_{V_{j-1}}\big]\\
&= \P\big(S_t^{(j-1)} > 0 \,\,\forall t\in[T],\,V_{j-1}\big).
\end{align*}
Since $V_t = \bigcap_{i=0}^t \{\calV^{(d)}_i \le a_n(i)+m\}$ is decreasing in $t$, and interpreting $V_{-1}$ to be the empty intersection (i.e.~the whole sample space), we have
\[\P\big(S_t^{(j)} > 0 \,\,\forall t\in[T],\,V_{j-1}\big)\le \P\big(S_t^{(j-1)} > 0 \,\,\forall t\in[T],\,V_{j-2}\big).\]
Repeating $T$ times, we have
\[\P\big(S_t^{(T)} > 0 \,\,\forall t\in[T],\,V_{T-1}\big) \le \P\big(S_t^{(0)} > 0 \,\,\forall t\in[T],\,V_{-1}\big) = \P\big(S_t^{(0)} > 0 \,\,\forall t\in[T]).\]
But for any $t\le T$, we have $S_t^{(T)} = d+\sum_{i=1}^t \nu'_i$ and $S_t^{(0)} = d + \sum_{i=1}^t \mu_i$, and therefore the line above is exactly the statement of the proposition.
\end{proof}


\subsection{The probability of staying positive and finishing above $q(T)$: proof of Lemma \ref{hereweapplymodifiedballot}}\label{hereweapplymodifiedballot_sec}
Recall that we are trying to bound from above the probability 
\begin{equation}\label{pk}
\mathbb{P}\left(d+\sum_{i=1}^{j}(\xi_i-1)>0\hspace{0.15cm}\forall j\in [t], d+\sum_{i=1}^{t}(\xi_i-1)=k\right).
\end{equation}
We wish to use the following result, which is Corollary 2.3 in \cite{de_ambroggio_roberts:near_critical_ER}.

\begin{lem}\label{ballotcor}
	Let $(X_i)_{i\ge 1}$ be i.i.d.~random variables taking values in $\mathbb{Z}$, whose distribution may depend on $t$. Let $h\in \mathbb{N}$, and suppose that $\mathbb{P}(X_1=h)>0$. Then for any $t,k\in\N$ we have
	\begin{equation*}
	\mathbb{P}\left(h+\sum_{i=1}^j X_i >0\hspace{0.15cm} \forall j\in [t],\, h+\sum_{i=1}^t X_i =k\right)\leq \frac{1}{\mathbb{P}(X_1=h)}\frac{k}{t+1}\mathbb{P}\left(\sum_{i=1}^{t+1} X_i=k\right).
	\end{equation*}
\end{lem}

Observe that we can't directly apply Lemma \ref{ballotcor} to bound the probabilities in (\ref{pk}), because $\mathbb{P}(\xi_i-1=d)=\mathbb{P}(\xi_i=d+1)=0$ (since $\xi_i\in \{0,d-1\}$). To solve this issue, we use the following simple tactic.



Let $\xi_0$ be a random variable independent from $(\xi_i)_{i\geq 1}$, such that $\xi_0\overset{d}{=}\xi_1$. Note that, by independence,
\begin{align*}
&\mathbb{P}\left(d+\sum_{i=1}^j (\xi_i-1)>0\hspace{0.15cm}\forall j\in [t], d+\sum_{i=1}^t (\xi_i-1) =k\right)\\
&=\frac{1}{\mathbb{P}(\xi_0-1=d-2)}\mathbb{P}\left(d+\sum_{i=1}^j (\xi_i-1)>0\hspace{0.15cm}\forall j\in [t], d+\sum_{i=1}^t (\xi_i-1) =k,\,\xi_0-1=d-2\right)\\
&\le \frac{1}{p}\mathbb{P}\left(2+\sum_{i=0}^j (\xi_i-1)>0\hspace{0.15cm}\forall j\in [t], 2+\sum_{i=0}^t(\xi_i-1)=k\right).
\end{align*}
Now, since $(\xi_i)_{i\ge0}$ has the same distribution as $(\xi_i)_{i\ge1}$, the above equals
\begin{equation}\label{worksfor3too}
\frac{1}{p}\mathbb{P}\left(2+\sum_{i=1}^j (\xi_i-1)>0\hspace{0.15cm}\forall j\in [t+1], 2+\sum_{i=1}^{t+1}(\xi_i-1)=k\right).
\end{equation}
Now suppose that $d\ge 4$. Then we have $2\le d-2$, and therefore the above is at most
\[\frac{1}{p}\mathbb{P}\left(d-2+\sum_{i=1}^j (\xi_i-1)>0\hspace{0.15cm}\forall j\in [t+1], d-2+\sum_{i=1}^{t+1}(\xi_i-1)=k+d-4\right).\]
We can now apply Lemma \ref{ballotcor} to conclude that this is bounded from above by
\[\frac{k+d-4}{p^2(t+2)}\mathbb{P}\left(\sum_{i=1}^{t+2}\xi_i = k+d-4\right),\]
which establishes Lemma \ref{hereweapplymodifiedballot} in the case $d\ge 4$.

When $d=3$ we need to apply the same trick once more: returning to \eqref{worksfor3too}, first using independence of $\xi_0$ and then the equality in distribution we have
\begin{align*}
\eqref{worksfor3too} &= \frac{1}{p^2}\mathbb{P}\left(2+\sum_{i=1}^j (\xi_i-1)>0\hspace{0.15cm}\forall j\in [t+1], 2+\sum_{i=1}^{t+1}(\xi_i-1)=k,\, \xi_0-1=1\right)\\
&\le \frac{1}{p^2}\mathbb{P}\left(1+\sum_{i=0}^j (\xi_i-1)>0\hspace{0.15cm}\forall j\in [t+1], 1+\sum_{i=0}^{t+1}(\xi_i-1)=k\right)\\
&= \frac{1}{p^2}\mathbb{P}\left(1+\sum_{i=1}^j (\xi_i-1)>0\hspace{0.15cm}\forall j\in [t+2], 1+\sum_{i=1}^{t+2}(\xi_i-1)=k\right).
\end{align*}
Applying Lemma \ref{ballotcor} now gives the result in the case $d=3$, completing the proof.

\subsection{The upper bound in Theorem \ref{mainthm} appears: proof of Lemmas \ref{smallksum} and \ref{largeksum}}\label{ubappears_sec}

To approximate the sum in Lemma \ref{smallksum}, we use the following result from \cite{bollobas_book}.

\begin{lem}[Theorem 1.2 of \cite{bollobas_book}]\label{bolBNPlem}
Let $\text{Bin}_{N,P}$ be a binomial random variable of parameters $N$ and $p$. Suppose that $PN\ge1$ and $1\le x(1-P)N/3$. Then if $j\ge PN+x$, we have
\[\P(\text{Bin}_{N,P}=j) < \frac{1}{\sqrt{2\pi P(1-P)N}} \exp\left(-\frac{x^2}{2P(1-P)N} + \frac{x}{(1-P)N} + \frac{x^3}{P^2 N^2}\right).\]
\end{lem}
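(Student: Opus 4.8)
The plan is to treat this as a refined local-limit estimate for the binomial and to prove it directly from Stirling's formula. One does not really need to reduce to a single value of $j$: for any admissible $j$ one estimates $\P(\Bin_{N,P}=j)$ from above, keeping track of $y\coloneqq j-PN$, which is at least $x\ge 1$, and only substitutes $y\ge x$ at the very end. (It is worth noting that, since the likelihood ratio $\P(\Bin_{N,P}=j+1)/\P(\Bin_{N,P}=j)=(N-j)P/((j+1)(1-P))$ falls below $1$ once $j\ge PN+P-1$, the map $j\mapsto\P(\Bin_{N,P}=j)$ is decreasing on $j\ge PN+x$; so the estimate is tightest when $y$ is close to $x$, and there is exponential slack otherwise.)

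Applying Stirling's formula $m!=\sqrt{2\pi m}\,(m/e)^m e^{r_m}$ with $0<r_m<1/(12m)$ to the three factorials in $\binom{N}{j}$ gives
\[\P(\Bin_{N,P}=j)\le\frac{1}{\sqrt{2\pi j(N-j)/N}}\,e^{1/(12N)}\Big(\tfrac{PN}{j}\Big)^{j}\Big(\tfrac{(1-P)N}{N-j}\Big)^{N-j}.\]
With $s=y/(PN)$ and $t=y/((1-P)N)$, the product of the last two factors equals $\exp\!\big(-PN\phi(s)-(1-P)N\psi(t)\big)$, where $\phi(s)=(1+s)\log(1+s)-s$ and $\psi(t)=t+(1-t)\log(1-t)$; crucially, the linear-in-$y$ parts of $\phi$ and $\psi$ cancel (this is just the fact that the binomial entropy is minimized at the mean). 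Using the elementary bounds $\phi(s)\ge s^2/2-s^3/6$ for $s\ge0$ and $\psi(t)\ge t^2/2$ for $t\in[0,1)$ — the latter requiring $t<1$, which is exactly the point of the hypothesis $x\le(1-P)N/3$ — one gets $PN\phi(s)+(1-P)N\psi(t)\ge \frac{y^2}{2P(1-P)N}-\frac{y^3}{6P^2N^2}$. It then remains to substitute $y\ge x$ in the Gaussian term and absorb the cubic correction, the factor $e^{1/(12N)}$, and the discrepancy between $1/\sqrt{j(N-j)/N}$ and $1/\sqrt{P(1-P)N}$ into the permitted error budget $\frac{x}{(1-P)N}+\frac{x^3}{P^2N^2}$.

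The Stirling step and the Taylor inequalities for $\phi$ and $\psi$ are routine; the main obstacle is this last step, namely passing from the estimate written naturally in terms of $y$ and $j(N-j)/N$ to the claimed one in terms of $x$ and $P(1-P)N$, uniformly over all $N,P$ and all $j\ge PN+x$ and using only $PN\ge1$ and $x\le(1-P)N/3$. Two effects need care. When $j$ exceeds $N/2$ the prefactor $1/\sqrt{j(N-j)/N}$ is \emph{larger} than $1/\sqrt{P(1-P)N}$, so one must spend part of the exponential decay to compensate. And when $y$ greatly exceeds $x$ the cubic term $y^3/(6P^2N^2)$ is large, but is then overwhelmed by the extra Gaussian decay $(y^2-x^2)/(2P(1-P)N)$; so one naturally treats $y$ close to $x$ and $y$ large as separate regimes, the error terms $\frac{x}{(1-P)N}$ and $\frac{x^3}{P^2N^2}$ being exactly what is needed to cover the corrections in the first regime. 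Carrying an explicit constant through these comparisons — which is what forces the slightly ungainly form of the stated error terms — is the delicate part; the rest is standard.
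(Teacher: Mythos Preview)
The paper does not prove this lemma at all: it is quoted as Theorem~1.2 of Bollob\'as's book \cite{bollobas_book} and used as a black box in the proof of Lemma~\ref{smallksum}. So there is no in-paper proof to compare against.

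Your Stirling-based approach is the standard one, and indeed is essentially how Bollob\'as proves it. A couple of remarks on the write-up. First, the cleanest way to handle the range $j\ge PN+x$ is the monotonicity observation you make parenthetically: since the point mass is decreasing in $j$ once $j\ge PN$, it suffices to treat the single value $j=\lceil PN+x\rceil$, for which $y\in[x,x+1)$. This removes the ``two regimes'' issue you describe (large $y$ versus $y$ close to $x$) entirely, and the error terms $\frac{x}{(1-P)N}+\frac{x^3}{P^2N^2}$ then absorb the $O(1)$ discrepancy between $y$ and $x$, the prefactor correction $\sqrt{P(1-P)N/(j(N-j)/N)}$, and the $e^{1/(12N)}$ without difficulty. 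Second, your phrasing ``one does not really need to reduce to a single value of $j$'' followed by the monotonicity aside is slightly at cross-purposes; committing to the reduction makes the final paragraph unnecessary.
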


We can now proceed with our proof of Lemma \ref{smallksum}. Recall that
\[x_{d,n}(k,\lambda,T) = \frac{k+d-4-\lambda(T+2)n^{-1/3}}{d-1}.\]

\begin{proof}[Proof of Lemma \ref{smallksum}]
Applying Lemma \ref{bolBNPlem} with $N=T+2$, $P=p$, $x=x_{d,n}(k,\lambda,T)$ and $j=(T+2)p + x_{d,n}(k,\lambda,T)$, we have
\begin{multline*}
\mathbb{P}\big(B_{T+2,p}=(T+2)p + x_{d,n}(k,\lambda,T)\big)\\
< \frac{c}{T^{1/2}} \exp\left(-\frac{x_{d,n}(k,\lambda,T)^2}{2p(1-p)(T+2)} + \frac{x_{d,n}(k,\lambda,T)}{(1-p)(T+2)} + \frac{x_{d,n}(k,\lambda,T)^3}{p^2(T+2)^2}\right).
\end{multline*}
When $k\le T^{2/3}$, the last two terms in the exponent are $O(1)$, and so
\[\mathbb{P}\big(B_{T+2,p}=(T+2)p + x_{d,n}(k,\lambda,T)\big) < \frac{c}{T^{1/2}} \exp\left(-\frac{x_{d,n}(k,\lambda,T)^2}{2p(1-p)(T+2)}\right).\]
We deduce that
\begin{multline}\label{sumtoint}
\frac{1}{p^2(T+2)}\sum_{k=\lfloor q(T)-h\rfloor+1}^{\lfloor T^{2/3}\rfloor}(k+d-4)\mathbb{P}\big(B_{T+2,p}=(T+2)p + x_{d,n}(k,\lambda,T)\big)\\
\le \frac{c}{T^{3/2}} \sum_{k=\lfloor q(T)-h\rfloor+1}^{\lfloor T^{2/3}\rfloor} (k+d-4)\exp\left(-\frac{x_{d,n}(k,\lambda,T)^2}{2p(1-p)(T+2)}\right),
\end{multline}
and the right-hand side above is easily seen to be at most
\[\frac{c}{T^{3/2}} \int_{\lfloor q(T)-h\rfloor}^\infty (y+d-4)\exp\left(-\frac{x_{d,n}(y,\lambda,T)^2}{2p(1-p)(T+2)}\right)\d y.\]
Recalling from \eqref{xdndef} that  $x_{d,n}(y,\lambda,T) = \frac{y+d-4-\lambda(T+2)n^{-1/3}}{d-1}$, in order to bound the integral by one that can be calculated exactly, it is useful to note that for $y\ge \lfloor q(T)-h\rfloor$,
\[y + d -4 \asymp y + d - 4 - \lambda(T+2)n^{-1/3};\]
this follows from the condition on $|\lambda|$ in Theorem \ref{mainthm}. Thus \eqref{sumtoint} is bounded above by
\begin{align}
&\frac{c}{T^{3/2}} \int_{\lfloor q(T)-h\rfloor}^\infty (y + d - 4 - \lambda(T+2)n^{-1/3})\exp\left(-\frac{(y + d - 4 - \lambda(T+2)n^{-1/3})^2}{2(d-1)^2 p(1-p)(T+2)}\right)\d y\nonumber\\
&= \frac{c}{T^{3/2}}(d-1)^2 p(1-p)(T+2)\exp\left(-\frac{(\lfloor q(T)-h\rfloor + d-4 - \lambda(T+2)n^{-1/3})^2}{2(d-1)^2 p(1-p)(T+2)}\right)\nonumber\\
&\le \frac{c}{A^{1/2}n^{1/3}}\exp\left(-\frac{(\lfloor q(T)-h\rfloor + d-4 - \lambda(T+2)n^{-1/3})^2}{2(d-1)^2 p(1-p)(T+2)}\right).\label{integraldone}
\end{align}

Concentrating on the exponent in \eqref{integraldone}, since $h\le An^{4/15}$ and $A=o(n^{1/30})$ and therefore $hq(T)\ll T$, we have
\begin{align*}
\frac{(\lfloor q(T)-h\rfloor + d-4 - \lambda(T+2)n^{-1/3})^2}{2(d-1)^2 p(1-p)(T+2)} &= \frac{(q(T)-\lambda T n^{-1/3})^2}{2(d-1)(1-\frac{1}{d-1})T} + o(1)\\
&= \frac{q(T)^2 - 2q(T)\lambda T n^{-1/3} + \lambda^2 T^2 n^{-2/3}}{2(d-2)T} + o(1).
\end{align*}
Recalling from Lemma \ref{arub} that $q(T)=p(1-2/d)\frac{T(T-1)}{2n}$, we see that the above is
\[\frac{(d-2)^2 T^4}{4(d-1)^2 d^2 n^2 \cdot 2(d-2)T} - \frac{(d-2)T^2 \cdot \lambda T n^{-1/3}}{(d-1)d n\cdot 2(d-2)T} + \frac{\lambda^2 T^2 n^{-2/3}}{2(d-2)T} + o(1)\]
and since $T=(d-1)An^{2/3} + O(n^{1/2})$, this equals
\[\frac{A^3(d-1)(d-2)}{8d^2} - \frac{A^2\lambda(d-1)}{2d} + \frac{A\lambda^2(d-1)}{2(d-2)} + o(1).\]
Combining the above calculations with \eqref{sumtoint} and \eqref{integraldone}, we have shown that
\begin{multline*}
\frac{1}{p^2(T+2)}\sum_{k=\lfloor q(T)-h\rfloor+1}^{\lfloor T^{2/3}\rfloor}(k+d-4)\mathbb{P}\big(B_{T+2,p}=(T+2)p + x_{d,n}(k,\lambda,T)\big)\\
\le \frac{c}{A^{1/2}n^{1/3}}\exp\left(-\frac{A^3(d-1)(d-2)}{8d^2} + \frac{A^2\lambda(d-1)}{2d} - \frac{A\lambda^2(d-1)}{2(d-2)}\right).
\end{multline*}
The exponent above is exactly $-G_\lambda(A,d)$ and the proof is complete.
\end{proof}

To prove Lemma \ref{largeksum}, we will need a Chernoff bound for the Binomial distribution. The following version comes from \cite{janson_et_al:random_graphs}.

\begin{lem}[{\cite[Theorem 2.1]{janson_et_al:random_graphs}}]\label{Bol2}
	Let $B_{N,P}$ be a binomial random variable of parameters $N$ and $p$. Then for every $x\geq 0$ we have
	\[\mathbb{P}(B_{N,P}\ge NP+x)\leq \exp\left(-\frac{x^2}{2(NP+x/3)}\right).\]
\end{lem}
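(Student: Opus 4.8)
The plan is to prove Lemma~\ref{Bol2} by the standard Chernoff (exponential Markov) method, followed by an elementary calculus estimate to massage the resulting Bennett-type bound into the stated form. First I would fix $t>0$ and apply Markov's inequality to $e^{tB_{N,P}}$: since the moment generating function of a binomial factorises,
\[\mathbb{P}(B_{N,P}\ge NP+x)\le e^{-t(NP+x)}\,\mathbb{E}\big[e^{tB_{N,P}}\big]=e^{-t(NP+x)}\big(1-P+Pe^t\big)^{N}.\]
Using the elementary inequality $1+y\le e^{y}$ with $y=P(e^t-1)$ gives $(1-P+Pe^t)^N\le\exp\big(NP(e^t-1)\big)$, so that
\[\mathbb{P}(B_{N,P}\ge NP+x)\le \exp\!\big(NP(e^t-1)-t(NP+x)\big).\]

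Next I would optimise the exponent over $t>0$. Writing $\mu=NP$ and differentiating $\mu(e^t-1)-t(\mu+x)$ in $t$, the minimiser is $t=\log(1+x/\mu)$, which is nonnegative for $x\ge0$; substituting this value yields Bennett's inequality
\[\mathbb{P}(B_{N,P}\ge NP+x)\le \exp\!\big(-\mu\,\varphi(x/\mu)\big),\qquad \varphi(u):=(1+u)\log(1+u)-u.\]
(If $\mu=0$ the claim is trivial, and if $x=0$ the right-hand side of the lemma is $1$; so we may assume $\mu,x>0$.)

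Finally, to recover the precise denominator $2(NP+x/3)$, I would invoke the elementary bound $\varphi(u)\ge \dfrac{u^{2}}{2(1+u/3)}$, valid for all $u\ge0$; taking $u=x/\mu$ gives $\mu\,\varphi(x/\mu)\ge \dfrac{x^{2}}{2(\mu+x/3)}=\dfrac{x^{2}}{2(NP+x/3)}$, and the lemma follows. The only non-mechanical step is this last inequality. I would prove it by setting $g(u):=2(1+u/3)\varphi(u)-u^{2}$ and checking that $g(0)=g'(0)=g''(0)=0$ while $g'''(u)=\dfrac{4u}{3(1+u)^{2}}\ge0$ for $u\ge0$, so that successive integration from $0$ gives $g\ge0$ on $[0,\infty)$, which is exactly the claimed inequality. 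This short calculus verification is the main (and essentially only) obstacle; alternatively, since the statement is precisely Theorem~2.1 of \cite{janson_et_al:random_graphs}, one may simply cite it rather than reproduce the argument.
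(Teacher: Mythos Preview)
Your proof is correct and is precisely the standard derivation of this Chernoff--Bennett bound; the paper itself does not prove the lemma at all but merely cites \cite[Theorem~2.1]{janson_et_al:random_graphs}, whose proof is essentially the argument you have written. Your calculus verification of $\varphi(u)\ge u^{2}/(2(1+u/3))$ via $g'''(u)=\tfrac{4u}{3(1+u)^{2}}\ge 0$ is accurate and is the clean way to establish the exact denominator $2(NP+x/3)$.
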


We now apply this elementary bound to prove Lemma \ref{largeksum}.

\begin{proof}[Proof of Lemma \ref{largeksum}]
We first bound the factor of $k+d-4$ by a constant times $T$, so that
\begin{multline*}
\frac{1}{p^2(T+2)}\sum_{k=\lfloor T^{2/3}\rfloor+1}^{d+T(d-2)}(k+d-4)\mathbb{P}\big(B_{T+2,p}=(T+2)p + x_{d,n}(k,\lambda,T)\big)\\
\le c\P\big(B_{T+2,p}\ge (T+2)p + x_{d,n}(\lfloor T^{2/3}\rfloor+1,\lambda,T)\big).
\end{multline*}
Now note that since $|\lambda|=o(n^{1/30})$, for sufficiently large $n$ we have
\[x_{d,n}(\lfloor T^{2/3}\rfloor+1,\lambda,T) \ge \frac{pT^{2/3}}{2}.\]
Thus, by Lemma \ref{Bol2},
\begin{align*}
\P\big(B_{T+2,p}\ge (T+2)p + x_{d,n}(\lfloor T^{2/3}\rfloor+1,\lambda,T)\big) &\le \P\big(B_{T+2,p}\ge (T+2)p + pT^{2/3}/2\big)\\
&\le \exp\left(-\frac{p^2 T^{4/3}}{8((T+2)p+pT^{2/3}/6)}\right).
\end{align*}
Since $T$ is of order $An^{2/3}$, the above is at most $\exp(-c A^{1/3}n^{2/9})$ for some $c>0$ depending on $d$, and the result follows.
\end{proof}

\subsection{Concentration bounds: proofs of Lemmas \ref{numberfresh} and \ref{arub}}\label{concentration_sec}

\begin{proof}[Proof of Lemma \ref{numberfresh}]
	Recall that $a_n(i)=n-1-i-i^2/2n$. A union bound gives us
	\begin{equation*}
	\mathbb{P}\left(\exists \hspace{0.15cm}i\in \{0\}\cup[T-1]: |\mathcal{V}^{(d)}_i|> a_n(i)+ m\right)\leq \sum_{i=0}^{T-1}\mathbb{P}\left( |\mathcal{V}^{(d)}_i|\geq a_n(i)+ m\right).
	\end{equation*}
	Next we bound the probabilities on the right-hand side. 
	Notice that, since $|\mathcal{V}^{(d)}_0|=n-1$ and $a_n(0)=n-1$, we have that $\mathbb{P}\big( |\mathcal{V}^{(d)}_0|\geq a_n(0)+m\big)=0$ and hence we can assume throughout that $i\geq 1$. Now by definition of $a_n(i)$ and using the identity
	\begin{equation}\label{lowboundnumbfresh}
	|\mathcal{V}^{(d)}_i|=|\mathcal{V}^{(d)}_0|-\sum_{j=1}^{i}\ind_{F_j}=n-1-\sum_{j=1}^{i}(1-\ind_{F^c_j})=n-1-i+\sum_{j=1}^{i}\ind_{F^c_j},
	\end{equation}
	we can write
	\begin{equation}
	\nonumber\mathbb{P}\left( |\mathcal{V}^{(d)}_i|\geq a_n(i)+m\right)= \mathbb{P}\bigg( \sum_{j=1}^{i}\ind_{F^c_j}\geq i^2/2n+m\bigg).
	\end{equation}
	Now, given any $r>0$, by Markov's inequality we have
	\begin{align}\label{mgftobound}
	\mathbb{P}\bigg( \sum_{j=1}^{i}\ind_{F^c_j}\geq i^2/2n+m\bigg)\leq e^{-r(i^2/2n+m)}\mathbb{E}\left[e^{r\sum_{j=1}^{i}\ind_{F^c_j}}\right].
	\end{align}
	Next we bound from above the expectation in (\ref{mgftobound}). We write
	\begin{align}\label{towerexp}
	\mathbb{E}\left[e^{r\sum_{j=1}^{i}\ind_{F^c_j}}\right]=\mathbb{E}\left[e^{r\sum_{j=1}^{i-1}\ind_{F^c_j}}\mathbb{E}\left(\left.e^{r\ind_{F^c_i}}\right|\mathcal{F}_{i-1}\right)\right]
	\end{align}
	and observe that
	\begin{equation*}
	\mathbb{E}\left(\left.e^{r\ind_{F^c_i}}\right|\mathcal{F}_{i-1}\right)=1+\mathbb{P}(F^c_i|\mathcal{F}_{i-1})(e^r-1).
	\end{equation*}
	We claim that the number of unexplored stubs that are incident to vertices which are no longer fresh at the end of step $i-1$ is at most $di$. Indeed, at each step the number of non-fresh vertices can increase by at most one, and this vertex contributes at most $d$ stubs to the count. Since $h_i$ is picked uniformly at random from the set of unexplored stubs at the end of step $i-1$, of which there are exactly $dn-2(i-1)-1$, we can therefore bound
	\begin{align*}
	\mathbb{P}(F^c_i|\mathcal{F}_{i-1})\leq \frac{di}{dn-2(i-1)-1}.
	\end{align*}
	Thus
	\begin{equation*}
	\mathbb{E}\left(\left. e^{r\ind_{F^c_i}}\right|\mathcal{F}_{i-1}\right)\leq 1+\frac{di}{dn-2(i-1)-1}(e^r-1),
	\end{equation*}
	from which it follows that (see (\ref{towerexp}))
	\begin{align*}
	\mathbb{E}\left[e^{r\sum_{j=1}^{i}\ind_{F^c_j}}\right]\leq \left(1+\frac{di}{dn-2(i-1)-1}(e^r-1)\right)\mathbb{E}\left[e^{r\sum_{j=1}^{i-1}\ind_{F^c_j}}\right].
	\end{align*}
	Iterating the above argument and using the standard inequality $1+x\leq e^x$ (valid for all $x\in \mathbb{R}$) we obtain
	\begin{align*}
	\mathbb{E}\left[e^{r\sum_{j=1}^{i}\ind_{F^c_j}}\right]&\leq \prod_{j=1}^{i}\left(1+\frac{dj}{dn-2(j-1)-1}(e^r-1)\right)\\
	&\leq \prod_{j=1}^{i}\exp\left(\frac{dj}{dn-2(j-1)-1}(e^r-1)\right)\\
	&=\exp\bigg((e^r-1)d\sum_{j=1}^{i}\frac{j}{dn-2(j-1)-1}\bigg).
	\end{align*}
	A simple calculation shows that, for $i\leq T\ll n$,
	\begin{align*}
	\sum_{j=1}^{i}\frac{j}{dn-2(j-1)-1}=\sum_{j=1}^{i}\frac{j}{dn}\left(1-\frac{2(j-1)+1}{dn}\right)^{-1}\leq \frac{i^2}{2dn}\left(1+O(T/dn)\right)
	\end{align*}
	and hence we obtain
	\begin{equation*}
	\mathbb{E}\left[e^{r\sum_{j=1}^{i}\ind_{F^c_j}}\right]
	\leq \exp\left((e^r-1)\frac{i^2}{2n}\left(1+O(T/dn)\right)\right).
	\end{equation*}
	Combining this with \eqref{mgftobound}, we have shown that
	\begin{equation}\label{klk}
	\mathbb{P}\bigg( \sum_{j=1}^{i}\ind_{F^c_j}\geq i^2/2n+m\bigg)
	\leq e^{-r(i^2/2n+m)}\exp\left((e^r-1)\frac{i^2}{2n}\left(1+O(T/dn)\right)\right).
	\end{equation}
	Taking $r\leq (d-1)^{-1}$ we can write $e^r-1\leq r+r^2$ and hence for $i\leq T$ the expression in (\ref{klk}) is bounded from above by
	\begin{align*}\label{kpk}
	\exp\left(-rm+r^2 \frac{T^2}{2n} + rO\left(\frac{T^3}{n^2}\right)\right).
	\end{align*}
	Taking $r=n^{1/2}/T$ ($\asymp 1/An^{1/6}$) and using the fact that, as $n\to\infty$,
	\[\frac{n^{1/2}}{T}O\left(\frac{T^3}{n^2}\right)=O\left(\frac{T^2}{n^{3/2}}\right)=O\left(\left(\frac{A}{n^{1/12}}\right)^2\right)=o(1),\]
	we see that 
	\begin{equation*}
	\eqref{klk} \le  c\exp\left(-\frac{mn^{1/2}}{T}\right),
	\end{equation*}
	completing the proof.
\end{proof}

\begin{proof}[Proof of Lemma \ref{arub}]
Recall that, for each $i\in [T]$,
\begin{equation*}
\mu'_i=\ind_{R_i}\ind_{\left\{U_i> \frac{d(a_n(i-1)+m)}{dn-2(i-1)-1}\right\}}.
\end{equation*}
By Markov's inequality we see that, for every $r>0$,
\begin{align*}
\mathbb{P}\left(\sum_{i=1}^{T}\mu'_i\leq q(T)-h\right)\leq e^{-rh+rq(T)}\mathbb{E}\left[e^{-r\sum_{i=1}^{T}\mu'_i}\right].
\end{align*}
Since the $\ind_{R_i}$ are independent, the $U_i$ are independent and $(\ind_{R_i})_i$ is independent of $(U_i)_i$ we see that
\begin{align}\label{mgf}
\nonumber\mathbb{E}\left[e^{-r\sum_{i=1}^{T}\mu'_i}\right]=\prod_{i=1}^{T}\mathbb{E}\left(e^{-r\mu'_i}\right)&=\prod_{i=1}^{T}\left[1-p\left(1-\frac{d(a_n(i-1)+m)}{dn-2(i-1)-1}\right)(1-e^{-r})\right]\\
\nonumber&\leq \prod_{i=1}^{T}\exp\left\{-p(1-e^{-r})\left(1-\frac{d(a_n(i-1)+m)}{dn-2(i-1)-1}\right)\right\}\\
&=\exp\left\{-p(1-e^{-r})\sum_{i=1}^{T}\left(1-\frac{d(a_n(i-1)+m)}{dn-2(i-1)-1}\right)\right\}.
\end{align}
Recalling that $a_n(i-1)=n-1-(i-1)+(i-1)^2/2n$ we obtain
\begin{align*}
\frac{d(a_n(i-1)+m)}{dn-2(i-1)-1}=\left(1-\frac{i-1}{n}+\frac{(i-1)^2}{2n^2}+\frac{m-1}{n}\right)\frac{1}{1-\frac{2(i-1)+1}{dn}}.
\end{align*}
An elementary computation then shows that 
\begin{align*}
\frac{d(a_n(i-1)+m)}{dn-2(i-1)-1}\leq 1-\frac{i-1}{n}\left(1-\frac{2}{d}\right)+O\left(\frac{(i-1)^2}{2n^2}\right)+O\left(\frac{m}{n}\right).
\end{align*}
Therefore, recalling the definition of $q(T)=p(1-2/d)(2n)^{-1}T(T-1)$ and using the bound $1-e^{-x} \leq x$, we see that for $r>0$ the expression in (\ref{mgf}) is at most
\begin{align*}
&\exp\left\{-p(1-e^{-r})\left[(1-2/d)\frac{T(T-1)}{2n}-O\left(\frac{T^3}{n^2}\right)-O\left(\frac{Tm}{n}\right)\right]\right\}\\
\leq &\exp\left\{-(1-e^{-r})q(T)+rO\left(\frac{T^3}{n^2}\right)+rO\left(\frac{Tm}{n}\right)\right\}.
\end{align*}
Hence, using the bound $1-e^{-x}\geq 1-(1-x+x^2/2)=x-x^2/2$ (valid for all $x\geq 0$) together with the fact that $q(T)\leq T^2/2n$, we obtain
\begin{align}\label{nn}
\nonumber\mathbb{P}\left(\sum_{i=1}^{T}\mu'_i\leq q(T)-h\right)&\leq \exp\left\{-rh+r^2\frac{T^2}{4n}+rO\left(\frac{T^3}{n^2}\right)+rO\left(\frac{Tm}{n}\right)\right\}.
\end{align}
Taking $r=n^{1/2}/T$ we see that
\begin{align*}
rO\left(\frac{T^3}{n^2}\right)+rO\left(\frac{Tm}{n}\right)=O\left(\left(\frac{A}{n^{1/12}}\right)^2\right)+O\left(\frac{m}{n^{1/2}}\right)=o(1)
\end{align*}
and hence
\begin{align*}
\exp\left\{-rh+r^2\frac{T^2}{4n}+rO\left(\frac{T^3}{n^2}\right)+rO\left(\frac{Tm}{n}\right)\right\}\leq ce^{-\frac{hn^{1/2}}{T}}
\end{align*}
for some positive constant $c$, completing the proof of the lemma.
\end{proof}

\section{Proof of Theorem \ref{mainthm}: lower bounds}\label{LB_sec}
We begin by recalling that our exploration process potentially creates multiple edges or self-loops, and that to produce the simple graph $\mathbb{G}(n,d,p)$ we condition on the event $\mathbb{S}_n$ that the $d$-regular multigraph $\mathbb{G}'(n,d)$ produced by the exploration process (including both retained and unretained edges) is simple. For the upper bounds in Theorem \ref{mainthm}, we worked for the most part with the multigraph and then deduced the result conditional on $\mathbb{S}_n$ at the very last step; this worked because for any event $\mathcal B$, we have
\[\P(\mathcal B\, |\,\mathbb{S}_n) = \P(\mathcal B\cap\mathbb{S}_n)/\P(\mathbb{S}_n) \le \P(\mathcal B)/\P(\mathbb{S}_n) \le c\P(\mathcal B).\]
For the lower bound this does not work, and we must be aware of the conditioning on $\mathbb{S}_n$ throughout. It turns out that our proof does not depend much on whether we condition on $\mathbb{S}_n$ or not, and a version of Theorem \ref{mainthm} for the multigraph $\mathbb{G}'(n,d,p)$ could be given by following our proof and ignoring any appearance of $\mathbb{S}_n$.

By Lemma \ref{relationship}, we are tasked with bounding from below the probability
\begin{align}\label{probtoboundbelow}
\mathbb{P}(|\CC(V_n)|>An^{2/3}\, |\,\mathbb{S}_n) &= \P(\sigmaUR > An^{2/3}-1 \,|\,\bS_n )\nonumber\\
&\ge \mathbb{P}(\tau>(d-1)An^{2/3}+1\,|\,\bS_n)\nonumber\\
&=\mathbb{P}\bigg(d+\sum_{i=1}^{t}\eta_i>0\hspace{0.15cm}\forall t\in \big[\lfloor(d-1)An^{2/3}\rfloor +1\big]\,\bigg|\,\bS_n\bigg),
\end{align}
where we recall that, if $i\le \tau$, then
\begin{equation}
\eta_i=\ind_{\{h_i\in \mathcal{U}_{i-1}\}} \ind_{R_i}\left|\mathcal{S}(h_i)\cap \mathcal{U}_{i-1}\setminus \{h_i\}\right|-\ind_{\{h_i\in \mathcal{A}_{i-1}\}}-1.
\end{equation}
To simplify the notation, we set $T\coloneqq \lfloor(d-1) An^{2/3}\rfloor + 1$, noting that this new definition of $T$ is not quite the same as the one of Section 3.

Recall that $F_i$ is the event that vertex $v(h_i)$ is fresh, i.e.~that $v(h_i)$ has $d$ unseen stubs, at the end of step $i-1$; we also defined $F'_i$ to be the event that $v(h_i)$ has $d-1$ unseen stubs at the end of step $i-1$, and $F^-_i$ to be the event that $v(h_i)$ has $m\in [d-2]$ unseen stubs at the end of step $i-1$. In other words, recalling also that $\mathcal V_{i-1}^{m}$ is the set of vertices with $m$ unseen stubs at the end of step $i-1$, we have
\[F_i = \{v(h_i)\in \calV_{i-1}^{(d)}\}, \,\,\,\, F_i' = \{v(h_i)\in\calV_{i-1}^{(d-1)}\} \,\,\text{ and }\,\, F^-_i = \bigg\{v(h_i)\in\bigcup_{m=1}^{d-2}\calV_{i-1}^{(m)}\bigg\}.\]

Since we want to bound the probability in (\ref{probtoboundbelow}) from below, we need to approximate the $\eta_i$ with \textit{smaller} random variables, sufficiently close to the $\eta_i$ but easier to deal with. To this end, define
\begin{equation}\label{deltadef}
\delta_i \coloneqq \ind_{R_i}\ind_{F_i}(d-1) + \ind_{R_i}\ind_{F'_i}(d-2) - \ind_{\{h_i\in \calA_{i-1}\}} - 1
\end{equation}
and
\begin{equation}\label{delta'def}
\delta'_i \coloneqq \ind_{R_i}\ind_{F_i}(d-1) - \ind_{\{h_i\in \calA_{i-1}\}} - 1
\end{equation}
and note that for each $i\le \tau$ we have
\begin{equation}\label{nudelta}
\eta_i \ge \delta_i \ge \delta'_i.
\end{equation}
Indeed, we note that $\eta_i=\delta_i$ unless $R_i\cap F^-_i$ occurs, in which case $\eta_i$ equals the number of unseen stubs in $\mathcal{S}(h_i)\setminus \{h_i\}$ minus one, whereas $\delta_i=-1$. Furthermore, $\delta_i = \delta'_i$ unless $R_i\cap F'_i$ occurs, in which case $\delta_i=d-3$ and $\delta'_i=-1$.

The less precise approximation given by $\delta'_i$ will be useful when $i$ is small, when almost all vertices will have $d$ unseen stubs. However, approximating $\eta_i$ with $\delta'_i$ over the whole time interval $[T]$ turns out to be insufficient to obtain lower bounds that match the upper bounds established in Section 3, and the closer approximation given by $\delta_i$ will be needed when $t$ is larger and a substantial number of vertices have $d-1$ unseen stubs.

Once we have replaced $\eta_i$ with $\delta'_i$ or $\delta_i$ as appropriate, our next step is to replace the process formed by summing the $\delta_i$ with a random walk with increments
\[D_i \coloneqq \ind_{R_i}(d-1)-1.\]
The random variables $\delta_i$ tend to get smaller as $i$ increases, since fewer vertices have $d$ or $d-1$ unseen stubs, whereas the $D_i$ are (independent and) identically distributed. This means that we must substitute the event that the process $\sum_{i=1}^{t}\delta_i$ stays above $-d$ with the event that the process $\sum_{i=1}^t D_i$ stays above the increasing curve
\begin{equation}\label{defqt}
q(t)=q_{A,n,d}(t)\coloneqq p(1-2/d)\frac{t^2}{2n} + An^{4/15}.
\end{equation}
The next result shows that this is the right curve to use. Define
\[\tau_\delta \coloneqq \min\bigg\{t\ge 1 : d+\sum_{i=1}^t \delta_i \le 0\bigg\}\]
and note that $\tau_{\delta}\leq \tau$, where we recall that $\tau$ is the first time at which the set of active stubs becomes empty. To see this observe that, since $\delta_i\leq \eta_i$ for all $i\leq \tau$, then $d+\sum_{i=1}^{\tau}\delta_i\leq d+\sum_{i=1}^{\tau}\eta_i\leq 0$.

\begin{prop}\label{animpprop}
	Suppose that $A\ll n^{1/30}$. Then for all large enough $n$, we have that
	\[\P\bigg(\exists t\in[T\wedge\tau_\delta] : \sum_{i=1}^t (D_i - \delta_i) > q(t)\,\bigg|\,\bS_n\bigg) \leq CTe^{-cn^{1/10}}.\]
	where $C$ is a finite constant and $c=c(d)>0$ is a constant that depends on $d$.
%
\end{prop}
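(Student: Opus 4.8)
The plan is to bound, for each fixed $t\in[T]$, the probability that $\sum_{i=1}^t(D_i-\delta_i)$ exceeds $q(t)$ while $t\le\tau_\delta$, and then take a union bound over $t\in[T]$; since $T\asymp An^{2/3}$ this costs only a factor of $T$, which is absorbed into the $CTe^{-cn^{1/10}}$ on the right-hand side. The first step is to identify the increments $D_i-\delta_i$ explicitly. Recalling $D_i=\ind_{R_i}(d-1)-1$ and the definition \eqref{deltadef} of $\delta_i$, on the event $i\le\tau$ we have $\ind_{\{h_i\in\calA_{i-1}\}}=0$ is false in general but the point is that
\[D_i - \delta_i = \ind_{R_i}\big((d-1) - \ind_{F_i}(d-1) - \ind_{F'_i}(d-2)\big) + \ind_{\{h_i\in\calA_{i-1}\}} = \ind_{R_i}\big(\ind_{F'_i} + (d-1)\ind_{F_i^- \cup F^{\calA}_i}\big) + \ind_{\{h_i\in\calA_{i-1}\}},\]
where I write $F_i^{\calA}=\{h_i\in\calA_{i-1}\}$; in any case each increment $D_i-\delta_i$ is non-negative and bounded by $d$, and vanishes whenever $h_i$ is incident to a fresh vertex and $h_i\notin\calA_{i-1}$. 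So the sum $\sum_{i=1}^t(D_i-\delta_i)$ is controlled by the number of steps $i\le t$ at which $h_i$ is either active or incident to a non-fresh vertex — precisely the quantity $|X_t|$ (up to a constant factor $d$) that already appeared in the proof of Lemma \ref{mylemmaupper}, governed by the bound \eqref{expupperxm}: $\P(|X_m|>4m^2/n)\le e^{-cm^2/n}$.

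The second step is therefore a moment generating function / Chernoff computation analogous to the proof of Lemma \ref{numberfresh}. Conditioning on $\F_{i-1}$, the probability that $h_i$ is active or incident to a non-fresh vertex is at most $(di + d)/(dn-2(i-1)-1) \le c\,t/n$ for $i\le t$ (at most $d$ stubs belong to active status at the moment of step $i$ via the $d$ stubs of the currently explored vertex, and at most $di$ unexplored stubs belong to vertices that have ceased to be fresh, by the same argument as in Lemma \ref{numberfresh}). Hence, writing $Y_i$ for the non-negative increment $D_i-\delta_i\in\{0,1,\dots,d\}$, for $r\in(0,(d-1)^{-1}]$ we get
\[\E\big[e^{r\ind_{\{Y_i\ge1\}}}\,\big|\,\F_{i-1}\big] \le 1 + \frac{cdi}{dn-2(i-1)-1}(e^{dr}-1),\]
and bounding $Y_i \le d\ind_{\{Y_i\ge1\}}$ and iterating the tower property exactly as in Lemma \ref{numberfresh} yields $\E[e^{r\sum_{i=1}^t Y_i}] \le \exp(c(e^{dr}-1)t^2/n)$. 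Markov's inequality then gives, for the threshold $q(t)=p(1-2/d)t^2/(2n)+An^{4/15}$,
\[\P\Big(\textstyle\sum_{i=1}^t Y_i \ge q(t)\Big) \le \exp\big(-rq(t) + c(e^{dr}-1)t^2/n\big).\]
The key observation — and this is why the curve $q(t)$ is chosen with exactly that leading coefficient — is that the polynomial (in $t$) part of $q(t)$, namely $p(1-2/d)t^2/(2n)$, already dominates the main term $\E[\sum Y_i] \le (\text{const})\cdot t^2/n$ coming from the MGF bound when one uses the sharper conditional estimate that most non-fresh contributions are in fact $\ind_{F_i'}$ steps with increment exactly $1$ (not $d$); here one has to be a little more careful than the crude $Y_i\le d\ind_{\{Y_i\ge1\}}$ bound and instead separate $\ind_{R_i}\ind_{F'_i}$, whose conditional mean is at most $p\cdot d|\calV^{(d-1)}_{i-1}|/(dn-2i)$, from the genuinely rare events $F_i^-$, $F_i^{\calA}$. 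Choosing $r=r_n\asymp n^{1/2}/T \asymp 1/(An^{1/6})$ so that $e^{dr}-1 = dr + O(r^2)$, the quadratic-in-$t$ contributions cancel to leading order against $r\cdot p(1-2/d)t^2/(2n)$, the error terms are $r\cdot O(t^3/n^2) = O((A/n^{1/12})^2) = o(1)$ using $A\ll n^{1/30}$, and what survives is $-r\cdot An^{4/15} = -\Theta(n^{4/15}/n^{1/6}) = -\Theta(n^{1/10})$. This gives $\P(\sum_{i=1}^t Y_i\ge q(t)) \le ce^{-cn^{1/10}}$ for each $t\le T$, and a union bound over $t\in[T]$ completes the estimate; the conditioning on $\bS_n$ costs only the usual factor $\P(\bS_n)^{-1}=O(1)$, as in the displayed inequality $\P(\mathcal B\,|\,\bS_n)\le c\P(\mathcal B)$ at the start of Section \ref{LB_sec}.

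I expect the main obstacle to be the bookkeeping in the MGF step: one must track the conditional distribution of the increment $D_i-\delta_i$ carefully enough to see that its conditional mean is $O(t/n)$ with the \emph{right} constant, i.e.\ small enough that $r$ times the quadratic term $p(1-2/d)t^2/(2n)$ in $q(t)$ genuinely beats $r$ times the MGF main term, leaving the linear slack $An^{4/15}$ entirely available to produce the stretched-exponential decay. The cleanest way to organise this is probably to split $D_i-\delta_i = \ind_{R_i}\ind_{F'_i} + W_i$ where $W_i$ collects the remaining (rarer, but larger) contributions from $F_i^-$ and $h_i\in\calA_{i-1}$; handle the first piece with a tight conditional bound on $|\calV^{(d-1)}_{i-1}|$ (which is $O(i^2/n)$ in expectation, mirroring Lemma \ref{numberfresh}) and the second piece with the crude $W_i\le d\ind_{\{W_i\ge1\}}$ bound and \eqref{expupperxm}-type reasoning. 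A secondary technical point is that the bound must hold on the event $\{t\le\tau_\delta\}$ rather than unconditionally; but since $\{t\le\tau_\delta\}\subseteq\{t\le\tau\}$ and all our conditional estimates on $\P(\cdot\,|\,\F_{i-1})$ are valid on $\{i\le\tau\}$ (where the exploration is genuinely running step (a)), restricting the sum $\sum_{i=1}^{T\wedge\tau_\delta}$ only helps, and the stopping-time truncation can be handled by the standard device of freezing the increments to $0$ after $\tau_\delta$.
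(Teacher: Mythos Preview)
Your overall architecture --- Chernoff/MGF bound on $\sum_{i\le t}(D_i-\delta_i)$, choice $r\asymp n^{1/2}/T$, union bound over $t$, and the split into a ``main'' contribution plus a rare residual from $F_i^-$ and $\{h_i\in\calA_{i-1}\}$ --- is exactly how the paper proceeds. However, there is a genuine error in your bookkeeping that makes the proposed execution fail as written.

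You claim that $|\calV^{(d-1)}_{i-1}|$ is $O(i^2/n)$ in expectation, ``mirroring Lemma~\ref{numberfresh}''. This is false: at each step in which $h_j$ hits a fresh vertex and $e_jh_j$ is \emph{not} retained (an event of probability $\approx 1-p \approx (d-2)/(d-1)$), a vertex enters $\calV^{(d-1)}$, so typically $|\calV^{(d-1)}_{i}|\sim (1-p)i$, i.e.\ linear in $i$, not $i^2/n$. What is $O(i^2/n)$ is $|\bigcup_{m\le d-2}\calV^{(m)}_i|$ (this is the content of the paper's Lemma~\ref{lemmaforfirstrandomsumtoremove}); you have conflated the two populations. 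With only the trivial bound $|\calV^{(d-1)}_{i-1}|\le i$ you would get $\E[\ind_{R_i}\ind_{F'_i}\mid\F_{i-1}]\lesssim p(d-1)i/(dn)\approx i/(dn)$, and summing gives $t^2/(2dn)$, which is \emph{larger} than the leading term $p(1-2/d)t^2/(2n)=(d-2)t^2/(2d(d-1)n)$ of $q(t)$. The quadratic terms then do not cancel and no $e^{-cn^{1/10}}$ decay emerges.

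The paper sidesteps this by working with $\ind_{R_i}\ind_{F_i^c}$ rather than $\ind_{R_i}\ind_{F'_i}$ as the main term, bounding $D_i-\delta_i\le \ind_{R_i\cap F_i^c}+d(\ind_{R_i\cap F_i^-}+\ind_{\{h_i\in\calA_{i-1}\}})$. The point is that $\P(F_i^c\mid\F_{i-1})=1-d|\calV^{(d)}_{i-1}|/(dn-2(i-1)-1)$, and the \emph{deterministic} lower bound $|\calV^{(d)}_{i-1}|\ge n-i$ yields $\P(F_i^c\mid\F_{i-1})\le (1-2/d)\,i/n+O(i^2/n^2)$ with exactly the right constant, so that the MGF main term matches $p(1-2/d)t^2/(2n)$ on the nose (this is Lemma~\ref{rightcurveappears}). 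No concentration for $|\calV^{(d-1)}|$ is needed. The residual piece $d(\ind_{R_i\cap F_i^-}+\ind_{\{h_i\in\calA_{i-1}\}})$ is then handled exactly as you outline, using that $|\bigcup_{m\le d-2}\calV^{(m)}_i|=O(i^2/n)$ and a Doob-inequality bound on $\max_{i\le t\wedge\tau}|\calA_i|$ (Lemmas~\ref{lemmaforfirstrandomsumtoremove}--\ref{lemmaforsecondrandomsumtoremove} and Corollary~\ref{firstrandomsumtoremove}); your intuition for this part, and your handling of the stopping time and of the conditioning on $\bS_n$, are correct.

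Your approach can be salvaged if instead of the false $O(i^2/n)$ claim you prove and use the concentration $|\calV^{(d-1)}_i|\le (1-p)i+\text{fluctuation}$; the constants then do line up. But the paper's route via $F_i^c$ and the trivial bound $|\calV^{(d)}|\ge n-i$ is cleaner and avoids this extra lemma.
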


This result will be proved in Section \ref{animppropsec}. We now apply this result to show that the right-hand side of \eqref{probtoboundbelow} 
can be roughly split into the product of two terms, each of which is easier to analyse.

\begin{lem}\label{splitprod_lem}
For any $0<T'<T$ and $\eps>0$ we have
\begin{align*}
&\P\bigg(d+\sum_{i=1}^t \eta_i > 0\, \forall t\in[T]\,\bigg|\,\bS_n\bigg)\\
&\ge \mathbb{P}\bigg(\sum_{i=1}^{t}\delta'_i > -d\hspace{0.15cm}\forall t\in [T'],\, \sum_{i=1}^{T'}\delta'_i \geq \eps\sqrt{T'}\,\bigg|\,\bS_n\bigg)\\
&\hspace{30mm}\cdot\mathbb{P}\bigg(\sum_{i=T'+1}^t D_i >q(t)-\eps\sqrt{T'}\hspace{0.15cm}\forall t\in [T]\setminus[T']\bigg) - CTe^{-cn^{1/10}}
\end{align*}
where $C$ is a finite constant and $c=c(d)>0$ is a constant that depends on $d$.
\end{lem}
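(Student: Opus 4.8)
The plan is to find a single event, contained in $\{d+\sum_{i=1}^t\eta_i>0\ \forall t\in[T]\}$, whose conditional probability given $\bS_n$ factorises into the two probabilities appearing in the statement, with the error in the factorisation absorbed into the bound of Proposition \ref{animpprop}. To this end I would introduce the events
\begin{align*}
E_1 &\coloneqq \Big\{\textstyle\sum_{i=1}^t\delta'_i>-d\ \ \forall t\in[T'],\ \ \sum_{i=1}^{T'}\delta'_i\ge\eps\sqrt{T'}\Big\},\\
E_2 &\coloneqq \Big\{\textstyle\sum_{i=T'+1}^t D_i>q(t)-\eps\sqrt{T'}\ \ \forall t\in[T]\setminus[T']\Big\},
\end{align*}
and let $G\coloneqq\big\{\sum_{i=1}^t(D_i-\delta_i)\le q(t)\ \forall t\in[T\wedge\tau_\delta]\big\}$ be the complement of the exceptional event controlled by Proposition \ref{animpprop}. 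Writing $\F_t$ for the $\sigma$-algebra generated by the first $t$ steps of the exploration process, the two structural facts I would exploit are: (i) $E_1$ is $\F_{T'}$-measurable, while $E_2$ is a function of the retention indicators $\ind_{R_{T'+1}},\dots,\ind_{R_T}$ only; and (ii) the sequence $(\ind_{R_i})_{i\ge1}$ is i.i.d.\ Bernoulli$(p)$ and independent of the configuration-model pairing, hence jointly independent of $\F_{T'}$ and of $\bS_n$.

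The core of the argument is the deterministic inclusion $E_1\cap E_2\cap G\subseteq\{d+\sum_{i=1}^t\eta_i>0\ \forall t\in[T]\}$, which I would prove by passing through the auxiliary stopping time $\tau_\delta$. On $E_1\cap E_2\cap G$: since $\delta_i\ge\delta'_i$ for every $i$ (immediate from \eqref{deltadef} and \eqref{delta'def}), the first part of $E_1$ forces $d+\sum_{i=1}^t\delta_i>0$ for all $t\in[T']$, so $\tau_\delta>T'$. Next I would show $\tau_\delta>T$ by contradiction: if $T'<\tau_\delta\le T$, then applying $G$ at time $\tau_\delta$, using $D_i\ge\delta'_i$ for all $i$ together with the second part of $E_1$ and the lower bound from $E_2$,
\begin{align*}
d+\sum_{i=1}^{\tau_\delta}\delta_i &\ge d+\sum_{i=1}^{\tau_\delta}D_i-q(\tau_\delta)\ \ge\ d+\sum_{i=1}^{T'}\delta'_i+\sum_{i=T'+1}^{\tau_\delta}D_i-q(\tau_\delta)\\
&> d+\eps\sqrt{T'}+\big(q(\tau_\delta)-\eps\sqrt{T'}\big)-q(\tau_\delta)\ =\ d\ >\ 0,
\end{align*}
contradicting the definition of $\tau_\delta$. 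Hence $\tau_\delta>T$, and since $\tau_\delta\le\tau$ this gives $\tau>T$, so that $\eta_i\ge\delta_i$ for every $i\in[T]$; consequently $d+\sum_{i=1}^t\eta_i\ge d+\sum_{i=1}^t\delta_i>0$ for all $t\in[T]$, as required.

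Granting the inclusion, I would conclude as follows. Conditioning on $\bS_n$ and using $\P(\mathcal A\cap\mathcal B)\ge\P(\mathcal A)-\P(\mathcal B^c)$,
\[\P\Big(d+\sum_{i=1}^t\eta_i>0\ \forall t\in[T]\ \Big|\ \bS_n\Big)\ \ge\ \P(E_1\cap E_2\mid\bS_n)-\P(G^c\mid\bS_n),\]
and Proposition \ref{animpprop} bounds $\P(G^c\mid\bS_n)$ by $CTe^{-cn^{1/10}}$. For the main term, structural facts (i) and (ii) make $\ind_{E_2}$ independent of $(\ind_{E_1},\ind_{\bS_n})$, so $\P(E_1\cap E_2\cap\bS_n)=\P(E_1\cap\bS_n)\,\P(E_2)$ and hence $\P(E_1\cap E_2\mid\bS_n)=\P(E_1\mid\bS_n)\,\P(E_2)$; these are exactly the two factors in the statement. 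The main obstacle is the bookkeeping with $\tau_\delta$ in the deterministic inclusion: one must ensure that the inequality $\eta_i\ge\delta_i$ (valid only for $i\le\tau$) is invoked only once $\tau>T$ has been established, which is precisely why it is cleanest to run the argument through $\tau_\delta$ and the curve $q$ of \eqref{defqt} rather than working directly with the $\eta_i$. The factorisation step, by contrast, is routine, resting only on the observation that the first $T'$ steps of the exploration reveal no information about the retention marks of the edges discovered at later steps.
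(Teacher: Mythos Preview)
Your proof is correct and follows essentially the same approach as the paper's own proof: both pass through $\tau_\delta$, use the curve $q$ from Proposition~\ref{animpprop} to control $\sum(D_i-\delta_i)$, exploit $D_i\ge\delta'_i$ to link the two time intervals, and then factorise via the independence of $(\ind_{R_i})_{i>T'}$ from $\F_{T'}$ and $\bS_n$. The only organisational difference is that the paper argues via complements (showing $\{\tau\in[T]\}$ lies inside a union of bad events and then subtracting), whereas you give a direct deterministic inclusion $E_1\cap E_2\cap G\subseteq\{\tau>T\}$; your packaging is arguably cleaner, but the content is the same.
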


\begin{proof}
Define $\tau_{\delta'}\coloneqq \min \left\{t\geq 1:d+\sum_{i=1}^{t}\delta'_i\leq 0 \right\}$ and note that, since $\delta'_i\leq \delta$ for all $i$, we must have $\tau_{\delta'}\leq \tau_{\delta}$. Since we also have $\tau_{\delta}\leq \tau$ we obtain that
	\begin{align}\label{eventincl}
	\nonumber\{\tau\in [T] \}\subset \{\tau_{\delta}\in [T]\}&\subset \{\tau_{\delta'}\in [T']\}\cup \{\tau_{\delta'}\notin [T'], \tau_{\delta}\in [T]\}\\
	&= \{\tau_{\delta'}\in [T']\}\cup \{\tau_{\delta'}\notin [T'], \tau_{\delta}\in [T]\setminus [T'] \}.
	\end{align}
	Now observe that if $\sum_{i=1}^{t}\delta_i \le -d$, then either $\sum_{i=1}^{t}D_i\leq q(t)$, or $\sum_{i=1}^{t}D_i> q(t)$ and $\sum_{i=1}^{t}(D_i-\delta_i)>q(t)+d > q(t)$. Therefore we have the inclusion
	\begin{multline*}
	\big\{\tau_{\delta}\in [T]\setminus[T']\big\}\subset \bigg\{\exists t\in [T]\setminus[T']:\sum_{i=1}^{t}D_i\leq q(t) \bigg\}\\
	\cup \bigg\{\exists t\in [T\wedge \tau_{\delta}]\setminus [T']:\sum_{i=1}^{t}(D_i-\delta_i)>q(t)\bigg\}.
	\end{multline*}
	Substituting this inclusion into (\ref{eventincl}) we obtain
	\begin{multline*}
	\big\{\tau\in [T] \big\}\subset \big\{\tau_{\delta'}\in [T']\big\}\cup \bigg\{\tau_{\delta'}\notin [T'],\, \exists t\in [T]\setminus[T']:\sum_{i=1}^{t}D_i\leq q(t)\bigg\}\\
	\cup\bigg\{\exists t\in [T\wedge \tau_{\delta}]\setminus [T']:\sum_{i=1}^{t}(D_i-\delta_i)>q(t)\bigg\}.
	\end{multline*}
	Consequently we deduce that
	\begin{align*}
	\mathbb{P}\big(\tau\not\in[T]\big|\mathbb{S}_n\big) &\geq \mathbb{P}\big(\tau_{\delta'}\notin [T']\big|\mathbb{S}_n\big)-\mathbb{P}\bigg(\tau_{\delta'}\notin [T'], \exists t\in [T]\setminus[T']:\sum_{i=1}^{t}D_i\leq q(t)\,\bigg|\,\mathbb{S}_n\bigg)\\
	&\hspace{30mm}-\mathbb{P}\bigg(\exists t\in [T\wedge \tau_{\delta}]\setminus [T']:\sum_{i=1}^{t}(D_i-\delta_i)>q(t)+d \,\bigg|\,\mathbb{S}_n\bigg)\\
	&= \mathbb{P}\bigg(\tau_{\delta'}\notin [T'],  \sum_{i=1}^{t}D_i>q(t)\hspace{0.15cm}\forall t\in [T]\setminus[T']\,\bigg|\,\mathbb{S}_n\bigg)\\
	&\hspace{30mm}-\mathbb{P}\bigg(\exists t\in [T\wedge \tau_{\delta}]\setminus [T']:\sum_{i=1}^{t}(D_i-\delta_i)>q(t)+d \,\bigg|\,\mathbb{S}_n\bigg).
	\end{align*}
	Recalling that $\{\tau\not\in[T]\}$ is equivalent to $\{d+\sum_{i=1}^t \eta_i > 0\,\, \forall t\in[T]\}$, and similarly $\{\tau_{\delta'}\notin [T']\}$ is equivalent to $\{\sum_{i=1}^t \delta'_i > -d\,\,\forall t\in[T']\}$, and applying Proposition \ref{animpprop} we obtain that
\begin{multline*}
\P\bigg(d+\sum_{i=1}^t \eta_i > 0\,\, \forall t\in[T]\,\bigg|\,\bS_n\bigg)\\
\ge \P\bigg(\sum_{i=1}^t \delta'_i > -d\,\,\forall t\in[T'],\,\sum_{i=1}^t D_i > q(t)\,\,\forall t\in[T]\setminus[T']\,\bigg|\,\bS_n\bigg) - CTe^{-cn^{1/10}}.
\end{multline*}
Since $\delta'_i\le D_i$ for each $i$, and therefore $\sum_{i=1}^{T'}\delta'_i\leq \sum_{i=1}^{T'}D_i$, we also have
%
\begin{align*}
&\mathbb{P}\bigg(\sum_{i=1}^{t}\delta'_i >-d\hspace{0.15cm}\forall t\in [T'], \sum_{i=1}^t D_i>q(t)\hspace{0.15cm}\forall t\in [T]\setminus[T']\,\bigg|\,\bS_n\bigg)\\
&\geq \mathbb{P}\bigg(\sum_{i=1}^{t}\delta'_i > -d\hspace{0.15cm}\forall t\in [T'],\,\, \sum_{i=1}^{T'}\delta'_i \geq \eps\sqrt{T'},\,\, \sum_{i=1}^t D_i > q(t)\hspace{0.15cm}\forall t\in [T]\setminus[T']\,\bigg|\,\bS_n\bigg)\\
&\geq \mathbb{P}\bigg(\sum_{i=1}^{t}\delta'_i > -d\hspace{0.15cm}\forall t\in [T'],\,\, \sum_{i=1}^{T'}\delta'_i \geq \eps\sqrt{T'},\,\, \sum_{i=T'+1}^t D_i >q(t)-\eps\sqrt{T'}\hspace{0.15cm}\forall t\in [T]\setminus[T']\,\bigg|\,\bS_n\bigg).
\end{align*}
Since $D_j$ is independent of $\sum_{i=1}^t\delta'_i$ whenever $j>t$, and the sequence $(D_j)$ is also independent of $\bS_n$, the last probability equals
\[\mathbb{P}\bigg(\sum_{i=1}^{t}\delta'_i > -d\hspace{0.15cm}\forall t\in [T'],\,\, \sum_{i=1}^{T'}\delta'_i \geq \eps\sqrt{T'}\,\bigg|\,\bS_n\bigg)\mathbb{P}\bigg(\sum_{i=T'+1}^t D_i >q(t)-\eps\sqrt{T'}\hspace{0.15cm}\forall t\in [T]\setminus[T']\bigg),\]
as desired.
\end{proof}

To bound from below the first probability on the right-hand side of Lemma \ref{splitprod_lem}, the idea is to substitute the process $(\sum_{i=1}^{t}\delta'_i)_{t\in [T']}$ with a random walk having (i.i.d.) mean zero increments, and then to use known results about random walks to bound from below the probability that such a random walk stays positive up to time $T'$ and finishes above level $\eps\sqrt{T'}$ at time $T'$. 

On the other hand, to bound from below the second probability on the right-hand side of Lemma \ref{splitprod_lem}, the idea is to  approximate the random walk $(\sum_{i=T'}^t D_i)_{t\in [T]\setminus [T']}$ with (standard) Brownian motion, and then to bound from below the probability that Brownian motion stays above the curve $q(t)$ by the probability that it stays above two straight lines which lie above the curve $q(t)$.

The details are carried out in the following two propositions, whose proofs can be found in Subsections \ref{propfirstbit_sec} and \ref{propsecondbit_sec}, respectively.
\begin{prop}\label{propfirstbit}
	Let $T'\coloneqq \lfloor n^{2/3}/A^2\rfloor$. Then there exists $\eps>0$ such that for all large $n$,
	\begin{equation*}
	\mathbb{P}\bigg(\sum_{i=1}^{t}\delta'_i > -d\hspace{0.15cm}\forall t\in [T'],\, \sum_{i=1}^{T'}\delta'_i \geq \eps\sqrt{T'}\,\bigg|\,\bS_n\bigg)\geq c\frac{A}{n^{1/3}},
	\end{equation*}
	where $c=c(d)>0$ is a finite constant that depends on $d$.
\end{prop}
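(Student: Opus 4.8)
Throughout write $T'=\lfloor n^{2/3}/A^2\rfloor$ and let
\[E:=\Big\{\sum_{i=1}^{t}\delta'_i>-d\ \ \forall t\in[T'],\ \ \sum_{i=1}^{T'}\delta'_i\ge\eps\sqrt{T'}\Big\}\]
be the event in the proposition. The plan is to first pass from the measure conditioned on $\bS_n$ to the unconditioned one, then replace the increments $\delta'_i$ by the i.i.d.\ variables $D_i:=\ind_{R_i}(d-1)-1$, and finally invoke classical random-walk fluctuation estimates, splitting the interval $[T']$ at an intermediate scale. For the first point: $E$ is $\F_{T'}$-measurable, i.e.\ it depends only on the first $T'\ll n$ steps. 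Let $\bS_n^{(1)}$ be the event that the first $T'$ revealed edges contain neither a repeated edge nor a self-loop; then $\bS_n\subseteq\bS_n^{(1)}$ and $\P((\bS_n^{(1)})^c)=O(T'/n)=O(n^{-1/3}/A^2)$ (a repeated edge at a given step must join $v(e_t)$ to one of its $\le d$ current neighbours). A standard configuration-model estimate gives $\P(\bS_n\mid\F_{T'})\ge c$ on $\bS_n^{(1)}$, since completing a sparse, bounded-degree partial pairing to a simple multigraph has probability bounded below; together with $\P(\bS_n)\to e^{(1-d^2)/4}$ this yields
\[\P(E\mid\bS_n)=\frac{\E[\ind_{E\cap\bS_n^{(1)}}\,\P(\bS_n\mid\F_{T'})]}{\P(\bS_n)}\ge c'\,\P(E\cap\bS_n^{(1)})\ge c'\big(\P(E)-O(n^{-1/3}/A^2)\big).\]
Since $n^{-1/3}/A^2=A^{-3}(A/n^{1/3})$, once we establish $\P(E)\ge\tilde c\,A/n^{1/3}$ for $\tilde c=\tilde c(d)>0$, taking $A_0$ large gives $\P(E\mid\bS_n)\ge(c'\tilde c/2)A/n^{1/3}$. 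So it remains to bound $\P(E)$ from below under the unconditioned law.

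\emph{Comparison with an i.i.d.\ walk, and a two-scale split.} Write $\delta'_i=D_i-W'_i$ with $W'_i:=(d-1)\ind_{R_i}\ind_{F^c_i}+\ind_{\{h_i\in\calA_{i-1}\}}\ge0$, and set $S_t:=\sum_{i\le t}D_i$ and $V_t:=\sum_{i\le t}W'_i$ (non-decreasing), so that $\sum_{i\le t}\delta'_i=S_t-V_t$; here $(D_i)$ are i.i.d.\ with $\E D_i=\lambda n^{-1/3}$ and $\operatorname{Var}(D_i)\to d-2$. Since $|\calV^{(d)}_{i-1}|\ge n-i$ and $|\calA_{i-1}|\le di$ we have $\P(W'_i\ne0\mid\F_{i-1})\le Ci/n$. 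Fix the short scale $t_0:=\lfloor n^{2/5}/\log n\rfloor$; then $\P(\exists\, i\le t_0:W'_i\ne0)\le\sum_{i\le t_0}Ci/n=O(t_0^2/n)=o(1/\sqrt{t_0})$ — this is the crucial gain — and on the complementary event $\sum_{i\le t}\delta'_i=S_t$ for all $t\le t_0$. On the range $(t_0,T']$ we instead invoke an estimate analogous to Lemma~\ref{numberfresh}, applied with parameter $m\asymp n^{1/6}\log^2 n$ (and the analogous bound for $\sum\ind_{\{h_i\in\calA_{i-1}\}}$), to conclude that with probability $\ge1-n\,e^{-c\log^2 n}$ the process $V$ stays below $\phi(t):=C't^2/n+m$ for all $t\in(t_0,T']$; crucially $\phi(t)/\sqrt t\le C'(T')^{3/2}/n+m/\sqrt{t_0}=C'A^{-3}+o(1)$, which is small for $A\ge A_0$, so $\phi$ is negligible against the diffusive scale on $(t_0,T']$.

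\emph{The random-walk estimate and assembly.} Let $a=a(d)>0$ be a suitable constant and set
\[\mathcal E_1:=\{W'_i=0\ \forall i\le t_0\}\cap\{d+S_t>0\ \forall t\le t_0\}\cap\{S_{t_0}\in[a\sqrt{t_0},2a\sqrt{t_0}]\},\]
an $\F_{t_0}$-measurable event. On $[0,t_0]$ the walk $(S_t)$ has negligible drift $t_0|\lambda|n^{-1/3}=o(\sqrt{t_0})$, so a classical ballot/local-central-limit estimate for random walks (of exactly the type established in \cite{de_ambroggio_roberts:near_critical_ER}) gives $\P(d+S_t>0\ \forall t\le t_0,\,S_{t_0}\in[a\sqrt{t_0},2a\sqrt{t_0}])\ge c_1/\sqrt{t_0}$, whence $\P(\mathcal E_1)\ge c_1/\sqrt{t_0}-o(1/\sqrt{t_0})\ge(c_1/2)/\sqrt{t_0}$. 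On $\mathcal E_1$ we have $V_{t_0}=0$, so $\sum_{i\le t_0}\delta'_i=S_{t_0}\in[a\sqrt{t_0},2a\sqrt{t_0}]$; applying the Markov property at time $t_0$ and using $\sum_{i\le t}\delta'_i=S_t-V_t\ge S_t-\phi(t)$ on the good event for $V$, we obtain on $\mathcal E_1$
\[\P(E\mid\F_{t_0})\ge\P\Big(S_{t_0}+(S_t-S_{t_0})>\phi(t)-d\ \forall t\in(t_0,T'],\ S_{T'}-S_{t_0}\ge(\eps+o(1))\sqrt{T'}\Big)-n\,e^{-c\log^2 n}.\]
The fresh walk $(S_t-S_{t_0})_{t>t_0}$ starts effectively at height $S_{t_0}\asymp\sqrt{t_0}$, the barrier $\phi(t)-d$ is negligible against $\sqrt t$ on $(t_0,T']$, the drift over $(t_0,T']$ is $O(\sqrt{T'})$, and $\eps$ is a small constant; a Brownian approximation (Donsker, or a strong invariance principle) then shows this probability is at least $c_2 S_{t_0}/\sqrt{T'}\ge c_2 a\sqrt{t_0}/\sqrt{T'}$, for $\eps$ and $A^{-1}$ small. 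Combining,
\[\P(E)\ge\E\big[\ind_{\mathcal E_1}\P(E\mid\F_{t_0})\big]\ge\tfrac12 c_2 a\,\frac{\sqrt{t_0}}{\sqrt{T'}}\,\P(\mathcal E_1)\ge\frac{c_1c_2a}{4}\cdot\frac1{\sqrt{T'}}\asymp\frac{A}{n^{1/3}},\]
which is the desired bound.

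\emph{Main obstacle.} The delicate point is that the quantity being estimated is itself $\Theta(A/n^{1/3})=\Theta(1/\sqrt{T'})\to0$, so no error term — neither the $\bS_n$-conditioning nor the gap $V_t$ between $\sum\delta'_i$ and the i.i.d.\ walk $S_t$ — may be discarded as merely $o(1)$: each must be pushed below the scale $1/\sqrt{T'}$ (or $1/\sqrt{t_0}$ on the short interval). This is exactly what dictates the two-scale split: $t_0$ must be small enough ($t_0=o(n^{2/5})$) that the probability of a bad step on $[1,t_0]$ is $o(1/\sqrt{t_0})$, yet large enough ($t_0\to\infty$) that the walk has genuinely reached height $\asymp\sqrt{t_0}$ by time $t_0$; and it is why one needs the sharp concentration of Lemma~\ref{numberfresh} with a well-tuned parameter on $(t_0,T']$, together with the hypothesis $A\ge A_0$ in order to absorb the $O(n^{-1/3}/A^2)$ loss from the conditioning and to keep $\phi$ negligible against $\sqrt t$.
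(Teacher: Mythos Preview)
Your approach is valid and genuinely different from the paper's; let me compare and flag the one step that needs more work.

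\textbf{The paper's route (Section 4.2).} The paper separates $\delta'_i=\delta''_i-\ind_{\{h_i\in\calA_{i-1}\}}$ and handles the two pieces independently: it shows $\sum_{i\le t}\ind_{\{h_i\in\calA_{i-1}\}}<t^{\gamma}$ with probability $1-O(n^{-1/2})$, and then for the $\delta''$-walk it (i) removes the conditioning on $\bS_n$ (as you do), (ii) replaces $\delta''_i$ by i.i.d.\ increments $\Delta''_i$ via a pointwise stochastic-domination coupling (Proposition~\ref{deltatoDprop}), (iii) removes the $O(n^{-1/3})$ drift by an exponential tilt (Lemma~\ref{DeltatoDlem}), and (iv) applies Ritter's theorem on walks conditioned positive. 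No intermediate time scale and no Brownian approximation are used here.

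\textbf{Your route.} You use a two-scale split: on $[1,t_0]$ with $t_0\asymp n^{2/5}/\log n$, the correction $W'_i$ vanishes outright with probability $1-o(1/\sqrt{t_0})$, and a ballot estimate gives $\P(\mathcal E_1)\gtrsim1/\sqrt{t_0}$; on $(t_0,T']$ you bound $V_t\le\phi(t)$ with $\phi(t)/\sqrt t\le C/A^{3}+o(1)$ by concentration, and defer the remaining walk estimate to a Brownian approximation. This sidesteps the coupling of Proposition~\ref{deltatoDprop} and packages both corrections into $W'_i$, at the cost of pushing the analytic work into the second interval.

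\textbf{The step that needs filling in.} Your claim that the fresh walk started at $x\asymp\sqrt{t_0}$ stays above $\phi(t)-d$ on $(t_0,T']$ and ends above $\eps\sqrt{T'}$ with probability $\ge c_2\,x/\sqrt{T'}$ is correct but not a consequence of ``Donsker, or a strong invariance principle'' alone. Two features prevent a one-line argument: the drift over $(t_0,T']$ is $(\lambda/A)\sqrt{T'}$, the \emph{same order} as the target level $\eps\sqrt{T'}$, so it cannot be absorbed into an $o(1)$; and the barrier $\phi(t)$, while $o(\sqrt t)$ uniformly, sits well above the starting height $x$ for most of the interval (e.g.\ $\phi(T')\asymp n^{1/3}/A^{4}\gg\sqrt{t_0}$ when $A$ is fixed). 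Donsker gives only weak convergence, useless for a probability tending to zero. What is actually required is KMT to get a quantitative coupling to Brownian motion, then a Girsanov shift to remove the drift and the slope of the (convex) barrier, and finally a reflection-principle computation---exactly the toolbox the paper deploys in Section~\ref{propsecondbit_sec} for the neighbouring Proposition~\ref{propsecondbit}. So the gap is certainly fillable, but as written this is asserted rather than proved, and in your route it is where most of the technical work lives.
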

\begin{prop}\label{propsecondbit}
	Let $T'\coloneqq \lfloor n^{2/3}/A^2\rfloor$ and $\eps>0$. Then, for all large enough $n$, we have that
	\begin{multline*}
	\mathbb{P}\bigg(\sum_{i=T'+1}^t D_i >q(t)-\eps\sqrt{T'}\hspace{0.15cm}\forall t\in [T]\setminus[T']\bigg)\geq \frac{c}{A^{3/2}}e^{-\frac{A^3(d-1)(d-2)}{8d^2}+\frac{\lambda A^2(d-2)^2}{2d(d-1)}-\frac{\lambda^2 A(d-1)}{2(d-2)}},
	\end{multline*}
	where $c=c(d,\eps)>0$ is a finite constant that depends on $d$ and $\eps$.
\end{prop}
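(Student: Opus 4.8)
The plan is to couple the random walk $W_t \coloneqq \sum_{i=T'+1}^{T'+t} D_i$ with a standard Brownian motion via Skorokhod embedding (or a strong invariance principle à la KMT), and then replace the event that $W$ stays above the parabola $q(t)-\eps\sqrt{T'}$ with the event that a Brownian motion with the appropriate drift and diffusivity stays above two straight lines that dominate the parabola on the relevant range. First I would compute the relevant parameters: the increments $D_i = \ind_{R_i}(d-1)-1$ have mean $p(d-1)-1 = \lambda n^{-1/3}$ and variance $p(d-1)^2(1-p) \to (d-1)(d-2)/(d-1) = d-2$ as $n\to\infty$, so after recentering, $W_t - \lambda n^{-1/3}t$ behaves like $\sqrt{d-2}\,B_t$ for a standard BM $B$. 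Thus I want to bound from below
\[
\mathbb{P}\Big(\sqrt{d-2}\,B_t + \lambda n^{-1/3}t > q(T'+t) - \lambda n^{-1/3}T' - \eps\sqrt{T'} \ \ \forall t\in[T-T']\Big),
\]
where $q(T'+t) = p(1-2/d)(T'+t)^2/(2n) + An^{4/15}$. Since $T' = \lfloor n^{2/3}/A^2\rfloor \ll T \asymp An^{2/3}$, the terms involving $T'$ and the $An^{4/15}$ additive constant are lower-order corrections to the exponent, and the dominant contribution comes from the parabolic term over $t\in[0,T]$.

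The key step is the Brownian barrier estimate. Writing $\sigma^2 = d-2$, $b = \lambda n^{-1/3}$ and letting $f(t) = \tfrac{p(1-2/d)}{2n}(T'+t)^2 + An^{4/15} - bT' - \eps\sqrt{T'}$, I need a lower bound on $\mathbb{P}(\sigma B_t > f(t) - bt\ \forall t\le T-T')$. The standard trick (used in \cite{de_ambroggio_roberts:near_critical_ER}) is: on $[0, \theta]$ for an intermediate time $\theta \asymp T$, the concave-from-below curve $f(t)-bt$ lies beneath the chord joining its endpoints; bound the probability that $\sigma B$ stays above that chord using the reflection principle / explicit formula for BM staying above a line, which gives a Gaussian factor $\exp(-(\text{endpoint height})^2/(2\sigma^2\theta))$ times a polynomial prefactor in $\theta$; then on $[\theta, T-T']$ restart and use that the remaining curve, suitably, still admits a comparable bound — or simply choose $\theta = T-T'$ directly and bound against a single line from $0$ to the terminal value of $f$. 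Optimizing the endpoint: $f(T) - bT \approx \frac{p(1-2/d)}{2n}T^2 - bT$, and with $T \approx (d-1)An^{2/3}$, $p \approx (d-1)^{-1}$, $b = \lambda n^{-1/3}$, the terminal height is $\approx \frac{(d-2)}{2d(d-1)}A^2 n^{1/3} - \lambda A n^{1/3}$ — wait, I would need to track $n$-powers carefully; the point is that dividing its square by $2\sigma^2 T \asymp (d-2)An^{2/3}$ produces exactly the exponent
\[
\frac{A^3(d-1)(d-2)}{8d^2} - \frac{\lambda A^2(d-2)^2}{2d(d-1)} + \frac{\lambda^2 A(d-1)}{2(d-2)},
\]
and the polynomial prefactor in $T\asymp An^{2/3}$ combines with normalization to give the claimed $c/A^{3/2}$; the additive corrections $An^{4/15}$, $bT'$, $\eps\sqrt{T'}$ all contribute only $o(1)$ to the exponent since $A = o(n^{1/30})$.

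The main obstacle — and where I expect the bulk of the work to lie — is twofold: first, justifying the passage from the discrete random walk $W$ to Brownian motion with sufficient precision that the error in the exponent is $o(1)$ rather than $O(1)$; a Skorokhod embedding together with a maximal inequality controlling $\max_{t\le T-T'}|W_t - \sigma B_{\tau_t}|$ and $\max|\tau_t - t|$ should suffice, but one must check the moment conditions on $D_i$ and that the time-change discrepancy $\asymp (T)^{1/2+o(1)}$ is negligible against the barrier gap $\asymp A^2 n^{1/3}$ — this is where the constraint $A = o(n^{1/30})$ is used. Second, choosing the comparison lines: one straight line from $0$ to the terminal point is too crude near $t=0$ because the parabola is flat there while the line has positive slope, so a small initial buffer is needed — this is exactly what the $\eps\sqrt{T'}$ head start (inherited from Lemma \ref{splitprod_lem} and Proposition \ref{propfirstbit}) provides, and one must verify $\eps\sqrt{T'}$ is large enough to absorb the gap between the parabola and the initial segment of the chord while being small enough ($\eps\sqrt{T'}\ll A^2 n^{1/3}$, which holds since $T' = n^{2/3}/A^2$) not to disturb the leading exponent. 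Organizing these approximations so the errors provably land in $o(1)$ of the exponent and $\Theta(1)$ of the prefactor is the delicate accounting; the underlying Brownian computation is classical.
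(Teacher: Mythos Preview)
Your overall architecture matches the paper's: reduce to a Brownian motion above the parabola, replace the parabola by straight-line chords, and evaluate via the reflection principle. However, three of your specific choices would not close the argument as written.

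\textbf{Order of the change of measure and the strong approximation.} You apply a strong approximation directly to $D_i=\ind_{R_i}(d-1)-1$, whose law depends on $n$ through $p=(1+\lambda n^{-1/3})/(d-1)$. The KMT constants $H,a,b$ are distribution-dependent, so applying it to a triangular array needs justification. The paper avoids this by \emph{first} performing an exponential tilt on the discrete walk (their Lemma~\ref{secondchangeofmeasure}): with $\gamma=\frac{1}{d-1}\log\frac{1-p}{p(d-2)}$ one obtains increments $D'_i$ with $\P(D'_i=\sqrt{d-2})=\tfrac{1}{d-1}$, zero mean, unit variance, and \emph{no} $n$-dependence; the $\lambda$-factors $e^{\frac{\lambda A^2(d-1)}{2d}-\frac{\lambda^2 A(d-1)}{2(d-2)}}$ drop out at this stage from $e^{-\gamma f(T'')}\E[e^{\gamma D_1}]^{T''}$. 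Then the classical KMT theorem applies verbatim. Your ``absorb the drift into a drifted Brownian motion'' is formally equivalent via Girsanov, but you would still need to justify the coupling uniformly in $n$.

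\textbf{Skorokhod is not sharp enough.} You offer Skorokhod embedding as an alternative. Its fluctuation error is of order $(T'')^{1/2+o(1)}\asymp A^{1/2}n^{1/3+o(1)}$, whereas the buffer available between the walk and the barrier at the start and end is only $\eps\sqrt{T'}\asymp n^{1/3}/A$. For $A$ large this fails. The paper uses KMT precisely because its $O(\log T'')$ error is $\ll n^{1/3}/A$ under $A=o(n^{1/30})$.

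\textbf{A single chord does not give the prefactor $A^{-3/2}$.} You suggest ``simply choose $\theta=T-T'$ and bound against a single line''. The exponent is indeed governed by $\phi(T'')^2/(2T'')$ and would come out correctly, but the prefactor would not: the starting gap $x-y\asymp n^{1/3}/A$ and the terminal window (forced on you by the change-of-measure step, which requires $\sum D'_i\le (f(T'')+\ell)/\sqrt{d-2}$ with $\ell\asymp n^{1/3}/A$) are both of order $n^{1/3}/A\ll\sqrt{T''}$. Plugging into Lemma~\ref{reflectionlem} with a single chord yields a factor $(1-e^{2(x-y)(\mu t+y-z)/t})\asymp (n^{1/3}/A)^2/T''\asymp A^{-3}$, and after multiplying by the window length and the Gaussian normalisation one gets $A^{-9/2}$, not $A^{-3/2}$. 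The paper's remedy is to split $[0,T'']$ at the midpoint and allow a \emph{wide} window $I_1$ of length $A^{1/2}n^{1/3}\asymp\sqrt{T''}$ there; on each half the reflection factor is then $\Theta(1)$, and integrating produces exactly $c/A^{3/2}$. Your proposal mentions the two-line option, but you should commit to it: the single-chord shortcut loses three powers of $A$.
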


We are now in a position to prove the lower bounds stated in Theorem \ref{mainthm}, subject to completing the proofs of Propositions \ref{animpprop}, \ref{propfirstbit} and \ref{propsecondbit} above.
\begin{proof}[Proof of the lower bounds in Theorem \ref{mainthm}]

It follows from \eqref{probtoboundbelow} and Lemma \ref{splitprod_lem} that for any $0<T'<T$,
\begin{multline*}
\mathbb{P}(|\CC(V_n)|>An^{2/3}\,|\,\bS_n) \ge \mathbb{P}\bigg(\sum_{i=1}^{t}\delta'_i > -d\hspace{0.15cm}\forall t\in [T'],\, \sum_{i=1}^{T'}\delta'_i \geq \eps\sqrt{T'}\,\bigg|\,\bS_n\bigg)\\
\cdot\mathbb{P}\bigg(\sum_{i=T'+1}^t D_i >q(t)-\eps\sqrt{T'}\hspace{0.15cm}\forall t\in [T]\setminus[T']\bigg)  - CTe^{-cn^{1/10}}. 
\end{multline*}
Propositions \ref{propfirstbit} and \ref{propsecondbit} then tell us that when $T' = \lfloor n^{2/3}/A^2\rfloor$ this is at least
\[\frac{c'}{A^{1/2}n^{1/3}}\exp\Big(-\frac{A^3(d-1)(d-2)}{8d^2}+\frac{\lambda A^2(d-2)^2}{2d(d-1)}-\frac{\lambda^2 A(d-1)}{2(d-2)}\Big) - CTe^{-cn^{1/10}}.\]
Since $A\ll n^{1/30}$ and $|\lambda|=O(A)$, the first term above is dominant when $n$ is large, and the required bound on $\mathbb{P}(|\CC(V_n)|>An^{2/3})$ follows.

To obtain the lower bound for the probability that $|\mathcal{C}_{\text{max}}|$ is larger than $An^{2/3}$, we follow exactly the same argument elaborated in \cite{de_ambroggio_roberts:near_critical_ER}, which we recall here for the reader's convenience. First of all we remark that, for any (non-trivial) $\N_0$-valued random variable $X$,
\begin{equation}\label{simpleineq}
\mathbb{P}(X\geq 1 \,|\,\bS_n)\geq \frac{\mathbb{E}[X|\bS_n]^2}{\mathbb{E}[X^2|\bS_n]};
\end{equation}
this simple fact can be proved by applying the Cauchy-Schwarz inequality to $X\ind_{\{X\ge 1\}}$.

Denote by $X = \sum_{i=1}^{n}\ind_{\{|\CC(i)|\in [T,2T]\}}$ the number of vertices contained in components of size between $T$ and $2T$. Observe that $X\geq 1 $ implies $|\mathcal{C}_{\max}|\geq T$. Therefore, using \eqref{simpleineq} we obtain
\begin{equation}\label{ratiox}
\mathbb{P}\left(|\mathcal{C}_{\max}|\geq T \,|\,\bS_n\right)\geq \P(X\ge 1\,|\,\bS_n) \ge \frac{\mathbb{E}[X|\bS_n]^2}{\mathbb{E}[X^2|\bS_n]}.
\end{equation}
For the numerator, since $V_n$ is a vertex selected uniformly at random from the set of vertices we have
\begin{equation}\label{EXtoP}
\mathbb{E}[X|\bS_n]^2=n^2\mathbb{P}\left(\left.|\CC(V_n)|\in [T,2T]\,\right|\,\bS_n\right)^2.
\end{equation}
Next we bound the denominator from above, ignoring the conditioning on $\bS_n$ for now. Given vertices $i,j\in [n]$, recall that we write $i\leftrightarrow j$ if there exists a path of open edges (that is, edges in $\G(n,d,p)$) between $i$ and $j$. Denote by $V'_n$ a vertex sampled uniformly at random from $[n]$, independently of $V_n$. Then we can write
\begin{align}
\mathbb{E}[X^2] &= n^2\mathbb{P}\left(|\CC(V_n)|\in [T,2T],\, |\CC(V'_n)|\in[T,2T]\right)\nonumber\\
&= n^2\P\left(|\CC(V_n)|\in[T,2T],\, V'_n\in \CC(V_n)\right)\nonumber\\
&\hspace{30mm} + n^2 \P\left(|\CC(V_n)|\in[T,2T],\, |\CC(V'_n)|\in[T,2T],\,V_n\nleftrightarrow V'_n\right).\label{secondmoment}
\end{align}
Since $V'_n$ is uniformly chosen independently of $V_n$, we have
\begin{align}
n^2\P\left(|\CC(V_n)|\in[T,2T],\, V'_n\in \CC(V_n)\right) &\le n^2 \frac{2T}{n}\P\left(|\CC(V_n)|\in[T,2T]\right)\nonumber\\
&= 2Tn\P\left(|\CC(V_n)|\in[T,2T]\right).\label{prodsplit1}
\end{align}

For the second term on the right-hand side of \eqref{secondmoment}, we observe that once we have run the exploration process until step $\tau$ and explored $\CC(V_n)$, if we then observe that $V'_n$ is not in $\CC(V_n)$ then we may choose (in part (b) of the exploration process) one of the stubs incident to $V'_n$ to begin the next phase. We may then repeat the argument in Section \ref{UBsec} for the exploration of this second component, to discover that the probability that it is larger than $T$ is again at most
\[\frac{c}{A^{1/2}n^{1/3}}e^{-G_{\lambda}(A,d)}.\]
Thus we have
\begin{align*}
&\P\left(|\CC(V_n)|\in[T,2T],\, |\CC(V'_n)|\in[T,2T],\,V_n\nleftrightarrow V'_n\right)\\
&\hspace{40mm}\le \E\left[\ind_{\{|\CC(V_n)|\in[T,2T], V_n\nleftrightarrow V'_n\}}\P\left(\left.|\CC(V'_n)\ge T \,\right|\,\F_{\tau}, V'_n\right)\right]\\
&\hspace{40mm}\le \frac{c}{A^{1/2}n^{1/3}}e^{-G_{\lambda}(A,d)}\P\left(|\CC(V_n)|\in[T,2T]\right).
\end{align*}
Substituting this and \eqref{prodsplit1} into \eqref{secondmoment}, we obtain
\begin{align*}
\mathbb{E}[X^2] &\le 2Tn\P\left(|\CC(V_n)|\in[T,2T]\right) + \frac{cn^{5/3}}{A^{1/2}}e^{-G_{\lambda}(A,d)}\P\left(|\CC(V_n)|\in[T,2T]\right)\\
&\le 3Tn\P\left(|\CC(V_n)|\in[T,2T]\right)
\end{align*}
for large $n$, and then by our usual argument,
\[\mathbb{E}[X^2|\bS_n] = \frac{\E[X^2\ind_{\bS_n}]}{\P(\bS_n)} \le \frac{\E[X^2]}{\P(\bS_n)} \le c\E[X^2] \le 3cTn\P\left(|\CC(V_n)|\in[T,2T]\right)\]
for some finite constant $c$. In turn substituting this and \eqref{EXtoP} into \eqref{ratiox}, we have for large $n$
\[\mathbb{P}\left(\left.|\mathcal{C}_{\max}|\geq T\,\right|\,\bS_n\right)\geq \frac{n^2\mathbb{P}\left(\left.|\CC(V_n)|\in [T,2T]\,\right|\,\bS_n\right)^2}{3cTn\P\left(|\CC(V_n)|\in[T,2T]\right)} \ge \frac{c'n^{1/3}}{A}\frac{\mathbb{P}\left(\left.|\CC(V_n)|\in [T,2T]\,\right|\,\bS_n\right)^2}{\P\left(|\CC(V_n)|\in[T,2T]\right)}\]
for some constant $c'>0$.

Now applying the lower bound
\[\P\left(\left.|\CC(V_n)|\ge T \,\right|\,\bS_n\right) \ge \frac{c_1}{A^{1/2}n^{1/3}}e^{-G_{\lambda}(A,d)}\]
obtained in the first part of this proof, together with the upper bounds
\[\P\left(|\CC(V_n)|\in[T,2T]\right) \le \P\left(|\CC(V_n)|\ge T\right) \le \frac{c_2}{A^{1/2}n^{1/3}}e^{-G_{\lambda}(A,d)}\]
which follows from \eqref{uncondUB} and
\[\P\left(\left.|\CC(V_n)|\ge 2T \,\right|\,\bS_n\right) \le \frac{c_3}{(2A)^{1/2}n^{1/3}}e^{-G_{\lambda}(2A,d)}\]
which follows from the upper bound in Theorem \ref{mainthm} proved in Section \ref{UBsec} we obtain
\[\mathbb{P}\left(\left.|\mathcal{C}_{\max}|\geq T \,\right|\,\bS_n\right)\geq \frac{c' n^{1/3}}{A}\bigg(\frac{c_1 e^{-G_{\lambda}(A,d)}}{A^{1/2}n^{1/3}} - \frac{c_3 e^{-G_{\lambda}(2A,d)}}{(2A)^{1/2}n^{1/3}}\bigg)^2\bigg(\frac{c_2 e^{-G_{\lambda}(A,d)}}{A^{1/2}n^{1/3}}\bigg)^{-1}.\]
Provided that $A$ is large enough, the second term inside the first set of parentheses is smaller than $1/2$ times the first, and the result follows.
\end{proof}

The remainder of Section \ref{LB_sec} is devoted to the proofs of the results used above. We start by proving Proposition \ref{animpprop} in Section \ref{animppropsec}, and then we establish our two main tools, namely Propositions \ref{propfirstbit} and \ref{propsecondbit}, in Sections \ref{propfirstbit_sec} and \ref{propsecondbit_sec} respectively. In Section \ref{propsecondbit_sec} we will use two lemmas whose proofs we delay until Section \ref{aux_sec}. The most substantial of these is Lemma \ref{mainlemmasecondbit}, where we use a strong Brownian approximation to estimate the probability that a random walk remains above the curve $q(t)$ seen above.

\subsection{Proof of Proposition \ref{animpprop}}\label{animppropsec}
	Recall that we defined
	\begin{equation}\label{Fdef}
	F_i = \{v(h_i)\in \calV_{i-1}^d\}, \,\,\,\, F_i' = \{v(h_i)\in\calV_{i-1}^{d-1}\} \,\,\text{ and }\,\, F^-_i = \bigg\{v(h_i)\in\bigcup_{m=1}^{d-2}\calV_{i-1}^m\bigg\}
	\end{equation}
	and also
	\begin{equation}\label{deldef}
	\delta_i = \ind_{R_i}\ind_{F_i}(d-1) + \ind_{R_i}\ind_{F'_i}(d-2) - \ind_{\{h_i\in \calA_{i-1}\}} - 1,
	\end{equation}
	whereas
	\begin{equation}\label{Ddef}
	D_i = \ind_{R_i}(d-1)-1.
	\end{equation}
	We want to quantify the difference between $\delta_i$ and $D_i$. There are essentially three ways in which the two objects can differ: if $R_i\cap F_i^-$ occurs, then $\delta_i = -1$ whereas $D_i = d-2$; if $h_i\in\calA_{i-1}$, then $\delta_i = -2$ whereas $D_i$ could either equal $d-2$ or $-1$; and if $R_i\cap F_i'$ occurs, then $\delta_i = d-3$ whereas $D_i = d-2$. The first two of these events occur infrequently, which we show in Section \ref{animpprop_pt1}. The third event, $R_i\cap F_i'$, is then the main contribution to the difference between $\delta_i$ and $D_i$, and we control how often this event occurs in Section \ref{animpprop_pt2}.
	
	\subsubsection{The events $R_i\cap F_i^-$ and $\{h_t\in\calA_{t-1}\}$ rarely occur for $t\in[T]$}\label{animpprop_pt1}
	We begin this section by showing that the number of vertices with at most $d-2$ unseen stubs is unlikely to be too large. The bound provided---using a straightforward Chernoff estimate---is not the best possible, but will suffice for our purposes.
	
	\begin{lem}\label{lemmaforfirstrandomsumtoremove}
		Let $i\in[T-1]$. Then, for all $l>0$ and sufficiently large $n\in\N$, we have that
		\[\mathbb{P}\left(\left|\bigcup_{m=1}^{d-2}\mathcal{V}^{(m)}_i\right|>\frac{i^2}{n}+l\right)\leq Ce^{-\frac{n^{1/2}l}{T}}\]
		for some finite constant $C$.
	\end{lem}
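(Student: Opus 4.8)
The plan is to dominate $\big|\bigcup_{m=1}^{d-2}\calV^{(m)}_i\big|$ by a sum of indicators and then run a Chernoff estimate, exactly along the lines of the proof of Lemma \ref{numberfresh}. First I would record the combinatorial fact underlying this. A vertex $v$ lies in $\bigcup_{m=1}^{d-2}\calV^{(m)}_i$ iff it still has at least one unseen stub (so it has \emph{not} been discovered, i.e.\ it has no active stubs and is not in $\calV^{(0)}$) while at least two of its stubs have already been matched; all such matches must have occurred with $v$'s stub playing the role of the uniformly-chosen partner $h_t$ (a stub matched as $e_t$ would have to be active, forcing $v$ to be discovered), and with the edge $e_th_t$ unretained (a retained hit would, by case (a.1), turn all of $v$'s remaining unseen stubs active and put $v$ into $\calV^{(0)}$). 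If $t_1<t_2<\dots<t_{d-m}$ are the steps at which $v$'s stubs are chosen as $h_t$, then at step $t_1$ the vertex $v$ is fresh but at steps $t_2,\dots,t_{d-m}$ it is not; hence $v$ contributes at least one step to
\[W_i\coloneqq \sum_{t=1}^i \ind_{\{h_t\in\calU_{t-1}\}}\,\ind_{\{v(h_t)\notin\calV^{(d)}_{t-1}\}},\]
and since distinct vertices contribute distinct steps we get the deterministic bound $\big|\bigcup_{m=1}^{d-2}\calV^{(m)}_i\big|\le W_i$. It therefore suffices to show $\P\big(W_i> i^2/n+l\big)\le Ce^{-n^{1/2}l/T}$.

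Next I would estimate the conditional probabilities. Note that the indicator in $W_i$ is nonzero only when $h_t$ is an unseen stub incident to a vertex in $\bigcup_{m=1}^{d-1}\calV^{(m)}_{t-1}$. At the end of step $t-1$ the stub $h_t$ is drawn uniformly from the $dn-2(t-1)-1$ unexplored stubs; among these, at most $(d-1)(t-1)$ are incident to a vertex in $\bigcup_{m=1}^{d-1}\calV^{(m)}_{t-1}$, because at most one vertex per step can be demoted out of $\calV^{(d)}_{\cdot}$ without being discovered (this happens precisely when a single unseen stub of a fresh vertex is matched by an unretained edge, and it leaves the vertex with exactly $d-1$ unseen stubs), and such a vertex can only lose unseen stubs thereafter. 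Hence $\P\big(h_t\in\calU_{t-1},\,v(h_t)\notin\calV^{(d)}_{t-1}\,\big|\,\F_{t-1}\big)\le \tfrac{(d-1)(t-1)}{dn-2(t-1)-1}$, and iterating with the tower property (using $\E[e^{r\ind_A}\mid\F_{t-1}]=1+\P(A\mid\F_{t-1})(e^r-1)$ and $1+x\le e^x$) gives, for $r>0$ and $i\le T\ll n$,
\[\E\big[e^{rW_i}\big]\le \exp\!\Big((e^r-1)\sum_{t=1}^{i}\frac{(d-1)(t-1)}{dn-2(t-1)-1}\Big)\le \exp\!\Big((e^r-1)\,\frac{d-1}{2d}\,\frac{i^2}{n}\,(1+o(1))\Big).\]

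Finally I would optimise over $r$. By Markov's inequality $\P(W_i> i^2/n+l)\le e^{-r(i^2/n+l)}\E[e^{rW_i}]$; taking $r=n^{1/2}/T$ (which tends to $0$), using $e^r-1\le r+r^2$, and exploiting that $\frac{d-1}{2d}<\frac12$ so that $(e^r-1)\frac{d-1}{2d}(1+o(1))\le r$ for all large $n$, the $i^2/n$ contributions to the exponent combine to something nonpositive, leaving $\P(W_i> i^2/n+l)\le e^{-rl}=e^{-n^{1/2}l/T}$, which is the asserted bound (with $C=1$). The only real obstacle is the combinatorial inequality $\big|\bigcup_{m=1}^{d-2}\calV^{(m)}_i\big|\le W_i$: the bookkeeping requires keeping the notions of ``not fresh'' and ``discovered'' strictly apart, since a discovered vertex (one whose stubs are active or explored) has no unseen stubs and so leaves $\bigcup_{m=1}^{d-1}\calV^{(m)}$ altogether; everything after that is a routine rerun of the Chernoff computation from the proof of Lemma \ref{numberfresh}.
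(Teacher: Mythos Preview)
Your proof is correct and follows essentially the same route as the paper: dominate the quantity of interest by a sum of indicators whose conditional probabilities are at most $\frac{(d-1)(t-1)}{dn-2(t-1)-1}$, then run the Chernoff argument with $r=n^{1/2}/T$. The only cosmetic difference is the dominating sum: the paper observes directly that $\big|\bigcup_{m=1}^{d-2}\calV^{(m)}_i\big|$ can increase at step $t$ only when $v(h_t)\in\calV^{(d-1)}_{t-1}$, giving the cleaner bound $\big|\bigcup_{m=1}^{d-2}\calV^{(m)}_i\big|\le\sum_{t=1}^i\ind_{\{v(h_t)\in\calV^{(d-1)}_{t-1}\}}$, whereas you use the slightly larger $W_i$ and a more elaborate ``second-hit'' argument to justify the domination; both feed into the identical estimate.
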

	
	\begin{proof}
	Note that, for every $r>0$ we have
	\begin{equation}\label{Vbd1eq}
	\mathbb{P}\left(\left|\bigcup_{m=1}^{d-2}\mathcal{V}^{(m)}_i\right|>\frac{i^2}{n}+l\right)\leq e^{-ri^2/n-rl}\mathbb{E}\left[e^{r\left|\bigcup_{m=1}^{d-2}\mathcal{V}^{(m)}_i\right|}\right].
	\end{equation}
	Observe that $\left|\bigcup_{m=1}^{d-2}\mathcal{V}^{(m)}_{i-1}\right|$ can only increase during step $i$ if vertex $v(h_i)$ has $d-1$ unseen stubs (and we do not keep the edge $e_ih_i$ in the percolation). Thus we can write
	\[\left|\bigcup_{m=1}^{d-2}\mathcal{V}^{(m)}_i\right|
	\leq \left|\bigcup_{m=1}^{d-2}\mathcal{V}^{(m)}_{i-1}\right|+\ind_{\left\{v(h_i)\in \mathcal{V}^{(d-1)}_{i-1} \right\}}.\]
	Therefore, recalling that $\F_t$ is the $\sigma$-algebra generated by the exploration process up to step $t$, we have that
	\[\mathbb{E}\Big[e^{r\left|\bigcup_{m=1}^{d-2}\mathcal{V}^{(m)}_i\right|}\Big|\F_{i-1}\Big] \leq e^{r\left|\bigcup_{m=1}^{d-2}\mathcal{V}^{(m)}_{i-1}\right|}\left(1+\mathbb{P}\left(\left.v(h_i)\in \mathcal{V}^{(d-1)}_{i-1}\right|\F_{i-1}\right)(e^r-1)\right).\]
	Furthermore, at the end of step $i-1$, we can have at most $i-1$ vertices with $d-1$ unseen stubs; indeed, this can only happen if at each step $j\in [i-1]$ we pick a stub incident to a fresh vertex and we do not keep the edge $e_jh_j$. Thus
	\[\mathbb{P}\left(\left.v(h_i)\in \mathcal{V}^{(d-1)}_{i-1}\right|\F_{i-1}\right) \le \frac{(d-1)(i-1)}{dn-2(i-1)-1},\]
	and combining these two inequalities we obtain
	\begin{align*}
	\mathbb{E}\Big[e^{r\left|\bigcup_{m=1}^{d-2}\mathcal{V}^{(m)}_i\right|}\Big]&=\mathbb{E}\Big[\mathbb{E}\Big(e^{r\left|\bigcup_{m=1}^{d-2}\mathcal{V}^{(m)}_i\right|}\Big| \F_{i-1}\Big)\Big]\\
	&\leq \left(1+\frac{(d-1)(i-1)}{dn-2(i-1)-1}(e^r-1)\right) \mathbb{E}\left[e^{r\left|\bigcup_{m=1}^{d-2}\mathcal{V}^{(m)}_{i-1}\right|}\right].
	\end{align*}
	Iterating and using the inequality $1+x\leq e^x$ valid for all $x\in \mathbb{R}$, we see that
	\begin{align*}
	\mathbb{E}\left[e^{r\left|\bigcup_{m=1}^{d-2}\mathcal{V}^{(m)}_i\right|}\right]\leq \prod_{j=1}^{i-1}\left(1+\frac{(d-1)j}{dn-2j-1}(e^r-1)\right) \le \exp\bigg((e^r-1)\sum_{j=1}^{i-1}\frac{j}{n(1-\frac{2j+1}{dn})}\bigg).
	\end{align*}
	Finally, since for large $n$ and $j\le i\le T$ we have $\frac{2j+1}{dn}\le 1/2$, the above is at most $\exp((e^r-1)i^2/n)$, and using the inequality $e^x\le 1+x+x^2$ valid for $x\in[0,1]$, provided $r\in(0,1]$ we have
	\[\mathbb{E}\left[e^{r\left|\bigcup_{m=1}^{d-2}\mathcal{V}^{(m)}_i\right|}\right] \le \exp\bigg(\frac{ri^2}{n} + \frac{r^2i^2}{n}\bigg).\]
	Substituting this into \eqref{Vbd1eq}, we have for $r\in(0,1]$,
	\begin{align*}
	\mathbb{P}\left(\left|\bigcup_{m=1}^{d-2}\mathcal{V}^{(m)}_i\right|>\frac{i^2}{n}+l\right)\leq e^{-ri^2/n-rl}\exp\left(r\frac{i^2}{n}+r^2\frac{i^2}{n}\right)=e^{-rl+r^2 i^2/n}.
	\end{align*}
	Taking $r=n^{1/2}/T \ll 1$ we thus see that
	\begin{align*}
	\mathbb{P}\left(\left|\bigcup_{m=1}^{d-2}\mathcal{V}^{(m)}_i\right|>\frac{i^2}{n}+l\right)\leq Ce^{-\frac{ln^{1/2}}{T}}
	\end{align*}
	for some finite constant $C$, which completes the proof.
\end{proof}

By a similar method we show that the number of active stubs is unlikely to be too large.

	\begin{lem} \label{lemmaforsecondrandomsumtoremove}
	For any $t\in\N$ and $\omega\le 3t/(d-1)$,
	\[\P(\exists i\in [t\wedge\tau] : |\mathcal{A}_i|>\omega) \le C\exp\left(-\frac{\omega^2}{4p(d-1)t} + \frac{\omega}{2}\left(1-\frac{1}{p(d-1)}\right)\right)\]
	where $C=C(d)$ is a finite constant which depends on $d$.
	\end{lem}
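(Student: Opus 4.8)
The plan is to dominate the number of active stubs by a random walk with i.i.d.\ increments and then apply an exponential maximal inequality. By the recursion $|\mathcal{A}_i|-|\mathcal{A}_{i-1}|=\eta_i$ (valid while $|\mathcal{A}_{i-1}|\ge 1$, i.e.\ for $i\le\tau$) we have $|\mathcal{A}_i|=d+\sum_{j=1}^i\eta_j$ for every $i\le\tau$, and from the definition \eqref{mainquantity} of $\eta_i$, using $|\mathcal{S}(h_j)\cap\mathcal{U}_{j-1}\setminus\{h_j\}|\le d-1$ and $\ind_{\{h_j\in\mathcal{A}_{j-1}\}}\ge0$, one gets the pathwise bound $\eta_j\le \ind_{R_j}(d-1)-1=D_j$ for every $j\le\tau$ (with the same $R_j$). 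Since the percolation indicators $\ind_{R_j}$ are i.i.d.\ Bernoulli$(p)$ and each independent of the past, $(D_j)_j$ is a sequence of i.i.d.\ increments and $S_i\coloneqq\sum_{j=1}^iD_j$ is a genuine random walk. Hence
\[\P\big(\exists i\in[t\wedge\tau]:|\mathcal{A}_i|>\omega\big)\le\P\Big(\max_{1\le i\le t}S_i>\omega-d\Big).\]

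Next I would apply a Chernoff bound for the running maximum. For $r>0$, writing $\Lambda(r)\coloneqq\log\E[e^{rD_1}]$, the process $e^{rS_i-i\Lambda(r)}$ is a non-negative martingale, so by Doob's $L^1$ maximal inequality (equivalently, optional stopping at the first time $S_i$ exceeds $\omega-d$, which is a bounded stopping time) one obtains, for every $y\in\R$,
\[\P\Big(\max_{1\le i\le t}S_i\ge y\Big)\le\exp\big(-ry+t(\Lambda(r))_+\big),\]
where $(\cdot)_+$ is the positive part; in particular for any $r$ with $\Lambda(r)<0$ the bound only improves. Using $D_1+1=(d-1)\ind_{R_1}$ gives $\E[e^{rD_1}]=e^{-r}\big(1+p(e^{r(d-1)}-1)\big)$, hence by $\log(1+x)\le x$ and the elementary estimate $e^x-1\le x+x^2$ valid for $x\in[0,1]$,
\[\Lambda(r)\le-r+p\big(e^{r(d-1)}-1\big)\le-r+p\big(r(d-1)+r^2(d-1)^2\big)=r\big(p(d-1)-1\big)+pr^2(d-1)^2,\]
the last step being valid whenever $r(d-1)\le 1$.

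The final step is the choice of $r$. Taking $r=\theta\,\omega/t$ for an appropriate $d$-dependent constant $\theta\le 1/3$, the hypothesis $\omega\le 3t/(d-1)$ forces $r(d-1)\le 3\theta\le 1$, so the quadratic estimate for $\Lambda(r)$ applies; substituting it into the displayed Chernoff bound with $y=\omega-d$ and optimising over $\theta$ produces an exponent of the claimed form $-\tfrac{\omega^2}{4p(d-1)t}+\tfrac{\omega}{2}\big(1-\tfrac{1}{p(d-1)}\big)$ plus an additive error which — arising from the shift $-d$ in $\omega-d$, from the slack in $\log(1+x)\le x$ and $e^x-1\le x+x^2$, and from lower-order terms in $p(d-1)-1$, and using $\omega/t\le 3/(d-1)$ — is bounded by a constant depending only on $d$ and is absorbed into $C$. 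I expect the only real obstacle to be bookkeeping: one must check that the chosen rate keeps $r(d-1)$ in the regime where the quadratic bound holds (this is exactly what $\omega\le 3t/(d-1)$ buys) and then track carefully which error terms may be discarded into $C=C(d)$. There is no conceptual difficulty beyond the standard exponential-martingale machinery once the pathwise domination $\eta_j\le D_j$ is in hand.
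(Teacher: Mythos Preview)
Your approach is essentially identical to the paper's: the same pathwise domination $\eta_j\le D_j=\ind_{R_j}(d-1)-1$, the same exponential/Chernoff maximal inequality for the i.i.d.\ walk $S_i=\sum_{j\le i}D_j$, and the same quadratic expansion of $e^{r(d-1)}-1$ followed by the choice $r\asymp \omega/t$. The one cosmetic difference is that the paper first argues that the left-hand side is monotone in $\lambda$, reduces WLOG to $\lambda\ge 0$ (so that $e^{rS_i}$ itself is a submartingale), and then applies Doob's inequality directly to $e^{rS_i}$, whereas you work with the genuine martingale $e^{rS_i-i\Lambda(r)}$ and optional stopping; your route is slightly cleaner since it avoids the monotonicity step and handles both signs of $\lambda$ uniformly via the $(\Lambda(r))_+$. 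The paper's explicit choice is $r=\omega/(2p(d-1)t)$, which sits just inside the regime where the quadratic bound on $e^{r(d-1)}-1$ is valid thanks to $\omega\le 3t/(d-1)$; your $r=\theta\omega/t$ with $\theta\le 1/3$ is the same idea with a more conservative constant.
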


	\begin{proof}
	We note first that for $i\le \tau$,
	\[|\mathcal{A}_{i}|=d+\sum_{j=1}^{i}\eta_j\leq d+\sum_{j=1}^{i}\left(\ind_{R_j}(d-1)-1\right)=d+\sum_{j=1}^{i}D_j\]
	and therefore, for any $r>0$,
	\[\P(\exists i\in [t\wedge\tau] : |\mathcal{A}_i|>\omega) \le \P\left(\max_{i\le t\wedge\tau} e^{r\sum_{j=1}^{i}D_j} > e^{r(\omega-d)}\right) \le \P\left(\max_{i\le t} e^{r\sum_{j=1}^{i}D_j} > e^{r(\omega-d)}\right).\]
	Since the left-hand side above is monotone increasing in $\lambda$, we may without loss of generality suppose that $\lambda\ge 0$, so that $p\ge 1/(d-1)$. Then the process $(\sum_{j=1}^{i}D_j, i\ge 0)$ is a submartingale, and therefore so is $\exp(r\sum_{j=1}^{i}D_j)$ for any $r>0$. From Doob's submartingale inequality we obtain
	\begin{equation}\label{Doobsub}
	\P(\exists i\in [t\wedge\tau] : |\mathcal{A}_i|>\omega) \leq \E\left[e^{r\sum_{j=1}^{t}D_j}\right]e^{-r(\omega-d)}.
	\end{equation}
	Now observe that
	\begin{align*}
	\E[e^{r\sum_{j=1}^{t}D_j}] = \prod_{i=1}^t \left(e^{r(d-2)}p + e^{-r}(1-p)\right) &= e^{-rt}\prod_{i=1}^t (1+p(e^{r(d-1)}-1))\\
	&\le \exp\left(-rt + pt(e^{r(d-1)}-1)\right).
	\end{align*}
	Choosing $r=\frac{\omega}{2p(d-1)t}$, since $\omega\le 3t/(d-1)$ we note that $r(d-1)\le 3/2$ and therefore $e^{r(d-1)}-1\le r(d-1)+r^2(d-1)^2$. Combining this with \eqref{Doobsub}, we have
	\[\P(\exists i\in [t\wedge\tau] : |\mathcal{A}_i|>\omega) \leq \exp\left(-rt + pt(r(d-1)+r^2(d-1)^2) -r\omega + rd\right)\]
	which, after simplifying, gives the desired result.
	\end{proof}

We can then apply the above two lemmas to show our main result for this section, which says that $\sum_{i=1}^t (\ind_{R_i\cap F_i^-} + \ind_{\{h_t\in\calA_{t-1}\}})$ is likely to be small when $t\in [T]$. Again we use a straightforward Chernoff bound.

\begin{cor}\label{firstrandomsumtoremove}
	Suppose that $h=h_n$ satisfies $1\ll h \ll T^{1/2}$. Then for all sufficiently large $n\in\N$, we have that
	\[\mathbb{P}\left(\exists t\in [T\wedge\tau] : \sum_{i=1}^{t}(\ind_{R_i\cap F_i^-} + \ind_{\{h_t\in\calA_{t-1}\}}) > \frac{4t^3}{3dn^2} + \left(\frac{8T^{1/2}t}{n}+1\right)h\right)\leq CTe^{-h} + e^{-ch^2},\]
	for some finite constant $C$ and $c=c_d>0$ which depends only on $d$.
\end{cor}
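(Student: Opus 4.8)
The plan is to bound the two pieces of the sum separately and then combine them. Write $Y_t := \sum_{i=1}^t \ind_{R_i\cap F_i^-}$ and $Z_t := \sum_{i=1}^t \ind_{\{h_i\in\calA_{i-1}\}}$, and note it suffices to show that $Y_{T\wedge\tau}$ exceeds $\tfrac{4T^3}{3dn^2} + \tfrac{8T^{1/2}T}{n}h$ (say) only with probability $\le CTe^{-h}$, and separately that $Z_{T\wedge\tau}$ exceeds something like $h$ with probability $\le e^{-ch^2}$; a union bound then finishes the job, after checking the constants line up with the statement (there is some slack built into the $+1$ and the factor $8$). For $Y_t$, the key observation is that conditionally on $\F_{i-1}$, the event $R_i\cap F_i^-$ has probability $p\cdot\frac{d|\bigcup_{m=1}^{d-2}\calV^{(m)}_{i-1}|}{dn-2(i-1)-1}$, since $h_i$ is uniform among the unexplored stubs. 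On the good event from Lemma \ref{lemmaforfirstrandomsumtoremove}, namely $|\bigcup_{m=1}^{d-2}\calV^{(m)}_{i}|\le i^2/n + l$ for all $i\in[T-1]$, this conditional probability is at most $\frac{d(i^2/n + l)}{dn - 2(i-1)-1}\le \frac{2(i^2/n+l)}{n}$ for large $n$ (using $i\le T\ll n$). So on that event the increments of $Y$ are stochastically dominated by independent Bernoulli variables with these parameters, and a Chernoff/exponential-martingale computation of exactly the same flavour as in the proofs of Lemmas \ref{numberfresh} and \ref{arub} — take $r = n^{1/2}/T$, use $e^r - 1\le r + r^2$, and bound $\sum_{i\le T}(i^2/n + l)\le \tfrac{T^3}{3n} + Tl$ — gives
\[
\P\Big(\exists t\in[T\wedge\tau]: Y_t > \tfrac{2T^3}{3n^2}\cdot\tfrac{2}{d}\text{-ish} + (\text{error})\cdot h\Big)\le CTe^{-h}
\]
after choosing $l$ of order $T^{1/2}h$ (so that $2Tl/n \asymp T^{1/2}h/n \cdot T$, matching the $8T^{1/2}t/n\cdot h$ term) and absorbing the Lemma \ref{lemmaforfirstrandomsumtoremove} failure probability $Ce^{-n^{1/2}l/T} = Ce^{-h}$ into the constant. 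One should run the exponential-martingale bound on $Y_t$ directly (not just on the dominating Bernoullis) using the tower property step by step, exactly as in Lemma \ref{numberfresh}, intersecting with the good event at each stage so the conditional-probability bound is valid; a maximal inequality (Doob, applied to the supermartingale $\exp(rY_t - \text{compensator})$) or a union bound over $t\in[T]$ converts the fixed-time tail into the "exists $t$" tail, at the cost of the factor $T$.

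For $Z_t = \sum_{i=1}^t\ind_{\{h_i\in\calA_{i-1}\}}$, conditionally on $\F_{i-1}$ this indicator has probability $\frac{|\calA_{i-1}| - 1}{dn - 2(i-1)-1}\le \frac{|\calA_{i-1}|}{dn/2}$ for large $n$. On the event from Lemma \ref{lemmaforsecondrandomsumtoremove} that $|\calA_i|\le\omega$ for all $i\le T\wedge\tau$ — which, taking $\omega$ of order, say, $n^{1/2}$ (so that $\omega\le 3T/(d-1)$ holds and the exponent $-\omega^2/(4p(d-1)T) + O(\omega)$ is of order $-n^{1/2}/A^{?}$, in any case much smaller than $e^{-ch^2}$ for $h\ll T^{1/2}$) — the conditional probability of each indicator is $O(n^{-1/2})$, so $Z_{T\wedge\tau}$ is stochastically dominated by a $\Bin(T, c n^{-1/2})$, which has mean $O(T n^{-1/2}) = O(A n^{1/6})$. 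Hmm — this is not $\ll h$ when $h = An^{4/15}$, so in fact the clean statement must be that $Z_t$ is small \emph{compared to the other terms}: indeed $T^3/n^2 \asymp A^3$ and $T^{1/2}Th/n \asymp A^{3/2}h n^{1/6} \gg A n^{1/6}$, so $Z_{T\wedge\tau}$ is genuinely of smaller order than the claimed bound and one just needs $Z_{T\wedge\tau} = O(T n^{-1/2} + (\text{dev}))$ with the deviation term controlled by a Chernoff bound on the dominating binomial giving $\P(Z_{T\wedge\tau} > \text{mean} + s)\le e^{-cs^2/(\text{mean})}$; taking $s\asymp h$ and noting $h^2/(An^{1/6})$ may not be large, one instead keeps the cruder $e^{-ch^2/(h + An^{1/6})}$ — but since $h\ll T^{1/2}\asymp A^{1/2}n^{1/3}$, one can just as well bound $Z_{T\wedge\tau}\le\omega\cdot(\text{something})$ deterministically on the good event or, cleanest, absorb all of $Z$ into the "$+1$" and error terms by noting $Tn^{-1/2}\cdot$const $\le 8T^{1/2}Th/n$ once $h\gg n^{?}$; this bookkeeping is the one genuinely fiddly part.

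The main obstacle, then, is not any single estimate — each is a routine Chernoff/exponential-supermartingale argument of the type already carried out twice in Section \ref{UBsec} — but rather making the constants and error terms in the final inequality ($\tfrac{4t^3}{3dn^2}$, the factor $8$, the $+1$) come out exactly as stated while simultaneously (i) conditioning on the good events from Lemmas \ref{lemmaforfirstrandomsumtoremove} and \ref{lemmaforsecondrandomsumtoremove} with the right choice of their free parameters $l$ and $\omega$ so their failure probabilities are dominated by $CTe^{-h} + e^{-ch^2}$, and (ii) upgrading fixed-$t$ tail bounds to uniform-in-$t\le T\wedge\tau$ bounds via a union bound (hence the factor $T$) or Doob's inequality. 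I would organise the write-up as: first invoke the two lemmas to restrict to the good event; then on that event dominate $Y_t$ and $Z_t$ by sums of independent Bernoullis with explicit small parameters; then apply a Markov/exponential-moment bound with $r = n^{1/2}/T$ to $Y$ and a standard binomial Chernoff bound to $Z$; then union-bound over $t\in[T]$ and over the two pieces, collecting all error probabilities.
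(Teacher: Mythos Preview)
Your overall architecture is right and matches the paper's: restrict to the good events furnished by Lemmas \ref{lemmaforfirstrandomsumtoremove} and \ref{lemmaforsecondrandomsumtoremove}, bound the conditional probabilities of the two indicator events on those good events, run an exponential-moment argument, and union-bound over $t$. The difficulty you flag as ``genuinely fiddly'' is, however, a real gap, and it stems from the wrong choice of the free parameter $\omega$ in Lemma \ref{lemmaforsecondrandomsumtoremove}.

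The correct choice is $\omega = T^{1/2}h$ (and correspondingly $l = Th/n^{1/2}$ in Lemma \ref{lemmaforfirstrandomsumtoremove}). With $\omega = T^{1/2}h$ the failure probability from Lemma \ref{lemmaforsecondrandomsumtoremove} has exponent $-\omega^2/(4p(d-1)T) + O(\omega\lambda n^{-1/3}) \asymp -h^2$, which is exactly where the $e^{-ch^2}$ in the statement comes from; your choice $\omega\asymp n^{1/2}$ does not reproduce this for the full range $1\ll h\ll T^{1/2}$. More importantly, on the good event $\{|\calA_{i-1}|\le T^{1/2}h\}$ the conditional probability of $\{h_i\in\calA_{i-1}\}$ is at most $2T^{1/2}h/(dn)$, and summing this over $i\le t$ gives a contribution $2T^{1/2}ht/(dn)$ to the compensator --- this is precisely the origin of the $\tfrac{8T^{1/2}t}{n}h$ term in the statement, not a term you have to squeeze in afterwards. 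Once you see this, the $Z_t$ part is not fiddly at all: it is handled on exactly the same footing as $Y_t$.

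Two smaller points. First, the paper does not separate $Y_t$ and $Z_t$ but treats $\ind_{(R_i\cap F_i^-)\cup A_i}$ as a single increment; on the good event its conditional probability is at most $\tfrac{4}{dn^2}((i-1)^2 + 2T^{1/2}hn)$, and a single exponential-moment bound with $r=1$ (not $r=n^{1/2}/T$) suffices --- the compensator is then $\tfrac{4}{dn^2}(\tfrac{t^3}{3}+2T^{1/2}hnt)$, which sits strictly below the threshold in the statement because $d\ge 3$, leaving a clean $e^{-h}$ after Markov. Second, your arithmetic around $l$ is slightly off: the choice $l=Th/n^{1/2}$ is what makes the Lemma \ref{lemmaforfirstrandomsumtoremove} failure exponent $n^{1/2}l/T$ equal to $h$, and its contribution $Thn^{1/2}/n^2\le T^{1/2}h/n$ (using $T\le n$) again folds into the same $8T^{1/2}t/n\cdot h$ term.
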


\begin{proof}
	Define the event
	\[V_{t,h} = \bigcap_{i=1}^{t-1}\left\{\left|\bigcup_{m=1}^{d-2}\mathcal{V}^{(m)}_i\right|\le \frac{i^2}{n}+\frac{Th}{n^{1/2}},\, |\calA_i|\le T^{1/2}h\right\}\]
	and also let $A_t = \{h_t\in\calA_{t-1}\}$. We split the probability that we want to bound depending on whether or not $V_{T\wedge\tau,h}$ occurs. That is, we have
	\begin{align}
	&\mathbb{P}\left(\exists t\in [T\wedge\tau]:\sum_{i=1}^t \ind_{(R_i\cap F_i^-)\cup A_i}>\frac{4t^3}{3dn^2} + \left(\frac{8T^{1/2}t}{n}+1\right)h\right)\nonumber\\
	&\le \sum_{t=1}^T \P\left(t\le\tau,\,\sum_{i=1}^t \ind_{(R_i\cap F_i^-)\cup A_i}>\frac{4t^3}{3dn^2} + \left(\frac{8T^{1/2}t}{n}+1\right)h,\,V_{T\wedge\tau,h}\right) + \P(V_{T\wedge\tau,h}^c)\nonumber\\
	&\le \sum_{t=1}^T \E\left[e^{\sum_{i=1}^t \ind_{(R_i\cap F_i^-)\cup A_i}}\ind_{V_{t,h}}\right]\exp\left(-\frac{4t^3}{3dn^2} - \left(\frac{8T^{1/2}t}{n}+1\right)h\right) + \P(V_{T\wedge\tau,h}^c)\label{Vprobbreak}
	\end{align}
	Applying Lemma \ref{lemmaforfirstrandomsumtoremove} with $l=Th/n^{1/2}$ and taking a union bound over $i=1,\ldots,T-1$, and also applying Lemma \ref{lemmaforsecondrandomsumtoremove} with $\omega = T^{1/2}h$, we see that
	\begin{equation}\label{VThbd}
	\mathbb{P}\left(V_{T\wedge\tau,h}^c\right)\leq C(T-1)e^{-h} + e^{-ch^2}.
	\end{equation}

	For the remaining term on the right-hand side of \eqref{Vprobbreak}, we will apply Markov's inequality to $\exp(\sum_{i=1}^t \ind_{(R_i\cap F_i^-)\cup A_i})$. To this end, recalling that $\F_t$ is the $\sigma$-algebra generated by the exploration process up to step $t$, we now focus on bounding
	\begin{equation}\label{Vtheq}
	\E\left[e^{\sum_{i=1}^t \ind_{(R_i\cap F_i^-)\cup A_i}}\ind_{V_{t,h}}\right] = \E\left[\E\left[\left.e^{\ind_{(R_t\cap F_t^-)\cup A_t}}\right|\F_{t-1}\right] e^{\sum_{i=1}^{t-1} \ind_{(R_i\cap F_i^-)\cup A_i}}\ind_{V_{t,h}}\right].
	\end{equation}

%
%
	Note that
	\begin{align*}
	\E\left[\left.e^{\ind_{(R_t\cap F_t^-)\cup A_t}}\right|\F_{t-1}\right]&=1+\mathbb{P}((R_t\cap F_t^-)\cup A_t |\F_{t-1})(e-1)\\
	&=1+p\mathbb{P}(F^-_t|\F_{t-1})(e-1) + \P(A_t|\F_{t-1})(e-1)\\
	&\le 1+2p\mathbb{P}(F^-_t|\F_{t-1}) + 2\P(A_t|\F_{t-1})
	\end{align*}
	and on the event $V_{t,h}$,
	\begin{align*}
	2p\mathbb{P}(F^-_t|\F_{t-1}) + 2\P(A_t|\F_{t-1}) &\le \frac{2p(d-2)\left|\bigcup_{m=1}^{d-2}\mathcal{V}^{(m)}_{t-1}\right|}{dn-2(t-1)-1} + \frac{2\left|\calA_{t-1}\right|}{dn-2(t-1)-1}\\
	&\le \frac{2p(d-2)\left(\frac{(t-1)^2}{n}+\frac{Th}{n^{1/2}}\right)}{dn-2(t-1)-1} + \frac{2T^{1/2}h}{dn-2(t-1)-1}.
	\end{align*}
	When $n$ is large, we have $2(t-1)+1 \le dn/2$, $p(d-2)\le 1$ and $T\le n$, and therefore
	\begin{align*}
	\E\left[\left.e^{\ind_{(R_t\cap F_t^-)\cup A_t}}\right|\F_{t-1}\right]
	&\le 1 + \frac{4}{dn^2}\big((t-1)^2+Thn^{1/2}\big) + \frac{4T^{1/2}h}{dn}\\
	&\le 1 + \frac{4}{dn^2}\big((t-1)^2+2T^{1/2}hn\big).
	\end{align*}
	Substituting this estimate into \eqref{Vtheq}, we obtain that
	\begin{align*}
	\E\left[e^{\sum_{i=1}^{t}\ind_{(R_i\cap F_i^-)\cup A_i}}\ind_{V_{t,h}}\right] &\le \E\left[\left(1 + \frac{4}{dn^2}\big((t-1)^2+2T^{1/2}hn\big)\right) e^{\sum_{i=1}^{t-1}\ind_{(R_i\cap F_i^-)\cup A_i}}\ind_{V_{t,h}}\right]\\
	&\le \left(1 + \frac{4}{dn^2}\big((t-1)^2+2T^{1/2}hn\big)\hspace{-1mm}\right) \E\left[ e^{\sum_{i=1}^{t-1}\ind_{(R_i\cap F_i^-)\cup A_i}}\ind_{V_{t-1,h}}\right].
	\end{align*}
	Iterating, and then using the inequality $1+x\le e^x$ valid for all $x\in\mathbb{R}$, we have
	\begin{align*}
	\E\left[e^{\sum_{i=1}^{t}\ind_{(R_i\cap F_i^-)\cup A_i}}\ind_{V_{t,h}}\right] &\le \prod_{i=1}^t \left(1 + \frac{4}{dn^2}\big((i-1)^2+2T^{1/2}hn\big)\right)\\
	&\le \exp\left(\frac{4}{dn^2}\left(\frac{t^3}{3} + 2T^{1/2}hnt\right)\right).
	\end{align*}
	Substituting this and \eqref{VThbd} into \eqref{Vprobbreak}, we obtain
	\begin{align*}
	\mathbb{P}\left(\exists t\in [T]:\sum_{i=1}^t \ind_{(R_i\cap F_i^-)\cup A_i}>\frac{4t^3}{3dn^2} + \left(\frac{4}{d}+1\right)h\right)
	&\le \sum_{t=1}^T e^{-h} + C(T-1)e^{-h} + e^{-ch^2}
	\end{align*}
	and the result follows.
\end{proof}

\subsubsection{Controlling how often $R_i\cap F_i'$ occurs}\label{animpprop_pt2}

The purpose of our next result, proved in Subsection 4.6, is to control the (random) sums $\sum_{i=1}^{t}\ind_{R_i}\ind_{F^c_i}$ over the interval $[T]\setminus [T']$. Specifically, in Lemma \ref{rightcurveappears} below we substitute the process $(\sum_{i=1}^{t}\ind_{R_i}\ind_{F^c_i})_{t\in [T]\setminus [T']}$ with a deterministic function of $t$, which will be of great importance in order to obtain the correct exponential term in our lower bounds stated in Theorem \ref{mainthm}.
	\begin{lem}\label{rightcurveappears}
		Let $\theta>0$. Then, for all sufficiently large $n$, we have that
		\begin{equation}\label{upperomegaprime}
		\mathbb{P}\left(\exists t\in [T] :\sum_{i=1}^{t}\ind_{R_i}\ind_{F^c_i} > p(1-2/d)\frac{t^2}{2n}+\frac{2T^3}{n^2}+\theta\right)\leq CTe^{-n^{1/2}\theta/T}
		\end{equation}
		for some finite constant $C$.
	\end{lem}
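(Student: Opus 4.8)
The plan is to run a one-step conditioning / Chernoff argument, exactly as in the proofs of Lemmas \ref{numberfresh} and \ref{arub}, together with a union bound over $t\in[T]$. Write $Z_i\coloneqq\ind_{R_i}\ind_{F^c_i}$ and $S_t\coloneqq\sum_{i=1}^t Z_i$, and set $M_t\coloneqq p(1-2/d)\frac{t^2}{2n}+\frac{2T^3}{n^2}+\theta$, so that the event in \eqref{upperomegaprime} is $\{\exists t\in[T]:S_t>M_t\}$. Since $M_t$ is increasing in $t$ we cannot reduce this to the single event $\{S_T>M_T\}$, so a union bound over $t$ will be needed; as $T$ is polynomial in $n$, the resulting factor of $T$ is harmless against the exponential decay we will extract.

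First I would bound the moment generating function of $S_t$ by iterating over the steps of the exploration. Since $R_i$ is independent of $\F_{i-1}$ with $\P(R_i)=p$, for any $r>0$
\[\E\big[e^{rZ_i}\,\big|\,\F_{i-1}\big]=1+p\,\P(F^c_i\,|\,\F_{i-1})(e^r-1)\le\exp\big(p\,\P(F^c_i\,|\,\F_{i-1})(e^r-1)\big).\]
The crucial point is that the conditional probability must be replaced by a \emph{deterministic} upper bound: recalling from Section \ref{exploration_sec} that $\P(F_i\,|\,\F_{i-1})=d|\calV^{(d)}_{i-1}|/(dn-2(i-1)-1)$ and using the identity $|\calV^{(d)}_{i-1}|=n-i+\sum_{j=1}^{i-1}\ind_{F^c_j}\ge n-i$ (cf. \eqref{lowboundnumbfresh}), we get
\[\P(F^c_i\,|\,\F_{i-1})\le 1-\frac{d(n-i)}{dn-2i+1}=\frac{(d-2)i+1}{dn-2i+1}.\]
Substituting this bound and applying the tower property repeatedly yields
\[\E\big[e^{rS_t}\big]\le\exp\big((e^r-1)\Lambda_t\big),\qquad \Lambda_t\coloneqq p\sum_{i=1}^t\frac{(d-2)i+1}{dn-2i+1}.\]
A routine computation, using only $n^{1/2}\ll T\ll n$, then shows that for all large $n$ and every $t\le T$ one has $\Lambda_t\le p(1-2/d)\frac{t^2}{2n}+\frac{2T^3}{n^2}$ (so in particular $\Lambda_t\le \tfrac32 T^2/n$): indeed $\sum_{i\le t}\frac{(d-2)i+1}{dn-2i+1}=\frac{1}{dn}(1+O(T/n))\big((d-2)\tfrac{t^2}{2}+O(t)\big)$, and the error terms are comfortably absorbed by the $2T^3/n^2$ slack.

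To conclude, I would fix $t\in[T]$ and apply Markov's inequality to $e^{rS_t}$ with the choice $r\coloneqq n^{1/2}/T$. Since $T\sim(d-1)An^{2/3}$ and $A=o(n^{1/30})$ we have $r\to0$, so for large $n$, $r\le 1$ and $e^r-1-r\le r^2$. Using $M_t\ge\Lambda_t+\theta$,
\[\P(S_t>M_t)\le e^{-rM_t}\E\big[e^{rS_t}\big]\le e^{-r(\Lambda_t+\theta)}e^{(e^r-1)\Lambda_t}=e^{(e^r-1-r)\Lambda_t-r\theta}\le e^{r^2\Lambda_t}e^{-r\theta}\le e^{3/2}\,e^{-n^{1/2}\theta/T},\]
where we used $r^2\Lambda_t\le (n/T^2)(\tfrac32 T^2/n)=\tfrac32$ and $r\theta=n^{1/2}\theta/T$. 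A union bound over $t\in[T]$ then gives $\P(\exists t\in[T]:S_t>M_t)\le e^{3/2}\,T\,e^{-n^{1/2}\theta/T}$, which is the claim with $C=e^{3/2}$.

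I do not expect any serious obstacle: the argument is routine exponential-concentration bookkeeping. The one genuinely important observation is that, because we are Chernoff-bounding, $\P(F^c_i\,|\,\F_{i-1})$ must be controlled by an \emph{almost sure} (deterministic) quantity, and the counting identity for $|\calV^{(d)}_{i-1}|$ supplies exactly this; after that, the choice $r=n^{1/2}/T$ — which makes the quadratic correction $r^2\Lambda_t$ of constant order while keeping $r\theta=n^{1/2}\theta/T$ — does everything. Pinning down the precise constant $2$ in the $2T^3/n^2$ term takes a little care in collecting error terms, but any absolute constant would do for the applications, so this is cosmetic.
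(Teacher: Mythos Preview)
Your proposal is correct and follows essentially the same approach as the paper's own proof: both use the deterministic lower bound $|\calV^{(d)}_{i-1}|\ge n-i$ to control $\P(F^c_i\mid\F_{i-1})$, iterate conditionally to bound the moment generating function, apply Markov's inequality with the parameter choice $r=n^{1/2}/T$, and finish with a union bound over $t\in[T]$. The only differences are cosmetic bookkeeping in how the error terms are collected and in which elementary inequality for $e^r$ is invoked.
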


\begin{proof}
	Note that for any $x>0$ and $r>0$,
	\begin{equation}\label{rcastart}
	\P\left(\sum_{i=1}^{t}\ind_{R_i}\ind_{F^c_i} > x\right) \le \mathbb{E}\left[e^{r\sum_{i=1}^{t}\ind_{R_i}\ind_{F^c_i}}\right]e^{-rx}.
	\end{equation}
	Now since $\mathcal{V}^{(d)}_{0}$ consists of all vertices except $V_n$, and at most one vertex can be removed from $\mathcal{V}^{(d)}_{i}$ at each step $i$ of the exploration process, we have
	\[\left|\mathcal{V}^{(d)}_{i}\right| \geq n-1-i\]
	and therefore, recalling that $\F_t$ is the $\sigma$-algebra generated by the exploration process up to time $t$,
	\begin{align*}
	\mathbb{E}\left[\left.e^{r\ind_{R_t}\ind_{F^c_t}}\right|\mathcal{F}_{t-1}\right]&=1+p\mathbb{P}(F^c_t|\mathcal{F}_{t-1})(e^r-1)\\
	&=1+p\left(1-\mathbb{P}(F_t|\mathcal{F}_{t-1})\right)(e^r-1)\\
	&=1+p\left(1-\frac{d|\mathcal{V}^{(d)}_{t-1}|}{dn-2(t-1)-1}\right)(e^r-1)\\
	&\leq 1+p\left(1-\frac{d(n-1-(t-1))}{dn-2(t-1)-1}\right)(e^r-1).
	\end{align*}
	Thus
	\begin{align*}
	\mathbb{E}\left[e^{r\sum_{i=1}^{t}\ind_{R_i}\ind_{F^c_i}}\right]\leq \left(1+p\left(1-\frac{d(n-1-(t-1))}{dn-2(t-1)-1}\right)(e^r-1)\right) \mathbb{E}\left[e^{r\sum_{i=1}^{t-1}\ind_{R_i}\ind_{F^c_i}}\right]
	\end{align*}
	and iterating we obtain
	\begin{align*}
	\mathbb{E}\left[e^{r\sum_{i=0}^{t}\ind_{R_i}\ind_{F^c_i}}\right]&\leq \prod_{i=1}^{t-1}\left(1+p\left(1-\frac{d(n-1-i)}{dn-2i-1}\right)(e^r-1)\right)\\
	&\leq  \exp\left( p(e^r-1)\sum_{i=0}^{t-1}\left(1-\frac{d(n-1-i)}{dn-2i-1}\right)\right).
	\end{align*}
	Now
	\begin{align*}
	\sum_{i=0}^{t-1}\left(1-\frac{d(n-1-i)}{dn-2i-1}\right)&=\sum_{i=1}^{t}\left(1-\frac{n-1-i}{n}\frac{1}{1-\frac{2i+1}{dn}}\right)\\
	&\le \sum_{i=0}^{t-1}\left(1-\left(1-\frac{i+1}{n}\right)\left(1+\frac{2i+1}{dn}\right)\right)\\
	&= \sum_{i=0}^{t-1} \left(\frac{(d-2)i}{dn} + \frac{d-1}{n} + \frac{(i+1)(2i+1)}{dn^2}\right)\\
	&\le (1-2/d)\frac{t^2}{2n} + \frac{dt}{n} + \frac{2t^3}{dn^2}.
	\end{align*}
	Bounding $dt/n + 2t^3/dn^2 \le T^3/n^2$ for large $n$, and using the inequalities $e^r\le 1+r+r^2$ and $e^r \le 1+2r$ for $r\le 1$, we deduce that for $r\leq 1$,
	\begin{align*}
	\mathbb{E}\left[e^{r\sum_{i=1}^{t}\ind_{R_i}\ind_{F^c_i}}\right]&\leq \exp\left(p(e^r-1)\left( (1-2/d)\frac{t^2}{2n}+\frac{T^3}{n^2}\right)\right)\\
	&\leq \exp\left(rp(1-2/d) \frac{t^2}{2n}+r^2p (1-2/d)\frac{t^2}{2n}+\frac{2rT^3}{n^2}\right).
	\end{align*}
	Therefore, substituting into \eqref{rcastart}, if $n$ is large enough then (provided $r\leq 1$) we obtain 
	\begin{align*}
	\mathbb{P}\left(\sum_{i=1}^{t}\ind_{R_i}\ind_{F^c_i}\geq p (1-2/d)\frac{t^2}{2n}+\frac{2T^3}{n^2}+\theta\right)&\leq e^{r^2p(1-2/d)t^2/2n-r\theta} \leq e^{r^2 T^2/2n-r\theta}.
	\end{align*}
	The desired conclusion follows by taking $r=n^{1/2}/T$ and using a union bound.
\end{proof}

We now have the ingredients to complete the proof of Proposition \ref{animpprop}.

\begin{proof}[Proof of Proposition \ref{animpprop}]
Recalling again the definitions \eqref{deldef} and \eqref{Ddef} of $\delta_i$ and $D_i$ respectively, and also \eqref{Fdef}, we observe that
\begin{align*}
0\le D_i - \delta_i &= \ind_{R_i\cap F_i'} + (d-1)\ind_{R_i\cap F_i^-} + \ind_{\{h_i\in\calA_{i-1}\}}(1+(d-1)\ind_{R_i})\\
&\le \ind_{R_i\cap F_i^c} + d(\ind_{R_i\cap F_i^-} + \ind_{\{h_i\in\calA_{i-1}\}}).
\end{align*}
and therefore, since $\tau_\delta\le \tau$,
\begin{align}
&\P\left(\exists t\in[T\wedge\tau_\delta] : \sum_{i=1}^t (D_i - \delta_i) > p(1-2/d)\frac{t^2}{2n} + An^{4/15}\right)\nonumber\\
&\hspace{15mm} \le \P\left(\exists t\in[T] : \sum_{i=1}^t \ind_{R_i\cap F_i^c} > p(1-2/d)\frac{t^2}{2n} + \frac{2An^{4/15}}{3}\right)\nonumber\\
&\hspace{40mm} + \P\left(\exists t\in[T\wedge\tau] : \sum_{i=1}^t (\ind_{R_i\cap F_i^-} + \ind_{\{h_i\in\calA_{i-1}\}}) > \frac{An^{4/15}}{3d}\right).\label{animeq1}
\end{align}
We now apply Corollary \ref{firstrandomsumtoremove} with $h=An^{1/10}$. This tells us that for all sufficiently large $n$ we have
	\begin{multline*}
	\mathbb{P}\left(\exists t\in [T\wedge\tau] : \sum_{i=1}^{t}(\ind_{R_i\cap F_i^-} + \ind_{\{h_t\in\calA_{t-1}\}}) > \frac{4t^3}{3dn^2} + \left(\frac{8T^{1/2}t}{n}+1\right)An^{1/10}\right)\\
	\leq CTe^{-An^{1/10}} + e^{-cA^2n^{1/5}},
	\end{multline*}
and since for $t\le T$ we have
\[\frac{4t^3}{3dn^2} + \left(\frac{8T^{1/2}t}{n}+1\right)An^{1/10} = o(An^{2/5})\]
we deduce that for large $n$,
\begin{equation}\label{drosseq1}
\mathbb{P}\left(\exists t\in [T\wedge\tau] : \sum_{i=1}^{t}(\ind_{R_i\cap F_i^-} + \ind_{\{h_t\in\calA_{t-1}\}}) > \frac{An^{2/5}}{3d}\right)	\leq CTe^{-An^{1/10}}.
\end{equation}

Next we apply Lemma \ref{rightcurveappears} with $\theta = An^{4/15}/2$, which tells us that for all sufficiently large $n$ we have
\begin{equation*}
\mathbb{P}\left(\exists t\in [T] :\sum_{i=1}^{t}\ind_{R_i}\ind_{F^c_i} > p(1-2/d)\frac{t^2}{2n}+\frac{2T^3}{n^2}+An^{4/15}/2\right)\leq C'Te^{-An^{23/30}/2T}.
\end{equation*}
Since $2T^3/n^2 = o(An^{4/15})$ and $An^{23/30}/2T = \Theta_d(n^{1/10})$, combining this with \eqref{drosseq1} and substituting the estimates into \eqref{animeq1} we obtain that
\[\P\left(\exists t\in[T\wedge\tau_\delta] : \sum_{i=1}^t (D_i - \delta_i) > p(1-2/d)\frac{t^2}{2n} + An^{4/15}\right)\leq CTe^{-An^{1/10}} + C'Te^{-cn^{1/10}}.\]
This completes the proof.
\end{proof}

\subsection{The probability of staying positive and finishing above $\eps\sqrt{T'}$: proof of Proposition \ref{propfirstbit}}\label{propfirstbit_sec}

Here we aim to show that for a sufficiently small constant $\eps>0$,
\begin{equation}\label{kklk}
\mathbb{P}\bigg(\sum_{i=1}^t \delta'_i > -d \hspace{0.15cm}\forall t\in [T'],\, \sum_{i=1}^{T'} \delta'_i \geq \eps \sqrt{T'}\,\bigg|\,\bS_n\bigg) \ge \frac{cA}{n^{1/3}},
\end{equation}
where we recall that
\[\delta'_i = \ind_{R_i}\ind_{F_i}(d-1)-\ind_{\{h_i\in \mathcal{A}_{i-1}\}} - 1.\]
We do this in two parts: we show that, if we replace $\delta'_i$ with the simpler
\[\delta''_i \coloneqq \ind_{R_i}\ind_{F_i}(d-1) - 1,\]
then for any $\gamma\in[0,1/2)$,
\begin{equation}\label{delprimeabovegamma}
\mathbb{P}\bigg(\sum_{i=1}^t \delta''_i \ge t^\gamma \hspace{0.15cm}\forall t\in [T'],\, \sum_{i=1}^{T'} \delta''_i \geq 2\eps\sqrt{T'}\,\bigg|\,\bS_n\bigg) \ge \frac{cA}{n^{1/3}},
\end{equation}
and then we show that for $\gamma\in(0,1/2)$,
\begin{equation}\label{activesumbelowgamma}
\mathbb{P}\bigg(\exists t\in[T'\wedge\tau] : \sum_{i=1}^t \ind_{\{h_i\in \mathcal{A}_{i-1}\}} \ge t^\gamma\,\bigg|\,\bS_n\bigg) \le \frac{c}{n^{1/2}} \ll \frac{A}{n^{1/3}},
\end{equation}
where we recall that
\[\tau = \min\{t\in\N : |\mathcal A_t|=0\} = \min\bigg\{t\in\N : d+\sum_{i=1}^t \eta_i = 0\bigg\}.\]
We will then combine \eqref{delprimeabovegamma} and \eqref{activesumbelowgamma} to obtain \eqref{kklk}, proving Proposition \ref{propfirstbit}. We note that the choice of $\gamma$ is not important above; one may choose, for example, $\gamma=1/4$ in both \eqref{delprimeabovegamma} and \eqref{activesumbelowgamma}. We retain the general $\gamma$ in the proofs since this is no extra work.

To prove \eqref{delprimeabovegamma}, we will use a coupling and a change of measure to replace $(\delta''_i)_{i\ge 1}$ with i.i.d.~Bernoulli random variables whose parameter does not depend on $n$, and then apply a theorem of Ritter \cite{ritter:growth_RW_cond_positive}. We will prove \eqref{activesumbelowgamma} by applying Lemma \ref{lemmaforsecondrandomsumtoremove} to show that the number of active stubs is never too large; then we will break $[T']$ up into two smaller intervals, replace the barrier $(t^\gamma)_{t\ge1}$ with a constant barrier on each of these smaller intervals, and use simple Markov and Chernoff bounds to complete the proof.

\subsubsection{Proof of \eqref{delprimeabovegamma}, step 1: removing the conditioning on $\bS_n$}

We begin by removing the conditioning on $\bS_n$. The idea boils down to the fact that the probability of creating a non-simple edge before step $T'$ of the exploration process is at most $\frac{c}{A^2 n^{1/3}}$, and the probability of creating a non-simple edge after step $T'$ is of the same order as the probability that we create a non-simple edge anywhere in the graph, regardless of what happens in the first $T'$ steps.

\begin{lem}\label{removecondS}
For large $n$,
\begin{multline*}
\mathbb{P}\bigg(\sum_{i=1}^t \delta''_i \ge t^\gamma \hspace{0.15cm}\forall t\in [T'],\, \sum_{i=1}^{T'} \delta''_i \geq 2\eps\sqrt{T'}\,\bigg|\,\bS_n\bigg)\\
\ge \frac{1}{2}\mathbb{P}\bigg(\sum_{i=1}^t \delta''_i \ge t^\gamma \hspace{0.15cm}\forall t\in [T'],\, \sum_{i=1}^{T'} \delta''_i \geq 2\eps\sqrt{T'}\bigg) - \frac{c}{A^2 n^{1/3}}
\end{multline*}
where $c=c(d)$ is a finite constant that depends only on $d$.
\end{lem}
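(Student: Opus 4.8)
The plan is to compare the conditional and unconditional probabilities by controlling $\P(\bS_n^c \mid B)$ for the relevant "good" event $B$, splitting the potential non-simpleness into a part that can be created in the first $T'$ steps and a part created afterwards. Write
\[
B = \bigg\{\sum_{i=1}^t \delta''_i \ge t^\gamma \hspace{0.15cm}\forall t\in [T'],\, \sum_{i=1}^{T'} \delta''_i \geq 2\eps\sqrt{T'}\bigg\}.
\]
Then $\P(B\mid \bS_n) = \P(B\cap\bS_n)/\P(\bS_n) \ge \P(B\cap\bS_n)$ (for large $n$, using $\P(\bS_n)\to e^{(1-d^2)/4} < 1$, so $1/\P(\bS_n) \ge 1$; actually we only need $\P(\bS_n)$ bounded below) and $\P(B\cap \bS_n) \ge \P(B) - \P(\bS_n^c)$, but this crude bound is useless since $\P(\bS_n^c)$ is a constant. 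The key point is that $B$ only constrains the first $T'$ steps of the exploration process, so we should instead write $\P(B\cap\bS_n) = \P(B) - \P(B\cap\bS_n^c)$ and bound $\P(B\cap\bS_n^c)$ by splitting $\bS_n^c$ according to whether a multiple edge or self-loop is created during steps $1,\dots,T'$ or during steps $T'+1,\dots,dn/2$.

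First I would bound the probability that a non-simple edge (a self-loop or a repeated pair of vertices) is created during the first $T'$ steps. At step $i\le T'$, the stub $h_i$ is chosen uniformly from at least $dn - 2T' - 1 \ge dn/2$ unexplored stubs; a self-loop is created only if $h_i \in \calS(e_i)\setminus\{e_i\}$, which has probability at most $(d-1)/(dn/2)$, and a multiple edge is created only if $h_i$ lands on one of at most $d(i-1)$ stubs incident to already-touched vertices, which has probability at most $d(i-1)/(dn/2)$. Summing over $i\in[T']$ gives a bound of order $T'^2/n = O(n^{1/3}/A^4) = O(1/(A^2 n^{1/3}))$ (using $T' = \lfloor n^{2/3}/A^2\rfloor$), which is exactly the error term claimed. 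Call this event $\bS_n^{c,\le T'}$, so $\P(\bS_n^{c,\le T'}) \le c/(A^2 n^{1/3})$.

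Next, on the complement $\bS_n^{c} \setminus \bS_n^{c,\le T'}$ — i.e.\ the graph is non-simple but the non-simpleness arises only at steps after $T'$ — I would argue that, conditionally on the first $T'$ steps, the probability of creating a non-simple edge later is bounded above by some constant $\rho < 1$ depending only on $d$, uniformly over the configuration after step $T'$. Indeed, after step $T'$ the remaining pairing is still a (partial) uniform matching of the remaining stubs, and the probability that the completion of a uniform matching of $m$ stubs avoids all self-loops and multiple edges is bounded below by a constant; since here the relevant quantity is $1-\P(\text{simple completion} \mid \F_{T'})$ and $T' = o(n)$, this conditional probability of non-simpleness converges to $1 - e^{(1-d^2)/4}=:\rho<1$, and in particular is at most, say, $(1+\rho)/2 < 1$ for large $n$. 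Hence $\P(B\cap\bS_n^c) \le \P(\bS_n^{c,\le T'}) + \P(B)\cdot\frac{1+\rho}{2}$, and therefore
\[
\P(B\cap\bS_n) = \P(B) - \P(B\cap\bS_n^c) \ge \Big(1-\tfrac{1+\rho}{2}\Big)\P(B) - \tfrac{c}{A^2 n^{1/3}} = \tfrac{1-\rho}{2}\P(B) - \tfrac{c}{A^2 n^{1/3}}.
\]
Dividing by $\P(\bS_n)\to e^{(1-d^2)/4}$ and absorbing the resulting constant factors into the constant $c$ (and into the $1/2$ appearing in the statement, which is anyway not sharp) yields the claimed inequality.

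The main obstacle is making the second step — the bound $\P(\bS_n^c \mid \F_{T'}) \le (1+\rho)/2$ — fully rigorous, since it requires a uniform-in-$\F_{T'}$ estimate on the probability that the completion of a near-uniform partial matching is simple. This is essentially the classical computation underlying $\P(\bS_n)\to e^{(1-d^2)/4}$, but one must check that conditioning on the first $T' = o(n)$ pairings (and on all those pairings being "simple") does not push the conditional probability of a later non-simple edge up to $1$; this follows because the remaining matching is uniform on the remaining stubs, the number of already-used vertices is $O(T') = o(n)$, and the standard moment/method-of-moments argument for the Poisson approximation of the number of self-loops and multiple edges goes through with the same limit. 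I would either cite the relevant estimate from the configuration-model literature or include a short self-contained second-moment argument. Everything else is a routine union bound.
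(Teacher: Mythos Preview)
Your overall strategy is correct and essentially matches the paper's: split $\bS_n^c$ into ``non-simple edge created in the first $T'$ steps'' versus ``non-simple edge created afterwards,'' bound the first by a union bound and the second by using that the conditional probability of simplicity given $\F_{T'}$ is (on $\bS_n(T')$) asymptotically equal to the unconditional $\P(\bS_n)$. The paper makes this last step by quoting Lemma~23 of Nachmias--Peres (stated in the paper as Lemma~\ref{lemnach}), which gives $\P(\bS_n\mid\F_{T'})=(1+o(1))\P(\bS_n)\ind_{\bS_n(T')}$; your plan to cite or reprove this is exactly right.

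There is, however, a genuine error in your first step. Your bound on multiple edges is too crude and the subsequent arithmetic is wrong. You bound the probability of a multiple edge at step $i$ by $d(i-1)/(dn/2)$, which sums to order $T'^2/n = n^{1/3}/A^4$. You then claim $n^{1/3}/A^4 = O(1/(A^2 n^{1/3}))$, but this is false: the ratio is $n^{2/3}/A^2 = T' \to\infty$. Since $A=o(n^{1/30})$, the quantity $n^{1/3}/A^4$ is in fact much \emph{larger} than the target $cA/n^{1/3}$ in \eqref{delprimeabovegamma}, so this bound is useless. The fix is to observe that a multiple edge at step $i$ requires $h_i$ to be a stub incident to a vertex \emph{already connected to $v(e_i)$}; since $v(e_i)$ has at most $d-1$ already-paired sister stubs, each connected to a vertex with at most $d-1$ remaining stubs, there are at most $(d-1)^2$ bad stubs, giving probability $O(1/n)$ per step and $O(T'/n)=O(1/(A^2 n^{1/3}))$ in total. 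This is exactly the bound the paper uses.
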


In order to prove this result, we need the following lemma that appears in \cite{nachmias:critical_perco_rand_regular}.

\begin{lem}[{\cite[Lemma 23]{nachmias:critical_perco_rand_regular}}]\label{lemnach}
	Let $d\geq 3$ be fixed and let $\bar{d}_1,\bar{d}_2\in \{1,\dots,d\}^m$ be degree sequences of length $m$ such that each sequence sums to an even number. Let $\mathbb{P}_1$ be the distribution of a uniform perfect matching on $\sum_{i=1}^{m}\bar{d}_1(i)$ vertices, divided into $m$ tuples such that the i-th tuple has $\bar{d}_1(i)$ vertices in it. Let $\mathbb{S}$ be the event that contracting each tuple into a single vertex yields a simple graph. Assume $m\rightarrow \infty$. If $\bar{d}_1=(d,\dots,d)$ and $\bar{d}_2$ has $(1 - o(1))m$ entries with the value $d$ then
	\begin{equation*}
	\mathbb{P}_2(\mathbb S) = (1 + o(1))\mathbb{P}_1(\mathbb S).
	\end{equation*}
\end{lem}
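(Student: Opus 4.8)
The plan is to show that both $\mathbb{P}_1(\mathbb{S})$ and $\mathbb{P}_2(\mathbb{S})$ converge to the \emph{same} positive constant, namely $e^{(1-d^2)/4}$, from which $\mathbb{P}_2(\mathbb{S})=(1+o(1))\mathbb{P}_1(\mathbb{S})$ is immediate. First I would observe that contracting each tuple of a uniform perfect matching into a single vertex is exactly the configuration-model construction of a multigraph with the prescribed degree sequence (the evenness of $\sum_i\bar d_j(i)$, needed only so that a perfect matching exists, plays no role in the asymptotics); thus $\mathbb{P}_j(\mathbb{S})$ is the probability that the configuration model on $\bar d_j$ yields a simple graph, and $\mathbb{S}=\{L=0\}\cap\{M'=0\}$, where $L$ is the number of self-loops and $M'$ is the number of pairs of vertices joined by at least two edges. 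The key input is the classical Poisson-approximation result for configuration models with uniformly bounded degrees: if
\[\nu\coloneqq\lim_{m\to\infty}\frac{\sum_{i=1}^m \bar d(i)(\bar d(i)-1)}{\sum_{i=1}^m \bar d(i)}\]
exists, then $(L,M')$ converges in distribution to a pair of independent Poisson random variables of means $\nu/2$ and $\nu^2/4$, so that $\mathbb{P}(\mathbb{S})=\mathbb{P}(L=0,\,M'=0)\to\exp(-\nu/2-\nu^2/4)$ (see e.g.~\cite{remco:random_graphs}). For $\bar d_1=(d,\dots,d)$ one has $\nu=d-1$, which recovers the limit $\mathbb{P}_1(\mathbb{S})\to e^{(1-d^2)/4}$ already quoted in Section \ref{config_model_sec}.

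The heart of the matter is then to check that $\bar d_2$ has the same value of $\nu$. Setting $J\coloneqq\{i\le m:\bar d_2(i)\ne d\}$, the hypothesis gives $|J|=o(m)$, and since every entry of $\bar d_2$ lies in $\{1,\dots,d\}$ we have $0\le d-\bar d_2(i)\le d-1$ and $0\le d(d-1)-\bar d_2(i)(\bar d_2(i)-1)\le d(d-1)$ for every $i$. Hence
\[\sum_{i=1}^m \bar d_2(i)=dm-\sum_{i\in J}\bigl(d-\bar d_2(i)\bigr)=dm(1+o(1))\]
and
\[\sum_{i=1}^m \bar d_2(i)(\bar d_2(i)-1)=d(d-1)m-\sum_{i\in J}\bigl(d(d-1)-\bar d_2(i)(\bar d_2(i)-1)\bigr)=d(d-1)m(1+o(1)),\]
so $\nu=d-1$ for $\bar d_2$ as well, whence $\mathbb{P}_2(\mathbb{S})\to e^{(1-d^2)/4}$. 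Combining the two limits gives $\mathbb{P}_2(\mathbb{S})=(1+o(1))e^{(1-d^2)/4}=(1+o(1))\mathbb{P}_1(\mathbb{S})$, which is the assertion.

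I expect the only genuine obstacle to be justifying the Poisson input in precisely the generality required, i.e.~for a degree sequence $\bar d_2=\bar d_2^{(m)}$ allowed to vary with $m$. The clean way is the method of moments: the mixed factorial moments of $(L,M')$ are fixed polynomials, up to corrections that vanish as $m\to\infty$, in the normalized power sums $\bigl(\sum_i \bar d(i)\bigr)^{-1}\sum_i \bar d(i)(\bar d(i)-1)\cdots(\bar d(i)-k+1)$ for $k\ge 2$; since the degrees are bounded by the fixed constant $d$ and only $o(m)$ of them differ from $d$, each of these power sums converges to the value it takes for the $d$-regular sequence, so the factorial moments converge and, the Poisson law being determined by its moments, convergence in distribution (hence of $\mathbb{P}(L=0,M'=0)$) follows. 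Alternatively one can simply cite the general theorem on the probability that a random multigraph is simple. A fully self-contained route, avoiding any black box, would instead compare $\bar d_2$ directly with $\bar d_1$ by a switching argument adjusting one non-$d$ coordinate at a time, but this is more work and the Poisson statement above is entirely standard.
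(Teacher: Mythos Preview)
The paper does not supply its own proof of this lemma; it is quoted verbatim as Lemma~23 of \cite{nachmias:critical_perco_rand_regular} and used as a black box. Your proposal is therefore not competing against an argument in the present paper but against the original source.

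That said, your approach is correct and is essentially the standard one. The identification of $\mathbb{P}_j(\mathbb{S})$ with the probability that the configuration model on $\bar d_j$ is simple is exactly right, and the computation showing that $\nu=(d-1)$ for both sequences is clean and accurate. The Poisson-approximation input you invoke---joint convergence of $(L,M')$ to independent Poissons with means $\nu/2$ and $\nu^2/4$ when the degrees are uniformly bounded and the ratio $\sum_i d_i(d_i-1)/\sum_i d_i$ converges---is precisely the content of the relevant theorem in \cite{remco:random_graphs} (or Bollob\'as' original argument), and it does apply to $m$-dependent sequences under exactly the hypotheses you verify. Your remark that the method of moments handles the varying sequence uniformly, because all factorial-moment computations involve only the bounded quantities $\sum_i \bar d(i)_k / \sum_i \bar d(i)$, is the right justification; no further work is needed. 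The alternative switching route you mention is closer in spirit to how Nachmias and Peres themselves argue, but your Poisson-limit route is both shorter and entirely rigorous here.
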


We now return to the proof of our lemma.

\begin{proof}[Proof of Lemma \ref{removecondS}]
Let $\bS_n(t)$ be the event that the multigraph produced by the first $t$ steps of the exploration process (including both retained and non-retained edges) is simple. We first claim that
\begin{equation}\label{NPsimple}
\P(\bS_n|\F_{T'}) = (1-o(1)) \P(\bS_n)\ind_{\bS_n(T')}.
\end{equation}
This follows from Lemma \ref{lemnach} above, since $|\calV^{(d)}_{T'}|\ge n-1-T' = (1-o(1)) n$ (indeed, at most one vertex can be removed from the set of fresh vertices at each step). Next we claim that for large $n$,
\begin{equation}\label{probSsmall}
\P(\bS_n(T')^c)\le \frac{2d}{A^2 n^{1/3}}.
\end{equation}
Indeed, at each step $t$ of the exploration process, to create a non-simple edge, $e_t$ has to pair either with one of its sister stubs (i.e.~those associated to its own vertex), of which there are $d-1$, or with a sister stub of a stub that one of its sisters has already been paired with, of which there are at most $(d-1)^2$. Thus the probability of creating a non-simple edge at step $t$ is at most
\[\frac{(d-1) + (d-1)^2}{dn-2(t-1)-1},\]
which is at most $2d/n$ for $t\le T'$ when $n$ is large. Our claim \eqref{probSsmall} then follows by taking a union bound over all $t\le T'$.

Now, writing $V$ for the event of interest,
\[V \coloneqq \bigg\{ \sum_{i=1}^t \delta''_i \ge t^\gamma \hspace{0.15cm}\forall t\in [T'],\, \sum_{i=1}^{T'} \delta''_i \geq 2\eps\sqrt{T'} \bigg\},\]
we have
\[\P(V\cap \bS_n) = \E\left[ \P(V\cap\bS_n | \F_{T'}) \right] = \E\left[ \ind_V \P(\bS_n | \F_{T'})\right].\]
By \eqref{NPsimple}, this equals
\[(1+o(1))\P(\bS_n)\P(V\cap \bS_n(T')) = (1+o(1))\P(\bS_n)\big(\P(V) - \P(V\cap \bS_n(T')^c)\big),\]
and we deduce that
\[\P(V | \bS_n) = (1+o(1))\big(\P(V) - \P(V\cap \bS_n(T')^c)\big) \ge (1+o(1))\big(\P(V) - \P(\bS_n(T')^c)\big).\]
The result now follows from \eqref{probSsmall}.
\end{proof}

\subsubsection{Proof of \eqref{delprimeabovegamma}, step 2: replacing $\delta''_i$ with i.i.d.~random variables}
Lemma \ref{removecondS} ensures that we do not need to include the conditioning on $\bS_n$ in order to prove \eqref{delprimeabovegamma}. The next step in the proof is to compare $\delta''_i$ with
\[\Delta''_i\coloneqq \ind_{R_i}\ind_{\{U_i\le 1-T'/n\}}(d-1)-1,\]
where $(U_i)_{i\ge 0}$ is a sequence of $U([0,1])$ random variables, independent of everything else. We would like to show the following.

\begin{prop}\label{deltatoDprop}
For any $\gamma\ge 0$ and $\eps>0$ we have
\[\P\bigg(\sum_{i=1}^t \delta''_i \ge t^\gamma \hspace{0.15cm}\forall t\in [T'],\, \sum_{i=1}^{T'} \delta''_i \geq \eps\sqrt{T'}\bigg) \ge \P\bigg(\sum_{i=1}^t \Delta''_i \ge t^\gamma  \hspace{0.15cm}\forall t\in [T'],\, \sum_{i=1}^{T'} \Delta''_i \geq \eps\sqrt{T'}\bigg).\]
\end{prop}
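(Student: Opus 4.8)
\textbf{Proof proposal for Proposition~\ref{deltatoDprop}.}

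The plan is to run the interpolation/stochastic-domination argument used for Proposition~\ref{mainpropforupper}, but in the opposite direction: there the $\eta'_i$ were replaced by a stochastically \emph{larger} i.i.d.\ sequence to obtain an upper bound, whereas here the $\delta''_i$ should be replaced by a stochastically \emph{smaller} i.i.d.\ sequence to obtain a lower bound. It is convenient first to model the retention events as $R_i=\{B_i=1\}$, where $(B_i)_{i\ge1}$ is an i.i.d.\ $\mathrm{Bernoulli}(p)$ sequence independent of the stub-pairings and of $(U_i)_{i\ge1}$. Then $\Delta''_i=(d-1)B_i\ind_{\{U_i\le 1-T'/n\}}-1$ is a function of $(B_i,U_i)$ alone, so $(\Delta''_i)_{i\ge1}$ is i.i.d.\ and, for every $j$, the tail $(\Delta''_i)_{i>j}$ is independent of $\F_j$ (hence of $\F_{j-1}$ and of the $\F_j$-measurable random variable $\delta''_j$), where as usual $\F_t$ denotes the $\sigma$-algebra generated by the first $t$ steps of the exploration (including the $B_i$).

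The only model-specific input needed is the lower bound $\P(F_i\mid\F_{i-1})\ge 1-T'/n$ for $i\le T'$. This holds because at most one vertex can leave the set of fresh vertices at each step, so $|\calV^{(d)}_{i-1}|\ge n-i$, and since $h_i$ is chosen uniformly among the $dn-2(i-1)-1$ unexplored stubs after step $i-1$,
\[\P(F_i\mid\F_{i-1})=\frac{d\,|\calV^{(d)}_{i-1}|}{dn-2(i-1)-1}\ge \frac{d(n-i)}{dn}=1-\frac{i}{n}\ge 1-\frac{T'}{n}.\]
Both $\delta''_i$ and $\Delta''_i$ take values in $\{-1,d-2\}$, and since $B_i$ and $U_i$ are independent of $\F_{i-1}$, for $i\le T'$ this gives
\[\P(\delta''_i=d-2\mid\F_{i-1})=p\,\P(F_i\mid\F_{i-1})\ge p(1-T'/n)=\P(\Delta''_i=d-2\mid\F_{i-1}),\]
that is, $\delta''_i$ stochastically dominates $\Delta''_i$ given $\F_{i-1}$ — the analogue of Lemma~\ref{etamuobs}.

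With this in hand I would introduce the interpolating process $S_t^{(j)}:=\sum_{i=1}^{j\wedge t}\delta''_i+\sum_{i=(j\wedge t)+1}^{t}\Delta''_i$ for $j,t\ge0$, and the barrier $g:[T']\to\R$ given by $g(t)=t^\gamma$ for $t<T'$ and $g(T')=(T')^\gamma\vee\eps\sqrt{T'}$, so that the event in the proposition is exactly $\{S_t^{(\cdot)}\ge g(t)\ \forall t\in[T']\}$. Note $S^{(T')}$ sums only $\delta''_i$ and $S^{(0)}$ sums only $\Delta''_i$, and $S^{(j)}$ and $S^{(j-1)}$ differ only in the step-$j$ increment ($\delta''_j$ versus $\Delta''_j$). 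The key step (the analogue of Lemma~\ref{etamumain}) is the inequality
\[\P\big(S_t^{(j)}\ge g(t)\ \forall t\in[T']\mid\F_{j-1}\big)\ge \P\big(S_t^{(j-1)}\ge g(t)\ \forall t\in[T']\mid\F_{j-1}\big)\]
for each $j\in[T']$. To prove it, split each side over the $\F_{j-1}$-measurable value $s$ of $S_{j-1}^{(j)}=S_{j-1}^{(j-1)}=\sum_{i<j}\delta''_i$ and over the value $q\in\{-1,d-2\}$ of the step-$j$ increment; because $(\Delta''_i)_{i>j}$ is independent of $\F_{j-1}$ and of $\delta''_j$, both sides become $\ind_{\{S_i^{(\cdot)}\ge g(i)\,\forall i<j\}}\sum_q \Phi_s(q)\,\mathbb{P}(\text{increment}=q\mid\F_{j-1})$, where $\Phi_s(q):=\P\big(s+q+\sum_{i=j+1}^{t}\Delta''_i\ge g(t)\ \forall t\in[j,T']\big)$ is deterministic and non-decreasing in $q$; the inequality then follows from the conditional stochastic domination of $\delta''_j$ over $\Delta''_j$. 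Taking expectations and iterating over $j=T',T'-1,\dots,1$ via the tower property (exactly as in the proof of Proposition~\ref{mainpropforupper}) converts $S^{(T')}$ into $S^{(0)}$ and yields the claim. I do not expect a genuine obstacle here; the one point requiring care is the measurability bookkeeping in the one-step inequality — in particular that the future increments $(\Delta''_i)_{i>j}$ are jointly independent of $\F_{j-1}$ and $\delta''_j$ — which is precisely why realising the percolation via the auxiliary i.i.d.\ sequence $(B_i)$ is helpful.
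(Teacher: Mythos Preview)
Your proposal is correct and follows essentially the same interpolation/stochastic-domination argument as the paper's proof (Lemma~\ref{deltatoDlem} together with the short iteration that follows it). The only cosmetic differences are that you package the two constraints into a single barrier $g$ and are more explicit about realising the retention indicators via an auxiliary i.i.d.\ Bernoulli sequence to make the independence of $(\Delta''_i)_{i>j}$ from $\F_{j-1}$ and $\delta''_j$ transparent; neither changes the substance of the argument.
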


The proof of Proposition \ref{deltatoDprop} is almost identical to that of Proposition \ref{mainpropforupper}. We first note that, since $|\calV^{(d)}_i|\ge n-1-i$ for each $i\ge 1$ (indeed, at most one vertex can be removed from the set of fresh vertices at each step), the probability that $h_j$ is fresh when $j\in[T']$ is
\[\frac{d|\calV^{(d)}_{j-1}|}{dn-2(j-1)-1} \ge \frac{d(n-j)}{dn} = 1 - j/n \ge  1-T'/n.\]
Thus
\[\P(\delta''_j = d-2 \,|\,\F_{j-1}) \ge 1-T'/n = \P(\Delta''_j = d-2 \,|\,\F_{j-1}).\]
Since $\delta''_j$ and $\Delta''_j$ can only take the same two possible values, this is the same as saying that $\delta''_j$ stochastically dominates $\Delta''_j$, given $\F_{j-1}$. This is the equivalent of Lemma \ref{etamuobs}.

We apply this fact to prove the following lemma, which is the equivalent of Lemma \ref{etamumain} and the main ingredient in proving Proposition \ref{deltatoDprop}.

\begin{lem}\label{deltatoDlem}
Let
\[\bar S^{(j)}_t = d + \sum_{i=1}^{j\wedge t}\delta''_i + \sum_{i=(j\wedge t)+1}^t \Delta''_i.\]
For any $\gamma\ge 0$, $\eps>0$ and $j\in[T']$,
\begin{multline*}
\P\left(\left.\bar S_t^{(j)} \ge t^\gamma \,\,\forall t\in[T'],\, \bar S^{(j)}_{T'} \geq \eps\sqrt{T'}\,\right|\,\F_{j-1}\right) \\
\ge \P\left(\left.\bar S_t^{(j-1)} \ge t^\gamma \,\,\forall t\in[T'],\, \bar S^{(j-1)}_{T'} \geq \eps\sqrt{T'}\,\right|\,\F_{j-1}\right).
\end{multline*}
\end{lem}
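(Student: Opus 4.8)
The plan is to follow the proof of Lemma~\ref{etamumain} essentially verbatim, with two cosmetic differences. First, the stochastic domination now runs the other way: it is $\delta''_j$ that dominates $\Delta''_j$ given $\F_{j-1}$ (as recorded just before the statement of the lemma, using the deterministic bound $|\calV^{(d)}_{j-1}|\ge n-1-(j-1)$), and $\bar S^{(j)}$ is the process carrying the \emph{dominating} increment $\delta''_j$ in position $j$; hence passing from $\bar S^{(j-1)}$ to $\bar S^{(j)}$ \emph{increases} the probability, which is why the inequality in the statement points the way it does. Second, because $|\calV^{(d)}_{j-1}|\ge n-1-(j-1)$ holds deterministically, no indicator of a ``good'' event (the analogue of $\ind_{V_{j-1}}$ from Lemma~\ref{etamumain}) is needed here.

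First I would note that $\delta''_i$ is $\F_{j-1}$-measurable for every $i\le j-1$, so that $S\coloneqq d+\sum_{i=1}^{j-1}\delta''_i=\bar S^{(j)}_{j-1}=\bar S^{(j-1)}_{j-1}$ is $\F_{j-1}$-measurable and the two paths $(\bar S^{(j)}_t)_{t\le j-1}$, $(\bar S^{(j-1)}_t)_{t\le j-1}$ coincide; in particular $\{\bar S^{(j)}_t\ge t^\gamma\ \forall t\le j-1\}$ is $\F_{j-1}$-measurable and equal to $\{\bar S^{(j-1)}_t\ge t^\gamma\ \forall t\le j-1\}$. For $t\ge j$ one has $\bar S^{(j)}_t=S+\delta''_j+\sum_{i=j+1}^t\Delta''_i$ and $\bar S^{(j-1)}_t=S+\Delta''_j+\sum_{i=j+1}^t\Delta''_i$, with the convention that the sum over $i\in\{j+1,\dots,t\}$ is empty when $t=j$. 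For $s\in\R$ and $q\in\{-1,d-2\}$ define
\[g(s,q)\coloneqq \P\Big(s+q+\sum_{i=j+1}^t\Delta''_i\ge t^\gamma\ \ \forall t\in\{j,\dots,T'\},\ \ s+q+\sum_{i=j+1}^{T'}\Delta''_i\ge\eps\sqrt{T'}\Big),\]
a quantity depending only on $\Delta''_{j+1},\dots,\Delta''_{T'}$, which are independent of $\F_{j-1}$, of $\delta''_j$ and of $\Delta''_j$; note that $q\mapsto g(s,q)$ is non-decreasing, since increasing $q$ only relaxes every constraint in the event.

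Conditioning on $\F_{j-1}$ and splitting over the two possible values of $\delta''_j$ (respectively $\Delta''_j$), and using that given $\F_{j-1}$ the variable $\delta''_j$ (respectively $\Delta''_j$) is independent of $(\Delta''_i)_{i\ge j+1}$, one obtains
\begin{align*}
\P\big(\bar S^{(j)}_t\ge t^\gamma\ \forall t\in[T'],\ \bar S^{(j)}_{T'}\ge\eps\sqrt{T'}\,\big|\,\F_{j-1}\big)
&=\ind_{\{\bar S^{(j)}_t\ge t^\gamma\,\forall t\le j-1\}}\sum_{q\in\{-1,d-2\}}g(S,q)\,\P(\delta''_j=q\,|\,\F_{j-1}),\\
\P\big(\bar S^{(j-1)}_t\ge t^\gamma\ \forall t\in[T'],\ \bar S^{(j-1)}_{T'}\ge\eps\sqrt{T'}\,\big|\,\F_{j-1}\big)
&=\ind_{\{\bar S^{(j-1)}_t\ge t^\gamma\,\forall t\le j-1\}}\sum_{q\in\{-1,d-2\}}g(S,q)\,\P(\Delta''_j=q\,|\,\F_{j-1}).
\end{align*}
The two indicator prefactors agree, and $\delta''_j$ stochastically dominates $\Delta''_j$ given $\F_{j-1}$; since $g(S,\cdot)$ is non-decreasing, $\sum_q g(S,q)\P(\delta''_j=q\,|\,\F_{j-1})\ge\sum_q g(S,q)\P(\Delta''_j=q\,|\,\F_{j-1})$, which is exactly the claimed inequality. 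The only point needing care is the bookkeeping at the left endpoint $t=j$ of the second interval, where the sum $\sum_{i=j+1}^t\Delta''_i$ is empty and the constraint reads $S+q\ge j^\gamma$; past that, the argument is a direct transcription of the proof of Lemma~\ref{etamumain}, so I do not expect any real obstacle.
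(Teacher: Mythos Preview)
Your proposal is correct and follows essentially the same approach as the paper's proof: decompose over the two possible values of the $j$-th increment, use independence of $(\Delta''_i)_{i\ge j+1}$ from $\F_{j-1}$ and from the $j$-th increment, and conclude via monotonicity of $g(S,\cdot)$ together with the stochastic domination $\delta''_j \succeq \Delta''_j$ given $\F_{j-1}$. The only cosmetic difference is that the paper sums explicitly over the values $s$ of $\bar S^{(j)}_{j-1}$ whereas you keep $S$ as an $\F_{j-1}$-measurable random variable; your version is if anything slightly cleaner, and you have correctly identified why no analogue of the indicator $\ind_{V_{j-1}}$ is needed here.
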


\begin{proof}
We proceed almost exactly as in the proof of Lemma \ref{etamumain}, and therefore leave out some of the details. By summing over the possible values of $\bar S^{(j)}_{j-1}$ and using the $\F_{j-1}$-measurability of $(\bar S^{(j)}_i)_{i=1}^{j-1}$, we have
\begin{align*}
&\P\big(\bar S_t^{(j)} \ge t^\gamma \,\,\forall t\in[T'],\, \bar S^{(j)}_{T'} \geq \eps\sqrt{T'}\,\big|\,\F_{j-1}\big)\\
&= \sum_{s = 1}^{\infty} \ind_{\{\bar S^{(j)}_{j-1} = s\}}\ind_{\{\bar S^{(j)}_i \ge i^\gamma \,\,\forall i\in [j-1]\}}\\
&\hspace{10mm}\cdot\P\bigg(s + \delta''_j + \sum_{i=j+1}^t \Delta''_i \ge t^\gamma \,\, \forall t\in[T']\setminus[j-1],\, s + \delta''_j + \sum_{i=j+1}^{T'} \Delta''_i \ge \eps\sqrt{T'}\,\bigg|\,\F_{j-1}\bigg).
\end{align*}
Then summing further over the possible values $Q\coloneqq\{-1,d-2\}$ of $\delta''_j$, and using the independence of $(\Delta_i)_{i=j+1}^{T'}$ from $\F_{j-1}$, we have
\begin{align}
&\P\left(\left.\bar S_t^{(j)} \ge t^\gamma \,\,\forall t\in[T'],\, \bar S^{(j)}_{T'} \geq \eps\sqrt{T'}\,\right|\,\F_{j-1}\right)\nonumber\\
&= \sum_{s = 1}^{\infty} \ind_{\{\bar S^{(j)}_{j-1} = s\}}\ind_{\{\bar S^{(j)}_i \ge i^\gamma \,\,\forall i\in [j-1]\}}\nonumber\\
&\hspace{10mm}\cdot\sum_{q\in Q} \P\bigg(s+q+\sum_{i=j+1}^t \Delta''_i \ge t^\gamma \,\,\forall t\in[T']\setminus[j-1],\, s+q+\sum_{i=j+1}^{T'} \Delta''_i\ge \eps \sqrt{T'}\bigg)\nonumber\\
&\hspace{100mm}\cdot \P\big(\delta''_j = q\,\big|\,\F_{j-1}\big).\label{deltatoD1}
\end{align}
We also observe that exactly the same argument holds for $\bar S_t^{(j-1)}$ if we replace $\delta''_j$ with $\Delta''_j$; that is,
\begin{align}
&\P\left(\left.\bar S_t^{(j-1)} \ge t^\gamma \,\,\forall t\in[T'],\, \bar S^{(j-1)}_{T'} \geq \eps\sqrt{T'}\,\right|\,\F_{j-1}\right)\nonumber\\
&= \sum_{s = 1}^{\infty} \ind_{\{\bar S^{(j-1)}_{j-1} = s\}}\ind_{\{\bar S^{(j-1)}_i \ge i^\gamma \,\,\forall i\in [j-1]\}}\nonumber\\
&\hspace{10mm}\cdot\sum_{q\in Q} \P\bigg(s+q+\sum_{i=j+1}^t \Delta''_i \ge t^\gamma \,\,\forall t\in[T']\setminus[j-1],\, s+q+\sum_{i=j+1}^{T'} \Delta''_i\ge \eps \sqrt{T'}\bigg)\nonumber\\
&\hspace{100mm}\cdot \P\big(\Delta''_j = q\,\big|\,\F_{j-1}\big).\label{deltatoD2}
\end{align}
Now since
\[\P\bigg(s+q+\sum_{i=j+1}^t \Delta''_i \ge t^\gamma \,\,\forall t\in[T']\setminus[j-1],\, s+q+\sum_{i=j+1}^{T'} \Delta''_i\ge \eps \sqrt{T'}\bigg)\]
is increasing in $q$, and $\delta''_j$ stochastically dominates $\Delta''_j$ given $\F_{j-1}$ (as shown above), we see that $\eqref{deltatoD1}\ge \eqref{deltatoD2}$, completing the proof.
\end{proof}

\begin{proof}[Proof of Proposition \ref{deltatoDprop}]
Using the notation in Lemma \ref{deltatoDlem} and taking expectations, we have
\[\P\left(\bar S_t^{(j)} \ge t^\gamma \,\,\forall t\in[T'],\, \bar S^{(j)}_{T'} \geq \eps\sqrt{T'}\right) \ge \P\left(\bar S_t^{(j-1)} \ge t^\gamma \,\,\forall t\in[T'],\, \bar S^{(j-1)}_{T'} \geq \eps\sqrt{T'}\right).\]
Iterating, we obtain
\[\P\left(\bar S^{(T')}_t \ge t^\gamma \hspace{0.15cm}\forall t\in [T'],\, \bar S^{(T')}_{T'} \geq \eps\sqrt{T'}\right) \ge \P\left(\bar S^{(0)}_t \ge t^\gamma \hspace{0.15cm}\forall t\in [T'],\, \bar S^{(0)}_{T'} \geq \eps\sqrt{T'}\right).\]
However $S^{(T')}_t = d+\sum_{i=1}^t \delta''_i$ and $S^{(0)}_t = d+\sum_{i=1}^t \Delta''_i$, so the above is exactly the result we are trying to prove.
\end{proof}

\subsubsection{Proof of \eqref{delprimeabovegamma}, step 3: replacing $\Delta''_i$ with mean-zero random variables that do not depend on $n$}
Although the random variables $(\Delta''_i)_{i=1}^{T'}$ are i.i.d.~and therefore easier to work with than $(\delta''_i)_{i=1}^{T'}$, their distribution depends on $n$ and has a small but non-zero drift. We now replace $\Delta''_i$ with mean-zero random variables that do not depend on $n$.

\begin{lem}\label{DeltatoDlem}
Let $(D''_i)_{i=0}^{T'}$ be a sequence of i.i.d.~random variables with $\P(D''_i=d-2) = \frac{1}{d-1} = 1-\P(D''_i=-1)$. Suppose that $|\lambda|=O(A)$. Then for any $\gamma\ge0$ and $\eps>0$,
\begin{multline*}
\P\bigg(\sum_{i=1}^t \Delta''_i \ge t^\gamma \hspace{0.15cm}\forall t\in [T'],\, \sum_{i=1}^{T'} \Delta''_i \geq \eps\sqrt{T'}\bigg)\\
\ge c\P\bigg(\sum_{i=1}^t D''_i \ge t^\gamma \hspace{0.15cm}\forall t\in [T'],\, \sum_{i=1}^{T'} D''_i \in \left[\eps\sqrt{T'},\sqrt{T'}/\eps\right]\bigg).
\end{multline*}
where $c=c(d,\eps)>0$ is a constant depending only on $d$ and $\eps$.
\end{lem}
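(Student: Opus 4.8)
The plan is to relate the i.i.d.\ sequence $(\Delta''_i)_{i=1}^{T'}$ to the i.i.d.\ sequence $(D''_i)_{i=1}^{T'}$ by an explicit change of measure. Both are sequences of $\{-1,d-2\}$-valued random variables: $\Delta''_i=d-2$ with probability $\beta/(d-1)$, where $\beta\coloneqq(d-1)p(1-T'/n)$, and $D''_i=d-2$ with probability $1/(d-1)$. Since $p\in(0,1)$ and $T'<n$ we have $\beta\in(0,d-1)$, so the two laws are mutually absolutely continuous; setting $\rho\coloneqq\frac{(d-1)(1-p(1-T'/n))}{d-2}=\frac{(d-1)-\beta}{d-2}\in(0,\infty)$, the Radon--Nikodym derivative of the law of $(\Delta''_i)_{i=1}^{T'}$ with respect to that of $(D''_i)_{i=1}^{T'}$, evaluated on a realisation having exactly $k$ increments equal to $d-2$, equals $\beta^{k}\rho^{T'-k}$.

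First I would pass to a more restrictive event for $(\Delta''_i)$: since $\{\sum_{i=1}^{T'}\Delta''_i\ge\eps\sqrt{T'}\}\supseteq\{\sum_{i=1}^{T'}\Delta''_i\in[\eps\sqrt{T'},\sqrt{T'}/\eps]\}$, the left-hand side of the lemma is at least the probability of the event in which the partial sums stay above $t^\gamma$ \emph{and} $\sum_{i=1}^{T'}\Delta''_i\in[\eps\sqrt{T'},\sqrt{T'}/\eps]$. By the change of measure this probability equals $\E\big[\ind_E\,\beta^{K}\rho^{T'-K}\big]$, where $E\coloneqq\{\sum_{i=1}^t D''_i\ge t^\gamma\ \forall t\in[T'],\ \sum_{i=1}^{T'}D''_i\in[\eps\sqrt{T'},\sqrt{T'}/\eps]\}$ is exactly the event appearing on the right-hand side of the lemma and $K\coloneqq\#\{i\le T':D''_i=d-2\}$. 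On $E$ we have $\sum_{i=1}^{T'}D''_i=(d-1)K-T'$, so $K=\frac{T'+s}{d-1}$ with $s\in[\eps\sqrt{T'},\sqrt{T'}/\eps]$; thus $K$ is confined to a window of width $O(\sqrt{T'})$ around its balanced value $T'/(d-1)$.

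The crux is to show $\beta^{K}\rho^{T'-K}\ge c(d,\eps)>0$ uniformly over this window. Writing $K=\frac{T'+s}{d-1}$ gives the identity
\[
\log\!\big(\beta^{K}\rho^{T'-K}\big)=\frac{T'}{d-1}\big(\log\beta+(d-2)\log\rho\big)+\frac{s}{d-1}\big(\log\beta-\log\rho\big).
\]
The key point is that the map $\beta\mapsto\log\beta+(d-2)\log\rho=\log\beta+(d-2)\log\frac{(d-1)-\beta}{d-2}$ vanishes together with its first derivative at $\beta=1$ — this is precisely the statement that $1/(d-1)$ is the unique up-probability with $\E[D''_i]=0$ — so it is $O_d((\beta-1)^2)$, while $\log\beta-\log\rho=\frac{d-1}{d-2}(\beta-1)+O_d((\beta-1)^2)$. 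From $(d-1)p=1+\lambda n^{-1/3}$, $T'=\lfloor n^{2/3}/A^2\rfloor$ and $|\lambda|=O(A)$ one checks $|\beta-1|=O(An^{-1/3})$, and then, using also $s\le\sqrt{T'}/\eps=O(n^{1/3}/(A\eps))$ together with $A=o(n^{1/30})$,
\[
\frac{T'}{d-1}\big(\log\beta+(d-2)\log\rho\big)=O_d\big(T'(\beta-1)^2\big)=O_d(1),\qquad \frac{s}{d-1}\big(\log\beta-\log\rho\big)=O_d\big(s(\beta-1)\big)+o(1)=O_{d,\eps}(1).
\]
Hence $\log(\beta^{K}\rho^{T'-K})$ is bounded below by a constant depending only on $d$ and $\eps$, and substituting this bound into $\E[\ind_E\,\beta^{K}\rho^{T'-K}]$ yields $\E[\ind_E\,\beta^{K}\rho^{T'-K}]\ge c(d,\eps)\,\P(E)$, which is exactly the claimed inequality.

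I expect the last step to be the main obstacle. One must both notice the cancellation of the first-order term in $\log(\beta^{K}\rho^{T'-K})$ — without it the exponent would be of order $T'|\beta-1|\asymp n^{1/3}/A\to\infty$, destroying the estimate — and then carefully verify that every surviving error term remains $O_{d,\eps}(1)$ under the standing hypotheses $T'=\lfloor n^{2/3}/A^2\rfloor$, $|\lambda|=O(A)$, $A=o(n^{1/30})$. It is worth emphasising that two ingredients are essential: the cancellation is what forces the comparison sequence to be exactly the mean-zero $D''_i$, and the upper constraint $\sum_{i=1}^{T'}D''_i\le\sqrt{T'}/\eps$ (pinning $K$ to within $O(\sqrt{T'})$ of $T'/(d-1)$) is what keeps the correction term $\frac{s}{d-1}(\log\beta-\log\rho)$ bounded rather than of order $T'|\beta-1|$.
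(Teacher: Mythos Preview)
Your proposal is correct and is essentially the same argument as the paper's. The paper phrases the change of measure as an exponential tilt $\frac{\d\Q}{\d\P}\propto e^{\nu\sum\Delta''_i}$ with $\nu=\frac{1}{d-1}\log\frac{1-p(1-T'/n)}{(d-2)p(1-T'/n)}=\frac{1}{d-1}(\log\rho-\log\beta)$, whereas you write the Radon--Nikodym derivative directly as $\beta^K\rho^{T'-K}$; these are the same object, and your two bounded terms $\frac{T'}{d-1}(\log\beta+(d-2)\log\rho)=O_d(T'(\beta-1)^2)$ and $\frac{s}{d-1}(\log\beta-\log\rho)=O_{d,\eps}(s|\beta-1|)$ correspond exactly to the paper's estimates $\E[e^{\nu\Delta''_1}]^{T'}\ge c$ and $e^{-|\nu|\sqrt{T'}/\eps}\ge c$.
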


To prove Lemma \ref{DeltatoDlem} we will use an exponential change of measure with a specific parameter $\nu$. We first calculate some asymptotics for quantities involving $\nu$.

\begin{lem}
Let
\begin{equation}\label{nu}
\nu = \frac{1}{d-1}\log(1-p(1-T'/n)) - \frac{1}{d-1}\log((d-2)p(1-T'/n)).
\end{equation}
Then
\begin{equation}\label{nudef}
\nu = \frac{T'}{n(d-2)}-\frac{\lambda}{n^{1/3}(d-2)}+O\left(\frac{\lambda^2}{n^{2/3}} + \frac{\lambda T'}{n^{4/3}} + \frac{(T')^2}{n^2}\right).
\end{equation}
and
\begin{equation}\label{lboundmgf}
\mathbb{E}\left[e^{\nu \Delta''_1}\right] = \exp\left(O\left(\frac{\lambda^2}{n^{2/3}} + \frac{\lambda T'}{n^{4/3}} + \frac{(T')^2}{n^2}\right)\right).
\end{equation}
\end{lem}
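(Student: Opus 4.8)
The plan is to reduce both estimates to elementary Taylor expansions in the two small quantities $a\coloneqq \lambda n^{-1/3}$ and $b\coloneqq T'/n$. Since $T'=\lfloor n^{2/3}/A^2\rfloor$ with $A\ge A_0\ge 1$ we have $0\le b\le n^{-1/3}$, and since $|\lambda|=O(A)=o(n^{1/30})$ we have $|a|=O(n^{-1/3+1/30})=o(1)$; in particular $a,b\to 0$, and $a^2$, $ab$, $b^2$ equal $\lambda^2 n^{-2/3}$, $\lambda T'n^{-4/3}$, $(T')^2 n^{-2}$ respectively, so the error term in the statement is exactly $O(a^2+ab+b^2)$. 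Writing $q\coloneqq p(1-T'/n)$, substituting $p=(d-1)^{-1}(1+\lambda n^{-1/3})$ gives the exact identities
\[1-q=\frac{d-2-a+b+ab}{d-1},\qquad (d-2)q=\frac{d-2}{d-1}(1+a)(1-b),\]
and hence, from the definition \eqref{nu} of $\nu$,
\[e^{(d-1)\nu}=\frac{1-q}{(d-2)q}=\frac{1+\frac{-a+b+ab}{d-2}}{(1+a)(1-b)}.\]

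To prove \eqref{nudef} I would take logarithms in the last display and expand each of $\log\!\big(1+\frac{-a+b+ab}{d-2}\big)$, $\log(1+a)$ and $\log(1-b)$ to first order. Since $a,b=o(1)$, all quadratic and higher remainders are $O(a^2+ab+b^2)$, so
\[(d-1)\nu=\frac{-a+b}{d-2}-a+b+O(a^2+ab+b^2)=\frac{d-1}{d-2}(b-a)+O(a^2+ab+b^2).\]
Dividing by $d-1$ and substituting back $a=\lambda n^{-1/3}$, $b=T'/n$ yields \eqref{nudef}.

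For \eqref{lboundmgf} the key point is that $\nu$ was defined precisely so that the moment generating function telescopes. Since $\Delta''_1=d-2$ with probability $q$ and $\Delta''_1=-1$ with probability $1-q$,
\[\E\big[e^{\nu\Delta''_1}\big]=e^{-\nu}\big(q\,e^{(d-1)\nu}+1-q\big)=e^{-\nu}\Big(\frac{1-q}{d-2}+1-q\Big)=e^{-\nu}\,\frac{(d-1)(1-q)}{d-2},\]
where the middle equality uses $e^{(d-1)\nu}=(1-q)/((d-2)q)$. Now the exact formula for $1-q$ gives $\frac{(d-1)(1-q)}{d-2}=1+\frac{b-a}{d-2}+\frac{ab}{d-2}$, while \eqref{nudef} gives $e^{-\nu}=1-\frac{b-a}{d-2}+O(a^2+ab+b^2)$; multiplying these two expansions, the linear terms cancel, leaving $\E[e^{\nu\Delta''_1}]=1+O(a^2+ab+b^2)$, which (since $a^2+ab+b^2\to 0$) is exactly \eqref{lboundmgf}.

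There is no genuine obstacle here: the computation becomes routine once the identity $e^{(d-1)\nu}=(1-q)/((d-2)q)$ is used to collapse the moment generating function to $e^{-\nu}(d-1)(1-q)/(d-2)$. The only points that require a little care are checking that the cubic-and-higher Taylor remainders are absorbed into $a^2+ab+b^2$ (which is where $a,b\to 0$ is used), and verifying that the linear-order terms genuinely cancel in the product $e^{-\nu}\cdot\frac{(d-1)(1-q)}{d-2}$ --- a cancellation that is automatic because $\nu$ is exactly the exponential tilt that recentres $\Delta''_1$ to mean zero.
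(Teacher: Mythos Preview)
Your proof is correct and follows essentially the same approach as the paper: both arguments reduce to first-order Taylor expansions of the logarithms defining $\nu$ and then check that the linear terms cancel in the moment generating function. Your presentation is slightly cleaner---introducing $a=\lambda n^{-1/3}$, $b=T'/n$ and exploiting the identity $e^{(d-1)\nu}=(1-q)/((d-2)q)$ to collapse $\E[e^{\nu\Delta''_1}]$ to $e^{-\nu}\,\tfrac{(d-1)(1-q)}{d-2}$---but the underlying computation is the same as the paper's.
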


\begin{proof}
Note that, since $p=(1+\lambda n^{-1/3})(d-1)^{-1}$, we have
\begin{align*}
&\frac{1}{d-1}\log(1-p(1-T'/n))\\
&\hspace{10mm}=\frac{1}{d-1}\log\left(1-\frac{1}{d-1}+\frac{T'}{n(d-1)}-\frac{\lambda}{n^{1/3}(d-1)}+\frac{\lambda T'}{n^{4/3}(d-1)}\right)\\
&\hspace{10mm}=\frac{1}{d-1}\log\left(\frac{d-2}{d-1}\right) + \frac{1}{d-1}\log\left(1+\frac{T'}{n(d-2)}-\frac{\lambda}{n^{1/3}(d-2)}+\frac{\lambda T'}{n^{4/3}(d-2)}\right).
\end{align*}
Since $\log(1+x) = x - x^2/2 +O(x^3)$ for $x\in(-1,1)$, we have
\begin{multline*}
\log\left(1+\frac{T'}{n(d-2)}-\frac{\lambda}{n^{1/3}(d-2)}+\frac{\lambda T'}{n^{4/3}(d-2)}\right)\\
=\frac{T'}{n(d-2)}-\frac{\lambda}{n^{1/3}(d-2)}-O\left(\frac{\lambda^2}{n^{2/3}} + \frac{\lambda T'}{n^{4/3}} + \frac{(T')^2}{n^2}\right).
\end{multline*}
Therefore we have shown that
\begin{align*}
&\frac{1}{d-1}\log(1-p(1-T'/n))\\
&=\frac{1}{d-1}\log\left(\frac{d-2}{d-1}\right)+\frac{1}{(d-1)(d-2)}\left(\frac{T'}{n}-\frac{\lambda}{n^{1/3}}\right)-O\left(\frac{\lambda^2}{n^{2/3}} + \frac{\lambda T'}{n^{4/3}} + \frac{(T')^2}{n^2}\right).
\end{align*}
Very similar calculations lead to
\begin{align*}
&\frac{1}{d-1}\log((d-2)p(1-T'/n))\\
&\hspace{0.5cm}=\frac{1}{d-1}\log\left(\frac{d-2}{d-1}\right)-\frac{1}{d-1}\left(\frac{T'}{n}-\frac{\lambda}{n^{1/3}}\right)-O\left(\frac{\lambda^2}{n^{2/3}} + \frac{\lambda T'}{n^{4/3}} + \frac{(T')^2}{n^2}\right).
\end{align*} 
Combining these two estimates leads to \eqref{nudef}.

For \eqref{lboundmgf}, since $\Delta''_1\coloneqq \ind_{R_1}\ind_{\{U_1\le 1-T'/n\}}(d-1)-1$, we see that
\begin{align*}
\mathbb{E}\left[e^{\nu \Delta''_1}\right]&=e^{-\nu}\big(e^{\nu(d-1)}p(1-T'/n) + 1-p(1-T'/n)\big)\\
&= e^{-\nu}\left(1+p(1-T'/n)(e^{\nu(d-1)}-1)\right).
\end{align*}
Using the definition (\ref{nu}) of $\nu$ it is easy to see that
\begin{align*}
1+p(1-T'/n)&(e^{\nu(d-1)}-1)\\
&= 1+\frac{T'}{n(d-2)}-\frac{\lambda}{n^{1/3}(d-2)}+\frac{\lambda T'}{n^{4/3}(d-2)}\\
&= \exp\left(\frac{T'}{n(d-2)}-\frac{\lambda}{n^{1/3}(d-2)} + O\left(\frac{\lambda^2}{n^{2/3}} + \frac{\lambda T'}{n^{4/3}} + \frac{(T')^2}{n^2}\right)\right).
\end{align*}
Combining this with \eqref{nudef} gives \eqref{lboundmgf} and completes the proof.
\end{proof}

Equipped with these estimates, we can now return to the proof of our main result for this section.

\begin{proof}[Proof of Lemma \ref{DeltatoDlem}]
Define a new probability measure $\Q$ by setting
\[\frac{\d\Q}{\d\P}\Big|_{\F_t} \coloneqq \frac{e^{\nu \sum_{i=1}^t \Delta''_i}}{\E[e^{\nu \sum_{i=1}^t \Delta''_i}]}\]
for each $t\ge 0$; since the sequence $(\Delta''_i)_{i\ge1}$ is i.i.d., the right-hand side of the definition forms a martingale and the definition is consistent. One may easily check that under $\Q$, the sequence $(\Delta''_i)_{i=1}^{T'}$ is i.i.d.~with
\[\Q(\Delta''_i = d-2 ) = \frac{1}{d-1} = 1-\Q(\Delta''_i = -1 ).\]

Now, we have
\begin{align*}
&\P\bigg(\sum_{i=1}^t \Delta''_i \ge t^\gamma \hspace{0.15cm}\forall t\in [T'],\, \sum_{i=1}^{T'} \Delta''_i \geq \eps\sqrt{T'}\bigg)\\
&\ge\P\bigg(\sum_{i=1}^t \Delta''_i \ge t^\gamma \hspace{0.15cm}\forall t\in [T'],\, \sum_{i=1}^{T'} \Delta''_i \in \big[\eps\sqrt{T'},\sqrt{T'}/\eps\big]\bigg)\\
&= \E_\Q\Big[e^{-\nu \sum_{i=1}^{T'} \Delta''_i} \ind_{\left\{\sum_{i=1}^t \Delta''_i \ge t^\gamma \hspace{0.15cm}\forall t\in [T'],\, \sum_{i=1}^{T'} \Delta''_i \in [\eps\sqrt{T'},\sqrt{T'}/\eps]\right\}}\Big] \E_\P\big[e^{\nu \sum_{i=1}^{T'} \Delta''_i}\big]\\
&\ge e^{-|\nu|\sqrt{T'}/\eps} \Q\bigg(\sum_{i=1}^t \Delta''_i \ge t^\gamma \hspace{0.15cm}\forall t\in [T'],\, \sum_{i=1}^{T'} \Delta''_i \in \big[\eps\sqrt{T'},\sqrt{T'}/\eps\big]\bigg) \E[e^{\nu \Delta''_1}]^{T'}
\end{align*}
By \eqref{lboundmgf} and the fact that $T' = \lfloor n^{2/3}/A^2\rfloor$ we see that 
\begin{equation*}
\E[e^{\nu \Delta''_1}]^{T'}\geq \exp\left(-O\left(\frac{\lambda^2}{A^2} + \frac{\lambda}{A^4} + \frac{1}{A^6}\right)\right),
\end{equation*}
and since $|\lambda|=O(A)$ the expression on the right-hand side is bounded away from zero for sufficiently large $A$. By \eqref{nudef} we also see that $e^{-|\nu|\sqrt{T'}/\eps}$ is bounded away from zero for any fixed $\eps>0$, and therefore
\begin{multline*}
\P\bigg(\sum_{i=1}^t \Delta''_i \ge t^\gamma \hspace{0.15cm}\forall t\in [T'],\, \sum_{i=1}^{T'} \Delta''_i \geq \eps\sqrt{T'}\bigg)\\
\ge c\Q\bigg(\sum_{i=1}^t \Delta''_i \ge t^\gamma \hspace{0.15cm}\forall t\in [T'],\, \sum_{i=1}^{T'} \Delta''_i \in \big[\eps\sqrt{T'},\sqrt{T'}/\eps\big]\bigg).
\end{multline*}
Since $(\Delta''_i)_{i=1}^{T'}$ under $\Q$ have the same distribution as the random variables $(D''_i)_{i=1}^{T'}$ under $\P$ from the statement of the lemma, this completes the proof.
\end{proof}

\subsubsection{Proof of \eqref{delprimeabovegamma}, step 4: bounding the probability that $\sum_{i=1}^t D''_i$ stays above $t^\gamma$ and finishes above $\eps\sqrt{T'}$.}

We have now reduced our problem to working with the straightforward random walk $\sum_{i=1}^t D''_i$, and can apply known results about random walks to gain our desired bound. The following lemma, when combined with Proposition \ref{deltatoDprop} and Lemma \ref{DeltatoDlem}, completes the proof of \eqref{delprimeabovegamma}.

\begin{lem}\label{Dlblem}
Let $(D''_i)_{i=0}^{T'}$ be a sequence of i.i.d.~random variables with $\P(D_i=d-2) = \frac{1}{d-1} = 1-\P(D_i=-1)$, as in Lemma \ref{DeltatoDlem}. Then for any $\gamma\in[0,1/2)$ there exist $\eps>0$ and $c>0$ depending only on $d$ such that
\[\P\bigg(\sum_{i=1}^t D''_i \ge t^\gamma \hspace{0.15cm}\forall t\in [T'],\, \sum_{i=1}^{T'} D''_i \in \left[\eps\sqrt{T'},\sqrt{T'}/\eps\right]\bigg) \ge \frac{cA}{n^{1/3}}.\]
\end{lem}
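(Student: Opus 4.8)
The plan is to recognise $(D''_i)_{i\ge 1}$ as a well-behaved centred lattice random walk and then invoke classical results on random walks conditioned to stay positive. First I would record the elementary facts: writing $\sigma^2\coloneqq\E[(D''_1)^2]$ we have $\E[D''_1]=(d-2)\tfrac1{d-1}-\tfrac{d-2}{d-1}=0$ and $\sigma^2=(d-2)^2\tfrac1{d-1}+\tfrac{d-2}{d-1}=d-2\in(0,\infty)$, so $S_t\coloneqq\sum_{i=1}^t D''_i$ is a mean-zero, finite-variance random walk whose increments lie in $\{-1,d-2\}$; in particular $S_t+t\in(d-1)\mathbb{Z}$, so $(S_t)$ is a genuine span-$(d-1)$ lattice walk, and its increments are bounded below by $-1$.

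The only genuinely probabilistic input is the behaviour of $(S_t)$ conditioned to stay above the sublinear barrier $t\mapsto t^\gamma$. Since $\gamma<1/2$ we have $t^\gamma=o(\sqrt t)$, so the barrier is negligible at the diffusive scale and the relevant persistence probability has the same order as for the constant barrier $0$: there is $c_0=c_0(d,\gamma)>0$ with $\P\big(S_t\ge t^\gamma\ \forall t\in[n]\big)\ge c_0/\sqrt n$ for all large $n$. This is exactly the kind of statement covered by Ritter's results on random walks conditioned to stay positive \cite{ritter:growth_RW_cond_positive}; the only subtlety is an initial window $[1,K]$ of bounded length, which is handled by forcing $D''_1=\cdots=D''_K=d-2$, an event of positive probability $(d-1)^{-K}$ on which $S_t=(d-2)t\ge t\ge t^\gamma$ for $t\le K$ and $S_K$ exceeds $K^\gamma$, after which the walk restarts from a height clear of the barrier (which thereafter lies below a constant times $\sqrt t$). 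Moreover, conditioned on $\{S_t\ge t^\gamma\ \forall t\in[n]\}$, the endpoint $S_n/\sqrt n$ converges in distribution to the terminal value of a Brownian meander, whose law has a density strictly positive on $(0,\infty)$; hence there are $0<a<b<\infty$ and $c_1>0$, depending only on $d$ and $\gamma$, with
\[\P\Big(S_t\ge t^\gamma\ \forall t\in[n],\ S_n\in[a\sqrt n,b\sqrt n]\Big)\ge\frac{c_1}{\sqrt n}\]
for all large $n$; the fact that $(S_n)$ is a span-$(d-1)$ lattice walk causes no difficulty since $[a\sqrt n,b\sqrt n]$ contains $\Theta(\sqrt n)$ lattice points.

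Applying this with $n=T'=\lfloor n^{2/3}/A^2\rfloor$ (which tends to infinity since $A=o(n^{1/30})$) and choosing $\eps\le\min\{a,1/b\}$ gives
\[\P\Big(\sum_{i=1}^t D''_i\ge t^\gamma\ \forall t\in[T'],\ \sum_{i=1}^{T'}D''_i\in\big[\eps\sqrt{T'},\sqrt{T'}/\eps\big]\Big)\ge\frac{c_1}{\sqrt{T'}},\]
and since $T'\le n^{2/3}/A^2$ we have $\sqrt{T'}\le n^{1/3}/A$, so $c_1/\sqrt{T'}\ge c_1 A/n^{1/3}$, which is the asserted bound with $c=c_1$. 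The main obstacle is essentially one of bookkeeping: isolating the persistence-above-a-polynomial-barrier estimate together with the diffusive, meander-type description of the conditioned endpoint in a form one can quote, and verifying that replacing the constant barrier $0$ by $t^\gamma$ with $\gamma<1/2$ does not change the order — the only place this matters is near time $0$, where the barrier and the typical position of the walk are both small and the scaling limit is silent, and there the forced-ascent argument over a window of constant length is what makes the comparison work.
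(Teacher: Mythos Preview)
Your proposal is correct and follows essentially the same approach as the paper: both use that the persistence probability for a centred finite-variance walk is of order $1/\sqrt{T'}$, invoke Ritter \cite{ritter:growth_RW_cond_positive} to handle the sublinear barrier $t^\gamma$, and appeal to conditional convergence of the endpoint to the Brownian meander terminal law to place $S_{T'}$ in an interval $[\eps\sqrt{T'},\sqrt{T'}/\eps]$. The only organizational difference is that the paper conditions on the event $\mathcal P_{T'}=\{S_t>0\ \forall t\le T'\}$ (where the Durrett meander result \cite{durrett} applies directly) and then uses Ritter to lower-bound $\P(S_t\ge t^\gamma\ \forall t\mid\mathcal P_{T'})$ and a separate bound on $\P(S_{T'}\notin[\eps\sqrt{T'},\sqrt{T'}/\eps]\mid\mathcal P_{T'})$, combining by subtraction; you instead condition directly on $\{S_t\ge t^\gamma\ \forall t\}$ and assert the meander limit there, which is a marginally stronger claim but follows from the same two ingredients since the two conditioning events have comparable probability.
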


\begin{proof}
The probability that a mean-zero random walk with finite variance remains positive for $k$ steps is of order $1/\sqrt k$; see e.g.~Spitzer \cite{spitzer}. Thus, if we define the event
\[\mathcal P_k = \bigg\{\sum_{i=1}^t D''_i > 0 \hspace{0.15cm}\forall t\in [k]\bigg\},\]
we have
\begin{align*}
&\P\bigg(\sum_{i=1}^t D''_i \ge t^\gamma \hspace{0.15cm}\forall t\in [T'],\, \sum_{i=1}^{T'} D''_i \in \left[\eps\sqrt{T'},\sqrt{T'}/\eps\right]\bigg)\\
&\ge \P\bigg(\sum_{i=1}^t D''_i \ge t^\gamma \hspace{0.15cm}\forall t\in [T'],\, \sum_{i=1}^{T'} D''_i \in \left[\eps\sqrt{T'},\sqrt{T'}/\eps\right]\,\bigg|\, \mathcal P_{T'}\bigg)\P(\mathcal P_{T'})\\
&\ge \frac{c}{\sqrt{T'}}\Bigg(\P\bigg(\sum_{i=1}^t D''_i \ge t^\gamma \hspace{0.15cm}\forall t\in [T']\,\bigg|\,\mathcal P_{T'}\bigg) - \P\bigg(\sum_{i=1}^{T'} D''_i \not\in \left[\eps\sqrt{T'},\sqrt{T'}/\eps\right]\,\bigg|\, \mathcal P_{T'}\bigg)\Bigg)
\end{align*}
We now claim that for any $\gamma<1/2$, there exists $\alpha>0$ such that
\[\P\bigg(\sum_{i=1}^t D''_i \ge t^\gamma \hspace{0.15cm}\forall t\in [T']\,\bigg|\,\mathcal P_{T'}\bigg)\ge \alpha;\]
and for any $\alpha>0$, there exists $\eps>0$ such that
\[\P\bigg(\sum_{i=1}^{T'} D''_i \not\in \left[\eps\sqrt{T'},\sqrt{T'}/\eps\right]\,\bigg|\, \mathcal P_{T'}\bigg) < \alpha/2.\]
The first statement follows from a result of Ritter \cite{ritter:growth_RW_cond_positive}; see \cite[Lemma 8]{prigent_roberts} for details. The second statement follows from results of \cite{durrett}, specifically Theorem 3.10 (a general theorem on convergence of conditioned Markov processes) together with the results of Section 4.2 (where it is shown that finite variance, mean-zero random walks satisfy the conditions of the earlier theorem). These statements combine to complete the proof.
\end{proof}

Combining Proposition \ref{deltatoDprop}, Lemma \ref{DeltatoDlem} and Lemma \ref{Dlblem} tells us that
\[\mathbb{P}\bigg(\sum_{i=1}^t \delta''_i \ge t^\gamma \hspace{0.15cm}\forall t\in [T'],\, \sum_{i=1}^{T'} \delta''_i \geq 2\eps\sqrt{T'}\bigg) \ge \frac{cA}{n^{1/3}},\]
which is \eqref{delprimeabovegamma} without the conditioning on $\bS_n$; then applying Lemma \ref{removecondS} we deduce \eqref{delprimeabovegamma}.

\subsubsection{Proof of \eqref{activesumbelowgamma} : $h_i$ is not active too often}

Fix $\gamma\in(0,1/2)$. We first note that, since the number of active stubs can increase by at most $d$ at each step, $|\calA_i|\le di$ for each $i$. Since $h_i$ is chosen uniformly at random from the set of unexplored edges after step $i-1$, of which there are exactly $dn-2(i-1)-1$, a union bound gives
\begin{equation}\label{activegamma0}
\P\left(\exists i\in\big[\lceil n^{1/4}\rceil\wedge\tau\big] : h_i\in\calA_{i-1}\right) \le \sum_{i=1}^{\lceil n^{1/4}\rceil} \frac{d(i-1)}{dn-2(i-1)-1} \le \sum_{i=1}^{\lceil n^{1/4}\rceil} \frac{i}{2n/3} \le \frac{1}{n^{1/2}}.
\end{equation}
This gives us our desired bound up to $\lceil n^{1/4}\rceil$. For $t>n^{1/4}$, we observe that
\begin{multline}\label{activegamma1}
\P\bigg(\sum_{i=1}^{T'\wedge \tau} \ind_{\{h_i\in\calA_{i-1}\}} \ge n^{\gamma/4} \bigg)\\ \le \P\bigg(\sum_{i=1}^{T'\wedge \tau} \ind_{\{h_i\in\calA_{i-1}\}} \ge n^{\gamma/4},\, |\calA_t|\le n^{1/3 + \gamma/4}\,\,\forall t\in[T'\wedge\tau] \bigg)\\
+ \P\left(\exists t\in[T'\wedge\tau] : |\calA_t|> n^{1/3 + \gamma/4}\right).
\end{multline}
By Lemma \ref{lemmaforsecondrandomsumtoremove}, the last term above satisfies, for some finite constant $C$,
\begin{align}
&\P\left(\exists t\in[T'\wedge\tau] : |\calA_t|> n^{1/3 + \gamma/4}\right)\\
&\hspace{1cm}\le C\exp\left(-\frac{n^{2/3+\gamma/2}}{4p(d-1)T'} + \frac{n^{1/3+\gamma/4}}{2}\left(1-\frac{1}{p(d-1)}\right)\right)\nonumber\\
&\hspace{1cm}\le C\exp\left(-\frac{A^2 n^{\gamma/2}}{4(1+\lambda n^{-1/3})} + \frac{n^{1/3+\gamma/4}}{2}\left(1-\frac{1}{1+\frac{\lambda}{n^{1/3}}}\right)\right)\nonumber\\
&\hspace{1cm}= C\exp\left(-A^2 n^{\gamma/2}/4 + O\left(A^2\lambda n^{\gamma/2-1/3} + \lambda n^{\gamma/4}\right)\right)\nonumber\\
&\hspace{1cm}\le c\exp\left(- A^2 n^{\gamma/2}/8\right).\label{activegamma2}
\end{align}
On the other hand, the first term on the right-hand side of \eqref{activegamma1} satisfies
\begin{multline}\label{activegamma3}
\P\bigg(\sum_{i=1}^{T'\wedge \tau} \ind_{\{h_i\in\calA_{i-1}\}} \ge n^{\gamma/4},\, |\calA_t|\le n^{1/3 + \gamma/4}\,\,\forall t\in[T'\wedge\tau] \bigg)\\
\le \E\left[ e^{\sum_{i=1}^{T'\wedge \tau} \ind_{\{h_i\in\calA_{i-1}\}}} \ind_{\{|\calA_t|\le n^{1/3 + \gamma/4}\,\,\forall t\in[T'\wedge\tau]\}}\right] e^{-n^{\gamma/4}}.
\end{multline}
For $i\in[T'\wedge\tau]$, on the event $\{|\calA_{i-1}|\le n^{1/3 + \gamma/4}\}$ we have
\[\P(h_i\in\calA_{i-1}\,|\,\F_{i-1}) \le \frac{n^{1/3 + \gamma/4}}{dn-2(i-1)-1} \le n^{-2/3+\gamma/4}\]
and therefore
\begin{align*}
&\E\left[ e^{\sum_{i=1}^{T'\wedge \tau} \ind_{\{h_i\in\calA_{i-1}\}}} \ind_{\{|\calA_t|\le n^{1/3 + \gamma/4}\,\,\forall t\in[T'\wedge\tau]\}}\right] \\
&\le \E\left[ e^{\sum_{i=1}^{T'\wedge \tau - 1} \ind_{\{h_i\in\calA_{i-1}\}}} \ind_{\{|\calA_t|\le n^{1/3 + \gamma/4}\,\,\forall t\in[T'\wedge\tau-1]\}} \E\left[\left. e^{\ind_{\{h_{T'\wedge\tau}\in\calA_{i-1}\}}} \,\right|\,\F_{T'\wedge\tau}\right]\right] \\
&\le \big(1 + (e-1)n^{-2/3+\gamma/4}\big)\E\left[ e^{\sum_{i=1}^{T'\wedge \tau - 1} \ind_{\{h_i\in\calA_{i-1}\}}} \ind_{\{|\calA_t|\le n^{1/3 + \gamma/4}\,\,\forall t\in[T'\wedge\tau-1]\}}\right]\\
&\le \ldots \le \big(1 + (e-1)n^{-2/3+\gamma/4}\big)^{T'}.
\end{align*}
Using the inequality $1+x\le e^x$ valid for all $x\in\R$, we deduce that 
\[\eqref{activegamma3} \le \exp\left((e-1)n^{-2/3+\gamma/4}T' - n^{\gamma/4}\right) \le c\exp\left(-n^{\gamma/4}/2\right).\]
Substituting this and \eqref{activegamma2} into \eqref{activegamma1}, we have shown that
\[\P\bigg(\sum_{i=1}^{T'\wedge \tau} \ind_{\{h_i\in\calA_{i-1}\}} \ge n^{\gamma/4} \bigg) \le ce^{-A^2 n^{\gamma/2}/8} + ce^{-n^{\gamma/4}/2} \le c e^{-n^{\gamma/4}/2}.\]
Finally, combining this with \eqref{activegamma0}, we obtain
\[\mathbb{P}\bigg(\exists t\in[T'\wedge\tau] : \sum_{i=1}^t \ind_{\{h_i\in \mathcal{A}_{i-1}\}} \ge t^\gamma\bigg) \le \frac{1}{n^{1/2}} + c e^{-n^{\gamma/4}/2}.\]
We also have
\begin{multline*}
\mathbb{P}\bigg(\exists t\in[T'\wedge\tau] : \sum_{i=1}^t \ind_{\{h_i\in \mathcal{A}_{i-1}\}} \ge t^\gamma\,\bigg|\,\bS_n\bigg)\\
 \le \frac{1}{\P(\bS_n)}\mathbb{P}\bigg(\exists t\in[T'\wedge\tau] : \sum_{i=1}^t \ind_{\{h_i\in \mathcal{A}_{i-1}\}} \ge t^\gamma \bigg)
\end{multline*}
and since $\P(\bS_n)\to \exp((1-d^2)/4) > 0$ we deduce \eqref{activesumbelowgamma}.

\subsubsection{Combining \eqref{delprimeabovegamma} and \eqref{activesumbelowgamma} to deduce \eqref{kklk} and prove Proposition \ref{propfirstbit}}

We note that since $\eta_i\ge \delta'_i$ for all $i\le\tau$, if $\sum_{i=1}^t \delta'_i > -d$ for all $t\in[T']$, then $\tau\ge T'$. Thus
\begin{align*}
&\mathbb{P}\bigg(\sum_{i=1}^t \delta'_i > -d \hspace{0.15cm}\forall t\in [T'],\, \sum_{i=1}^{T'} \delta'_i \geq \eps \sqrt{T'}\,\bigg|\,\bS_n\bigg)\\
&= \P\bigg(\sum_{i=1}^t \delta'_i > -d \hspace{0.15cm}\forall t\in [T'\wedge\tau],\, \sum_{i=1}^{T'} \delta'_i \geq \eps \sqrt{T'}\,\bigg|\,\bS_n\bigg)\\
&\ge \P\bigg(\sum_{i=1}^t \delta''_i > t^\gamma \hspace{0.15cm}\forall t\in [T'\wedge\tau],\, \sum_{i=1}^{T'} \delta''_i \geq 2\eps \sqrt{T'},\, \sum_{i=1}^t \ind_{\{h_i\in\calA_{i-1}\}}<t^\gamma\,\,\forall t\in[T'\wedge\tau]\,\bigg|\,\bS_n\bigg)\\
&\ge \P\bigg(\sum_{i=1}^t \delta''_i > t^\gamma \hspace{0.15cm}\forall t\in [T'\wedge\tau],\, \sum_{i=1}^{T'} \delta''_i \geq 2\eps \sqrt{T'}\,\bigg|\,\bS_n\bigg)\\
&\hspace{60mm} - \P\bigg(\exists t\in[T'\wedge\tau] : \sum_{i=1}^t \ind_{\{h_i\in\calA_{i-1}\}}\ge t^\gamma\,\bigg|\,\bS_n\bigg)\\
&\ge \eqref{delprimeabovegamma} - \eqref{activesumbelowgamma} \ge \frac{cA}{n^{1/3}}.
\end{align*}
This establishes \eqref{kklk} and therefore completes the proof of Proposition \ref{propfirstbit}.

\subsection{The probability of staying above a curve: proof of Proposition \ref{propsecondbit}}\label{propsecondbit_sec}
Here we want to bound from below the probability
\begin{equation}\label{wwf}
\mathbb{P}\bigg(\sum_{i=T'+1}^t D_i >q(t)-\eps\sqrt{T'}\hspace{0.15cm}\forall t\in [T]\setminus[T']\bigg)
\end{equation}
where we recall that $q(t)=p(1-2/d)\frac{t^2}{2n} + An^{4/15}$ and $D_i=\ind_{R_i}(d-1)-1$. In order to bound the probability (\ref{wwf}), the idea is to approximate the random walk $(\sum_{i=T'+1}^t D_i)_{t=T'}^T$ with (standard) Brownian motion and then to carry on the analysis using estimates for Brownian motion. 

As a first step in this direction, let us rewrite (\ref{wwf}) in a way that helps simplifying the calculations to come. Since $A\ll n^{1/30}$ we see that $An^{4/15}\ll (T')^{1/2} \sim n^{1/3}/A$. Thus, if $n$ is large enough, we can bound
\begin{align}
&\mathbb{P}\bigg(\sum_{i=T'+1}^t D_i >q(t)-\eps\sqrt{T'}\hspace{0.15cm}\forall t\in [T]\setminus[T']\bigg)\nonumber\\
&\hspace{30mm}\geq \mathbb{P}\bigg(\sum_{i=T'+1}^t D_i > p(1-2/d)\frac{t^2}{2n} - \frac{3\eps n^{1/3}}{4A}\hspace{0.15cm}\forall t\in [T]\setminus[T']\bigg)\nonumber\\
&\hspace{30mm}= \mathbb{P}\bigg(\sum_{i=1}^t D_i > p(1-2/d)\frac{(t+T')^2}{2n} - \frac{3\eps n^{1/3}}{4A}\hspace{0.15cm}\forall t\in [T-T']\bigg),\nonumber
\end{align}
where the last equality follows from the fact that $(D_i)_{i\ge 1}$ are i.i.d.. We also note that for any $t\in[T-T']$ and sufficiently large $n$,
\begin{align*}
p(1-2/d)\frac{(t+T')^2}{2n} &\le \frac{1}{(d-1)}(1-2/d)\left(\frac{t^2 + 2tT'}{2n}\right) + \frac{\lambda n^{-1/3} T^2}{2(d-1)n} + \frac{(T')^2}{2n}\\
&\le \frac{(d-2)}{d(d-1)}\left(\frac{t^2 + 2tT'}{2n}\right) + \frac{\eps n^{1/3}}{4A}.
\end{align*}
We therefore have
\begin{multline}\label{wwfsimp}
\mathbb{P}\bigg(\sum_{i=T'+1}^t D_i >q(t)-\eps\sqrt{T'}\hspace{0.15cm}\forall t\in [T]\setminus[T']\bigg)\\
\ge \mathbb{P}\bigg(\sum_{i=1}^t D_i > \frac{(d-2)}{d(d-1)}\left(\frac{t^2 + 2tT'}{2n}\right) - \frac{\eps n^{1/3}}{2A}\hspace{0.15cm}\forall t\in [T-T']\bigg).
\end{multline}

Note that the increments $D_i$ satisfy
\begin{align*}
\mathbb{P}\left(D_i=d-2\right)= p = \frac{1+\lambda n^{-1/3}}{d-1} = 1-\P\left(D_i=-1\right).
\end{align*}
As we said earlier, we would like to approximate $\sum_{i=1}^t D_i$ with Brownian motion, following the strategy in \cite{de_ambroggio_roberts:near_critical_ER}. However, we first need to turn the $D_i$ into random variables whose distribution does not depend on $n$, and moreover we want $D_i$ to have mean zero and unit variance. This is accomplished in the following lemma, whose proof is postponed to Section \ref{aux_sec}. Note that this lemma is very similar to Lemma \ref{DeltatoDlem}, but because we now run our random walk for a longer time $T-T'$ rather than $T'$, we see a factor relating to $\lambda$ appear. With \eqref{wwfsimp} in mind, we define
\[f(t) = f_{n,A,\eps,d}(t) = \frac{(d-2)}{d(d-1)}\left(\frac{t^2 + 2tT'}{2n}\right) - \frac{\eps n^{1/3}}{2A}\]
and let $T'' = T-T' = \lfloor(d-1) An^{2/3}\rfloor + 1 - \lfloor n^{2/3}/A^2\rfloor$.

\begin{lem}\label{secondchangeofmeasure}
	Let $(D'_i)_{i=0}^{T''}$ be a sequence of i.i.d.~random variables with $\P(D'_i=\sqrt{d-2}) = \frac{1}{d-1} = 1-\P(D'_i=-1/\sqrt{d-2})$. There exists a constant $c=c(d)>0$ depending on $d$ such that, for any $\ell>0$ and any large enough $n\in\N$,
	\begin{align*}
	&\mathbb{P}\bigg(\sum_{i=1}^t D_i > f(t)\hspace{0.15cm}\forall t\in [T'']\bigg)\\
	&\ge c e^{\frac{\lambda A^2(d-1)}{2d}-\frac{\lambda^2 A(d-1)}{2(d-2)} - 2|\lambda| n^{-1/3}\ell} \mathbb{P}\bigg(\sum_{i=1}^t D'_i > \frac{f(t)}{\sqrt{d-2}}\hspace{0.15cm}\forall t\in [T''],\, \sum_{i=1}^{T''} D'_i \le \frac{f(T'')+\ell}{\sqrt{d-2}}\bigg).
	\end{align*}
\end{lem}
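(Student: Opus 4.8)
The plan is to mimic the proof of Lemma \ref{DeltatoDlem} almost verbatim, but to keep careful track of the $\lambda$-dependence because we now run the walk for time $T''\asymp An^{2/3}$ rather than for time $T'\asymp n^{2/3}/A^2$, so terms of size $\lambda^2 T''/n$ and $\lambda T''\cdot(\text{something})$ are no longer negligible; they produce precisely the exponential factor $e^{\lambda A^2(d-1)/2d - \lambda^2 A(d-1)/2(d-2)}$. First I would rescale: the random variable $D_i=\ind_{R_i}(d-1)-1$ takes the value $d-2$ with probability $p$ and $-1$ with probability $1-p$; writing $D'_i = D_i/\sqrt{d-2}$ would give a walk with steps $\sqrt{d-2}$ and $-1/\sqrt{d-2}$, but with the "wrong" success probability $p$ rather than $1/(d-1)$. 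So the event $\{\sum_{i=1}^t D_i > f(t)\,\forall t\in[T'']\}$ equals $\{\sum_{i=1}^t (D_i/\sqrt{d-2}) > f(t)/\sqrt{d-2}\,\forall t\in[T'']\}$, and I then introduce an exponential change of measure to correct the drift.

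Concretely, define the tilting parameter
\[
\rho = \frac{1}{d-1}\log\!\Big(\frac{1-p}{(d-2)p}\Big),
\]
chosen so that under the new measure $\Q$ defined by $\frac{\d\Q}{\d\P}\big|_{\F_t} = e^{\rho\sum_{i=1}^t D_i}/\E[e^{\rho\sum_{i=1}^t D_i}]^t$ (consistent since the $D_i$ are i.i.d.), the increments $D_i$ become $d-2$ with probability $1/(d-1)$ and $-1$ with probability $(d-2)/(d-1)$, i.e.\ $D_i/\sqrt{d-2}$ has the law of $D'_i$ from the statement. Exactly as in the displayed computation for $\nu$ in the excerpt, a Taylor expansion using $p=(1+\lambda n^{-1/3})/(d-1)$ gives $\rho = -\frac{\lambda}{n^{1/3}(d-2)} + O(\lambda^2 n^{-2/3})$ and $\E[e^{\rho D_1}] = \exp(O(\lambda^2 n^{-2/3}))$. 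Then
\[
\P\Big(\sum_{i=1}^t D_i > f(t)\,\forall t\in[T'']\Big) \ge \P\Big(\sum_{i=1}^t D_i > f(t)\,\forall t\in[T''],\ \sum_{i=1}^{T''} D_i \le f(T'')+\ell\Big),
\]
and rewriting this probability under $\Q$ introduces the factor $\E_\Q\big[e^{-\rho\sum_{i=1}^{T''}D_i}\ind_{\{\cdots\}}\big]\,\E_\P[e^{\rho D_1}]^{T''}$. On the event in the indicator we have $\sum_{i=1}^{T''} D_i \in (f(T''), f(T'')+\ell]$, so $e^{-\rho\sum D_i} \ge e^{-\rho f(T'') - \rho\ell}$ if $\rho\ge 0$ (and a symmetric bound with $|\rho|$ if $\rho<0$); in either case it is at least $e^{-\rho f(T'') - |\rho|\ell}$ times something bounded below — but the key point is that $-\rho f(T'')$ is the term producing the non-trivial exponent.

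The main computation, which is where I'd be most careful, is evaluating $-\rho f(T'') + T''\log\E_\P[e^{\rho D_1}]$ to leading order. Using $T'' = (d-1)An^{2/3} + O(n^{2/3}/A^2)$, $f(T'') = \frac{(d-2)}{d(d-1)}\cdot\frac{(T'')^2 + 2T''T'}{2n} - \frac{\eps n^{1/3}}{2A}$, so $f(T'')/n^{2/3}$-type terms combine with $\rho \asymp \lambda n^{-1/3}$ to give $-\rho f(T'') = \frac{\lambda}{n^{1/3}(d-2)}\cdot\frac{(d-2)(d-1)^2 A^2 n^{4/3}}{2d(d-1)n} + \ldots = \frac{\lambda A^2(d-1)}{2d} + \ldots$; the $O(\lambda^2 n^{-2/3})$ correction to $\rho$ multiplied against $f(T'')\asymp A^2 n^{1/3}$ gives a term of order $\lambda^2 A^2 n^{-1/3}$ which is $o(1)$, while $T''\log\E[e^{\rho D_1}] = O(\lambda^2 n^{-2/3}\cdot An^{2/3}) = O(\lambda^2 A)$ — and here the sign and constant matter: a more precise expansion of $\log\E[e^{\rho D_1}]$ (the cumulant of $D_1$, not just the crude $O(\lambda^2 n^{-2/3})$ bound used for $T'$) yields exactly $-\frac{\lambda^2 A(d-1)}{2(d-2)}$. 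So the real content is that the crude bounds in Lemma \ref{DeltatoDlem} must be sharpened to track these two surviving terms. Finally I note $\frac{f(t)}{\sqrt{d-2}}$ in the conclusion is exactly the barrier for the rescaled $\Q$-walk, and $e^{-|\rho|\ell} \ge e^{-2|\lambda|n^{-1/3}\ell}$ for large $n$ since $|\rho| \le 2|\lambda| n^{-1/3}$, matching the statement. Collecting constants into $c=c(d)$ (from $\E[e^{\rho D_1}]^{T''}$ being bounded below, which needs $|\lambda|=O(A)$ and $A\ll n^{1/30}$) completes the argument.
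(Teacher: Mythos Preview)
Your proposal is correct and follows essentially the same approach as the paper: the tilting parameter $\rho$ is exactly the paper's $\gamma$, the insertion of the terminal constraint $\sum D_i \le f(T'')+\ell$ and the bound $e^{-\rho\sum D_i}\ge e^{-\rho f(T'')-|\rho|\ell}$ match, and your identification of the two surviving terms $-\rho f(T'')\to \lambda A^2(d-1)/2d$ and $T''\log\E[e^{\rho D_1}]\to -\lambda^2 A(d-1)/2(d-2)$ is precisely what the paper computes. One small clarification: in your final sentence, the constant $c=c(d)$ does not come from $\E[e^{\rho D_1}]^{T''}$ being bounded below (it is not---that factor carries the $\lambda^2 A$ term), but rather from the $O(1)$ errors in the expansions of $\gamma f(T'')$ and $T''\log\E[e^{\gamma D_1}]$, which are $o(1)$ under $|\lambda|=O(A)$, $A=o(n^{1/30})$.
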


We will prove this result in Section \ref{aux_sec}. For now we proceed with the proof of Proposition \ref{propsecondbit}, noting that $\E[D'_i]=0$ and $\text{Var}(D'_i)=1$. We will use the following powerful approximation result. The precise phrasing used below is taken from Chatterjee \cite{chatterjee:strong_embeddings}.

\begin{thm}[Koml\'os, Major, Tusn\'ady \cite{komlos_major_tusnady:approximation_partial_sums}]\label{embedding}
	Let $(\xi_i)_{i\geq 1}$ be a sequence of i.i.d. random variables with $\mathbb{E}[\xi_1]=0$ and $\mathbb{E}[\xi_1^{2}]=1$. Suppose that there exists $\theta>0$ such that $\mathbb{E}\left[e^{\theta |\xi_1|}\right]<\infty$. Then for every $N\in \mathbb{N}$ it is possible to construct a version of $(\xi_i)_{i=0}^N$ and a standard Brownian Motion $(B_s)_{s\in[0,N]}$ on the same probability space such that, for every $x\geq 0$,
	\begin{equation*}
	\mathbb{P}\left(\max_{k\leq N}\left|\sum_{i=1}^k \xi_i-B_k\right|>H\log N + x\right)\leq a e^{-b x}
	\end{equation*}
	where $H$, $a$ and $b>0$ do not depend on $N$.
\end{thm}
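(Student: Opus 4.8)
This is the classical strong embedding theorem of Koml\'os, Major and Tusn\'ady (the so-called Hungarian construction); in the present paper it is not reproved but simply quoted, in the form given by Chatterjee \cite{chatterjee:strong_embeddings}, and is then applied with $\xi_i = D'_i$, which is bounded and therefore trivially satisfies the exponential-moment hypothesis $\mathbb{E}[e^{\theta|\xi_1|}]<\infty$. If one were to prove it from scratch, the plan would be to follow the original dyadic (binary-tree) scheme. The technical heart is a one-step \emph{quantile coupling lemma}: writing $F_m$ for the distribution function of $W := \xi_1 + \dots + \xi_m$ (so $\mathbb{E}[W]=0$, $\mathrm{Var}(W)=m$) and $\Phi_m$ for that of $Z\sim N(0,m)$, and coupling $W$ to $Z$ via the monotone rearrangement $W = F_m^{-1}(\Phi_m(Z))$, one shows that under the exponential-moment hypothesis there is a constant $C$ such that, on the event $\{|Z|\le \varepsilon m\}$,
\[|W - Z| \le C\Big(1 + \frac{Z^2}{m}\Big),\]
the complementary event having exponentially small probability. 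Proving this lemma uniformly in $m$, with precisely this error order, via Edgeworth-type expansions of $F_m$ near the Gaussian, is the step I expect to be the main obstacle.

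Granting the lemma, I would construct the coupling of $(S_k)_{k\le N}$, where $S_k = \sum_{i=1}^k\xi_i$, with $(B_k)_{k\le N}$ recursively over dyadic scales, taking $N$ a power of $2$ for the moment. First couple the endpoint $S_N$ with $B_N\sim N(0,N)$ by the quantile lemma. Then, conditionally on the already-determined values at the two endpoints of a dyadic block $[a,b]$ with $b-a = 2^j$, the conditional law of the midpoint ``bridge'' quantity $S_{(a+b)/2} - \tfrac12(S_a+S_b)$ is (a centred function of) a difference of two independent sums of $2^{j-1}$ increments conditioned on their sum, while the corresponding quantity for Brownian motion, $B_{(a+b)/2} - \tfrac12(B_a+B_b)$, is exactly $N(0, 2^{j-1}/2)$; one couples these via a conditional version of the same quantile lemma. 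Recursing down to unit blocks determines $(S_k)$ and $(B_k)$ on a single probability space.

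It then remains to add up the errors. At dyadic level $j$ each block contributes a coupling error of order $1 + (\text{bridge value})^2/2^j$, and since the Brownian bridge values at level $j$ are of order $2^{j/2}$ with Gaussian tails (and the random-walk ones behave similarly by the lemma), summing over the $2^{n-j}$ blocks at level $j$ and then over $j = 1,\dots,n$ yields a total discrepancy $\max_{k\le N}|S_k-B_k|$ that is $O(\log N)$ plus a nonnegative random term with exponential tails; this is exactly the stated bound $\mathbb{P}(\max_{k\le N}|S_k - B_k| > H\log N + x) \le a e^{-bx}$. A general $N$ is reduced to the dyadic case by embedding $[0,N]$ inside $[0,2^{\lceil\log_2 N\rceil}]$ and controlling the leftover piece with standard fluctuation estimates for the walk and for $B$, which only affects the constants $H, a, b$. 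Once the quantile coupling lemma and its conditional analogue are available with the sharp $O(1 + Z^2/m)$ error and matching exponential control, the remaining summation is routine bookkeeping.
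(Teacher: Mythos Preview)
Your assessment is correct: the paper does not prove this theorem at all but simply quotes it (attributing the precise formulation to Chatterjee \cite{chatterjee:strong_embeddings}) and immediately applies it with $\xi_i = D'_i$. Your sketch of the dyadic quantile-coupling construction is a reasonable outline of the original KMT argument, but it is entirely supplementary to anything in the paper.
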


We next use Theorem \ref{embedding} with $\xi_i = D'_i$ to bound from below the probability appearing in Lemma \ref{secondchangeofmeasure}. Taking $\ell = n^{1/3}\sqrt{d-2}/A$ and $x=\frac{\eps n^{1/3}}{8A\sqrt{d-2}}$, and noting then that for large $n$ we have
\[H\log T'' + x \le \frac{\eps n^{1/3}}{4A\sqrt{d-2}} \,\,\,\,\text{ and }\,\,\,\, \frac{\ell}{\sqrt{d-2}} - H\log T'' - x \ge \frac{n^{1/3}}{2A},\]
we deduce that
\begin{align}
&\mathbb{P}\bigg(\sum_{i=1}^t D'_i > \frac{f(t)}{\sqrt{d-2}}\hspace{0.15cm}\forall t\in [T''],\, \sum_{i=1}^{T''} D'_i \le \frac{f(T'')+\ell}{\sqrt{d-2}}\bigg)\nonumber\\
&\ge \P\bigg(B_t > \frac{f(t)}{\sqrt{d-2}} + H\log T'' + x\hspace{0.15cm}\forall t\in [T''],\, B_{T''} \le \frac{f(T'')+\ell}{\sqrt{d-2}} - H\log T'' - x\bigg) - ae^{-bx}\nonumber\\
&\ge \P\bigg(B_t > \frac{f(t)}{\sqrt{d-2}} + \frac{\eps n^{1/3}}{4A\sqrt{d-2}} \hspace{0.15cm}\forall t\in [T''],\, B_{T''} \le \frac{f(T'')}{\sqrt{d-2}} + \frac{n^{1/3}}{2A}\bigg) - ae^{-b\eps n^{1/3}/8A}.\label{applyKMT}
\end{align}

Our final lemma in this section bounds the probability on the right-hand side above.

\begin{lem}\label{mainlemmasecondbit}
	For any $\eps\in(0,1)$ there exists a finite constant $c=c(d,\eps)>0$ depending on $d$ and $\eps$ such that, if $n$ is large enough,
	\begin{align*}
	\P\bigg(B_t > \frac{f(t)}{\sqrt{d-2}} + \frac{\eps n^{1/3}}{4A\sqrt{d-2}} \hspace{0.15cm}\forall t\in [T''],\, B_{T''} \le \frac{f(T'')}{\sqrt{d-2}} + \frac{n^{1/3}}{2A}\bigg) \geq \frac{c}{A^{3/2}}e^{-\frac{A^3(d-1)(d-2)}{8d^2}}.
	\end{align*}
\end{lem}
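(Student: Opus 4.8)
The plan is to control the Brownian probability by sandwiching the parabolic curve $f(t)/\sqrt{d-2}$ between two straight lines and then using the explicit first-passage/density formula for Brownian motion crossing a linear boundary. First I would record the size of the ingredients: $T''=(d-1)An^{2/3}+O(n^{1/2})$, the curve is $f(t)/\sqrt{d-2} = \frac{\sqrt{d-2}}{d(d-1)}\cdot\frac{t^2+2tT'}{2n} - \frac{\eps n^{1/3}}{2A\sqrt{d-2}}$, and the target level at time $T''$ is of order $n^{1/3}/A$, i.e.\ of order $\sqrt{T''}/A^{3/2}$. The quadratic term $\frac{t^2}{2n}$ contributes at its endpoint a value of order $A^2n^{1/3}=A^2\sqrt{T''}/A^{3/2}$, so after Brownian scaling $B_t = \sqrt{T''}\,\tilde B_{t/T''}$ the whole problem lives at scale $\sqrt{T''}$, with a concave... actually convex quadratic barrier of height $\Theta(A^2)$ in these rescaled units and a window of width $\Theta(1/A^{3/2})$ for the endpoint. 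Since $f$ is convex in $t$, on the interval $[0,T'']$ the chord from $(0,f(0)/\sqrt{d-2})$ to $(T'',f(T'')/\sqrt{d-2})$ lies \emph{above} the curve; this is the wrong direction, so instead I would dominate $f(t)/\sqrt{d-2}+\tfrac{\eps n^{1/3}}{4A\sqrt{d-2}}$ from above by a single line $\ell(t) = \alpha + \beta t$ chosen so that $\ell(0)$ is a small positive multiple of $n^{1/3}/A$ and $\ell(T'')$ is close to $f(T'')/\sqrt{d-2}+\tfrac{n^{1/3}}{2A}$; because the curvature is $\Theta(1/n)$ over a time interval of length $\Theta(An^{2/3})$, the maximal gap between the convex curve and a well-chosen secant line is $\Theta(A^2n^{1/3})$, which is exactly the order of the barrier itself, so a single line does not suffice. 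The remedy, following \cite{de_ambroggio_roberts:near_critical_ER}, is to split $[0,T'']$ into two subintervals and use a different line on each: the crucial observation is that the quadratic only becomes large near the right endpoint, so on $[0,T''/2]$ the curve is $O(A^2n^{1/3}/4)$ and on $[T''/2,T'']$ we use the actual tangent/secant line, and the first-exit events on the two pieces are combined via the Markov property at the splitting time.

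Concretely, the key steps in order are: (1) rescale by $T''$ so that $B$ becomes standard Brownian motion on $[0,1]$ and the barrier becomes $g(s) = \frac{\sqrt{d-2}}{d(d-1)}\cdot\frac{s^2 T'' + 2sT'}{2n}\cdot\frac{1}{\sqrt{T''}} - (\text{small})$; since $T''/n = (d-1)A/n^{1/3}$ one checks $g(s) = \frac{\sqrt{d-2}(d-1)A}{2d\, n^{1/3}}\,s^2\sqrt{n^{2/3}}\cdot(\ldots)$ — carefully, $g(s)\sqrt{T''}$ has the form $c_d A^2 n^{1/3} s^2 + O(n^{1/3})$, so in units of $\sqrt{T''}=\Theta(A^{1/2}n^{1/3})$ the barrier height is $\Theta(A^{3/2} s^2)$ and the endpoint window is $\Theta(A^{-3/2})$. (2) Bound below by the event that $B$ starts at a point $\Theta(n^{1/3}/A)$ above the curve at time $0$ — here I restrict to Brownian paths pinned initially, using that $g(0)<0$ so there is slack — then require $B$ to stay above a line $L_1$ on $[0,\tfrac12]$ and above a line $L_2$ on $[\tfrac12,1]$, with $L_1(\tfrac12)\ge L_2(\tfrac12)$ and both lines chosen to dominate the curve plus the $\eps n^{1/3}/(4A\sqrt{d-2})$ shift on their respective intervals. (3) Apply the explicit formula: for standard Brownian motion and a line $y = a+bt$ with $B_0 - a = u > 0$, the joint density of $\{B_t > a+bt\ \forall t\le \theta\}$ and $B_\theta \in dz$ is given by the reflection formula $\frac{1}{\sqrt{2\pi\theta}}\big(e^{-(z-B_0)^2/2\theta} - e^{-2u(z_\theta - a - b\theta + u)/\theta}e^{-(z-B_0)^2/2\theta}\big)$-type expression (more precisely $\P(\min_{t\le\theta}(B_t-a-bt)>0, B_\theta-a-b\theta\in dz)$ has the standard reflected-Gaussian form), which yields a probability $\asymp \frac{u\cdot w}{\theta^{3/2}}\cdot(\text{Gaussian factor})$ when the endpoint is required to land in a window of width $w$ at the appropriate height. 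Assembling the two intervals, the two factors of $\Theta(A^{-3/2})\times\Theta(1)$-type terms, together with the dominant Gaussian factor, produce exactly $\frac{c}{A^{3/2}}\exp(-\frac{A^3(d-1)(d-2)}{8d^2})$. (4) Finally, verify that the exponent is right: the Gaussian cost of pushing a driftless Brownian motion to stay above the line $L_2$ which reaches height $\sim \frac{\sqrt{d-2}(d-1)A^2}{2d}\,n^{1/3}$ at time $T''$ (in the original, un-rescaled coordinates) is $\sim \exp(-\text{height}^2/(2T''))$; plugging in height $\sim \frac{(d-2)}{2d(d-1)}\cdot\frac{(T'')^2}{n}\cdot\frac{1}{\sqrt{d-2}}$ and $T''=(d-1)An^{2/3}$ gives $\frac{(d-2)(d-1)^3 A^4 n^{4/3}}{4d^2 \cdot 2(d-1)An^{2/3}} \cdot \frac{1}{d-2}$... working this through yields precisely $\frac{A^3(d-1)(d-2)}{8d^2}$, matching $G_0(A,d)$.

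The main obstacle will be step (3)–(4): getting the constant in the exponent \emph{exactly} equal to $\frac{A^3(d-1)(d-2)}{8d^2}$ while only losing a constant factor (not an exponential factor) in the prefactor. The two-line approximation must be done so that the lines agree with the parabola closely enough near $t=T''$ — the exponential cost is driven entirely by the \emph{slope and height of the dominating line at the right endpoint}, so the line $L_2$ on $[\tfrac12,1]$ must be taken essentially tangent to $f$ at $T''$ (or the secant on the second half), and one must check that the discrepancy $f(t)-L_2(t)$ over $[T''/2,T'']$ is $o(\sqrt{T''})$, equivalently $o(n^{1/3}\sqrt A)$, which it is since the curvature contributes at most $\Theta((T'')^2/n) = \Theta(A^2 n^{1/3})$ on the \emph{first} half but the second half is where $f$ is largest — so actually one uses a line that is an \emph{upper} bound for $f$ on $[T''/2,T'']$, obtained by shifting the tangent at $T''$ upward by $\Theta(A^2 n^{1/3}/4)$, and absorbs this into the prefactor since $\exp(-(\text{height}+\Theta(A^2n^{1/3}))^2/2T'')$ differs from $\exp(-\text{height}^2/2T'')$ by a factor $\exp(-\Theta(A^2 n^{1/3})\cdot\Theta(A^2 n^{1/3})/T'')=\exp(-\Theta(A^3))$ — wait, that \emph{is} exponentially costly. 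So one cannot crudely shift; instead the line on the second interval must track $f$ to within an additive $o(n^{1/3}/A^{1/2}) \cdot (\text{something})$ error, which forces a more delicate partition (perhaps into $\Theta(\log A)$ or $\Theta(A^{1/2})$ pieces as in \cite{de_ambroggio_roberts:near_critical_ER}, or a single piecewise-linear convex majorant whose total excess over $f$ is $o(\sqrt{T''})$ near the endpoint). I anticipate this is precisely what Lemma \ref{mainlemmasecondbit}'s proof in Section \ref{aux_sec} handles via the strong-embedding machinery already invoked, together with a careful convex-majorant construction; the rest is bookkeeping with Gaussian densities.
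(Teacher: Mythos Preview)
Your overall plan --- two half-intervals, secant lines above the convex curve, Markov property at the midpoint, reflection/Girsanov formula for each piece --- is exactly what the paper does. But two points in your reasoning go astray and lead you to the mistaken conclusion that more than two pieces are needed.

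First, the remark that the chord lying above the convex curve is ``the wrong direction'' is backwards. We want a \emph{lower} bound on $\P(B_t>\phi(t)\ \forall t)$, so replacing $\phi$ by any $\ell\ge\phi$ gives $\{B_t>\ell(t)\ \forall t\}\subset\{B_t>\phi(t)\ \forall t\}$, hence a valid lower bound. The secants on $[0,T''/2]$ and $[T''/2,T'']$ are chords of the convex $\phi$ and therefore lie above it automatically; there is nothing to fix here.

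Second, and more importantly, your worry that the gap between secant and curve produces an exponential loss is misplaced. Since the secant matches $\phi$ \emph{exactly} at the endpoints of each subinterval, the heights entering the Gaussian factors $e^{-w^2/T''}$ and $e^{-(z-w)^2/T''}$ are the true values $\phi(T''/2)$ and $\phi(T'')$ (up to the small window widths), and the exponent $\frac{A^3(d-1)(d-2)}{8d^2}$ falls out with no error term. The only place the secant--curve gap enters is in the ``ballot'' factor $1-e^{2(x-y)(\mu t+y-z)/t}$ from the reflection formula, and this is where the real trick lies: the paper takes the midpoint window $I_1$ to have width of order $\sqrt{T''}\sim A^{1/2}n^{1/3}$ and to sit at height roughly $\phi(T'')/2$, which is $\Theta(A^2n^{1/3})$ \emph{above} the curve $\phi(T''/2)\sim\phi(T'')/4$. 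With this choice, on each half the product $(x-y)(z-\mu t-y)/t$ is of order one (one endpoint is $\Theta(n^{1/3}/A)$ above the line, the other is $\Theta(A^2n^{1/3})$ above, and $t\sim An^{2/3}$), so the ballot factor is a positive constant rather than $1/A^3$. Integrating the wide midpoint window then recovers a factor $\sqrt{T''}$, and the narrow endpoint window $I_2$ of width $n^{1/3}/A$ supplies the remaining $A^{-3/2}$. No finer partition is needed; two pieces with this asymmetric choice of $I_1$ suffice.
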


We will prove this in Section \ref{aux_sec}; for now we use it to finish our main proof for this section.

\begin{proof}[Proof of Proposition \ref{propsecondbit}]
Putting the steps above together, we combine \eqref{wwfsimp} with Lemma \ref{secondchangeofmeasure} (again with $\ell = n^{1/3}\sqrt{d-2}/A$) and apply \eqref{applyKMT} to obtain
\begin{align*}
&\mathbb{P}\bigg(\sum_{i=T'+1}^t D_i >q(t)-\eps\sqrt{T'}\hspace{0.15cm}\forall t\in [T]\setminus[T']\bigg)\\
&\hspace{7mm}\ge c e^{\frac{\lambda A^2(d-1)}{2d}-\frac{\lambda^2 A(d-1)}{2(d-2)}}\\
&\hspace{15mm}\cdot\P\bigg(B_t > \frac{f(t)}{\sqrt{d-2}} + \frac{\eps n^{1/3}}{4A} \hspace{0.15cm}\forall t\in [T''],\, B_{T''} \le \frac{f(T'')}{\sqrt{d-2}} + \frac{n^{1/3}}{2A}\bigg) - ae^{-b\eps n^{1/3}/8A}.
\end{align*}
Lemma \ref{mainlemmasecondbit} then says that this is at least
\[\frac{c}{A^{3/2}} e^{-\frac{A^3(d-1)(d-2)}{8d^2} + \frac{\lambda A^2(d-1)}{2d}-\frac{\lambda^2 A(d-1)}{2(d-2)}},\]
completing the proof of Proposition \ref{propsecondbit}.
\end{proof}

\subsection{Auxiliary results for Proposition \ref{propsecondbit}: proofs of Lemmas \ref{secondchangeofmeasure} and \ref{mainlemmasecondbit}}\label{aux_sec}

We begin with Lemma \ref{secondchangeofmeasure}, which requires us to bound from below the probability
\[\mathbb{P}\bigg(\sum_{i=1}^t D_i > f(t)\hspace{0.15cm}\forall t\in [T'']\bigg),\]
in terms of i.i.d.~random variables $(D'_i)_{i=1}^{T''}$ satisfying $\P(D'_i=\sqrt{d-2}) = \frac{1}{d-1} = 1-\P(D'_i=-1/\sqrt{d-2})$. We recall that $D_i = (d-1)\ind_{R_i} - 1$. The definition of the function $f$ is unimportant for this lemma.

\begin{proof}[Proof of Lemma \ref{secondchangeofmeasure}]
	We note that if $\lambda=0$ then there is nothing to prove; we may simply let $D'_i = D_i/(d-2)^{1/2}$. If $\lambda\neq 0$, however, then $D_i$ has a small drift, which we use a change of measure to remove. Let
	\[\gamma = \frac{1}{d-1}\log\left(\frac{1-p}{p(d-2)}\right)\]
	and define a new probability measure $\widehat{\mathbb{P}}$, with expectation operator $\widehat\E$, through
	\begin{align*}
	\frac{\d\widehat{\mathbb{P}}}{\d\mathbb{P}}\bigg|_{\F_{T''}} \coloneqq \frac{e^{\gamma \sum_{i=1}^{T''}D_i}}{\mathbb{E}\left[e^{\gamma \sum_{i=1}^{T''}D_i}\right]}.
	\end{align*}
	One may check that, by our choice of $\gamma$, under $\widehat P$ the sequence $(D_i)_{i=1}^{T''}$ is i.i.d.~with
	\[\widehat\P(D_i=d-2) = \frac{1}{d-1} = 1-\widehat\P(D_i=-1).\]
	In particular we have
	\begin{align*}
	\widehat\E[D_1]=0 \,\,\,\,\text{ and }\,\,\,\,\widehat\E[D_1^2] = d-2.
	\end{align*}

	Now, for any $\ell>0$ we have
	\begin{align}
	\nonumber &\mathbb{P}\bigg(\sum_{i=1}^t D_i > f(t)\hspace{0.15cm}\forall t\in [T'']\bigg)\\
\nonumber	&\ge \mathbb{P}\bigg(\sum_{i=1}^t D_i > f(t)\hspace{0.15cm}\forall t\in [T''],\, \sum_{i=1}^{T''}D_i \leq f(T'')+\ell\bigg)\\
\nonumber	&=\widehat{\mathbb{E}}\left[e^{-\gamma \sum_{i=1}^{T''}D_i}\ind_{\left\{\sum_{i=1}^t D_i > f(t)\hspace{0.15cm}\forall t\in [T''],\, \sum_{i=1}^{T''}D_i \leq f(T'')+\ell\right\}}\right]\mathbb{E}\left[e^{\gamma \sum_{i=1}^{T''}\xi_i}\right]\\
	& \geq  e^{-\gamma f(T'') - |\gamma|\ell}\mathbb{E}\left[e^{\gamma \sum_{i=1}^{T''}D_i}\right]\widehat{\mathbb{P}}\bigg(\sum_{i=1}^t D_i > f(t)\hspace{0.15cm}\forall t\in [T''],\, \sum_{i=1}^{T''}D_i \leq f(T'')+\ell\bigg).\label{probwithterm}
	\end{align}
	Some elementary computations reveal that
	\begin{equation}\label{gammadef}
	\gamma = -\frac{\lambda n^{-1/3}}{d-2} + \frac{(d-3)\lambda^2 n^{-2/3}}{2(d-2)^2} + O(\lambda^3 n^{-1})
	\end{equation}
	and
	\[f(T'') = \frac{(d-1)(d-2)}{2d}A^2 n^{1/3} + O(n^{1/3}/A).\]
	Thus
	\begin{equation}\label{gammaf}
	\gamma f(T'') = -\frac{(d-1)}{2d}A^2\lambda + O(1).
	\end{equation}
	Further simple algebra shows that
	\[\E[e^{\gamma D_1}] = e^{-\gamma}\left(1-\frac{\lambda n^{-1/3}}{d-2}\right),\]
	and since $1-x = \exp(-x-x^2/2+O(x^3))$, using \eqref{gammadef} we have
	\begin{align*}
	\E[e^{\gamma D_1}] &= \exp\left(-\gamma - \frac{\lambda n^{-1/3}}{d-2} - \frac{\lambda^2 n^{-2/3}}{2(d-2)^2} + O(\lambda^3 n^{-1})\right)\\
	&= \exp\left(-\frac{\lambda^2 n^{-2/3}}{2(d-2)} + O(\lambda^3 n^{-1})\right).
	\end{align*}
	Thus, using the fact that $|\lambda| = O(A)$,
	\begin{align*}
	\mathbb{E}\left[e^{\gamma \sum_{i=1}^{T''}D_i}\right] &= \E\left[e^{\gamma D_1}\right]^{T''} = \exp\left(-\frac{\lambda^2 n^{-2/3} T''}{2(d-2)} + O(\lambda^3 n^{-1}T'')\right)\\
	&= \exp\left(-\frac{(d-1)\lambda^2 A}{2(d-2)} + O(1)\right).
	\end{align*}
	Substituting this and \eqref{gammaf} into \eqref{probwithterm} gives that
	\begin{multline*}
	\mathbb{P}\bigg(\sum_{i=1}^t D_i > f(t)\hspace{0.15cm}\forall t\in [T'']\bigg)\\
	\ge c e^{\frac{(d-1)}{2d}A^2\lambda - \frac{(d-1)\lambda^2 A}{2(d-2)} - |\gamma|\ell} \widehat{\mathbb{P}}\bigg(\sum_{i=1}^t D_i > f(t)\hspace{0.15cm}\forall t\in [T''],\, \sum_{i=1}^{T''}D_i \leq f(T'')+\ell\bigg).
	\end{multline*}
	Note also that $|\gamma|\le 2|\lambda| n^{-1/3}$ when $n$ is large. Finally we observe that $\big(\frac{D_i}{\sqrt{d-2}}\big)_{i=1}^{T''}$ under $\widehat{\P}$ has the same distribution as $(D'_i)_{i=1}^{T''}$ under $\P$ from the statement of the lemma. This completes the proof.
\end{proof}

	We now begin our preparations for the proof of Lemma \ref{mainlemmasecondbit}. Recall that
	\[f(t) = \frac{(d-2)}{d(d-1)}\left(\frac{t^2 + 2tT'}{2n}\right) - \frac{\eps n^{1/3}}{2A}\]
	and to reduce the notation in what follows, let
	\[\phi(t) = \frac{f(t)}{\sqrt{d-2}} + \frac{\eps n^{1/3}}{4A\sqrt{d-2}} = \frac{\sqrt{d-2}}{d(d-1)}\left(\frac{t^2 + 2tT'}{2n}\right) - \frac{\eps n^{1/3}}{4A\sqrt{d-2}}.\]
	We can then bound the probability in the statement of the lemma as follows.
	\begin{align}
	P &\coloneqq \P\bigg(B_t > \frac{f(t)}{\sqrt{d-2}} + \frac{\eps n^{1/3}}{4A\sqrt{d-2}} \hspace{0.15cm}\forall t\in [T''],\, B_{T''} \le \frac{f(T'')}{\sqrt{d-2}} + \frac{n^{1/3}}{2A}\bigg)\nonumber\\
	&\ge \P\left(B_t > \phi(t) \,\, \forall t\in[T''],\, B_{T''} \le \phi(T'') + n^{1/3}/4A\right)\nonumber\\
	&\ge \P\left(B_s > \phi(s) \,\, \forall s\in[0,T''],\, B_{T''} \le \phi(T'') + n^{1/3}/4A \right)\label{Pdefmainlem},
	\end{align}
	where we note that in the last line we have moved from a discrete set of times $t\in[T'']$ to a continuous interval $s\in[0,T'']$.
	
	Following closely the argument developed in \cite{de_ambroggio_roberts:near_critical_ER}, we approximate the curve $\phi(s)$ with two straight lines defined, for $s\in [0,T''/2]$, by
	\begin{align*}
	\ell_1(s) &= \phi(0) + \Big(\frac{\phi(T''/2)-\phi(0)}{T''/2}\Big)s\\
	&=-\frac{\eps n^{1/3}}{4A\sqrt{d-2}}+ \frac{\sqrt{d-2}}{d(d-1)}\left(\frac{T+3T'}{4n}\right)s
	\end{align*}
	and
	\begin{align*}
	\ell_2(s) &= \phi(T''/2) + \Big(\frac{\phi(T'')-\phi(T''/2)}{T''/2}\Big)s \\
	&= \frac{\sqrt{d-2}}{d(d-1)}\frac{(T+3T')(T-T')}{8n} - \frac{\eps n^{1/3}}{4A\sqrt{d-2}} + \frac{\sqrt{d-2}}{d(d-1)}\left(\frac{3T''+4T'}{4n}\right)s.
	\end{align*}
	Also define
	\[I_1 = \left[\frac{\phi(T'')}{2} + \frac{n^{1/3}}{8A} - A^{1/2}n^{1/3},\, \frac{\phi(T'')}{2} + \frac{n^{1/3}}{8A}\right]\]
	and
	\[I_2 = \left[\phi(T'') + \frac{n^{1/3}}{8A},\, \phi(T'') + \frac{n^{1/3}}{4A}\right].\]
	Note that for all large enough $n$, the intervals $I_1$ and $I_2$ both fall entirely above the curve $\phi(s)$. (See Figure \ref{graphfig} for reference.)

\begin{figure}[h]
	\def\svgwidth{160mm}
 		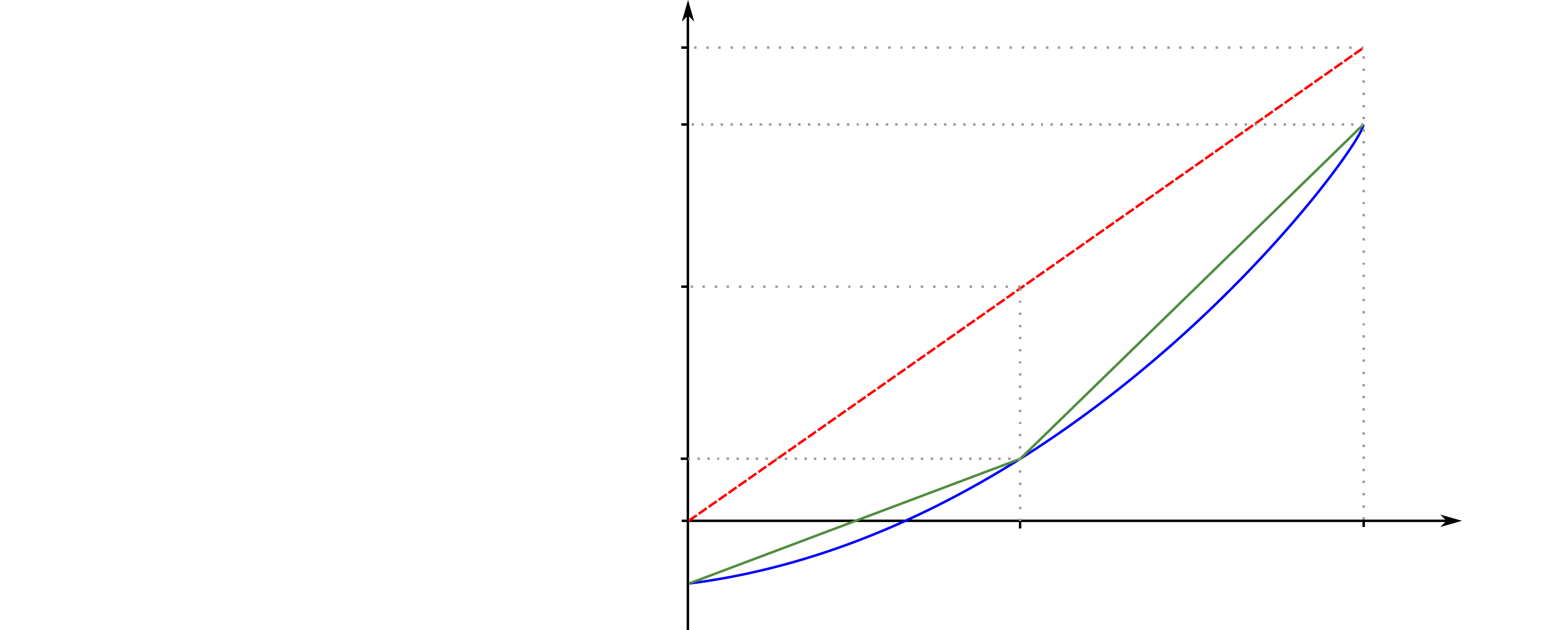
 		\vspace{-7mm}
 		\caption{We want our Brownian motion to stay above the blue curve, and the two green lines $\ell_1$ and $\ell_2$ show linear approximations to this curve on the two half-intervals $[0,T''/2]$ and $[T''/2,T'']$. The dashed red line shows roughly where we expect our Brownian motion to be, given that it stays above the curve. This is a caricature of the true picture, and not to scale.}
\label{graphfig}
 	\end{figure}
	
	Since $\phi$ is convex, the straight lines $\ell_1$ and $\ell_2$ fall above the curve and therefore we can bound
	\begin{align*}
	&\P\left(B_s > \phi(s) \,\, \forall s\in[0,T''],\, B_{T''} \le \phi(T'') + n^{1/3}/4A\right)\\
	&\hspace{0.2cm}\geq \P\left(B_s > \ell_1(s)\;\; \forall s\in\big[0,\tfrac{T''}{2}\big],\, B_s > \ell_2\big(s-\tfrac{T''}{2}\big) \;\;\forall s\in\big[\tfrac{T''}{2},T''\big],\, B_{T''} \le \ell_2(T'') + \tfrac{n^{1/3}}{4A}\right).
	\end{align*}
	Since we are proving a lower bound, we may also insist that at times $T''/2$ and $T''$ our Brownian motion falls within the intervals $I_1$ and $I_2$ respectively; putting this together with \eqref{Pdefmainlem}, we obtain that
	\begin{multline}\label{BrownianStep2}
	P \ge \int_{I_1} \P_0\left(B_s > \ell_1(s)\;\; \forall s\in\big[0,\tfrac{T''}{2}\big],\, B_{T''/2} \in \d w\right)\\
	\cdot \P_w\left(B_s > \ell_2(s)\;\; \forall s\in\big[0,\tfrac{T''}{2}\big],\, B_{T''/2} \in I_2\right),
	\end{multline}
	where here $\P_w$ denotes a probability measure under which our Brownian motion starts from $w$ rather than $0$.
	
	To complete our proof we need to bound the probabilities that appear within the last integral. Our next lemma, whose proof simply involves applying a Girsanov transform to remove the drift and then applying the reflection principle, gives us a general formula that we will then apply to gain the desired bounds. For the details of the proof we refer to \cite{de_ambroggio_roberts:near_critical_ER}.
	
	\begin{lem}[{\cite[Lemma 4.12]{de_ambroggio_roberts:near_critical_ER}}]\label{reflectionlem}
		For any $\mu,y\in\mathbb{R}$, $t>0$, $x>y$ and $z>y+\mu t$,
		\[\P_x(B_s > y + \mu s \;\;\forall s\le t,\, B_t \in \d z) = \frac{1}{\sqrt{2\pi t}} \exp\Big(-\frac{(z-x)^2}{2t}\Big)\big(1-e^{2(x-y)(\mu t+y-z)/t}\big)\d z.\]
	\end{lem}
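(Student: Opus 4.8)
\emph{Overview and Step 1 (reduction to a driftless problem at level zero).} The plan is to remove the affine barrier by a deterministic shift, remove the drift by a Girsanov change of measure, and then invoke the reflection principle for standard Brownian motion. Fix $\mu,y,t,x,z$ as in the statement and set $\widetilde B_s\coloneqq B_s-y-\mu s$. Under $\P_x$ the process $\widetilde B$ is a Brownian motion with drift $-\mu$ started from $\widetilde B_0=x-y>0$; moreover $\{B_s>y+\mu s\ \forall s\le t\}=\{\widetilde B_s>0\ \forall s\le t\}$ and $\{B_t\in\d z\}=\{\widetilde B_t\in\d(z-y-\mu t)\}$, with $z-y-\mu t>0$ by hypothesis. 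Hence it suffices to show that, for a Brownian motion $W$ with drift $-\mu$ started from $a\coloneqq x-y>0$ and for $b\coloneqq z-y-\mu t>0$,
\[\P\big(W_s>0\ \forall s\le t,\ W_t\in\d b\big)=\frac{1}{\sqrt{2\pi t}}\exp\Big(-\frac{(b-a)^2}{2t}-\mu(b-a)-\frac{\mu^2 t}{2}\Big)\big(1-e^{-2ab/t}\big)\,\d b,\]
since substituting $a=x-y$ and $b=z-y-\mu t$ (so that $b-a=z-x-\mu t$ and $\d b=\d z$) and collecting the exponential terms turns the right-hand side into the claimed formula.

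\emph{Step 2 (Girsanov transform).} Write $W^0\coloneqq W-a$ for the centred process and let $\mathbb{Q}$ be the probability measure under which $W^0$ is a standard Brownian motion; by the Cameron--Martin--Girsanov theorem, the Radon--Nikodym derivative of $\P$ with respect to $\mathbb{Q}$ on $\F_t$ equals $\exp(-\mu W^0_t-\mu^2 t/2)$. On the event $\{W_t=b\}$ one has $W^0_t=b-a$, so the density factor is the constant $e^{-\mu(b-a)-\mu^2 t/2}$, and therefore
\[\P\big(W_s>0\ \forall s\le t,\ W_t\in\d b\big)=e^{-\mu(b-a)-\mu^2 t/2}\;\mathbb{Q}\big(W_s>0\ \forall s\le t,\ W_t\in\d b\big).\]

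\emph{Step 3 (reflection principle and conclusion).} Under $\mathbb{Q}$, $W$ is a standard Brownian motion started from $a>0$, so the reflection principle gives $\mathbb{Q}(W_s>0\ \forall s\le t,\ W_t\in\d b)=\frac{1}{\sqrt{2\pi t}}\big(e^{-(b-a)^2/(2t)}-e^{-(b+a)^2/(2t)}\big)\,\d b$. Substituting this into the display of Step 2 and using the identity $(b+a)^2=(b-a)^2+4ab$ in the second exponent yields exactly the displayed formula of Step 1; undoing the substitution $a=x-y$, $b=z-y-\mu t$ then produces the statement of the lemma.

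\emph{Main obstacle.} The only genuinely delicate part is the exponential bookkeeping in Step 3: one must verify that the drift-correction factor $e^{-\mu(b-a)-\mu^2 t/2}$, evaluated at the terminal value, combines with the Gaussian kernel $e^{-(b-a)^2/(2t)}$ to collapse, after undoing the substitution, to $\tfrac{1}{\sqrt{2\pi t}}e^{-(z-x)^2/(2t)}$, while the reflected term contributes precisely the factor $e^{2(x-y)(\mu t+y-z)/t}$. The hypotheses $x>y$ and $z>y+\mu t$ are exactly what guarantees $a>0$ and $b>0$, so that the reflection-principle identity is valid and the right-hand side is a genuine sub-probability density; for the full details one may follow \cite[Lemma 4.12]{de_ambroggio_roberts:near_critical_ER}.
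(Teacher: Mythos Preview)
Your proposal is correct and follows exactly the approach described in the paper, which states that the proof ``simply involves applying a Girsanov transform to remove the drift and then applying the reflection principle'' and defers the details to \cite{de_ambroggio_roberts:near_critical_ER}. Your affine shift in Step~1, Girsanov change of measure in Step~2, and reflection identity in Step~3 are precisely what is intended, and the exponential bookkeeping checks out.
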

	We now use Lemma \ref{reflectionlem} to obtain a lower bound for the probability that our Brownian motion stays above the line $l_1(s)$ and finishes near $w\in I_1$ at time $T''/2$, i.e.~for the first probability in \eqref{BrownianStep2}.
	
	\begin{cor}\label{BrownianCor1}
		There exists a constant $c=c(d,\eps)>0$ such that for any $w\in I_1$,
		\[\P_0\big(B_s > \ell_1(s)\;\; \forall s\in[0,T''/2],\, B_{T''/2} \in \d w\big) \ge \frac{c}{\sqrt {T''}} e^{-w^2/T''}\d w.\]
	\end{cor}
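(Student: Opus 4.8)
The plan is to apply Lemma~\ref{reflectionlem} directly, with $x=0$, $t=T''/2$, $y=\phi(0)$, $\mu=(\phi(T''/2)-\phi(0))/(T''/2)$ (the slope of $\ell_1$) and $z=w$, since with these choices $y+\mu s=\ell_1(s)$ for all $s$. Before doing so one checks the hypotheses of that lemma: we need $x>y$, i.e.\ $0>\phi(0)=-\eps n^{1/3}/(4A\sqrt{d-2})$, which is clear; and we need $z>y+\mu t$, i.e.\ $w>\phi(T''/2)$, which holds for every $w\in I_1$ once $n$ is large, because $I_1$ lies entirely above the convex curve $\phi$, as already observed in the text. Lemma~\ref{reflectionlem} then gives
\[\P_0\bigl(B_s>\ell_1(s)\ \forall s\in[0,T''/2],\,B_{T''/2}\in\d w\bigr)=\frac{1}{\sqrt{\pi T''}}\,e^{-w^2/T''}\bigl(1-e^{-E(w)}\bigr)\,\d w,\]
where $E(w)=2(-\phi(0))\bigl(w-\phi(T''/2)\bigr)/(T''/2)\ge 0$; here we used that $\mu(T''/2)+\phi(0)=\phi(T''/2)$ and that $w>\phi(T''/2)$. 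The prefactor $1/\sqrt{\pi T''}$ is of the required form $c/\sqrt{T''}$ and the Gaussian exponent is exactly $-w^2/T''$, so the only remaining task is to bound $1-e^{-E(w)}$ below by a positive constant depending on $d$ and $\eps$, uniformly over $w\in I_1$.

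For this it is enough to control the size of $E(w)$, which I would do by estimating its three factors. First, $-\phi(0)=\eps n^{1/3}/(4A\sqrt{d-2})$ exactly. Second, $T''$ is of order $An^{2/3}$: indeed $T''=\lfloor(d-1)An^{2/3}\rfloor+1-\lfloor n^{2/3}/A^2\rfloor$, and the subtracted term is smaller than the first by a factor of order $A^{-3}$, so $\tfrac{1}{2}(d-1)An^{2/3}\le T''\le (d-1)An^{2/3}$ for large $n$. Third --- and this is the crux --- $w-\phi(T''/2)$ is of order $A^2n^{1/3}$ for every $w\in I_1$: since $\phi$ is a quadratic with leading term $\frac{\sqrt{d-2}}{d(d-1)}\cdot\frac{t^2}{2n}$, a direct computation gives $\phi(T'')/2-\phi(T''/2)=\frac{\sqrt{d-2}}{d(d-1)}\cdot\frac{(T'')^2}{8n}+O(n^{1/3}/A)$, which is of order $A^2n^{1/3}$, while the extra terms entering the definition of $I_1$ --- its midpoint shift $n^{1/3}/(8A)$ and its width $A^{1/2}n^{1/3}$ --- are both of strictly smaller order; hence $w=\phi(T'')/2+O(A^{1/2}n^{1/3})$ sits a distance of order $A^2n^{1/3}$ above $\phi(T''/2)$. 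Combining the three estimates,
\[E(w)=\frac{2(-\phi(0))\bigl(w-\phi(T''/2)\bigr)}{T''/2}\asymp\frac{(n^{1/3}/A)\cdot A^2n^{1/3}}{An^{2/3}}\asymp 1,\]
with implied constants depending only on $d$ and $\eps$. In particular $E(w)\ge c'(d,\eps)>0$ for all $w\in I_1$ and all large $n$, so $1-e^{-E(w)}\ge 1-e^{-c'(d,\eps)}=:c(d,\eps)>0$; plugging this into the displayed identity above completes the proof.

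The main obstacle is the middle step --- showing that $w-\phi(T''/2)$ is genuinely of order $A^2n^{1/3}$ rather than smaller. The potential pitfall is the $-A^{1/2}n^{1/3}$ appearing at the left end of $I_1$, together with the cross term $2tT'/(2n)$ in $\phi$; but since $A^{1/2}\ll A^2$ for $A\ge A_0$, and the cross term contributes only $O(n^{1/3}/A)$ at $t=T''/2$, all of these are of strictly smaller order than the $\Theta_d(A^2n^{1/3})$ gap between $\phi(T'')/2$ and $\phi(T''/2)$. The remainder is routine bookkeeping and the single appeal to Lemma~\ref{reflectionlem}.
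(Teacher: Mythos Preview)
Your proof is correct and follows essentially the same approach as the paper: apply Lemma~\ref{reflectionlem} with $x=0$, $y=\phi(0)$, $\mu$ the slope of $\ell_1$, $t=T''/2$, verify the hypotheses, and then show that the factor $1-e^{2(x-y)(\mu t+y-z)/t}$ is bounded below by a positive constant by estimating the exponent. The paper carries out the exponent estimate slightly more precisely (obtaining $2(x-y)(\mu t+y-z)/t\sim -\eps/(8d)$ at the worst $w\in I_1$), whereas you use an order-of-magnitude argument to get $E(w)\asymp_{d,\eps}1$; both yield the same conclusion, and your version is entirely adequate since only a uniform positive lower bound on $E(w)$ is needed.
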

	
	\begin{proof}
		We apply Lemma \ref{reflectionlem} with $x=0$, $z=w\in I_1$, $t=T''/2$,
		\begin{equation*}
		y=\phi(0) = -\frac{\eps n^{1/3}}{4A\sqrt{d-2}} \,\,\,\,\text{ and }\,\,\,\,\mu = \frac{\phi(T''/2)-\phi(0)}{T''/2} = \frac{\sqrt{d-2}}{d(d-1)}\left(\frac{T+3T'}{4n}\right).
		\end{equation*}
		Note that, with these parameters,
		\begin{align*}
		\mu t + y - z &\le \big(\phi(T''/2) - \phi(0)\big) + \phi(0) - \big(\phi(T'')/2 + n^{1/3}/8A - A^{1/2}n^{1/3}\big)\\
		&\sim \phi(T''/2) - \phi(T'')/2\\
		&\sim -\frac{\sqrt{d-2}(d-1)}{d}\frac{A^2 n^{1/3}}{8} < 0.
		\end{align*}
		We may therefore apply Lemma \ref{reflectionlem}, and noting that at most
		\begin{align*}
		2(x-y)(\mu t+y-z)/t &\sim 2\left(\frac{\eps n^{1/3}}{4A\sqrt{d-2}}\right)\left(-\frac{\sqrt{d-2}(d-1)}{d}\frac{A^2 n^{1/3}}{8}\right)\left(\frac{2}{(d-1)An^{2/3}}\right)\\
		&= - \eps/8d
		\end{align*}
		the last factor in Lemma \ref{reflectionlem} reduces to a positive constant and the result follows.
	\end{proof}
	
	Next we bound from below the second probability that appears in the integral (\ref{BrownianStep2}), again by means of Lemma \ref{reflectionlem}.
	
	\begin{cor}\label{BrownianCor2}
		There exists a constant $c=c(d,\eps)>0$ such that for any $w\in I_1$ and $n$ and $A$ sufficiently large,
		\[\P_w\big(B_s > \ell_2(s)\;\; \forall s\in[0,T''/2],\, B_{T''/2} \in I_2\big) \ge \frac{c}{\sqrt{T''}} \int_{I_2} e^{-(z-w)^2/T''} \d z.\]
	\end{cor}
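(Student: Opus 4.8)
The plan is to apply the reflection-principle formula of Lemma~\ref{reflectionlem} with the data $x=w$, $z\in I_2$, $t=T''/2$, and with $y$ and $\mu$ chosen to be the intercept and slope of the line $\ell_2$; that is, $y=\ell_2(0)=\phi(T''/2)$ and $\mu=\big(\phi(T'')-\phi(T''/2)\big)/(T''/2)$, so that $\ell_2(s)=y+\mu s$ and in particular $y+\mu t=\ell_2(T''/2)=\phi(T'')$. Since the event $\{B_s>\ell_2(s)\ \forall s\in[0,T''/2],\,B_{T''/2}\in I_2\}$ is exactly the event in Lemma~\ref{reflectionlem} integrated over $z\in I_2$, the corollary will follow once we (i) check the two hypotheses $x>y$ and $z>y+\mu t$, and (ii) bound the resulting reflection correction factor below by a positive constant uniformly over $w\in I_1$ and $z\in I_2$. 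This is the same scheme used to prove Corollary~\ref{BrownianCor1}.

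The hypothesis $z>y+\mu t=\phi(T'')$ is immediate, since the left endpoint of $I_2$ is $\phi(T'')+n^{1/3}/(8A)$. For $x>y$, i.e.\ $w>\phi(T''/2)$, a direct computation from the definition of $\phi$ gives
\[\frac{\phi(T'')}{2}-\phi(T''/2)=\frac{\sqrt{d-2}\,(T'')^2}{8d(d-1)n}+\frac{\eps n^{1/3}}{8A\sqrt{d-2}}>0,\]
and since $T''$ is of order $An^{2/3}$ this difference is of order $A^2n^{1/3}$, which dominates the term $A^{1/2}n^{1/3}$ appearing in the left endpoint of $I_1$. Hence for $A$ and $n$ large enough, every $w\in I_1$ satisfies $w-\phi(T''/2)\ge c_1A^2n^{1/3}$ for some constant $c_1=c_1(d)>0$; in particular $x>y$. (This is precisely where the hypothesis ``$A$ and $n$ sufficiently large'' in the statement is used.)

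With both hypotheses verified, Lemma~\ref{reflectionlem} gives
\[\P_w\big(B_s>\ell_2(s)\ \forall s\le T''/2,\ B_{T''/2}\in\d z\big)=\frac{1}{\sqrt{\pi T''}}\,e^{-(z-w)^2/T''}\Big(1-e^{-4(w-\phi(T''/2))(z-\phi(T''))/T''}\Big)\d z.\]
The exponent inside the last bracket is negative, and using $w-\phi(T''/2)\ge c_1A^2n^{1/3}$, $z-\phi(T'')\in[n^{1/3}/(8A),n^{1/3}/(4A)]$ for $z\in I_2$, and $T''\le (d-1)An^{2/3}(1+o(1))$, one checks that its absolute value lies between two positive constants depending only on $d$; in particular $1-e^{-4(w-\phi(T''/2))(z-\phi(T''))/T''}\ge c_0(d)>0$ uniformly over $w\in I_1$ and $z\in I_2$. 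Integrating over $z\in I_2$ and absorbing $1/\sqrt{\pi}$ and $c_0$ into the constant yields the claimed inequality.

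I expect the only genuine work to be the elementary asymptotic bookkeeping in the two places above: showing $\phi(T'')/2>\phi(T''/2)$ (so that the reflection lemma applies), and showing the reflection correction factor neither degenerates to $0$ (so that the lower bound is non-trivial); both reduce to comparing explicit powers of $A$ and $n$, exactly as in Corollary~\ref{BrownianCor1}. No upper bound on the correction factor is needed.
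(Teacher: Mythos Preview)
Your proposal is correct and follows essentially the same route as the paper: apply Lemma~\ref{reflectionlem} with $x=w$, $y=\phi(T''/2)$, $t=T''/2$ and the slope of $\ell_2$, verify the hypotheses via the asymptotics of $\phi$, and then show the reflection correction factor $1-e^{2(x-y)(\mu t+y-z)/t}$ is bounded below by a positive constant since $(x-y)(\mu t+y-z)/t$ is of order a negative constant. If anything, your write-up is slightly more explicit than the paper's in checking $x>y$ via the exact identity $\phi(T'')/2-\phi(T''/2)=\frac{\sqrt{d-2}(T'')^2}{8d(d-1)n}+\frac{\eps n^{1/3}}{8A\sqrt{d-2}}$.
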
	
	
	\begin{proof}
		We now apply Lemma \ref{reflectionlem} with $x=w\in I_1$, $y=\ell_2(0) = \phi(T''/2)$, $t=T''/2$ and
		\[\mu = \frac{\phi(T'')-\phi(T''/2)}{T''/2}.\]
		We also have $z\in I_2 = [\phi(T'')+ n^{1/3}/8A,\, \phi(T'')+n^{1/3}/4A]$; thus
		\[\mu t + y - z \le \big(\phi(T'') - \phi(T''/2)\big) + \phi(T''/2) - \big(\phi(T'')+n^{1/3}/8A) = -n^{1/3}/8A.\]
		Simple estimates show that
		\[x-y \sim \frac{\sqrt{d-2}}{d(d-1)}\frac{T^2}{4} - \frac{\sqrt{d-2}}{d(d-1)}\frac{T^2}{8} \sim \frac{\sqrt{d-2}(d-1)}{d}\frac{A^2 n^{1/3}}{8},\]
		and therefore
		\[(x-y)(\mu t + y - z)/t \le -\frac{\sqrt{d-2}}{32d} + o(1).\]
		We then deduce the result from Lemma \ref{reflectionlem}.
%
	\end{proof}

	The two corollaries above, combined with \eqref{BrownianStep2}, are the ingredients needed to complete our proof of Lemma \ref{mainlemmasecondbit}.
	
	\begin{proof}[Proof of Lemma \ref{mainlemmasecondbit}]
	Substituting Corollaries \ref{BrownianCor1} and \ref{BrownianCor2} into \eqref{BrownianStep2}, we obtain that
	\[P \ge \frac{c}{T''} \int_{I_2} \int_{I_1}  e^{-w^2/T'' - (z-w)^2/T''} \d w \, \d z.\]
	Let $\Phi = \phi(T'')+n^{1/3}/4A$, so that
	\[I_1 = [\Phi/2 - A^{1/2}n^{1/3},\,\Phi/2] \,\,\,\,\text{ and }\,\,\,\, I_2 = [\Phi-n^{1/3}/8A, \Phi].\]
	Using the substitutions $u = w - \Phi/2$ and $v = z - \Phi$, we have
	\[P \ge \frac{c}{T''} \int_{-n^{1/3}/8A}^{0} \int_{-A^{1/2}n^{1/3}}^0  e^{-(u+\Phi/2)^2/T'' - (v-u+\Phi/2)^2/T''} \d u \, \d v\]
	which, after multiplying out the quadratic terms in the exponent, becomes
	\[P \ge \frac{c}{T''} \int_{-n^{1/3}/8A}^{0} \int_{-A^{1/2}n^{1/3}}^0  e^{-2u^2/T'' + 2uv/T'' - \Phi^2/2T'' - v^2/T'' - v\Phi/T''} \d u \, \d v.\]
	Since $u,v\le 0$, we have $2uv/T''\ge 0$ and, removing this term, we may otherwise separate the two integrals, giving
	\[P \ge \frac{c}{T''} e^{-\Phi^2/(2T'')} \left(\int_{-A^{1/2}n^{1/3}}^0  e^{-2u^2/T''} \d u\right)\left(\int_{-n^{1/3}/8A}^{0} e^{- v^2/T'' - v\Phi/T''} \d v\right).\]
	Since $A^{1/2}n^{1/3} = O(\sqrt{T''})$, we have
	\[\int_{-A^{1/2}n^{1/3}}^0  e^{-2u^2/T''} \d u \ge c A^{1/2}n^{1/3} \ge c\sqrt{ T''},\]
	and since $n^{1/3}/8A = o(\sqrt{T''})$, we have
	\[\int_{-n^{1/3}/8A}^{0} e^{- v^2/T'' - v\Phi/T''} \d v \sim \int_{-n^{1/3}/8A}^{0} e^{- v\Phi/T''} \d v = \frac{T''}{\Phi}\left(\exp\left(\frac{n^{1/3}\Phi}{8A T''}\right)-1\right).\]
	We deduce that
	\begin{equation}\label{BrownianStep3}
	P \ge \frac{c\sqrt{T''}}{\Phi}  e^{-\Phi^2/(2T'')} \left(\exp\left(\frac{n^{1/3}\Phi}{8A T''}\right)-1\right).
	\end{equation}
	Finally, we note that
	\begin{align*}
	\Phi = \phi(T'') + \frac{n^{1/3}}{4A} &=  \frac{\sqrt{d-2}}{d(d-1)}\left(\frac{T^2 - (T')^2}{2n}\right) + \frac{n^{1/3}}{4A}\\
	&= \frac{\sqrt{d-2}(d-1)A^2n^{1/3}}{2d} + O\left(\frac{n^{1/3}}{A}\right)
	\end{align*}
	so that
	\[\Phi^2 = \frac{(d-2)(d-1)^2 A^4 n^{2/3}}{4d^2} + O(An^{2/3});\]
	and also
	\[T'' = (d-1)An^{2/3} + O(n^{2/3}/A^2).\]
	Substituting these estimates into \eqref{BrownianStep3} gives
	\[ P \ge \frac{c}{A^{3/2}} \exp\left(-\frac{(d-2)(d-1)A^3}{8d^2}\right),\]
	and the proof is complete.
\end{proof}

\section*{Acknowledgements}
Both authors would like to thank the Royal Society for their generous funding, of a PhD scholarship for UDA and a University Research Fellowship for MR.

\bibliographystyle{plain}
\def\cprime{$'$}

\end{document}